\newcommand\xref[1]{\csname#1\endcsname}
\newcommand\tdotsc{\ldots}
\newcommand\loccit{\emph{loc.{} cit.}\xspace}
\newcommand\mysigma{\eta}
\newcommand\mytau{\eta_0}
\let\Ind\relax
\DeclareMathOperator{\Ind}{Ind}
\DeclareMathOperator{\depth}{d}
\DeclareMathOperator{\ad}{ad}
\DeclareMathOperator{\Ad}{Ad}
\DeclareMathOperator{\supp}{supp}
\DeclareMathOperator{\stab}{stab}
\DeclareMathOperator{\Gal}{Gal}
\DeclareMathOperator{\Lie}{Lie}
\DeclareMathOperator{\Int}{Int}
\DeclareMathOperator{\meas}{meas}
\DeclareMathOperator{\ord}{ord}
\DeclareMathOperator{\GL}{GL}
\DeclareMathOperator\PGL{PGL}
\DeclareMathOperator\SL{SL}
\DeclareMathOperator{\SP}{Sp} 
\DeclareMathOperator{\sgn}{sgn}
\DeclareMathOperator{\Res}{Res}
\DeclareMathOperator{\tr}{tr}
\DeclareMathOperator\chr{char}
\newcommand{\me}[1]{{\protect\ensuremath{#1}}\xspace} 
\newcommand{\mc}[1]{\me{\mathcal #1}}
\gdef\AA(#1,#2){\fAA(#1)} 
\newcommand\fAA{\mc A} 
\newcommand{\BB}{\mc B}
\newcommand{\fQ}{\mc Q}
\newcommand{\fB}{\mc B}
\newcommand{\mf}[1]{\me{\protect\mathfrak{#1}}}
\newcommand{\ff}{\mf f}
\newcommand{\fg}{\mf g}
\newcommand{\mo}[1]{\me{\protect\mathbf #1}}
\newcommand{\bG}{\mo G}
\newcommand{\bH}{\mo H}
\newcommand{\bL}{\mo L}
\newcommand{\bS}{\mo S}
\newcommand{\bT}{\mo T}
\newcommand{\bM}{\mo M}
\newcommand{\bU}{\mo U}
\newcommand{\bX}{\mo X}
\newcommand{\wtilde}[1]{\widetilde{#1}}
\newcommand{\vbG}{\me{\vec\bG}}
\newcommand{\vG}{\me{\vec G}}
\newcommand{\mb}[1]{\me{\mathbb #1}}
\newcommand{\F}{\mb F}
\newcommand{\R}{\mb R}
\newcommand{\C}{\mb C}
\newcommand{\Z}{\mb Z}
\newcommand{\tR}{\me{\wtilde\R}}
\newcommand{\inv}{^{-1}}
\newcommand{\phplus}{{\phantom{+}}}
\newcommand{\cross}{^\times}
\newcommand{\conn}{^\circ}
\newcommand{\textsup}[1]{^{\protect\mathrm{#1}}}
\newcommand{\red}{\textsup{red}}
\newcommand{\sss}{\textsup{ss}}
\newcommand{\unram}{\textsup{un}}
\newcommand{\sep}{\textsup{sep}}
\newcommand{\rBB}{\BB\red}
\newcommand{\lsup}[1]{{}^{#1}}
\newcommand{\lsub}[1]{{}_{#1}}
\newcommand\card[1]{\left|#1\right|}
\newcommand\smcard[1]{\card{\smash{#1}}}
\newcommand\abs[1]{\left|#1\right|}
\newcommand\smabs[1]{\abs{\smash{#1}}}
\newcommand{\simm}[1]{\overset{#1}{\sim}}
\newcommand{\CC}[2]{{C_{#1}^{(#2)}}}
\newcommand{\ZZ}[2]{{Z_{#1}^{(#2)}}}
\newcommand{\mexp}{\me{\mathsf e}}
\newcommand{\ol}[1]{\me{\protect\overline{#1}}}
\newcommand{\ox}{{\ol x}\xspace}
\newcommand{\ul}[1]{\protect\underline{#1}}
\newcommand{\ugamma}{\me{\ul{\gamma}}}
\newcommand{\dpsi}{\me{\dot\psi}}
\newcommand\trans[1]{#1\textsup t}
\newcommand{\odc}[1]{{\llbracket #1\rrbracket}}
\newcommand{\dc}{\odc{\gamma; x, r}}
\renewcommand{\to}{\longrightarrow}
\newcommand{\ap}[1]{\me{#1^\perp}}
\newcommand{\gperp}{\ap g}
\newcommand{\iperp}{\ap i}
\newcommand{\jperp}{\ap j}
\newcommand{\tperp}{\ap t}
\newcommand\cpt{\mc K}
\newcommand\spindx[4]{#1#3 : #4#2}
\newcommand\indx[2]{\spindx[]{#1}{#2}}
\newcommand\Bigindx[2]{\spindx{\Bigl[}{\Bigr]}{#1}{#2}}
\newcommand\dotm{\nolinebreak\cdot\nolinebreak} 
\newcommand\dota{\nolinebreak\cdot\nolinebreak} 
\newcounter{incenumi} 
\newcounter{tagenumi} 
\newenvironment{inc_enumerate}{\begin{list}{(\arabic{incenumi})}{\usecounter{incenumi}}}{\end{list}} 
\newcommand{\set}[2]{ 
	{\left\{\left.
	#1\vphantom{#2\bigl(\bigr)}\,\right|
	\,#2\right\}}}
\newtoks\sset@tok
\newcommand{\sset}[1]{\sset@tok={#1}\futurelet\sset@temp\sset@action}
\def\sset@witharg[#1]{\ensuremath{\left\{\the\sset@tok\right\}_{#1}}}
\def\sset@withoutarg{\ensuremath{\left\{\the\sset@tok\right\}}}
\def\sset@action{\ifx\sset@temp[
\let\sset@next=\sset@witharg\else\let\sset@next=\sset@withoutarg\fi\sset@next}
\newcommand{\sett}[2]{\set{#1}{\text{#2}}}
\newtheorem{thm}{Theorem}[section]
\newtheorem{prop}[thm]{Proposition}
\newtheorem{lemma}[thm]{Lemma}
\newtheorem{cor}[thm]{Corollary}%
\theoremstyle{definition}
\newtheorem{defn}[thm]{Definition}
\newtheorem{notn}[thm]{Notation}
\newtheorem{example}[thm]{Example}
\newtheorem{hyp}[thm]{Hypothesis}
\newtheorem{nhyp}[thm]{Hypothesis}
\theoremstyle{remark}
\newtheorem{rem}[thm]{Remark}
\newenvironment{pn}{\begin{prop}}{\end{prop}}
\newenvironment{lm}{\begin{lemma}}{\end{lemma}}
\newenvironment{dn}{\begin{defn}}{\end{defn}}
\newenvironment{rk}{\begin{rem}}{\end{rem}}
\newcommand{\xrhardcode}[2]{\expandafter\def\csname#1\endcsname{#2}}
\newcommand{\indexme}[2]{\label{index:#1}}
\newcommand{\indexmem}[2]{\label{index:#1}}
\numberwithin{equation}{section}
\author[Adler]{Jeffrey D. Adler}
\address{American University \\ Washington, DC \ \ 20016-8050}
\email{jadler@american.edu}
\author[Spice]{Loren Spice}
\address{Texas Christian University \\ Fort Worth, TX \ \ 76109}
\email{l.spice@tcu.edu}
\thanks{The first-named author was partially supported by
the National Security Agency
(H98230-05-1-0251, H98230-07-1-002, and H98230-08-1-0068)
and by a University of Akron Faculty Research Grant (\#1604).
The second-named author was supported by
National Science Foundation Postdoctoral Fellowship award
DMS-0503107.}
\title[Supercuspidal characters]%
{Supercuspidal characters of reductive $p$-adic groups}
\subjclass[2000]{Primary 22E50, 22E35}
\keywords{%
$p$-adic group, supercuspidal representation, character}
\date{4 November, 2009}
\begin{document}
\begin{abstract}
We compute the characters of many supercuspidal representations
of reductive $p$-adic groups.
Specifically, we deal with representations that arise
via Yu's construction from data satisfying a certain
compactness condition.
Each character is expressed in terms of a depth-zero character
of a smaller group,
the (linear) characters appearing in Yu's construction,
Fourier transforms of orbital integrals,
and certain signs and cardinalities
that are described explicitly
in terms of the datum associated to the representation
and of the element at which the character is evaluated.
\end{abstract}

\maketitle
\setcounter{tocdepth}2
\tableofcontents

\pagebreak 

\addtocounter{section}{-1}
\section{Introduction}
\label{sec:intro}

\subsection{History}
\label{ssec:history}
Suppose $F$ is a non-Archimedean local field,
$\bG$ is a connected reductive $F$-group,
and $G = \bG(F)$.
For simplicity of the present discussion, assume
that $F$ has characteristic zero.
For $\pi$ an irreducible, admissible representation
of $G$, let $\Theta_\pi$ denote the distribution character of $\pi$,
a linear functional on the
space $C_c^\infty(G)$ of locally constant, compactly supported
functions on $G$.
Howe \cite{howe:qualitative} and Harish-Chandra \cite{hc:submersion}
showed that $\Theta_\pi$ can be represented by
a locally constant function on the set of regular,
semisimple elements of $G$.
We will also denote the representing function by $\Theta_\pi$.

A great deal is known about the asymptotic behavior of
characters (as functions) near singular points.
For example, the blow-up of $\Theta_\pi$ is controlled by
the fact, due to Harish-Chandra \cite{hc:queens},
that
$\abs{D_G}^{1/2}\Theta_\pi$ is locally integrable on $G$,
where $D_G$ is a certain polynomial function on $G$,
the \emph{discriminant} of $G$.
From Howe \cite{howe:fourier} and Harish-Chandra \cite{hc:queens},
we know that, near a singular point, $\Theta_\pi$
(composed with a suitable logarithmic map)
has an expansion
in terms of Fourier transforms of nilpotent orbital integrals.
More recent work has made precise where these expansions hold
(see \cites{hales:characters,moy-prasad:k-types}
for a conjecture,
\cite{debacker:homogeneity} for the main result,
and \cite{adler-korman:loc-char-exp} for a generalization);
presented other expansions, where the collection
of orbital integrals involved is smaller and depends on $\pi$
(see
\cites{%
murnaghan:chars-sln,
murnaghan:chars-u3,
murnaghan:chars-classical,
murnaghan:chars-gln
});
or done both
(see
\cites{%
cunningham:ell-exp,
adler-debacker:mk-theory,
debacker-kazhdan:mk-zero,
jkim-murnaghan:charexp,
jkim-murnaghan:gamma-asymptotic
}).

Despite the work mentioned above,
in most cases, we do not have explicit character formulas,
even in a limited domain,
because neither the orbital integrals nor their coefficients
are understood explicitly
(though see
\cites{assem:sll,debacker-sally:germs,sally-shalika:orbital-integrals}
for exceptions).
In practice, such formulas usually arise from explicit information about
the construction of representations.
However, the construction methods can be quite complicated.

Let us restrict our attention to supercuspidal representations.
Suppose that
the residual characteristic $p$ of $F$ is odd.
Then earlier work has yielded character formulas for all of the
supercuspidal representations of
$\SL_2$ \cite{sally-shalika:characters}
(using the construction in \cite{sally:unimodular}, which is known to be
exhaustive by \cite{sally-shalika:plancherel});
$\PGL_2$ \cite{silberger:pgl2};
$\GL_2$ \cite{shimizu:gl2};
$\GL_\ell$ \cites{corwin-moy-sally:gll,debacker:thesis};
$\SL_\ell$ \cite{spice:thesis};
and
division algebras of degree $\ell$
\cites{corwin-moy-sally:degrees,corwin-moy-sally:gll}.
In the latter cases, $\ell$ is a prime that is sufficiently
small with respect to $p$.
In addition, one knows the characters of many
depth-zero representations of unramified groups
\cite{debacker-reeder:depth-zero-sc}
(namely, those induced from inflations of Deligne--Lusztig
representations of associated finite groups of Lie type),
certain depth-zero character values for $\SP_4$ \cite{boller:thesis},
and inductive formulas for characters of division algebras
\cite{corwin-howe:div-alg}.

An earlier announcement
\cite{adler-corwin-sally:division-formulas}
contains formulas for the characters of (necessarily
supercuspidal) representations of the multiplicative group
of a central division algebra over a $p$-adic field.
The present paper generalizes these results to the setting
of general tame reductive groups over a $p$-adic field
of odd residual characteristic,
where the construction of J.-K.{} Yu (see
\cite{yu:supercuspidal} and our \S\ref{sec:JK}) can be used
to replace that of Corwin, Howe, and Moy
(see, for example, \cites{corwin:division-alg-tame,howe:gln,moy:thesis}).
If $p$ is large enough,
then all supercuspidals of $G$ arise via this construction
(see \cite{jkim:exhaustion}).
If \bG is $\GL_n$ or the multiplicative group of a central division algebra over $F$ of index $n$,
then the Corwin--Howe--Moy construction is known to be
exhaustive even if we assume only that $p$ does not divide $n$
(see \cite{moy:thesis}).
In \cite{hakim-murnaghan:distinguished}*{\S 3.5}, Hakim and
Murnaghan discuss the relationship between this construction
and Yu's.

There are other constructions of supercuspidal representations that make
no tameness assumptions.  These start with
\cites{kutzko:supercuspidal-gl2-1,kutzko:supercuspidal-gl2-2}
and presently culminate in
\cites{%
bushnell-kutzko:gln-book,
bushnell-kutzko:sln-1,
bushnell-kutzko:sln-2,
secherre:glnd-3,
stevens:classical-sc}.
However, an attempt to use these to compute explicit character formulas
would require a different approach in order to overcome many serious technical difficulties.
For example, among many other things,
we make use of Bruhat--Tits theory and
Moy--Prasad filtrations, both of which behave poorly
under wild Galois descent.

\subsection{Outline of this paper}
In order to evaluate the character $\Theta_\pi$ of a representation
$\pi$ at a regular, semisimple element $\gamma$ in $G$, we require
first of all that $\gamma$ lie near a tame $F$-torus.
If $p$ is larger than a constant determined by the root system
of $\bG$, then all semisimple elements of $G$ have this property.
Second, we require that $\gamma$ be well approximated by a
product of good elements.
Such approximations,
called ``normal $r$-approximations'',
are analogous to truncations of expressions of
elements of $F\cross$ in the form
$$
\varepsilon^{m_0} \varpi^d \dotm
\prod_{i=1}^\infty (1 + \varepsilon^{m_i} \varpi^i),
$$
where $\varpi$ is a uniformizer of $F$ and $\varepsilon$
is a root of unity in $F$ of order coprime to $p$.
From Lemma \xref{exp-lem:simult-approx} of \cite{adler-spice:good-expansions},
we see that many
tame elements of $G$ have such an expansion.
Under mild hypotheses,
which are always satisfied when $\bG$ is an inner form of $\GL_n$,
all tame elements of $\bG$ have such an expansion.
The expansions we require,
together with their basic properties,
are discussed in
\cite{adler-spice:good-expansions}.  The reader may find it
particularly convenient to have at hand the statements of
Lemmata \xref{exp-lem:more-vGvr-facts} and
\xref{exp-lem:shallow-comm},
Proposition \xref{exp-prop:heres-a-gp},
and
Remarks \xref{exp-rem:bracket-facts}
and
\xref{exp-rem:approx-facts} of \loccit
(Analogous approximations, with analogous properties,
exist for elements of the Lie algebra of $G$.
The proofs are similar to, but easier than,
those in \loccit)

After presenting our basic notation in \S\ref{sec:notation},
we outline (in \S\ref{sec:JK})
Yu's construction of supercuspidal representations
(see \cite{yu:supercuspidal}).
Briefly, Yu starts with a sequence
$(\bG^0 \subsetneq \cdots \subsetneq \bG^d = \bG)$
of reductive groups,
together with (an inducing datum for) a depth-zero, supercuspidal
representation $\pi'_{0}$ of $G^0$,
a character $\phi_i$ of each $G^i$,
and
a point $\ox$ in the reduced
building of $\bG^0$ over $F$,
all satisfying certain properties.
He then constructs inductively, for each $i=0,\ldots,d$,
a smooth
representation $\rho'_i$ of an open compact modulo center
subgroup of $G^i$ such that
the representation $\pi_i$ of $G^i$ induced from
$\rho'_i \otimes \phi_i$ is irreducible and supercuspidal.

We will assume for the
remainder of this subsection that
$d > 0$ and $\bG^{d - 1}/Z(\bG)$ is
$F$-anisotropic.
(Notice that the latter hypothesis follows from the former if
\bG is $F$-anisotropic, or if
\bG is $\GL_\ell$ or $\SL_\ell$ with $\ell$ a prime.)
The reason for this assumption is that we require very precise control
over the behavior, with respect to a given Moy--Prasad
filtration, of certain commutators (see, for example,
Propositions \ref{prop:step1-support} and
\ref{prop:step1-formula1}).
Since the computation of the character of $\pi$
requires the evaluation of an integral
formula (see \eqref{eq:char-pi-1}) involving arbitrary
conjugates of the element in which we are interested, we
cannot guarantee this good behavior for arbitrary groups
$\bG^{d - 1}$;
but it \emph{does} occur when our compactness condition is
satisfied (see Corollary \ref{cor:wk-step1-support}).
Even without the compactness condition,
we can still compute the character values at many points
of a representation $\tau = \tau_d$ induced from
$\rho'_d \otimes \phi_d$ to a large open compact modulo
center subgroup of $G$ (see \S\ref{sec:JK}).
(In our situation,
the representation $\tau_i$ of \S\ref{sec:JK} is equal to $\pi_i$
for $0 \le i < d$.)

Since Weil representations over finite fields play an
essential role in Yu's construction of supercuspidal
representations, in \S\ref{sec:weil} we compute some
of their characters at certain elements,
following G\'erardin (see \cite{gerardin:weil}).

After the Weil representation computations,
our character computations broadly follow the strategies
pursued in \cite{corwin-moy-sally:gll} and
\cite{debacker:thesis}, both of which rely on
vanishing results to cut down the support of
the relevant characters.
In \cite{corwin-moy-sally:gll}, these vanishing results are
approached by computing first not the full induced character, but
rather the character of a representation induced to a
smaller open and compact modulo center subgroup.
For us, this is the representation $\sigma_i$ defined in
\S\ref{sec:JK}.
The desired vanishing results are discussed in
\S\ref{sec:vanishing}, where we use the fact that the
character of $\sigma_i$ transforms by a linear character
near the identity (see Corollary
\ref{cor:trho-isotyp}) to prove Proposition
\ref{prop:step1-support}

In \S\ref{sec:induction1}, we compute the character of
$\sigma_i$, using the results of \S\ref{sec:vanishing} to
cut down the class of elements we must consider.
Although Proposition \ref{prop:induction1} is the result
that is used most often in the sequel, the heart of this
section is really Proposition \ref{prop:step1-formula1}.
The proof of this result involves fairly intricate
manipulations of the Frobenius formula (see
\cite{sally:remarks}), based on our detailed
understanding of the behavior of taking commutators with an
element $\gamma$ (see
\cite{adler-spice:good-expansions}*{\S\xref{exp-sec:good-facts}}).
Historically, supercuspidal character formulas
(specifically, Theorem 4.2(c)
of \cite{corwin-moy-sally:gll},
Theorem 5.3.2 of \cite{debacker:thesis},
and Theorem 4(2) of
\cite{adler-corwin-sally:division-formulas})
have involved Gauss sums in some form.
These sums also appear in the present setting, but
in disguise.
We devote \S\ref{ssec:gauss} to recognising and computing them.

With vanishing results in place for this partially induced
representation, it becomes easier to describe the
character of the full induced representation.
After proving a few results (Lemmata
\ref{lem:finite-double-coset}--\ref{lem:char-pi-cpt-supp})
to show that certain naturally
arising integrals converge, we compute the full character in
Theorem \ref{thm:char-tau|pi-1} using Harish-Chandra's
integral formula.

The construction of \cite{yu:supercuspidal} is
inductive, in the sense that an inducing datum for $\pi_i$
is constructed from an inducing datum for $\pi_{i - 1}$ and
some additional data.
This means that, in order to compute the character of
$\pi = \pi_d$,
we focus on explicating the relationship
between the characters of $\pi_{d - 1}$ and $\pi_d$ (or, in
the notation of \S\ref{sec:JK}, $\tau_{d - 1}$ and $\tau_d$).
Theorem \ref{thm:char-tau|pi-1} is actually a statement
about this relationship.
Accordingly, the groups $\bG^i$ and representations $\pi_i$
for $0 \le i < d - 1$ play no explicit role in our
calculations until \S\ref{sec:full}, where we unroll the
inductive computations of
\S\S\ref{sec:weil}--\ref{sec:induction2} to obtain an
explicit formula for $\pi_d$ in terms of the original
datum.

The result of this unrolling is contained in Theorem \ref{thm:full-char}.
Applying the inductive formulas
of the preceding sections, we obtain there a formula
for $\Theta_\pi = \Theta_{\pi_d}$ in terms of the character
of $\pi'_0$, the (linear) characters $\phi_i$, and Fourier
transforms of certain orbital integrals.
(If $\bG^{d - 1}/Z(\bG)$ is not $F$-anisotropic, then we
compute instead the character of $\tau = \tau_d$ in terms of
essentially the same data, but with $\rho'_0$ in place of
$\pi'_0$.)
Also appearing in the character formulas are some
explicitly defined
positive constants (the numbers $c(\vec\phi, \gamma'_{< r})$ of
Theorem \ref{thm:full-char}) --- essentially the
square roots of cardinalities of certain quotients of
filtration subgroups of $G$ ---
and signs $\varepsilon(\phi, \gamma)$
and $\mf G(\phi, \gamma)$ ---
computed in Propositions \ref{prop:theta-tilde-phi} and
\ref{prop:gauss-sum} in terms of the root system of \bG
and various fields associated to the representation $\pi$
and to the element $\gamma$.

Thus, we obtain formulas for evaluating, at many points,
many supercuspidal characters of many groups.

\subsection{Future goals}
Our hypotheses are weak enough that,
in the case of ``tame'' division algebras, i.e., those of
index coprime to $p$,
we can evaluate all characters at all points.
In this case, the presence of considerable additional
structural information (and fine control over conjugacy,
thanks to the Skolem--Noether theorem) allows us to make the
formula of Theorem \ref{thm:full-char} more explicit.
In the process, we will correct an error in Theorem 4 of
\cite{adler-corwin-sally:division-formulas} (some of whose
corollaries remain valid).
This will be carried out in future work.

Work of Henniart (see \cite{henniart:jl1}) has suggested that
it is valuable to be able to recognise a representation
given only the values of its character in a certain domain.
Theorem \ref{thm:full-char} may be sufficiently explicit to
allow us to describe a domain for which this can be done
(at least, if we restrict ourselves to appropriate
supercuspidal representations).
This would nicely complement
\cite{hakim-murnaghan:distinguished}*{Chapter 6}, which describes
another way of identifying supercuspidal
representations.

The stability calculations of
\cite{debacker-reeder:depth-zero-sc} proceed from 
Proposition 10.1.1 of \loccit, a ``reduction formula''.
We have modelled our Theorem \ref{thm:char-tau|pi-1} after
this reduction formula, and believe that the similarity of
statements should provide a guide to stability calculations
for positive-depth supercuspidal representations.

\subsection{Acknowledgements}
This work could not have been completed without the notes of
the late
Lawrence Corwin, written in collaboration with Paul Sally,
on their computation of characters of
division algebras.
The work was actually begun by Allen Moy and Paul Sally,
who computed, in \cite{corwin-moy-sally:degrees},
the formal degrees of representations of division algebras
and general linear groups.
Our \S\ref{ssec:frobenius} is translated from
these notes, with the notation and techniques adapted to our
present setting (in particular, using the tools of
\cite{adler-spice:good-expansions}).
The general strategy of our work was also suggested by these
notes.

This work
has also benefited from our conversations with Paul Sally,
Gopal Prasad, Stephen DeBacker, Brian Conrad, and J.-K.\ Yu,
and from the referee's comments.
It is a pleasure to thank all of these people.

\numberwithin{thm}{subsection}
\numberwithin{equation}{subsection}
\section{Notation and preliminaries}
\label{sec:notation}

\subsection{Generalities on fields and linear reductive groups}
\label{sec:generalities}

Let $\tR = \R \sqcup \set{ r{+} }{r \in \R} \sqcup \sset\infty$,
\indexmem{tR}{\tR}
and extend the order and additive structures on $\R$ to ones on $\tR$
in the usual way
(see, for example,
\cite{adler-spice:good-expansions}*{\S\xref{exp-sec:notation-generalities}}).
For \F a finite field,
let $\sgn_\F$
\indexmem{sgnF}{\sgn_\F}
denote the
character of $\F\cross$ with
kernel precisely $(\F\cross)^2$.

If $F$ is a valued field with valuation $\ord$,
\indexmem{ord}{\ord}
and $r \in \tR_{\ge 0}$, then let
$F_r$ denote $\set{t \in F}{\ord(t) \ge r}$.
Then the residue field
\indexmem{ffF}{\ff_F}
$\ff=\ff_F$
of $F$ is the quotient
$F_0/F_{0{+}}$ of $F_0$ by $F_{0{+}}$.
We will identify functions on
$F_0$ that are trivial on $F_{0{+}}$
with functions on $\ff$.
In particular, if $\ff$ is finite, then we have the
function $\sgn_{\ff}$ on $F_0$\,.

\indexmem{ff}\ff
From now on,
assume that $F$ is locally compact,
and that the characteristic $p$
\indexmem{p}{p}
of its residue field $\ff = \ff_F$
is not $2$.
Fix an algebraic closure $\ol F$ of $F$,
and let
\indexmem{Fun}{F\unram}
$F\unram$
and
\indexmem{Fsep}{F\sep}
$F\sep$
denote the maximal
unramified and separable extensions of $F$ in $\ol F$.
Since $F$ is Henselian, there is a unique extension of
$\ord$ to each algebraic extension $E/F$ (in particular, to
$\ol F/F$), which we will denote again by $\ord$.

Fix a square root $\sqrt{-1}$ of $-1$ in \C, and
an additive character $\Lambda$
\indexmem{Lambda}{\Lambda}
of \ol F that is trivial
on $\ol F_{0+}$
and induces on $\ff = F_0/F_{0{+}}$ the character
$t \mapsto \exp\bigl(2\pi\sqrt{-1}\tr_{\ff/\F_p}(t)/p\bigr)$,
where $\F_p$ is the finite field with $p$ elements.
We may, and do, write again $\Lambda$ for the resulting
character of $\ff_E$, for any discretely valued algebraic
extension $E/F$.
Except in \S\ref{sec:induction1}, we will be concerned only with
the restriction to $F$ of $\Lambda$.
All Fourier transforms will be taken with respect to
$\Lambda$.
The particular choice of square root will be of interest
only in the statement of Proposition \ref{prop:gauss-sum}.

If we denote an algebraic $F$-group by a bold, capital, Latin
letter, such as $\bH$, then we will sometimes denote its
Lie algebra by the corresponding bold, small Gothic letter,
such as $\pmb{\mf h}$.
We will denote sets of rational points by the corresponding
non-bold letters, such as $H$ and $\mf h$.

For any set $S \subseteq X$, we denote by $[S]$ the
characteristic function of $S$.
(The ``universe'' $X$ will be understood from the context.)
If $S$ is finite, then we denote by $\card S$ its
cardinality.
If $H' \subseteq H$ are groups and $f$ is a function on
$H'\backslash H$, then
$\sum_{g \in H'\backslash H} f(g)$
will be shorthand for
$\sum_{H'g \in H'\backslash H} f(H'g)$.
Similar notation will be used for sums over double coset
spaces.

Let \bG denote a reductive $F$-group.
For the moment, we do not assume that $\bG$ is connected.
Let $\bG\conn$ denote the identity component of $\bG$.
Write $\fg^*$ for the dual Lie algebra of $G$, i.e., the
vector-space dual of \fg.

Suppose that $X^* \in \fg^*$ is semisimple, in the sense
that it is fixed by the coadjoint action of some maximal
torus in \bG.
Any $G$-equivariant identification of $\fg^*$ with \fg carries
$X^*$ to a semisimple element of \fg (in the usual sense),
so one knows that $G/C_G(X^*)$ carries an invariant
measure, say $d\dot g$, and that the integral
$$
\int_{G/C_G(X^*)} f(\lsup g X^*)d\dot g
$$
converges for $f \in C_c^\infty(\fg^*)$.
Thus we may define a distribution
$\hat\mu_{X^*}$
\indexmem{hat-mu}{\hat\mu_{X^*}^G}
on $\fg$ by
$$
\hat\mu_{X^*}(f)
= \int_{G/C_G(X^*)} \hat f(\lsup g X^*)d\dot g
\quad\text{for $f \in C_c^\infty(\fg)$}.
$$
By Theorem A.1.2 of \cite{adler-debacker:mk-theory},
$\hat\mu_{X^*}$ is representable by a locally constant function on the regular semisimple
set in \fg.
We may, and do, sometimes regard $\hat\mu_{X^*}$ as
defined by an integral over $G/Z$, where $Z$ is any closed
cocompact unimodular subgroup of $C_G(X^*)$.
By abuse of notation, we will denote again by
$\hat\mu_{X^*}$ (or $\hat\mu_{X^*}^G$, if we wish to
emphasize the ambient group $G$)
the representing function.
Notice that this function depends on the measure chosen.

If \bM is a Levi (not necessarily $F$-Levi) subgroup of \bG, then,
as in \cite{yu:supercuspidal}*{\S 8}, we identify the dual
Lie algebras of $Z(\bM)$
and \bM with the fixed points in the
dual Lie algebra of \bG for the coadjoint actions of \bM and
$Z(\bM)$, respectively.

\subsection{Hypotheses}
\label{ssec:hypotheses}

Assume now, and for the remainder of the paper,
that \bG is connected,
splits over some tame extension of $F$,
and satisfies
Hypotheses (\textbf{\xref{exp-hyp:conn-cent}}) and
(\textbf{\xref{exp-hyp:good-weight-lattice}}) of
\cite{adler-spice:good-expansions}.
By Remark \xref{exp-rem:when-hyps-hold} of \loccit,
Hypotheses (\textbf{\xref{exp-hyp:reduced}}) and
(\textbf{\xref{exp-hyp:torus-H1-triv}})
follow from the tameness of \bG.
Thus, we may apply all the results of
\cite{adler-spice:good-expansions}.
In some places, we also assume Hypothesis
\ref{hyp:X*-central}.

\subsection{Buildings and filtrations}
\label{sec:buildings}

For any algebraic extension $E/F$ of finite ramification degree,
let $\rBB(\bG,E)$ and $\BB(\bG,E)$ denote the reduced and
enlarged Bruhat--Tits buildings of $\bG(E)$,
respectively.
Then $\BB(\bG,E)$ is the product of $\rBB(\bG,E)$ and an affine
space.  For a point $x\in \BB(\bG,E)$,
let $\ox$ denote the image of $x$ under the natural projection
to $\rBB(\bG,E)$.

If $H$ is a closed subgroup of $G$ and $x \in \BB(\bG, F)$,
then we will abbreviate $H \cap \stab_G(\ox)$ to $\stab_H(\ox)$.
\indexmem{stabHx}{\stab_H(\ox)}
Note that, in this notation, \ox is an
element of $\rBB(\bG, F)$,
not $\rBB(\bH, F)$,
even if $H = \bH(F)$
with \bH a compatibly filtered $F$-subgroup of \bG
(as in Definition \xref{exp-defn:compatibly-filtered} of
\cite{adler-spice:good-expansions})
and $x \in \BB(\bH, F)$.
Of course, if further $Z(\bH)/Z(\bG)$ is $F$-anisotropic, then
actually there is no ambiguity, since we may regard
$\rBB(\bH, F)$ as a subcomplex of $\rBB(\bG, F)$.

Suppose \bT is a maximal $F$-torus in \bG.
Then we let
\indexmem{PhiGT}{\Phi(\bG,\bT)}
$\Phi(\bG,\bT)$
denote the set of roots of $\bT$ in $\bG$,
and put
\indexmem{tildePhiGT}{\wtilde\Phi(\bG,\bT)}
$\wtilde\Phi(\bG,\bT) =\Phi(\bG,\bT) \cup \{0\}$.
For each root $\alpha\in \Phi(\bG,\bT)$, let
\indexmem{LieGalpha}{\Lie(\bG)_\alpha}%
$\Lie(\bG)_\alpha$
and
$\bU_\alpha$
\indexmem{Ualpha}{\bU_\alpha, \quad \text{$\alpha$ a root}}%
denote the corresponding root space and root group,
respectively.
If $\alpha = 0$, then put
$\Lie(\bG)_\alpha = \Lie(\bT)$
and $\bU_\alpha = \bT$.
If \bT is $F$-split, then there are associated to \bT an
affine space $\AA(\bT, F)$ under
$\bX_*(\bT) \otimes_\Z \R$, the lattice of cocharacters of
\bT (tensored with \R),
and an embedding of $\AA(\bT, F)$ in $\BB(\bT, F)$.
\indexmem{psi+}{\psi{+}}
For us, an \emph{affine root} will be either an affine function $\psi$
on $\AA(\bT, F)$ whose gradient $\dot\psi$
belongs to $\wtilde\Phi(\bG,\bT)$,
or a function of the form
$\psi{+} \colon x \mapsto \psi(x){+}$
with $\psi$ as above.
For each affine root $\psi$, we have
a compact subgroup
\indexmem{EUpsi}{\lsub E U_\psi,\quad\text{$\psi$ an affine root}}
$\lsub F U_\psi$ of $U_\dpsi$
and a lattice
\indexmem{Eupsi}{\lsub E \mf u_\psi}
$\lsub F \mf u_\psi$ in $\fg_\dpsi$.
Note that other authors reserve the term ``affine root''
for an affine function $\psi$ such that $\dot\psi \in \Phi(\bG,\bT)$
and
$\lsub F U_\psi \ne \lsub F U_{\psi{+}}$.

In \cite{moy-prasad:k-types}*{\S\S 2.6, 3.2}
and \cite{moy-prasad:jacquet}*{\S\S 3.2--3},
Moy and Prasad have defined, for each $x \in \BB(\bG, F)$,
filtrations
$(G_{x, r})_{r \in \R_{\ge 0}}$,
$(\fg_{x, r})_{r \in \R}$,
and
$(\fg^*_{x, r})_{r \in \R}$ of
$G$ by compact open subgroups,
\fg by lattices,
and
$\fg^*$ by lattices, respectively.
We extend these filtrations in the usual fashion to be
defined for all $r \in \tR$ (or $r \in \tR_{\ge 0}$, in the
case of the filtration on $G$).
If $x \in \BB(\bG, F)$ and $g \in G_{x, 0}$\,,
then we let $\depth_x(g)$
\indexmem{depthx}{\depth_x}
be the greatest index $t$ such
that $g \in G_{x, t}$\,.
We define similar functions on \fg and $\fg^*$ (not just
$\fg_{x, 0}$ and $\fg^*_{x, 0}$), and denote them also by
$\depth_x$.

\indexmem{filt-quot}{\mc G_{i:j}}
If a group \mc G has a filtration $(\mc G_i)_{i \in I}$, then
we shall frequently write $\mc G_{i:j}$ in place of
$\mc G_i/\mc G_j$ when $\mc G_j \subseteq \mc G_i$
(even if the quotient is not a group).  For example, we put
$F_{r:t} = F_r/F_t$,
$U_{\psi_1:\psi_2} = U_{\psi_1}/U_{\psi_2}$, and
$G_{x, r:t} = G_{x, r}/G_{x, t}$
for $r \le t$ (and $r \ge 0$, in the last case) and for
affine roots $\psi_1$ and $\psi_2$ such that
$\dpsi_1 = \dpsi_2$ and $\psi_1 \le \psi_2$.

By Proposition \ref{prop:strong-mock-exp-holds}, 
for each finite, tamely ramified extension $E/F$,
tamely ramified maximal $F$-torus \bT,
and point $x \in \BB(\bG, E)$ (respectively,
$x \in \BB(\bG, F)$), we have maps
$\mexp^E_{x, t:u}$ and $\mexp_{\bT, x}$ satisfying
Hypotheses \ref{hyp:mock-exp} and \ref{hyp:strong-mock-exp}.
We write $\mexp_{x, t:u}$
\indexmem{exp-xtu}{\mexp_{x, t:u}}
for $\mexp^F_{x, t:u}$\,.
\indexmem{exp-x}{\mexp_x}
If the choice of \bT is unimportant, then we will sometimes
write $\mexp_x$ for $\mexp_{\bT, x}$.

In Definition \xref{exp-defn:vGvr} of
\cite{adler-spice:good-expansions},
following
\cite{yu:supercuspidal}*{\S\S 1--2},
we defined, for \bT a tame maximal $F$-torus,
filtration subgroups
$\lsub\bT G_{x, f}$
\indexmem{TGxf}{\lsub\bT G_{x, f}}
of $G$ associated to
a pair $(x, f)$ consisting of a point $x \in \BB(\bT, F)$
and a $\Gal(F\sep/F)$-invariant concave function
$f$ on the root system of \bT in \bG
(see Definition \xref{exp-defn:concave} of
\cite{adler-spice:good-expansions}).
It will be convenient here to define filtration lattices
$\lsub\bT\Lie(G)_{x, f}$
\indexmem{TLieGxf}{\lsub\bT\Lie(G)_{x, f}}
in \fg in the analogous fashion, by
$$
\lsub\bT\Lie(G)_{x, f}
:= \sum
\lsub E\mf u_\psi
	\cap
\fg\,,
$$
where $E/F$ is a tame splitting field for \bT,
the sum is taken over those affine roots $\psi$ of \bT in \bG
with $\psi(x) \ge f(\dpsi)$.
(In Definition \xref{exp-defn:vGvr} of
\cite{adler-spice:good-expansions},
we had to take considerable
care --- for example, intersecting with
some $G_{x', 0}$ instead of just with $G$, since parahorics
tend to behave badly under ramified descent; but, since
Lie algebra filtrations are considerably better behaved
(see, for example, Proposition 1.4.1 of
\cite{adler:thesis}), such care is not necessary here.)

If $(\bT, \vbG)$ is a tame reductive $F$-sequence in \bG,
in the sense of Definition \xref{exp-defn:tame-reductive-sqnc} of \cite{adler-spice:good-expansions},
and
$\vec r$ is an admissible sequence, in the sense of
Definition \xref{exp-defn:admissible} of \loccit (with associated concave
function $f_{\vbG, \vec r}$), then,
by analogy with the definition
\indexmem{vGxr}{\vG_{x, \vec r}}%
$\vG_{x, \vec r} = \lsub\bT G_{x, f_{\vbG, \vec r}}$
of Definition \xref{exp-defn:vGvr} of \loccit,
we put
\indexmem{vLieGxr}{\Lie(\vG)_{x, \vec r}}%
$\Lie(\vG)_{x, \vec r}
= \lsub\bT\Lie(G)_{x, f_{\vbG, \vec r}}$\,.
It is shown in Lemma \xref{exp-lem:torus-what-torus} of
\loccit that
$\vG_{x, \vec r}$ is independent of the choice of torus \bT.
The proof of the analogous result for $\Lie(\vG)_{x, \vec r}$
is, except for minor changes, the same.
For convenience, by abuse of notation, we will often write
\indexmem{LieTGxf}{\Lie(\lsub\bT G_{x, f})}%
$\Lie(\lsub\bT G_{x, f})$ in place of
$\lsub\bT\Lie(G)_{x, f}$ and $\Lie(\vG_{x, \vec r})$
\indexmem{LievGxr}{\Lie(\vG_{x, \vec r})}%
in
place of $\Lie(\vG)_{x, \vec r}$\,.

\subsection{Normal approximations}
\label{sec:normal}

We now define some basic concepts that will be needed in
what follows.  Since the definitions do not necessarily give
the full flavor of what is going on, we give a ``pictorial''
example in Example \ref{example:double-bracket} and describe
a detailed computation in \S\ref{ssec:dc-compute}.

If $t \in \tR$ and
$\ugamma = (\gamma_i)_{0 \le i < t}$ is a good sequence in $G$
(in the sense of Definition \xref{exp-defn:funny-centralizer} of \cite{adler-spice:good-expansions}),
then put
\begin{align*}
\CC\bG t(\ugamma)
& = \Bigl(\bigcap_{0 \le i < t} C_\bG(\gamma_i)\Bigr)\conn, \\
\intertext{and}
\CC G t(\ugamma) & = \CC\bG t(\ugamma)(F).
\end{align*}
In particular, $\CC\bG t(\ugamma) = \bG$ if $t \le 0$.
Note that the intersection defining $\CC\bG t(\ugamma)$ is
really a \emph{finite} intersection if $t < \infty$
(and, if $t = \infty$, then we have that
$\CC\bG\infty(\ugamma) = \CC\bG{t'}(\ugamma)$ for $t' \in \R$
sufficiently large).
\indexme{r-approx}{normal $r$-approximation}
We say (as in Definition \xref{exp-defn:r-approx} of
\loccit) that
\ugamma is a \emph{normal $t$-approximation} to an element
$\gamma \in G$
if there is an element $x \in \BB(\CC\bG t(\ugamma), F)$
such that
$\gamma
\in \bigl(\prod_{0 \le i < t} \gamma_i\bigr)\CC G t(\ugamma)_{x, t}$\,.
Sometimes, we will say for emphasis
that $(\ugamma, x)$ is a normal $t$-approximation.
In this case, we put
\indexmem{CbGrgamma}{\protect\CC{\protect\bG} r(\gamma)}%
\indexmem{CGrgamma}{\protect\CC{G}{r}(\gamma)}%
\indexmem{ZbGrgamma}{\protect\ZZ{\protect\bG} r(\gamma)}%
\indexmem{ZGrgamma}{\protect\ZZ G r(\gamma)}%
\begin{align*}
\CC\bG t(\gamma) & = \CC\bG t(\ugamma) \\
\CC G t(\gamma) & = \CC G t(\ugamma), \\
\ZZ\bG t(\gamma) & = Z(\CC\bG t(\gamma)), \\
\intertext{and}
\ZZ G t(\gamma) & = \ZZ\bG t(\gamma)(F).
\end{align*}
By Proposition \xref{exp-prop:unique-approx} of \loccit,
these groups are all independent of the choice of normal
$t$-approximation to $\gamma$.
Note that, if $t' \in \tR$ and $t' \le t$, then \ugamma is also a normal
$t'$-approximation to $\gamma$, so the notations
$\CC\bG{t'}(\gamma)$ and $\ZZ\bG{t'}(\gamma)$ are defined;
and we have that
$\ZZ\bG{t'}(\gamma) \subseteq \ZZ\bG t(\gamma)
\subseteq \CC\bG t(\gamma) \subseteq \CC\bG{t'}(\gamma)$.

We will also write
\indexmem{gamma<r}{\gamma_{< r}}%
\indexmem{gamma-ge-r}{\gamma_{\ge r}}%
$\gamma_{< t} = \prod_{0 \le i < t} \gamma_i$
and
$\gamma_{\ge t} = \gamma_{< t}\inv\gamma$
(so that, with the point $x \in \BB(\CC\bG t(\gamma), F)$ as
above, we have
$\gamma_{\ge t} \in \CC G r(\gamma)_{x, t}$).
These elements should be thought of as the ``head'' and
``tail'' of $\gamma$, respectively.
By Corollary \xref{exp-cor:compare-centralizers} of \loccit,
$\CC\bG t(\gamma) = C_\bG(\gamma_{< t})\conn$.
Although the head and tail
are not independent of the choice of normal
$t$-approximation to $\gamma$, they are usually
``well determined enough'' (as described precisely in
Proposition \xref{exp-prop:unique-approx} of \loccit)
that we need not specify the choice.

If $t > 0$, then we put
\indexmem{Brgamma}{\BB_r(\gamma)}
$\BB_t(\gamma)
= \set{x \in \BB(\CC\bG t(\gamma), F)}
{\depth_x(\gamma_{\ge t}) \ge t}$.
By Lemma \xref{exp-lem:Brgamma} of \loccit, this is uniquely determined,
even though $\gamma_{\ge t}$ is not.
(An analogous set can also be defined when $t = 0$, as in
Definition \xref{exp-defn:Brgamma} of \loccit; but we do not
need this.)
Since $\depth_x(\gamma_i) \ge i$ for
$0 \le i < t$
and $x \in \BB(\CC\bG r(\gamma), F)$ (in particular,
for $x \in \BB_t(\gamma)$),
we have that $\BB_t(\gamma) \subseteq \BB_{t'}(\gamma)$
whenever $t' \in \tR_{> 0}$ and $t' \le t$.

If $t \in \tR_{\ge 0}$ and
$\gamma \in G$ has a normal
$t$-approximation, then,
in the notation of Definition \xref{exp-defn:vGvr} of
\cite{adler-spice:good-expansions},
we put
$\vbG = (\CC\bG{t - i}(\gamma))_{0 < i \le t}$
and
$\vec s = (i/2)_{0 < i \le t}$\,,
and write
\indexmem{dc}\dc%
$\odc{\gamma; x, t} = \vG_{x, \vec s}$\,.

We will also need various ``truncations''
\indexmem{dcj}{\dc^{(j)}}%
$\odc{\gamma; x, t}^{(j)}$ of $\odc{\gamma; x, t}$, as in Definition
\xref{exp-defn:fancy-centralizer-no-underline} of \loccit
These arise by taking only those terms in \vbG and $\vec s$
above with $0 < i < 2j$.
We will append a subscript $G'$ (writing instead
$\odc{\gamma; x, t}_{G'}$ or
$\odc{\gamma; x, t}_{G'}^{(j)}$)
\indexmem{dcG'}{\dc_{G'}}%
\indexmem{dcG'j}{\dc_{G'}^{(j)}}%
to indicate that we are constructing the
analogous object, but inside the ambient group $G'$, rather than
$G$.

\begin{dn}
\label{defn:trunc}
If $\vbG = (\bG^0, \dotsc, \bG^d = \bG)$ is a tame reductive sequence
in \bG, in the sense of Definition \xref{exp-defn:tame-reductive-sqnc}
of \cite{adler-spice:good-expansions},
and $\vec r = (r_0, \dotsc, r_d) \in \tR_{\ge 0}^{d + 1}$,
then we write $\mc T(\vbG, \vec r)$
\indexmem{TvGvr}{\mc T(\vbG, \vec r)}
for the set
$$
\sett{\delta \in G}
	{$\delta$ has a normal $r_{d - 1}$-approximation and
	$\delta_{< r_i} \in G^i$ for $0 \le i < d$}.
$$
(Note that $r_d$ is a ``dummy number'' that has no effect on
the resulting set \mc T.)
\end{dn}

\begin{example}
\label{example:double-bracket}
Suppose that $\gamma$ has a normal $\infty$-expansion
$(\gamma_i)_{i \ge 0}$ with the property that
$\gamma_i = 1$ when $i \not\in \Z$ or $i > 3$.
Thus, $\gamma = \gamma_0\gamma_1\gamma_2\gamma_3$.
Then the group
$\odc{\gamma; x,7}$ is a product of various filtration
subgroups of centralizer subgroups of $G$.
(See Figure~\ref{fig:sears-tower}.)
The larger the centralizer subgroup that is involved,
the deeper is the filtration subgroup that appears.
The group 
$\odc{\gamma; x,7}^{(3)}$ corresponds to the region between
the vertical dotted lines in Figure~\ref{fig:sears-tower}.

\begin{figure}
\newcommand{\StringA}{
	$\CC G{3+}(\gamma)_{x,0+} = \CC G{\infty}(\gamma)_{x,0+}$}
\newcommand{\StringB}{
	$\CC G{2+}(\gamma)_{x,2} = \CC G{3}(\gamma)_{x,2}$}
\newcommand{\StringC}{
	$\CC G{1+}(\gamma)_{x,\frac52} = \CC G{2}(\gamma)_{x,\frac52}$}
\newcommand{\StringD}{
	$\CC G{0+}(\gamma)_{x,3} = \CC G{1}(\gamma)_{x,3}$}
\newcommand{\StringE}{
	$G_{x,\frac72} = \CC G{0}(\gamma)_{x,\frac72}$}


\setlength{\unitlength}{3000sp}%
\begin{picture}(6624,5751)(1489,-6973)
\put(6901,-3886){\makebox(0,0)[lb]{\StringE}}
{\multiput(3301,-4561)(0.00000,-117.07317){21}{\line( 0,-1){ 58.537}}
}%
{\multiput(6301,-4561)(0.00000,-117.07317){21}{\line( 0,-1){ 58.537}}
}%
{\put(6601,-3361){\vector( 0,-1){1500}}
}%
{\put(7501,-3961){\vector( 0,-1){1425}}
}%
{\put(5242,-1579){\vector(-1,-2){465}}
}%
{\put(5649,-2144){\vector(-1,-4){469}}
}%
{\put(6186,-2770){\vector(-1,-4){422}}
}%
\put(5101,-1486){\makebox(0,0)[lb]{\StringA}}
\put(5476,-2086){\makebox(0,0)[lb]{\StringB}}
\put(5926,-2686){\makebox(0,0)[lb]{\StringC}}
\put(6451,-3286){\makebox(0,0)[lb]{\StringD}}
{\put(1501,-6961){\line( 0, 1){1950}}
\put(1501,-5011){\line( 1, 0){1200}}
\put(2701,-5011){\line( 0, 1){450}}
\put(2701,-4561){\line( 1, 0){600}}
\put(3301,-4561){\line( 0, 1){450}}
\put(3301,-4111){\line( 1, 0){900}}
\put(4201,-4111){\line( 0, 1){450}}
\put(4201,-3661){\line( 1, 0){300}}
\put(4501,-3661){\line( 0, 1){1725}}
\put(4501,-1936){\line( 1, 0){600}}
\put(5101,-1936){\line( 0,-1){1725}}
\put(5101,-3661){\line( 1, 0){300}}
\put(5401,-3661){\line( 0,-1){450}}
\put(5401,-4111){\line( 1, 0){900}}
\put(6301,-4111){\line( 0,-1){450}}
\put(6301,-4561){\line( 1, 0){600}}
\put(6901,-4561){\line( 0,-1){450}}
\put(6901,-5011){\line( 1, 0){1200}}
\put(8101,-5011){\line( 0,-1){1950}}
}%
\end{picture}%

\caption{Illustration of $\odc{\gamma; x,7}$
in Example~\ref{example:double-bracket}}
\label{fig:sears-tower}
\end{figure}

\end{example}

\subsection{Example computation of $\dc$}
\label{ssec:dc-compute}

We give an extended example to illustrate how to compute
normal approximations and groups $\dc$ in practice.
This involves a considerable amount of notation, all of
which should be regarded as being in force for this
subsection only.

Suppose that
$\bG = \GL(V)$ for some
finite-dimensional $F$-vector space $V$.
Let $\gamma$ be an element of $G = \GL_F(V)$.
Suppose for simplicity that $\gamma$ is compact and semisimple.
We describe a recipe for computing the leading term
in a normal approximation to $\gamma$.
This gives an inductive recipe for computing a normal
approximation to $\gamma$, from which falls out an
explicit description of
$\CC\bG r(\gamma)$,
$\BB_r(\gamma)$,
and
$\dc$
for $r \in \tR_{\ge 0}$.
(Although we do not do so here, it is very easy also to
compute
the group
$[\gamma; x, r]$
occurring in Definition \xref{exp-defn:fancy-centralizer-no-underline}
of \cite{adler-spice:good-expansions}
from our description.)
Remember that we write
$\dc_G$ rather than just $\dc$,
and
$\BB^\bG_r(\gamma)$ rather than just $\BB_r(\gamma)$,
when we wish to emphasise the ambient group.

Note that, in general, there is no canonical choice of normal
approximation.
This is reflected in our recipe in the fact that we have to
make some choices (namely, of a field $L$ and a uniformizer
$\varpi_L$ of $L$).
The point is that, as remarked earlier,
the groups $\CC\bG r(\gamma)$ and $\dc$,
and the set $\BB_r(\gamma)$, are nonetheless well defined.

For $\lambda \in (F\sep)\cross$, write $E_\lambda(V)$ for the
minimal $\gamma$-stable $F$-subspace of $V$ such that the action
of $\gamma$ on $V/E_\lambda(V)$ does not have $\lambda$ as an eigenvalue.
Then
$E_\lambda(V) = \sset 0$ unless $\lambda \in (F\sep)\cross_0$;
$E_\lambda(V) = E_{\sigma\lambda}(V)$ for
$\sigma \in \Gal(F\sep/F)$;
and
$V = \bigoplus_{\lambda \in (\dot F\sep)\cross} E_\lambda(V)$,
where $(\dot F\sep)\cross$ is a set of representatives for
the action of $\Gal(F\sep/F)$ on $(F\sep)\cross$.
We have
for $\lambda \in (\dot F\sep)\cross$
that $E_\lambda(V)$ carries the structure of an
$F[\lambda]$-vector space, where $\lambda$ acts by
$\gamma$.

Put $\bT = Z(C_\bG(\gamma))$,
so that $T = \bT(F)$ is the set of all $g \in \GL_F(V)$ that
act on each $E_\lambda(V)$ as scalar multiplication by an
element of $F[\lambda]^\times$.
Note that \bT is maximal if and only if $\gamma$ is regular.
Assume further that $\gamma$ is \emph{tame}, i.e., that
there exists a finite,
tamely ramified, Galois extension $L$ of $F$ that contains
all of the eigenvalues of $\gamma$.
Then \bT is an $L$-split, hence tame, torus.

We now choose, for each $d \in \R_{\ge 0}$,
a set $\Lambda_d$ of coset representatives for
$L\cross_{d:d+}$ as follows.
Let $\varpi_L$ be a uniformizer of $L$ such that
$\varpi_L^{e(L/F)} \in F$.
Write $\Lambda_0$ for the set of absolutely semisimple
elements of $L\cross_0$ (that is, elements whose order is finite
and prime to $p$).
If $d > 0$ and $L_d = L_{d+}$, then put $\Lambda_d = \sset 1$.
If $d > 0$ and $L_d \ne L_{d+}$, then there is some integer
$k$ such that $\varpi_L^k \in L_d \smallsetminus L_{d+}$.
Put
$\Lambda_d = \set{1 + \lambda_0\varpi_d^k}{\lambda_0 \in \Lambda_0}$.
It is easy to verify that, for any $d \in \R_{\ge 0}$ and
$\lambda \in \Lambda_d$,
the stabilizers in $\Gal(L/F)$ of $\lambda$ and
$\lambda L\cross_{d+}$ are the same.

Let $d$ be the least index $i \in \R_{\ge 0} \cup \sset\infty$ such that
$E_\lambda(V) \ne 0$ for some $\lambda \in L\cross_i$.
Then $d = \depth(\gamma)$.
If $d = \infty$, then $\gamma = 1$; so we assume that
$d < \infty$.

Put $\bG_\lambda = R_{F[\lambda]/F}\GL(E_\lambda(V))$.
Then there is a natural isomorphism of
$\prod_{\lambda \in \Lambda_d} \bG_\lambda$ with $C_\bG(\bT)$,
hence a natural injection of it into \bG.
Corresponding to this injection is an injection of
$\BB(\prod_{\lambda \in \Lambda_d} \bG_\lambda, F)$,
hence of
$\prod_{\lambda \in \Lambda_d} \BB(\bG_\lambda, F)$,
into $\BB(\bG, F)$ with certain properties
(see
Proposition 2.1.5 of \cite{landvogt:functorial} or
Proposition
\xref{exp-prop:compatibly-filtered-tame-rank}
of \cite{adler-spice:good-expansions}
for details).
We will regard these injections as inclusions.

Remember that we have chosen a set $\Lambda_d$ of
representatives for $L\cross_{d:d+}$.
For $\lambda \in L\cross_d$, write $s_\lambda$ for the
element of $\Lambda_d \cap \lambda L\cross_{d+}$.
Let $\gamma_d$ be the element of $\GL_F(V)$ that acts on
$E_\lambda(V)$ by scalar multiplication by
$s_\lambda \in F[\lambda]$ for all
$\lambda \in L\cross_d$.
We claim that $\gamma_d \in T_d$ is good,
in the sense of Definition \xref{exp-defn:good} of
\cite{adler-spice:good-expansions}.
Indeed, if
$\alpha \in \Phi(\bG, \bT)$,
then we have that
$\alpha(\gamma_d) = s_\lambda s_{\lambda'}\inv$
for some $\lambda, \lambda' \in L\cross_d$.
Suppose that $\alpha(\gamma_d) \in L\cross_{d+}$.
Since $s_\lambda$ and $s_{\lambda'}$ are elements of a set
of representatives for $L\cross_{d:d+}$, we have that
$s_\lambda = s_{\lambda'}$, hence that
$\alpha(\gamma_d) = 1$, as desired.

Further,
$\gamma \equiv \gamma_d \pmod{T_{d+}}$.
Thus, $(\gamma_d)$ is a normal $(d{+})$-approximation to
$\gamma$.
Put $\gamma_{> d} = \gamma_d\inv\gamma \in \GL_F(V)$.
By abuse of notation, we will also write $\gamma_{> d}$ for
the restriction of this element to any space
$E_\lambda(V)$.

The groups $\CC\bG r(\gamma)$ and $\dc$, and the sets
$\BB_r^\bG(\gamma)$, look different depending on the
relative values of $r$ and $d$.
For ``small values'' of $r$, we have
\begin{align*}
\CC\bG r(\gamma) & = \bG & \text{for $r \le d$,} \\ \intertext{and}
\dc_G & = \GL_F(V)_{x, 0+} & \text{for $r \le d+$
	and $x \in \BB(\CC\bG r(\gamma), F)$.}
\end{align*}
For ``large values'' of $r$, remember that we have
identified
$\prod_{\lambda \in \Lambda_d} \BB(\bG_\lambda, F)$ with a subset of
$\BB(\bG, F)$.
If $x \in \BB(\bG, F)$ lies in this subset
and $\lambda \in \Lambda_d$, then we will write
$x_\lambda$ for the image of $x$ under the natural projection to
$\BB(\bG_\lambda, F)$.
With this notation, we have
\begin{align*}
\CC\bG r(\gamma)
& = \prod_{\lambda \in \Lambda_d}
	\CC{\bG_\lambda}r(\gamma_{> d})
& \text{for $r > d$,} \\
\BB^\bG_r(\gamma)
& = \prod_{\lambda \in \Lambda_d}
	\BB^{\bG_\lambda}_r(\gamma_{> d})
& \text{for $r \ge d$,}
\end{align*}
and
$$
\dc_G
 = G_{x, (r - d)/2}\dotm\prod_{\lambda \in \Lambda_d}
	\odc{\gamma_{> d}; x_\lambda, r}_{G_\lambda}
\quad
\text{for $r > d+$
	and $x \in \BB(\CC\bG r(\gamma), F)$.}
$$
In particular,
\begin{align*}
\CC\bG{d+}(\gamma) & = \prod_{\lambda \in \Lambda_d}
	\bG_\lambda \\
\intertext{and}
\BB_d^\bG(\gamma) & = \prod_{\lambda \in \Lambda_d}
	\BB(\bG_\lambda, F).
\end{align*}

Although the recipe given always works, it can result in
normal approximations with more non-$1$ terms than necessary.
In practice, it is usually easy to find a shorter normal
approximation to a given element of $G$.
We illustrate this in case $V = F^3$, so that $G = \GL_3(F)$.
There are too many cases to consider them all, so we give
only a few representative examples.

If $\gamma$ is regular elliptic,
then we have that $T$ is isomorphic to the
multiplicative group of a cubic extension of $F$.
If $\gamma \not\in Z(G)T_{d+}$,
then $(\gamma)$ is a normal
$\infty$-approximation to $\gamma$.
If there exists $r \in \R$ with $r > d$ such that
$\gamma \in Z(G)T_r \smallsetminus Z(G)T_{r+}$,
say $\gamma = z t$ with $z \in Z(G)$ and $t \in T_r$,
then $(z, t)$ is a normal
$\infty$-approximation to $\gamma$.

If $\gamma$ is neither split nor regular elliptic, then
we may write $V = V' \oplus V''$, where
$V'$ and $V''$ are $\gamma$-stable $F$-subspaces of $V$ of
dimensions $1$ and $2$ respectively.
Write $\gamma'$ and
$\gamma''$ for the restrictions of $\gamma$ to $V'$ and
$V''$, respectively; $G''$ for $\GL_F(V'')$; and $T''$ for
$Z(C_{G''}(\gamma''))$.
When convenient, we will abuse notation and consider an operator on $V'$
to be an operator on $V$ that acts trivially on $V''$ (and vice versa).
Since $\gamma'$ is an operator on a $1$-dimensional vector
space, it acts as multiplication by a scalar $\lambda'$.
We divide this case into subcases depending on
which of $\gamma'$ and $\gamma''$ has depth $d$.
\begin{itemize}
\item
Suppose that $\gamma'$ and $\gamma''$ both have depth $d$.
If $\gamma'' \not\in Z(G'')T''_{d+}$,
then $(\gamma)$ is a normal $\infty$-approximation to $\gamma$.
On the other hand,
if $\gamma'' \in Z(G'')T''_{d+}$,
then ${\lambda'}\inv\gamma'' \in Z(G'')T''_{d+}$.
Since $\gamma$ is not split, there exists $r \in \R$ with $r>d$ such that
${\lambda'}\inv\gamma'' \in Z(G'')T''_{r} \smallsetminus Z(G'')T''_{r+}$.
Write
${\lambda'}\inv\gamma'' = z''t$ with $z''\in Z(G'')$
and $t \in T''_r$.
If $z''$ has depth $d$, then
$(\lambda'z'', t)$
is a normal $\infty$-approximation to $\gamma$.
If $d < \depth(z) < r$, then
$(\lambda',z'', t)$
is a normal $\infty$-approximation to
$\gamma$.
\item
Now suppose that $\gamma'$ has depth $d$ and $\gamma''$ has depth $s>d$.
Since $\gamma$ is not split, there exists $r \in \R$ with $r\geq s$ such that
$\gamma'' \in Z(G'')T''_{r} \smallsetminus Z(G'')T''_{r+}$.
If $r = s$, then $(\gamma',\gamma'')$ is a normal $\infty$-approximation
for $\gamma$.
If $r>s$, then we may write
$\gamma'' = z''t$ with $z''\in Z(G'')$
and $t \in T''_r \smallsetminus T''_{r+}$,
in which case $(\gamma', z'', t)$ is
a normal $\infty$-approximation
for $\gamma$.
\item
The subcase where $\gamma''$ has depth $d$ and $\gamma'$ has depth greater
than $d$ is straightforward, but involves several sub-subcases, so we
omit it.
\end{itemize}

There remains the case when $\gamma$ is split.
This is
handled similarly, but the plethora of possibilities for
depths and congruences among various eigenvalues makes it
impractical to give a complete list.

\subsection{Representations and characters}
\label{sec:reps}

Recall that, if $\pi$ is a smooth admissible
representation of $G$,
then the character of $\pi$ is a distribution on
$G$.
From work of Harish-Chandra \cite{hc:submersion}
and G.{} Prasad \cite{adler-debacker:mk-theory}*{Appendix B}
(see also Corollary A.11 of
\cite{bushnell-henniart:local-tame-1}),
this distribution is represented on the regular semisimple set in
$G$
by a locally constant function.
As mentioned in \S\ref{ssec:history},
we will denote by $\Theta_\pi$
\indexmem{Theta-pi}{\Theta_\pi}
both the function and
the distribution.
If $H$ is an open subgroup of $G$ and $\rho$ is a
finite-dimensional representation of $H$,
then
we will often denote by $\theta_\rho$
\indexmem{theta-rho}{\theta_\rho}%
the function
$h \mapsto \tr \rho(h)$ on $H$,
and by
$\dot\theta_\rho$
\indexmem{dot-theta-rho}{\dot\theta_\rho}%
the function on $G$
that is equal to $\theta_\rho$ on $H$, and to $0$ on
$G\smallsetminus H$.

\numberwithin{thm}{section}
\numberwithin{equation}{section}
\section{Review of J.-K. Yu's construction}
\label{sec:JK}

We review here the construction of supercuspidal representations
found in \cite{yu:supercuspidal}.
The terminology of this section will remain in force
throughout the remainder of the paper.

If $\phi$ is a character of $G$
and $x \in \BB(\bG, F)$,
then denote by
\indexmem{depth-x-char}{\depth_x(\phi), \quad\text{$\phi$ a character}}
$\depth_x(\phi)$
the smallest index $d \in \R_{\ge 0}$ such that $\phi$ is trivial on
$G_{x, d+}$\,.

\begin{defn}
\label{defn:cusp-dat}
A \emph{cuspidal datum}
\indexme{cusp-dat}{cuspidal datum}
is a quintuple
$\Sigma = (\vbG, \vec{\phi}, \vec{r}, x, \rho_0')$, where
\begin{itemize}
\item
\indexmem{bGi}{\bG^i}
$\vbG=(\bG^0,\ldots,\bG^d=\bG)$ is a tame Levi $F$-sequence,
and $Z(\bG^0)/Z(\bG)$ is $F$-anisotropic;
\item
\indexmem{x}{x}
$x$ lies in $\BB(\bG^0,F)$, and $\ox \in \rBB(\bG^0, F)$
is a vertex;
\item
\indexmem{r_i}{r_i}
$\vec{r} = (r_0, \ldots, r_d)$
is a sequence of real numbers satisfying
$0 \le r_0 < \dotsb < r_{d-1} \leq r_d$
and $r_0 > 0$ if $d > 0$;
\item
\indexmem{phi_i}{\phi_i}
$\vec{\phi}=(\phi_0,\ldots,\phi_d)$,
where, for each $0 \le i < d$,
$\phi_i$ is a
character of $G^i$
such that $\depth_x(\phi_i) = r_i$ and
$\phi_i$ is
\emph{$\bG^{i + 1}$-generic relative to $x$}
(in the sense of \cite{yu:supercuspidal}*{\S9}),
and $\depth_x(\phi_d) = r_d$, or
$\phi_d = 1$ and $r_d = r_{d - 1}$;
\item
$\rho_0'$ is an irreducible representation of $\stab_{G^0}(\ox)$
whose restriction to $G^0_{x, 0}$ contains the inflation of
a cuspidal representation of $G^0_{x, 0:0+}$\,.
\end{itemize}
\end{defn}

For the remainder of this paper, fix a cuspidal datum
$\Sigma$, with associated notation as above.
For $0\le i \le d$, we have the following objects
associated to $\Sigma$:
\begin{itemize}
\item
\indexmem{s_i}{s_i}
non-negative real numbers $s_i = r_i/2$;
\item
\indexmem{K-i}{K^i}
subgroups
$$
K^i_\phplus =
\stab_{G^0}(\ox)
(G^0, \ldots, G^i)_{x, (0{+}, s_0, \dotsc, s_{i - 1})}
$$
and (for $i > 0$)
\indexmem{J-i}{J^i}
\indexmem{J-i+}{J^i_+}
\begin{gather*}
J^i_\phplus = (G^{i - 1}, G^i)_{x, (r_{i - 1}, s_{i - 1})},
\\
\intertext{and}
J^i_+ = (G^{i - 1}, G^i)_{x, (r_{i - 1}, s_{i - 1}{+})};
\end{gather*}
\item
\indexmem{rho_i'}{\rho_i'}
representations $\rho_i'$ of $K^i$
(see Remark \ref{rem:yu-particulars});
and
\item
\indexmem{pi_i}{\pi_i}
irreducible supercuspidal representations
$\pi_i = \Ind_{K^i}^{G^i} (\rho_i' \otimes \phi_i)$ of depth $r_i$,
in the sense of \cite{moy-prasad:jacquet}*{\S 3.4}.
\end{itemize}
In particular,
$\pi_0 = \Ind_{\stab_{G^0}(\ox)}^{G^0} \rho_0' \otimes \phi_0$
is a twist of a depth-zero irreducible supercuspidal representation.
Since our calculations offer no new information about
depth-zero supercuspidal representations, we assume
throughout that $d > 0$.

\begin{rk}
\label{rem:yu-particulars}
In \cite{yu:supercuspidal}*{\S11},
for $0 \le i < d$, Yu constructs a canonical
representation $\tilde\phi_i$
\indexmem{tilde-phi_i}{\tilde\phi_i}
of $\stab_{G^i}(\ox) \ltimes J^{i + 1}$.
(However, by Proposition 3.26 of
\cite{hakim-murnaghan:distinguished}, one actually has
considerable freedom in constructing this representation.)
Then, by \cite{yu:supercuspidal}*{\S 4},
$\rho_{i + 1}'$ is the push-forward of
$\tilde\phi_i
	\otimes ((\rho_i' \otimes \phi_i) \ltimes 1)$
along the map
$K^i \ltimes J^{i + 1} \to K^iJ^{i + 1} = K^{i + 1}$.
\end{rk}

For $0 \le i \le d$, write again $\phi_i$ for the character
of $G^i_{x, s_i{+}:r_i{+}}$ induced by $\phi_i$.
Then there is an element
\indexmem{X_i*}{X_i^*}
$X_i^* \in \fg^{i\,*}_{x, -r_i}$ such that
$$
\phi_i \circ \mexp_{x, s_i{+}:r_i{+}}
	\bigr|_{\fg^i_{x, s_i{+}:r_i{+}}}
= \Lambda \circ X_i^*\bigr|_{\fg^i_{x, s_i{+}:r_i{+}}}.
$$
(Note that the right-hand side makes sense as a map on
$\fg^i_{x, s_i{+}:r_i{+}}$\,, because
$X_i^*(\fg^i_{x, r_i{+}})
\subseteq F_{0{+}} \subseteq \ker \Lambda$.)
By the definition of genericity, we have
$X_i^* \in \mf z(\fg^i)^*_{x, -r_i}
	+ \fg^{i\,*}_{x, (-r_i){+}}$.
Note that $X_i^*$ is determined only up to translation by
$\fg^{i\,*}_{x, -s_i}$\,.

For the results of \S\ref{ssec:gauss}, we require a hypothesis on the
elements $X_i^*$
that is a weaker version of Hypothesis C($\vbG$) of
\cite{hakim-murnaghan:distinguished}*{\S 2.6}.
In particular, by Lemma 2.50 of \loccit, it holds whenever
$\bG = \GL_n$.

\begin{hyp}
\label{hyp:X*-central}
$X_i^* \in \mf z(\fg^i)^* + \fg^{i + 1\,*}_{x, -s_i}$
for $0 \le i < d$.
\end{hyp}

This hypothesis is used only in the proofs of Corollaries \ref{cor:Q-const}
and \ref{cor:Q-and-B} to allow the invocations there of
Lemma \ref{lem:phi-trivial}.
These results, in turn, are necessary only for the computations
of
Propositions \ref{prop:gauss-sum-card} and \ref{prop:gauss-sum}.
If the hypothesis were dropped, then we could still prove a
version of Theorem \ref{thm:full-char}, but it would involve
the undetermined quantity $\wtilde{\mf G}(\phi, \gamma)$ (see
\S\ref{ssec:gauss}), hence be less
explicit.

By Proposition \xref{exp-prop:heres-a-gp} of \cite{adler-spice:good-expansions},
$\stab_{G^i}(\ox)G_{x, s_i{+}}
= \stab_{G^i}(\ox)(G^i, G)_{x, (r_i{+}, s_i{+})}$.
We denote by $\hat\phi_i$
\indexmem{hat-phi_i}{\hat\phi_i}
the character of
$\stab_{G^i}(\ox)G_{x, s_i{+}}$
that agrees with $\phi_i$ on $\stab_{G^i}(\ox)$ and is trivial on
$(G^i, G)_{x, (r_i{+}, s_i{+})}$.
(In particular, $\hat\phi_i$ is trivial on $G_{x, r_i{+}}$\,.)
If we write again $\hat\phi_i$ for the induced character of
$G_{x, s_i{+}:r_i{+}}$\,, then we have that
$$
\hat\phi_i \circ \mexp_{x, s_i{+}:r_i{+}}
= \Lambda \circ X_i^*
$$
as maps on $\fg_{x, s_i{+}:r_i{+}}$\,.

In order to study the various $\pi_i$ via induction in
stages, we put
\indexmem{K-sigma-i}{K_{\sigma_i}}%
$K_{\sigma_i} = \stab_{G^{i - 1}}(\ox)G^i_{x, 0{+}}$
and
\indexmem{sigma_i}{\sigma_i}%
$\sigma_i = \Ind_{K^i}^{K_{\sigma_i}} \rho_i'$
for $0 \le i \le d$.
Since we cannot yet compute the character of the full induced
representations $\pi_i$ in all cases,
we will sometimes consider instead the character of
\indexmem{tau_i}{\tau_i}%
$\tau_i
:= \Ind_{K^i}^{\stab_{G^i}(\ox)}
	\rho_i' \otimes \phi_i
= \Ind_{K_{\sigma_i}}^{\stab_{G^i}(\ox)} \sigma_i \otimes \phi_i$
for $0 \le i \le d$.

Since we will need to use induction on the length $d$ of $\Sigma$,
we abbreviate $\bG' = \bG^{d - 1}$ (and $G' = \bG'(F)$).
Further, we put
\indexmem{G'}{G'}
\indexmem K K\indexmem J J\indexmem{J+}{J_+}\indexmem{rho'}{\rho'}\indexmem{pi}\pi%
\indexmem{K-sigma}{K_\sigma}\indexmem{sigma}\sigma\indexmem{tau}\tau%
\indexmem r r%
\indexmem s s%
\indexmem{phi}\phi%
\indexmem{X*}{X^*}%
\indexmem{hat-phi}{\hat\phi}%
\indexmem{tilde-phi}{\tilde\phi}%
\begin{gather*}
r = r_{d - 1},\ s = s_{d - 1},\ \phi = \phi_{d - 1},\ 
X^* = X^*_{d - 1},\ \hat\phi = \hat\phi_{d - 1},\ 
\tilde\phi = \tilde\phi_{d - 1}, \\
K = K^d = \stab_{G^0}(\ox)\vG_{x, (0{+}, s_0, \dotsc, s_{d - 1})}, \\
K_\sigma = K_{\sigma_d} = \stab_{G'}(\ox)G_{x, 0+}, \\
J = J^d = (G', G)_{x, (r, s)}, \\
J_+ = J^d_+ = (G', G)_{x, (r, s{+})}, \\
\rho' = \rho'_d,\
\sigma = \sigma_d = \Ind_K^{K_\sigma} \rho',\
\tau = \tau_d = \Ind_{K_\sigma}^{\stab_G(\ox)} (\sigma \otimes \phi_d), \\
\intertext{and}
\pi = \pi_d = \Ind_{K_\sigma}^G (\sigma \otimes \phi_d).
\end{gather*}
That is, omitting a sub- or superscript $i$ will be the same
as taking $i = d$, except for $r$, $s$, $\phi$, $X^*$,
$\hat\phi$, and $\tilde\phi$, where it will be the same as
taking $i = d - 1$.
The ``basic ingredient'' in our character formula for $\pi$ will be
\indexmem{pi'_0}{\pi'_0}
$\pi'_0 := \Ind_{K^0}^{G^0} \rho'_0$,
a depth-zero supercuspidal representation.

Finally, put $\tilde\rho = \Ind_K^{\stab_{G'}(\ox)K} \rho'$.
\indexmem{tilde-rho}{\tilde\rho}

\begin{lm}
\label{lem:trho-char}
For $k \in K^{d - 1}$ and $j \in J$,
$$
\theta_{\tilde\rho}(k j)
= \theta_{\tilde\phi}(k \ltimes j)\theta_{\tau_{d - 1}}(k).
$$
\end{lm}

\begin{proof}
By the Frobenius formula,
$$
\theta_{\tilde\rho}(k j)
= \sum_{g \in K\backslash\stab_{G'}(\ox)K} \dot\theta_{\rho'}(\lsup g(k j)).
$$
We may, and do, actually regard the sum as running
over 
$(K \cap \stab_{G'}(\ox))\backslash\stab_{G'}(\ox)$.

Note that, by
Lemmata \xref{exp-lem:tame-descent} and
\xref{exp-lem:more-vGvr-facts} of \cite{adler-spice:good-expansions},
\begin{multline*}
K \cap \stab_{G'}(\ox)
= \stab_{G^0}(\ox)\vG_{x, \vec s} \cap \stab_{G'}(\ox) \\
= \stab_{G^0}(\ox)\vG_{x, \vec s(d - 1)}
= K^{d - 1}
\end{multline*}
(where $\vec s = (0{+}, s_0, \dotsc, s_{d - 2}, s_{d - 1})$
and
$\vec s(d - 1) = (0{+}, s_0, \dotsc, s_{d - 2}, \infty)$.)

Fix $g \in \stab_{G'}(\ox)$.
By Corollary \xref{exp-cor:stab-norm} of \cite{adler-spice:good-expansions},
$\stab_{G'}(\ox)$ normalizes $J$.
In particular, $\lsup g j \in J$.
Since $J \subseteq K$, we have that
$\lsup g(k j) \in K$ if and only if
$\lsup g k \in K$, i.e., if and only if
$\lsup g k \in K \cap \stab_{G'}(\ox) = K^{d - 1}$.
Therefore, either
\begin{enumerate}
\item
$\lsup g(k j) \not\in K$, so $\lsup g k \not\in K^{d - 1}$,
and
$$
\dot\theta_{\rho'}(\lsup g(k j)) = 0
= \theta_{\tilde\phi}(\lsup g k \ltimes \lsup g j)
\dot\theta_{\rho_{d - 1}' \otimes \phi}(\lsup g k);
$$
or
\item
$\lsup g(k j) \in K$, so $\lsup g k \in K^{d - 1}$, and, by
Remark \ref{rem:yu-particulars},
$$
\theta_{\rho'}(\lsup g(k j))
= \theta_{\tilde\phi}(\lsup g k \ltimes \lsup g j)
\theta_{\rho_{d - 1}' \otimes \phi}(\lsup g k).
$$
\end{enumerate}
Since $\tilde\phi$ is a representation of $\stab_{G'}(\ox) \ltimes J$,
its character has the same value at
$\lsup g k \ltimes \lsup g j = \lsup{g \ltimes 1}(k \ltimes j)$
as at $k \ltimes j$ for all $g \in \stab_{G'}(\ox)$.

Thus
\begin{multline*}
\theta_{\tilde\rho}(k j)
= \sum \theta_{\tilde\phi}(\lsup g k \ltimes \lsup g j)
	\dot\theta_{\rho_{d - 1}' \otimes \phi}(\lsup g k) \\
= \theta_{\tilde\phi}(k \ltimes j)
\sum \dot\theta_{\rho_{d - 1}' \otimes \phi}(\lsup g k) \\
= \theta_{\tilde\phi}(k \ltimes j)\theta_{\tau_{d - 1}}(k),
\end{multline*}
where both sums run over those cosets in
$K^{d - 1}\backslash\stab_{G'}(\ox)$
containing an element $g$
such that $\lsup g k \in K^{d - 1}$,
and
the last equality comes from another application of the
Frobenius formula and the fact that
$\tau_{d - 1}
= \Ind_{K^{d - 1}}^{\stab_{G'}(\ox)}
	\rho'_{d - 1} \otimes \phi$.
\end{proof}

\begin{lm}
\label{lem:trho-isotyp}
$\Res^{\stab_{G'}(\ox)K}_{J_+} \tilde\rho$ and
$\Res^{K_\sigma}_{G_{x, r}} \sigma$ are
$\hat\phi$-isotypic,
and $\Res^{\stab_G(\ox)}_{G_{x, r{+}}} \tau$
is $\phi_d$-isotypic.
\end{lm}

\begin{proof}
The statement about the restriction of $\tilde\rho$
follows from our Lemma \ref{lem:trho-char},
and Theorem 11.5 of \cite{yu:supercuspidal}
(reproduced as Theorem \ref{thm:yu:supercuspidal} below).
For the statement about the restriction of $\sigma$,
fix $\gamma\in G_{x,r}$\,,
and remember we have the Frobenius formula
$$
\theta_\sigma(\gamma)
= \sum_{g \in \stab_{G'}(\ox)K\backslash K_\sigma}
	\dot\theta_{\tilde\rho}(\lsup g\gamma).
$$
We may, and do, choose coset representatives $g$ in
the sum belonging to $G_{x, 0+}$\,, so that
$\lsup g\gamma \equiv \gamma \pmod{G_{x, r+}}$.
By the first statement, $G_{x, r+} \subseteq \ker \tilde\rho$.
Therefore,
$\Res^{K_\sigma}_{G_{x, r}} \sigma$ is
$\tilde\rho$-isotypic, so the second statement follows from another
application of the first statement.

Similarly, for $\gamma \in G_{x, r{+}}$, we have that
$\lsup g\gamma \in G_{x, r{+}}$\,,
hence that
$\theta_\sigma(\lsup g\gamma)
= \deg(\sigma)\hat\phi(\lsup g\gamma)
= \deg(\sigma)$
(by the second statement),
for $g \in \stab_G(\ox)$.  Thus
$$
\theta_\tau(\gamma)
= \phi_d(\gamma)\sum_{g \in K_\sigma\backslash\stab_G(\ox)}
	\theta_\sigma(\lsup g\gamma)
= \indx{\stab_G(\ox)}{K_\sigma}\deg(\sigma)\phi_d(\gamma),
$$
and the third statement follows.
\end{proof}

\section{Characters of Weil representations}
\label{sec:weil}

In this section, we make Lemma \ref{lem:trho-char} more
explicit by computing the character of the representation
$\tilde\phi$ appearing there.
We begin with two general results on extensions of finite fields.
These will be useful in this section, and in \S\ref{ssec:gauss}.

In this section, if $B$ is a non-degenerate bilinear or sesquilinear
form on a vector space $V$ over a field \F, then we will write
$\det B$ for the determinant of the matrix of $B$ with respect
to some fixed but arbitrary basis of $V$.
Since changing the basis does not change the square class of
the resulting determinant, it will not be necessary for our
purposes to specify the particular bases chosen.

\begin{lm}
\label{lem:gen-trace}
Let
\begin{itemize}
\item $\mb E/\F$ be a
degree-$n$ extension of odd-characteristic finite fields,
\item $\mytau$ an element of $\Gal(\mb E/\F)$ with $\mytau^2 = 1$,
and
\item $\Delta$ the determinant of the $\mytau$-Hermitian form
on \mb E given by
$$
(t_1, t_2) \mapsto \tr_{\mb E/\F} (t_1\mytau(t_2)).
$$
\end{itemize}
Then
$$
\sgn_\F(\Delta)
= \bigl(-\sgn_\F(\sgn_{\Gal(\mb E/\F)}(\mytau))\bigr)^{n + 1},
$$
where $\sgn_{\Gal(\mb E/\F)}$ is the
linear character of $\Gal(\mb E/\F)$
whose kernel is the group of squares.
\end{lm}

\begin{proof}
Let $\mysigma$ be a generator of $\Gal(\mb E/\F)$, and
$\set{e_i}{0 \le i < n}$ a basis for \mb E over \F.
The matrix, with
respect to the chosen basis, of the indicated pairing is
$M\dotm\trans{\mytau(M)}$, where $M$ is the $n \times n$ matrix with
$(i, j)$th entry $\mysigma^j e_i$ for $0 \le i, j < n$.
Since $\mysigma$ induces a permutation of the columns of $M$
that has parity opposite to that of $n$, we have that
$\det M \in \F\cross$ (equivalently, $(\det M)^2 \in (\F\cross)^2$)
if and only if $n$ is odd;
i.e., $\sgn_\F\bigl((\det M)^2\bigr) = (-1)^{n + 1}$.
Similarly, $\mytau$ induces a permutation of the columns of
$M$ that is even or odd according as
$\sgn_{\Gal(\mb E/\F)}(\mytau)^{n + 1}$ is $1$ or $-1$,
so
$\Delta = \det M\dotm\mytau(\det M)
= \sgn_{\Gal(\mb E/\F)}(\mytau)^{n + 1}(\det M)^2$.
The result follows.
\end{proof}

There is an analogue of Lemma \ref{lem:gen-trace}
for arbitrary finite cyclic Galois extensions of fields,
with essentially the same proof; but its statement is more
complicated, so we omit it.

\begin{lm}
\label{lem:witt-index}
Let
\begin{itemize}
\item \F be an odd-characteristic finite field,
\item $\mytau$ an automorphism of \F with $\mytau^2 = 1$,
\item $\varepsilon \in \sset{\pm 1}$,
\item $V$ a finite-dimensional \F-vector space,
and
\item $B$ a non-degenerate $(\varepsilon, \mytau)$-Hermitian
form on $V$.
\end{itemize}
Then the Witt index of $B$
(see \cite{jacobson:ba1}*{\S 6.5})
is $\lfloor \dim V/2\rfloor$
unless
\begin{itemize}
\item $\dim V$ is even,
\item $\mytau = 1$,
\item $\varepsilon = 1$,
and
\item $\det B$ is not in the square class of $(-1)^{\dim V/2}$,
\end{itemize}
in which case it is $(\dim V/2) - 1$.
\end{lm}

The Hermitian condition on $B$ means precisely that
it is linear in the first variable, and
$\varepsilon B(v, w) = \mytau(B(w, v))$ for $v, w \in V$.

\begin{proof}
Let $Q : v \mapsto B(v, v)$ be the quadratic form
associated to $B$,
and $\F'$ the fixed field of $\mytau$.
Denote by $N$ the map $t \mapsto t\dotm\mytau(t)$ from
\F to $\F'$.
Note that the image of $N$ is contained in the set of squares in \F.

Let $V = V_+ \oplus V_0 \oplus V_-$ be a Witt
decomposition of $V$, so that $V_+$ and $V_-$ are maximal
totally $Q$-isotropic subspaces of $V$ that are in duality by $B$,
and $V_0 = (V_+ \oplus V_-)^\perp$ is $Q$-anisotropic.
Then the matrix of $B$ on $V_+ \oplus V_-$,
with respect to a suitable basis, is of the form
$\left(\begin{smallmatrix}
0                   & \mytau(M) \\
\varepsilon\trans M & 0
\end{smallmatrix}\right)$
for some matrix $M$; so
$\det B\bigr|_{V_+ \oplus V_-}
= (-\varepsilon)^{\dim V_+}\dotm N(\det M)$
belongs to the square class of $(-\varepsilon)^{\dim V_+}$.
Let \mc B be a $B$-orthogonal basis for $V_0$,
so that
$Q\bigl(\sum_{v \in \mc B} a_v v)
= \sum_{v \in \mc B}  N(a_v)Q(v)$
for any constants $a_v \in \F$.

Suppose that $\mytau = 1$ and $\varepsilon = 1$.
If there are distinct $v_1, v_2 \in \mc B$ such that
$Q(v_2)$ belongs to the square class of $-Q(v_1)$
---
say
$Q(v_2) = -\lambda^2 Q(v_1)$,
with $\lambda \in \F\cross$
---
then $Q(\lambda v_1 + v_2) = 0$, which is a contradiction.
Thus, if $-1 \in (\F\cross)^2$, then no two elements
$Q(v)$, for $v \in \mc B$, lie in the same square class,
implying that $\card{\mc B} \le 2$;
and if $-1 \not\in (\F\cross)^2$,
then all of the elements $Q(v)$, for $v \in \mc B$, lie in the
same square class.
In this latter case, if $\card{\mc B} > 2$, then let
$v_1$, $v_2$, and $v_3$ be distinct elements of \mc B,
and write $Q(v_3) = c_i^2 Q(v_i)$,
with $c_i \in \F\cross$,
for $i = 1, 2$.
Then $Q(c_1\lambda v_1 + c_2\mu v_2 + v_3) = 0$,
where $\lambda, \mu \in \F$ are such that
$\lambda^2 + \mu^2 = -1$,
which is a contradiction.
Thus
\begin{itemize}
\item $\card{\mc B} \le 1$,
or
\item $\mc B = \sset{v_1, v_2}$, and $Q(v_2)$ does not belong
to the square class of $-Q(v_1)$.
\end{itemize}
In the former case, if $\dim V$ is even, then
$\card{\mc B} = 0$, so $\dim V_+ = \dim V/2$,
$V_+ \oplus V_- = V$,
and $\det B$ belongs to the square class of
$(-1)^{\dim V/2}$.
In the latter case, $-\det B\bigr|_{V_0} = -Q(v_1)Q(v_2)$
is a non-square in \F, so
$\det B = (\det B\bigr|_{V_+ \oplus V_-})(\det B\bigr|_{V_0})$
does not belong to the square class of
$(-1)^{\dim V_+}(-1) = (-1)^{\dim V/2}$.

If $\mytau = 1$ and $\varepsilon = -1$, then every vector is
isotropic for $Q$, so $V_0 = \sset 0$.

If $\mytau \ne 1$, then,
since $N$ surjects onto $\F'$, we have that
$\set{Q(v)}{v \in \mc B}$ is linearly independent over $\F'$.
On the other hand, since $B$ is $(\varepsilon, \mytau)$-Hermitian,
we have that $\set{Q(v)}{v \in \mc B}$ is contained in the
$\varepsilon$-eigenspace for $\mytau$ acting on \F, which is
$1$-dimensional over $\F'$.
Thus $\card{\mc B} \le 1$.
\end{proof}

\begin{thm}
\label{thm:gerardin:weil}
(Theorem 4.9.1 of \cite{gerardin:weil}.)
Let $V$ be an \ff-vector space equipped with
a non-degenerate symplectic form
$\langle\cdot, \cdot\rangle$,
and $\zeta$ an additive character of \ff.
Denote by $W^V_\zeta$ the Weil representation of $\SP(V)$
associated to $\zeta$
(defined in \cite{gerardin:weil}*{\S 2.4}%
).
Fix $g \in \SP(V)$.
\begin{inc_enumerate}
\item\label{thm:gerardin:weil(a)}
If $g$ has no non-zero fixed points, then let
$V_+$ be a maximal $g$-invariant totally isotropic subspace of $V$
and put $V_0=V_+^\perp/V_+$.
Then
$$
\theta_{W^V_\zeta}(g)
= \sgn_\ff(
	  (-1)^{\dim V_0/2}
          \det(g\bigr|_{V_+})
          \det(g - 1\bigr|_{V_0})
          ).
$$
\item\label{thm:gerardin:weil(b)}
If $g$ fixes pointwise a line $V_+ \subseteq V$, but $V_+$ does not
have a $g$-invariant complement in $V_+^\perp$, then
$$
\theta_{W^V_\zeta}(g)
= \theta_{W^{V_0}_\zeta}(g),
$$
where $V_0 = V_+^\perp/V_+$.
\item\label{thm:gerardin:weil(c)}
If $g$ fixes pointwise a line $V_+ \subseteq V$, and $V_0$ is a
$g$-invariant subspace of $V_+^\perp$ such that
$V_+^\perp = V_+ \oplus V_0$, then
$$
\theta_{W^V_\zeta}(g)
= \theta_{W^{V_0}_\zeta}(g)
  \sum_{v \in V_0^\perp/V_+}
      \zeta(\langle g v, v\rangle).
$$
\end{inc_enumerate}
\end{thm}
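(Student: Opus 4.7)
The plan is to use an explicit model of $W^V_\zeta$ and compute traces as sums over a Lagrangian subspace. Specifically: choose a complete polarization $V = L \oplus L'$, realize $W^V_\zeta$ on $\C[L]$ via the standard Heisenberg--Schr\"odinger construction, and write the trace of $W^V_\zeta(g)$ as $\sum_{\ell \in L} K_g(\ell,\ell)$ for the integral kernel $K_g$. The three cases of the theorem correspond to three distinct geometric configurations of $g$ with respect to the polarization, each of which suggests a different evaluation strategy for this sum.

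For case (a), when $g$ has no nonzero fixed vector, I would arrange the polarization so that $V_+ \subseteq L$ and use the $g$-stability of $V_+$ to factor the kernel computation: the $V_+$-part contributes $\det(g\bigr|_{V_+})$ (from the action on functions supported along $V_+$), the piece living on $V_0 = V_+^\perp/V_+$ yields a Gauss sum, and contributions transverse to $V_+^\perp$ vanish because $(g - 1)\bigr|_{V_0}$ is invertible by the no-fixed-point hypothesis. The Gauss sum is evaluated via a Cayley-transform identity that converts $g - 1$ acting on $V_0$ into a symmetric bilinear form, whose signed determinant supplies the remaining $\sgn_\ff$ factor.

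For cases (b) and (c), the fixed line $V_+$ permits reduction to the smaller symplectic space $V_0 = V_+^\perp/V_+$. I would decompose $\C[L]$ according to its behaviour under translation by $V_+$ (an action commuting with $g$, since $g$ fixes $V_+$) and use the resulting filtration to isolate the contributions to the trace. Only functions with a specific transformation law will contribute non-vanishingly, and the contribution simplifies to $\theta_{W^{V_0}_\zeta}(g)$ multiplied by an additional factor --- this factor is $1$ in case (b), where the absence of a $g$-invariant complement to $V_+$ in $V_+^\perp$ forces a unipotent obstruction to vanish for trace purposes, and is the explicit Gauss sum $\sum_{v \in V_0^\perp/V_+} \zeta(\langle gv, v\rangle)$ in case (c), where the splitting allows one to carry out the sum directly.

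The main obstacle will be the sign analysis in case (a): one must verify that the answer is genuinely independent of the (non-canonical) choice of maximal $g$-invariant totally isotropic $V_+$, and that the accumulated sign agrees exactly with $\sgn_\ff\bigl((-1)^{\dim V_0/2}\det(g\bigr|_{V_+})\det(g - 1\bigr|_{V_0})\bigr)$. This will require careful bookkeeping with finite-field quadratic Gauss sum formulas, together with a Witt-index computation in the spirit of Lemma \ref{lem:witt-index} applied to the symmetric form $\langle (g + 1)(g - 1)\inv\dotm,\dotm\rangle$ on $V_0$. In practice, since this is Theorem 4.9.1 of \cite{gerardin:weil}, the statement is simply cited, and we borrow G\'erardin's proof rather than reconstructing it.
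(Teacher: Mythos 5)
The paper gives no proof of this result at all --- it is stated purely as a citation of Theorem 4.9.1 of \cite{gerardin:weil} --- and your proposal ultimately does the same, deferring to G\'erardin after a plausible sketch of the Schr\"odinger-model computation. So your approach agrees with the paper's; the preliminary outline you give is reasonable but is not required here.
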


In \cite{yu:supercuspidal}*{\S 11},
there are described
a symplectic structure on $J/J_+$\,,
an action of $\SP(J/J_+)$ on $J/\ker \hat\phi$,
and an extension of the Weil
representation of $\SP(J/J_+)$ associated to $\hat\phi$
to a representation of
$\SP(J/J_+) \ltimes J/\ker \hat\phi$.
We have that $\tilde\phi$ (see Remark \ref{rem:yu-particulars})
is the pull-back of this extension to
$\stab_{G'}(\ox) \ltimes J$
via the map that restricts to the usual projection
$J \to J/\ker \hat\phi$,
and that takes $k \in \stab_{G'}(\ox)$ to the symplectic
transformation of $J/J_+$ induced by the conjugation action
of $k$ on $J$.

\begin{thm}
\label{thm:yu:supercuspidal}
(Theorem 11.5 of \cite{yu:supercuspidal}.)
$\tilde\phi\bigr|_{\sset 1 \ltimes J_+}$
is $\hat\phi$-isotypic, and
$\tilde\phi\bigr|_{G'_{x, 0+} \ltimes \sset 1}$
is $1$-isotypic.
\end{thm}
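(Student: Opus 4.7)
The plan is to unpack the construction of $\tilde\phi$ described immediately before the statement. Via the commutator pairing $(j_1, j_2) \mapsto \hat\phi([j_1, j_2])$, the character $\hat\phi$ induces the symplectic form on $J/J_+$ and exhibits $J/\ker\hat\phi$ as a finite Heisenberg group whose center is the image of $J_+$. The Stone--von Neumann theorem then identifies the underlying Heisenberg representation appearing in the construction as the (essentially unique) irreducible representation with central character $\hat\phi|_{J_+}$; in particular this character acts on all of $J_+$ by construction. Pulling back through the projection $J \to J/\ker\hat\phi$, we conclude that $\tilde\phi$ restricted to $\sset 1 \ltimes J_+$ is $\hat\phi$-isotypic, proving the first statement.

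For the second statement, recall that the homomorphism $\stab_{G'}(\ox) \to \SP(J/J_+)$ used in the construction of $\tilde\phi$ sends $k$ to the automorphism of $J/J_+$ induced by $k$-conjugation on $J$. It therefore suffices to show that $[G'_{x, 0{+}}, J] \subseteq J_+$, so that $G'_{x, 0{+}}$ maps to the identity of $\SP(J/J_+)$ and the Weil representation sends it to the identity operator. Unpacking $J = (G', G)_{x, (r, s)}$ and $J_+ = (G', G)_{x, (r, s{+})}$ via the standard product decomposition of such groups, this reduces to the two Moy--Prasad commutator estimates $[G'_{x, 0{+}}, G'_{x, r}] \subseteq G'_{x, r{+}}$ and $[G'_{x, 0{+}}, G_{x, s}] \subseteq G_{x, s{+}}$, both of which appear in \cite{adler-spice:good-expansions}*{\S\xref{exp-sec:good-facts}}.

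The main obstacle is bookkeeping rather than conceptual: one must verify that the particular extension of the Weil representation to $\SP(J/J_+) \ltimes J/\ker\hat\phi$ chosen in the construction of $\tilde\phi$ (there is some freedom in this choice, as noted in Remark \ref{rem:yu-particulars}) sends the identity of $\SP(J/J_+)$ to the identity operator, and is compatible with the Heisenberg-center identification that drives the first statement. These normalization issues are handled in detail in \S 11 of \cite{yu:supercuspidal}, whose argument may simply be invoked.
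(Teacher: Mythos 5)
The paper does not prove this statement at all: it is quoted verbatim as Theorem 11.5 of \cite{yu:supercuspidal}, and every delicate point (the special isomorphism with the Heisenberg group, the compatibility of the $\stab_{G'}(\ox)$-action with the chosen extension of the Weil representation) is delegated to \S 11 of that paper — exactly where your final paragraph also ends up. So your proposal is not in conflict with the paper; it is a reconstruction of Yu's argument. The first half is correct as it stands: under the map defining $\tilde\phi$, an element $1 \ltimes j_+$ is sent to a central element of the Heisenberg group $J/\ker\hat\phi$, and the Heisenberg representation has central character induced by $\hat\phi$.

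For the second half there is one step that fails as written. You reduce $[G'_{x, 0{+}}, J] \subseteq J_+$ to the estimates $[G'_{x, 0{+}}, G'_{x, r}] \subseteq G'_{x, r{+}}$ and $[G'_{x, 0{+}}, G_{x, s}] \subseteq G_{x, s{+}}$, but the second of these is not enough: $G_{x, s{+}}$ is \emph{not} contained in $J_+ = (G', G)_{x, (r, s{+})}$, because $G_{x, s{+}}$ contains $G'_{x, s{+}}$ while the $G'$-coordinate of $J_+$ is $r > s{+}$. The repair is to observe in addition that the commutator lies in $J$ — since $\stab_{G'}(\ox)$, hence $G'_{x, 0{+}}$, normalizes $J$ by Corollary \xref{exp-cor:stab-norm} of \cite{adler-spice:good-expansions} — and then to use the intersection formula
$J \cap G_{x, s{+}} = (G', G)_{x, (r, s)} \cap (G', G)_{x, (s{+}, s{+})} = (G', G)_{x, (r, s{+})} = J_+$
from Lemma \xref{exp-lem:more-vGvr-facts} of \loccit With that, $G'_{x, 0{+}}$ does act trivially on $J/J_{+}$, hence maps to the identity of $\SP(J/J_+)$, and the $1$-isotypy follows from the description of $\tilde\phi$ given before the theorem (granting, as you do, Yu's verification that the pull-back is a well-defined homomorphism).
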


\begin{pn}
\label{prop:howe:weil}
The character of $\tilde\phi$ vanishes except on conjugacy
classes intersecting $\stab_{G'}(\ox) \ltimes J_+$\,.
\end{pn}

\begin{proof}
This result is proved for an ``abstract'' Weil
representation in \cite{howe:weil}.
The details of how to apply the result in our situation are
in \cite{yu:supercuspidal}*{\S 11}.
\end{proof}

Denote by $\Gamma$ the Galois group $\Gal(F\sep/F)$.
For $\mysigma \in \Gamma$, we will abuse notation
and also denote by $\mysigma$ the corresponding
element of $\Gal(\ol\ff/\ff)$, where \ol\ff is the residue
class field of $F\unram$.

Fix a bounded-modulo-$Z(G)$ element $\gamma \in G'$
(i.e., an element whose orbits in $\rBB(\bG', F)$ are
bounded in the sense of metric spaces).
By Proposition \xref{J-prop:tJdZ=cpct-mod-Z} of
\cite{spice:jordan} and
Remark \xref{exp-rem:0+-approx-is-tJd} of
\cite{adler-spice:good-expansions},
$\gamma$ has a normal $(0{+})$-approximation $(\gamma_0)$
(in $G$ and $G'$).
We assume that $x \in \BB_{0{+}}(\gamma)$.
Let \bT be a maximal tame $F$-torus (hence, a tame maximal
$F$-torus) in $\bG'$,
containing $\gamma_0$, such that $x \in \BB(\bT, F)$;
and let $E/F$ be a tame, Galois, strictly Henselian extension
over which \bT splits.

\begin{notn}
\label{notn:root-constants}
For $\alpha \in \wtilde\Phi(\bG, \bT)$, put
\begin{itemize}
\item $\Gamma_\alpha = \stab_\Gamma \alpha$;
\item $F_\alpha = (F\sep)^{\Gamma_\alpha}$;
\item
$e_\alpha = e(F_\alpha/F)$, the ramification index of
$F_\alpha/F$,
and
$f_\alpha = f(F_\alpha/F)$, the residual degree of
$F_\alpha/F$;
and
\item $\ff_\alpha = \ff_{F_\alpha}$.
\end{itemize}
If $-\alpha \in \Gamma\dota\alpha$, then
\begin{itemize}
\item $\mysigma_\alpha$ is any element of $\Gamma$
such that $\mysigma_\alpha\alpha = -\alpha$;
\item $F_{\pm\alpha}$ is the fixed field of
$\langle\Gamma_\alpha, \mysigma_\alpha\rangle$;
\item $f_{\pm\alpha} = f(F_{\pm\alpha}/F)$;
and
\item $\ff_{\pm\alpha} = \ff_{F_{\pm\alpha}}$.
\end{itemize}
\end{notn}

\begin{notn}
\label{notn:weil}
In the remainder of \S\ref{sec:weil} only,
for $\alpha \in \Phi(\bG, \bT)$,
denote by $\mo V_\alpha$ the image of
$$
\Lie(\bG)_\alpha(E) \cap \Lie(\bG', \bG)(E)_{x, (r, s)}
$$
in
$$
\Lie(\bG', \bG)(E)_{x, (r, s):(r, s{+})},
$$
and by $V_\alpha$ the set of $\Gamma_\alpha$-fixed points in
$\mo V_\alpha$.
More concretely, we have that
$\mo V_\alpha = \sset 0$ if
$\alpha \in \Phi(\bG', \bT)$;
and, if $\alpha \not\in \Phi(\bG', \bT)$, then
$\mo V_\alpha \cong \lsub E\mf u_{(\alpha + s):(\alpha + s){+}}$\,,
where $\alpha + s$ is the affine root with gradient $\alpha$
whose value at $x$ is $s$.
Since \bT is $E$-split, we have that $\Gal(F\sep/E)$ acts
trivially on $\Phi(\bG, \bT)$, so that
$F_\alpha \subseteq E$ --- hence, in particular, $e_\alpha$
is not divisible by $p$ --- for all
$\alpha \in \Phi(\bG, \bT)$.
Put
$\Xi(\phi) = \set{\alpha \in \Phi(\bG, \bT)}
	{V_\alpha \ne \sset 0}$, and
\begin{align*}
\Xi_1(\phi, \gamma) & = \set{\alpha \in \Xi(\phi)}{\alpha(\gamma_0) = 1}
	= \Xi(\phi) \cap \Phi(\CC\bG{0{+}}(\gamma), \bT), \\
\Xi^1(\phi, \gamma) & = \set{\alpha \in \Xi(\phi)}{\alpha(\gamma_0) \ne 1}, \\
\Xi_{\textup{symm}, {-1}}(\phi, \gamma) & = \sett{\alpha \in \Xi(\phi)}
	{${-\alpha} \in \Gamma\dota\alpha$
		and $\alpha(\gamma_0) = -1$}
\subseteq \Xi^1(\phi, \gamma), \\
\Xi_{\textup{symm}}^{-1}(\phi, \gamma) & = \sett{\alpha \in \Xi(\phi)}
	{${-\alpha} \in \Gamma\dota\alpha$
		and $\alpha(\gamma_0) \ne \pm 1$}
\subseteq \Xi^1(\phi, \gamma), \\
\intertext{and}
\Xi^{\textup{symm}}(\phi, \gamma) & = \sett{\alpha \in \Xi(\phi)}
	{${-\alpha} \not\in \Gamma\dota\alpha$
		and $\alpha(\gamma_0) \ne 1$}
\subseteq \Xi^1(\phi, \gamma).
\end{align*}
We will omit $\phi$ and $\gamma$ from the notation when convenient.
Note that all of these sets are $\Gamma \times \sset{\pm 1}$-stable.
(Recall that $(-\alpha)(\gamma)$ is
$\alpha(\gamma)\inv$, \emph{not} $-(\alpha(\gamma))$.)
We denote by
$\dot\Xi_1(\phi, \gamma)$,
$\dot\Xi_{\textup{symm}, {-1}}(\phi, \gamma)$,
and $\dot\Xi_{\textup{symm}}^{-1}(\phi, \gamma)$
sets of representatives for the $\Gamma$-orbits in the
appropriate sets;
and by $\ddot\Xi^{\textup{symm}}(\phi, \gamma)$
a set of representatives for the $\Gamma \times \sset{\pm 1}$-orbits
in $\Xi^{\textup{symm}}$.
Finally, put
$\dot\Xi_{\textup{symm}}(\phi, \gamma)
= \dot\Xi_{\textup{symm}, {-1}} \cup \dot\Xi_{\textup{symm}}^{-1}$,
$\dot\Xi^1(\phi, \gamma)
= \dot\Xi_{\textup{symm}} \cup \pm\ddot\Xi^{\textup{symm}}$,
$\dot\Xi(\phi, \gamma) = \dot\Xi_1 \cup \dot\Xi^1$,
and
$f(\dot\Xi_{\textup{symm}}(\phi, \gamma))
= \sum_{\alpha \in \dot\Xi_{\textup{symm}}}
	f_\alpha$.
\end{notn}

\begin{pn}
\label{prop:theta-tilde-phi}
With notation and assumptions as above, we have
$$
\theta_{\tilde\phi}(\gamma \ltimes 1)
= \smcard{(\CC{G'}{0{+}}(\gamma), \CC G{0{+}}(\gamma))
	_{x, (r, s):(r, s{+})}}^{1/2}
\varepsilon(\phi, \gamma),
$$
where
\indexmem{eps_phi-gamma}{\varepsilon(\phi, \gamma)}
\begin{multline*}
\varepsilon(\phi, \gamma)
= \sgn_\ff(-1)
	^{f(\dot\Xi_{\textup{symm}}(\phi, \gamma))/2}
\dotm
\prod_{\alpha \in \ddot\Xi^{\textup{symm}}(\phi, \gamma)}
	\sgn_{\ff_\alpha}(\alpha(\gamma_0)) \\
\times \prod_{\alpha \in \dot\Xi_{\textup{symm}}^{-1}(\phi, \gamma)}
	\sgn_{\ff_\alpha}(1 - \alpha(\gamma_0)).
\end{multline*}
\end{pn}

We are using Theorem \ref{thm:gerardin:weil}
(Theorem 4.9.1 of \cite{gerardin:weil})
in our calculations.  Since we are only evaluating the
character of the Weil representation at semisimple elements,
it may be convenient for some purposes to use Corollary 4.8.1
of \cite{gerardin:weil} to write the sign $\varepsilon(\phi, \gamma)$
in a different form.  We do not do this here.

\begin{proof}
Write
$\mc V = (G', G)_{x, (r, s):(r, s{+})}$,
$\mc V^{(0{+})} = (\CC{G'}{0{+}}(\gamma), \CC G{0{+}}(\gamma))
	_{x, (r, s):(r, s{+})}$,
$\pmb{\mc V} = (\bG', \bG)(E)_{x, (r, s):(r, s{+})}$,
and
$\mo V = \Lie(\bG', \bG)(E)_{x, (r, s):(r, s{+})}$.
Recall that $\mc V = J/J_+$ carries a symplectic pairing;
we will describe it explicitly below.
By Corollaries 2.3 and 2.4 of \cite{yu:supercuspidal}, we
have an $\Int(\gamma)$-equivariant isomorphism
$\pmb{\mc V} \cong \mo V$
(essentially, the restriction of $\mexp^E_{x, s:r}$)
that restricts to an isomorphism
$j_1 : \mc V \cong \mo V^\Gamma$
(also $\Int(\gamma)$-equivariant, of course).
We have that
$\hat\phi([g_1, g_2]) = \Lambda(X^*[j_1(g_1), j_1(g_2)])$
for $g_1, g_2 \in \mc V$.
The $\Int(\gamma)$-equivariant map
$$
(X_\alpha)_{\alpha \in \dot\Xi}
\mapsto \sum_{\alpha \in \dot\Xi}
	\sum_{\vphantom{\dot\Xi}  \mysigma \in \Gamma/\Gamma_\alpha}
		\mysigma(X_\alpha)
$$
furnishes an isomorphism
$j_2 : \bigoplus_{\alpha \in \dot\Xi} V_\alpha \cong \mo V^\Gamma$
such that
$j_1(\mc V^{(0{+})})$
and
$j_2\bigl(\bigoplus_{\alpha \in \dot\Xi^1} V_\alpha\bigr)$
are complementary.
Thus, there is an $\Int(\gamma)$-equivariant isomorphism
\begin{equation}
\label{eq:weil-V-decomp}
\mc V
\cong \mc V^{(0{+})}
\oplus \bigoplus_{\alpha \in \dot\Xi_{\textup{symm}}}
	V_\alpha
\oplus \bigoplus_{\alpha \in \ddot\Xi^{\textup{symm}}}
	V_{\pm\alpha} =: V,
\end{equation}
where
$V_{\pm\alpha} = V_\alpha \oplus V_{-\alpha}$
for $\alpha \in \ddot\Xi^{\textup{symm}}$.

The (additive) pairing on \mc V is
$(g_1, g_2) \mapsto \tr_{\ff/\F_p} X^*[j_1(g_1), j_1(g_2)]$,
where $\F_p$ is the finite field with $p$ elements.
(In \cite{yu:supercuspidal}*{\S 11}, Yu
works instead with the multiplicative pairing
$(g_1, g_2) \mapsto \hat\phi([g_1, g_2])
$;
but this is identified with our pairing after an appropriate
choice of embedding $\F_p \hookrightarrow \C\cross$.)
We write $B$ for the pairing on $V$ induced by
\eqref{eq:weil-V-decomp}.
Then
$\mc V^{(0{+})}$ is $B$-orthogonal to $V_\alpha$ for
all $\alpha \in \dot\Xi$,
$V_\alpha$ is $B$-orthogonal to $V_\beta$ unless
$-\beta \in \Gamma\dota\alpha$,
and
\begin{equation}
\label{eq:whats-weil-B}
B(X, X')
= e_\alpha\tr_{\ff_\alpha/\F_p} X^*[X, \mysigma_\alpha X']
\quad\text{for $X, X' \in V_\alpha$ with
	$\alpha \in \dot\Xi_{\textup{symm}}$}.
\end{equation}
In particular, the sums on the right-hand side of
\eqref{eq:weil-V-decomp} are $B$-orthogonal.

Write again $\gamma$ for the symplectomorphism of $V$ induced
by $\gamma$.
By Corollary 2.5 of \cite{gerardin:weil},
\begin{equation}
\tag{$\dag$}
\theta_{\tilde\phi}(\gamma \ltimes 1)
= \theta_{W^{\mc V^{(0{+})}}_\zeta}(\gamma)
\dotm
\prod_{\alpha \in \dot\Xi_{\textup{symm}}} \theta_{W^{V_\alpha}_\zeta}(\gamma)
\dotm
\prod_{\alpha \in \ddot\Xi^{\textup{symm}}} \theta_{W^{V_{\pm\alpha}}_\zeta}(\gamma),
\end{equation}
where $\zeta$ is the character induced by the restriction of
$\hat\phi$ to $J_+$ (as in \cite{yu:supercuspidal}*{\S 11}).

By Theorem \ref{thm:gerardin:weil(c)}, since $\gamma$ acts
trivially on $\mc V^{(0{+})}$, we have that
\begin{equation}
\label{eq:weil-trivial}
\theta_{W^{\mc V^{(0{+})}}_\zeta}(\gamma)
= p^{\dim_{\F_p} \mc V^{(0{+})}/2}
= \smcard{(\CC{G'}{0{+}}(\gamma), \CC G{0{+}}(\gamma))
	_{x, (r, s):(r, s{+})}}^{1/2}.
\end{equation}


For $\alpha \in \dot\Xi^1$, we have that $\gamma$ acts on
$V_\alpha$ without non-zero fixed points.  Thus, our remaining
calculations may, and will, use Theorem \ref{thm:gerardin:weil(a)}.

For $\alpha \in \ddot\Xi^{\textup{symm}}$, we have that
$V_\alpha$ is a maximal $\gamma$-invariant totally isotropic
(indeed, a maximal totally isotropic)
subspace of $V_{\pm\alpha}$;
and
$V_\alpha^\perp/V_\alpha = \sset 0$
(the perpendicular taken in $V_{\pm\alpha}$).
Since $\gamma$ acts on $V_\alpha \cong \ff_\alpha$ by multiplication
by $\alpha(\gamma_0)$, we have that
$\det_{\F_p}(\gamma\bigr|_{V_\alpha})
= N_{\ff_\alpha/\F_p}(\alpha(\gamma_0))$;
so, by Theorem \ref{thm:gerardin:weil(a)},
\begin{equation}
\label{eq:weil-non-symm}
\theta_{W^{V_{\pm\alpha}}_\zeta}(\gamma)
= \sgn_{\F_p}(N_{\ff_\alpha/\F_p}(\alpha(\gamma_0)))
= \sgn_{\ff_\alpha}(\alpha(\gamma_0)).
\end{equation}

For $\alpha \in \dot\Xi_{\textup{symm}, {-1}}$,
the restriction of $B$ to $V_\alpha$ is nondegenerate,
so $\dim_{\F_p} V_\alpha$ is even.
Any maximal totally isotropic subspace of $V_\alpha$ is
$\gamma$-invariant, so reasoning as above gives
\begin{equation}
\label{eq:weil-ram-symm}
\theta_{W_\zeta^{V_\alpha}}(\gamma)
= \sgn_{\F_p}(-1)^{\dim_{\F_p} V_\alpha/2}
= \sgn_\ff(-1)^{\dim_\ff V_\alpha/2}
= \sgn_\ff(-1)^{f_\alpha/2}.
\end{equation}
(We have used the fact that, if $\indx\ff{\F_p}$ is even,
then $\sgn_\ff(-1) = 1$; and, if $\indx\ff{\F_p}$ is odd,
then $\sgn_\ff(-1) = \sgn_{\F_p}(-1)$.)

Finally, fix $\alpha \in \dot\Xi_{\textup{symm}}^{-1}$.
Note that $\Gal(F\sep/F_\alpha\unram)$ acts on
$\mo V_\alpha$ by a linear character, and
$\Gal(F_\alpha\unram/F_\alpha)$ acts on $\mo V_\alpha$ via
the natural projection to $\Gal(\ol\ff/\ff_\alpha)$.
Thus, $V_\alpha = \mo V_\alpha^{\Gal(\ol\ff/\ff_\alpha)}$
is a $1$-dimensional $\ff_\alpha$-vector space.
Choose $X_\alpha \in V_\alpha \setminus \sset 0$,
hence an $\ff_\alpha$-linear isomorphism
$\iota_\alpha : V_\alpha \to \ff_\alpha$, and put
$c_\alpha
= X^*[X_\alpha, \mysigma_\alpha X_\alpha] \in \ff_\alpha$,
so that $\mysigma_\alpha c_\alpha = -c_\alpha$.
By \eqref{eq:whats-weil-B},
$\iota_\alpha$ identifies the restriction of $B$
to $V_\alpha$ with the pairing
$(t_1, t_2)
\mapsto
e_\alpha\tr_{\ff_\alpha/\F_p}(
	c_\alpha\dotm t_1\mysigma_\alpha(t_2)
)$
on $\ff_\alpha$.

Suppose that $W$ is a $\gamma$-invariant totally $B$-isotropic
$\F_p$-subspace of $V_\alpha$.
By abuse of notation, we identify $W$ with its image
$\iota_\alpha(W) \subseteq \ff_\alpha$.  Since $\gamma$ acts on $W$
by multiplication by $\alpha(\gamma_0)$, we have that $W$ is
an $\F_p[\alpha(\gamma_0)]$-subspace.
Further, for $t_1, t_2 \in W$, we have that
\begin{multline*}
e_\alpha\tr_{\F_p[\alpha(\gamma_0)]/\F_p}\bigl(
	\lambda
	\tr_{\ff_\alpha/\F_p[\alpha(\gamma_0)]}(
		c_\alpha\dotm t_1\mysigma_\alpha(t_2)
	)
\bigr) \\
= e_\alpha\tr_{\ff_\alpha/\F_p}(
	c_\alpha\dotm(\lambda t_1)\mysigma_\alpha(t_2)
) = 0
\end{multline*}
for all $\lambda \in \ff[\alpha(\gamma_0)]$; so
$B'(t_1, t_2) := \tr_{\ff_\alpha/\F_p[\alpha(\gamma_0)]}(
	c_\alpha\dotm t_1\mysigma_\alpha(t_2)
) = 0$
(since $e_\alpha$ is not divisible by $p$).
That is, $W$ is totally $B'$-isotropic.
Conversely, it is clear that an $\F_p[\alpha(\gamma_0)]$-subspace of
$\ff_\alpha$ that is totally $B'$-isotropic
is carried by $\iota_\alpha\inv$ to a $\gamma$-invariant
totally $B$-isotropic $\F_p$-subspace of $V_\alpha$.

Let
$\ff_\alpha
= (\ff_\alpha)_+ \oplus (\ff_\alpha)_0 \oplus (\ff_\alpha)_-$
be a Witt decomposition for $B'$,
so that $(\ff_\alpha)_+$ is a maximal
totally $B'$-isotropic $\F_p[\alpha(\gamma_0)]$-subspace
of $\ff_\alpha$.
We have that $B'$ is
$({-1}, \mysigma_\alpha)$-Hermitian.
Denote by $\ol{\alpha(\gamma_0)}$ the image in $\ff_\alpha$ of
$\alpha(\gamma_0)$.
Since
$\mysigma_\alpha\ol{\alpha(\gamma_0)}
= \ol{\alpha(\gamma_0)}\,\inv \ne \ol{\alpha(\gamma_0)}$,
we have that $\mysigma_\alpha$ is non-trivial on
$\F_p[\alpha(\gamma_0)]$.
We record two consequences of this fact.
\begin{itemize}
\item $\F_p[\alpha(\gamma_0)]$ is not contained in the
unique quadratic subfield
$\ff_{\pm\alpha} = \ff_\alpha^{\mysigma_\alpha}$ of
$\ff_\alpha$, so $\dim_{\F_p[\alpha(\gamma_0)]} \ff_\alpha$
is odd.
\item Put $Q = \card{\F_p[\alpha(\gamma_0)]^{\mysigma_\alpha}}$.
Then also
$\mysigma_\alpha\ol{\alpha(\gamma_0)} =
\smash{\ol{\alpha(\gamma_0)}}^Q$.
This means that $\smash{\ol{\alpha(\gamma_0)}}^{Q + 1} = 1$,
so
\begin{equation}
\tag{$*$}
\sgn_{\F_p[\alpha(\gamma_0)]}(\alpha(\gamma_0))
= \smash{\ol{\alpha(\gamma_0)}}^{(Q^2 - 1)/2}
= 1.
\end{equation}
\end{itemize}
Now Lemma \ref{lem:witt-index} gives
$$
\dim (\ff_\alpha)_+
= (\dim \ff_\alpha - 1)/2
= \dim (\ff_\alpha)_-,
$$
so $\dim (\ff_\alpha)_0 = 1$
(all dimensions being taken over $\F_p[\alpha(\gamma_0)]$).
Put
$(V_\alpha)_\varepsilon
= \iota_\alpha\inv\bigl((\ff_\alpha)_\varepsilon\bigr)$,
where $\varepsilon \in \sset{{+}, {-}, 0}$.
Then $(V_\alpha)_+$ is a maximal $\gamma$-stable,
totally $B$-isotropic $\F_p$-subspace of $V_\alpha$, and
$(V_\alpha)_+^\perp/(V_\alpha)_+ \cong (V_\alpha)_0$
(the perpendicular taken in $V_\alpha$).
Since $\gamma$ acts on $V_\alpha \cong \ff_\alpha$ by
multiplication by $\alpha(\gamma_0)$, we have that
$\det_{\F_p}(\gamma\bigr|_{(V_\alpha)_+})
= N_{\F_p[\alpha(\gamma_0)]/\F_p}(\alpha(\gamma_0))
	^{(\dim \ff_\alpha  - 1)/2}$
and
$\det_{\F_p}(\gamma - 1\bigr|_{(V_\alpha)_0})
= N_{\F_p[\alpha(\gamma_0)]/\F_p}(\alpha(\gamma_0) - 1)$;
so, by Theorem \ref{thm:gerardin:weil(a)} and ($*$),
\begin{equation}
\label{eq:weil-unram-symm}
\begin{aligned}
\theta_{W^{V_\alpha}_\zeta}(\gamma)
= {} & \sgn_{\F_p}(-1)^{\dim_{\F_p} \F_p[\alpha(\gamma_0)]/2}
	\sgn_{\F_p[\alpha(\gamma_0)]}(\alpha(\gamma_0) - 1) \\
     & \quad \times\sgn_{\F_p[\alpha(\gamma_0)]}(\alpha(\gamma_0))
	^{(\dim \ff_\alpha - 1)/2} \\
= {} & \sgn_\ff(-1)^{f_\alpha/2}\sgn_{\ff_\alpha}(\alpha(\gamma_0) - 1)
\end{aligned}
\end{equation}
(where we have used in the last equality the facts that
$f_\alpha/2$
is the product of $\dim_{\F_p} \F_p[\alpha(\gamma_0)]/2$
and the odd number $\dim_{\F_p[\alpha(\gamma_0)]} \ff_\alpha$,
and that
$\sgn_{\F_p[\alpha(\gamma_0)]} = \sgn_{\ff_\alpha}$
on $\F_p[\alpha(\gamma_0)]\cross$).

Upon combining ($\dag$) with
\eqref{eq:weil-trivial}--\eqref{eq:weil-unram-symm},
we obtain the desired formula.
\end{proof}

\section{Vanishing results for {$\theta_\sigma$}}
\label{sec:vanishing}

In this section, we consider the support of the character
of the representation $\sigma = \sigma_d$ of
$K_\sigma = \stab_{G'}(\ox)G_{x, 0{+}}$ induced from the
representation $\rho' = \rho_d'$ of $K = K^d$.
We will show that it is much smaller than would na\"\i vely
be expected from an understanding of the support of the
character of $\rho'$ (or, better, of $\tilde\rho$).

Recall that we have associated to $\phi = \phi_{d - 1}$ an
element $X^* = X^*_{d - 1} \in \mf z(\fg')^*_{-r} + \fg'^*_{x, (-r){+}}$\,.
For any finite, tamely ramified extension $E/F$, let
$\hat\phi_E$ be the linear character of
$\bG(E)_{x, s{+}:r{+}}$ such that
$\hat\phi_E \circ \mexp^E_{x, s{+}:r{+}}
= \Lambda \circ X^*\bigr|_{\Lie(\bG)(E)_{x, s{+}:r{+}}}$.
We will also view $\hat\phi_E$ as a character of
$\bG(E)_{x, s{+}}$.
This is analogous to the definition of $\hat\phi$ in
\S\ref{sec:JK}; in fact,
$\hat\phi_E\bigr|_{G_{x, s{+}}}
= \hat\phi\bigr|_{G_{x, s{+}}}$\,.

\begin{lm}
\label{lem:phi-trivial}
Suppose that
\begin{itemize}
\item $E/F$ is a finite, tamely ramified extension,
\item $d \in \R$ and $d > s$,
\item \bH is a reductive, compatibly filtered
$E$-subgroup of \bG
(as in Definition \xref{exp-defn:compatibly-filtered} of \cite{adler-spice:good-expansions}),
\item \bH contains an $E$-split maximal torus \bT in $\bG'$,
\item $x \in \AA(\bT, E)$,
and
\item $X^* \in \Lie(\bT)^*(E) + \Lie(\bG')^*(E)_{x, (-d){+}}$\,.
\end{itemize}
Then $\hat\phi_E$ is trivial on $(\bH, \bG)(E)_{x, (r{+}, d)}$.
\end{lm}

\begin{proof}
We may, and do, assume, after
replacing $F$ by $E$, that \bT is $F$-split and
$X^* \in Y^* + \fg'^*_{x, (-d){+}}$
with $Y^* \in \mf t^*$.
Further, there is no harm in taking $d \le r$.
Since
$\hat\phi_F = \Lambda \circ X^* \circ \mexp_{x, s{+}:r{+}}\inv$
as maps on $(H, G)_{x, (r{+}, d):(r{+}, r{+})}$
(indeed, on $G_{x, s{+}:r{+}}$),
and since, by Hypothesis \ref{hyp:mock-exp-concave},
$\mexp_{x, s{+}:r{+}}$ carries
$\Lie(H, G)_{x, (r{+}, d):(r{+}, r{+})}$
onto $(H, G)_{x, (r{+}, d):(r{+}, r{+})}$,
it suffices to show that
$\Lambda \circ X^*$ is trivial on
$\Lie(H, G)_{x, (r{+}, d)}$.

By an easy analogue of
Proposition \xref{exp-prop:heres-a-gp} of \cite{adler-spice:good-expansions},
$$
\Lie(H, G)_{x, (r{+}, d)}
\subseteq \mf h_{x, r+} \oplus
	(\mf t^\perp \cap \fg_{x, d}),
$$
where
$\mf t^\perp
= \bigoplus_{\alpha \in \Phi(\bG, \bT)} \fg_\alpha$.
Since $\depth_x(X^*) = -r$,
we have $\mf h_{x, r+} \subseteq \ker (\Lambda \circ X^*)$.
For $X \in \mf t^\perp \cap \fg_{x, d}$\,,
we have that
$X^*(X) \equiv Y^*(X) = 0 \pmod{F_{0+}}$,
hence that $\Lambda(X^*(X)) = 1$.
\end{proof}

\begin{rk}
\label{rem:phi-trivial}
Preserve the notation of Lemma \ref{lem:phi-trivial}.
Since
$\hat\phi\bigr|_{G_{x, s{+}}} = \hat\phi_F$ and
$X^* \in \mf z(\fg')^* + \fg'^*_{x, (-r){+}}
	\subseteq \mf t^* + \fg'^*_{x, (-r){+}}$\,,
we always have that $\hat\phi$ is trivial on
$(H, G)_{x, (r{+}, r)}$.
If $X^* \in \mf z(\fg')^* + \fg'^*_{x, -s}$
(for example, if Hypothesis \ref{hyp:X*-central} is
satisfied), then,
by applying Lemma \ref{lem:phi-trivial} to a decreasing sequence of
$d$'s converging to $s$, we see that
$\hat\phi$ is trivial on $(H, G)_{x, (r{+}, s{+})}$.
\end{rk}

\begin{pn}
\label{prop:step1-support}
Suppose that $t \in \R_{\ge 0}$ and
$\gamma \in G$.
If
\begin{itemize}
\item $t < r$,
\item $t \le s$,
or $\gamma$ has a normal $t$-approximation and
$x \in \BB_t(\gamma)$,
and
\item $\gamma \in \stab_{G'}(\ox)G_{x, t}
\smallsetminus
\lsup{G_{x, 0+}}(\stab_{G'}(\ox)G_{x, t+})$,
\end{itemize}
then $\dot\theta_\sigma(\gamma) = 0$.
\end{pn}

\begin{proof}
Note that
$\gamma \not\in \lsup{K_\sigma}(\stab_{G'}(\ox)G_{x, t+})$.
Recall that
$\sigma = \Ind_K^{K_\sigma} \rho'$,
so $\supp \theta_\sigma \subseteq \lsup{K_\sigma}\supp \theta_{\rho'}$.
By Remark \ref{rem:yu-particulars} and
Proposition \ref{prop:howe:weil},
$\supp \theta_{\rho'} \subseteq \lsup K(\stab_{G'}(\ox)G_{x, s+})$.
Thus the result is clear for $t \le s$, so we assume that
$t > s$.

By Proposition \xref{exp-prop:aniso-Levi} of \cite{adler-spice:good-expansions},
there is $k \in G_{x, 0{+}}$
such that $\lsup k\ZZ G t(\gamma) \subseteq G'$.
Since $\theta_\sigma(\lsup k\gamma) = \theta_\sigma(\gamma)$
and $x = k x \in \BB_t(\lsup k\gamma)$,
we may, and do, replace $\gamma$ by $\lsup k\gamma$.

Then, since $x \in \BB_t(\gamma)$, we have that
$\gamma_{\ge t} \in G_{x, t}$\,.
Now, for $h \in \CC G t(\gamma)_{x, r - t}$\,,
we have that $[\gamma\inv, h] = [\gamma_{\ge t}\inv, h] \in G_{x, r}$\,;
so, by Lemma \ref{lem:trho-isotyp},
$$
\theta_\sigma(\gamma) = \theta_\sigma(\lsup h\gamma)
= \theta_\sigma(\gamma)\hat\phi([\gamma_{\ge t}\inv, h])
= \theta_\sigma(\gamma)\cdot[\gamma_{\ge t}, \hat\phi](h),
$$
where $[\gamma_{\ge t}, \hat\phi]$ is the character
$g \mapsto \hat\phi([\gamma_{\ge t}\inv, g])$
of $G_{x, r - t}$\,.

If $[\gamma_{\ge t}, \hat\phi]$ is non-trivial on
$\CC G t(\gamma)_{x, r - t}$\,, then we are done;
so suppose that it is trivial there.
Then consider $h \in (\CC G t(\gamma), G)_{x, ((r - t){+}, r - t)}$.
By Lemma \xref{exp-lem:shallow-comm} of \cite{adler-spice:good-expansions},
$[\gamma_{\ge t}\inv, h] \in (\CC G t(\gamma), G)_{x, (r{+}, r)}$.
By Lemma \ref{lem:phi-trivial}
and Remark \ref{rem:phi-trivial},
we have that $\hat\phi$
is trivial on $(\CC G t(\gamma), G)_{x, (r{+}, r)}$.
Therefore,
$[\gamma_{\ge t}, \hat\phi](h) = \hat\phi([\gamma_{\ge t}\inv, h]) = 1$.
Thus $[\gamma_{\ge t}, \hat\phi]$ is trivial on
$\CC G t(\gamma)_{x, r - t}$ and
$(\CC G t(\gamma), G)_{x, ((r - t){+}, r - t)}$, hence,
by Proposition \xref{exp-prop:heres-a-gp} of \loccit,
on $G_{x, r - t}$\,.
By Lemma \ref{lem:phi-commute}, this means that
$\gamma_{\ge t} \in (G', G)_{x, (t, t{+})}$.
Since $\gamma_{< t} \in \ZZ G r(\gamma) \subseteq G'$
and
$\gamma_{< t} \in \stab_G(\ox)$
(by Remarks \xref{exp-rem:approx-facts-in-center}
and \xref{exp-rem:approx-facts-in-stab} of \loccit,
respectively),
we have
$\gamma = \gamma_{< t}\gamma_{\ge t} \in \stab_{G'}(\ox)G_{x, t+}$\,,
which is a contradiction.
\end{proof}

\begin{cor}
\label{cor:step1-support}
If $\gamma$ has a normal $r$-approximation
and $x \in \BB_r(\gamma)$,
then $\dot\theta_\sigma(\gamma) = 0$ unless
$\gamma_{< r} \in \lsup{G_{x, 0+}}\stab_{G'}(\ox)$.
\end{cor}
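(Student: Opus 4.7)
The plan is to iterate Proposition~\ref{prop:step1-support} along the Moy--Prasad filtration up to depth $r$, and then to convert the resulting containment of $\gamma$ into a statement about its head $\gamma_{<r}$. I may assume $\dot\theta_\sigma(\gamma)\ne 0$, which forces $\gamma\in K_\sigma=\stab_{G'}(\ox)G_{x,0+}\subseteq\stab_{G'}(\ox)G_{x,0}$; so the containment hypothesis of Proposition~\ref{prop:step1-support} is already satisfied at $t=0$.

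Now I argue inductively. Suppose that at some stage of the induction I have produced $k\in G_{x,0+}$ with $\lsup{k^{-1}}\gamma\in\stab_{G'}(\ox)G_{x,t}$ for some $0\le t<r$. The conjugate $\lsup{k^{-1}}\gamma$ still satisfies the standing hypotheses of the corollary: its conjugated sequence $(\lsup{k^{-1}}\gamma_i)$ is a normal $r$-approximation, and since $k^{-1}$ fixes $x$, $x\in\BB_r(\lsup{k^{-1}}\gamma)\subseteq\BB_t(\lsup{k^{-1}}\gamma)$; moreover, $\dot\theta_\sigma(\lsup{k^{-1}}\gamma)=\dot\theta_\sigma(\gamma)\ne 0$ because $k\in K_\sigma$ and $\theta_\sigma$ is a class function on $K_\sigma$. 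The contrapositive of Proposition~\ref{prop:step1-support} at depth $t$ then forces $\lsup{k^{-1}}\gamma\in\lsup{G_{x,0+}}\stab_{G'}(\ox)G_{x,t+}$, and folding the conjugating element back into $k$ yields a new $k\in G_{x,0+}$ with $\lsup{k^{-1}}\gamma\in\stab_{G'}(\ox)G_{x,t+}$. Since $(G_{x,t})_{0\le t\le r}$ has only finitely many breaks, this process terminates after finitely many steps in some $k\in G_{x,0+}$ satisfying $\lsup{k^{-1}}\gamma\in\stab_{G'}(\ox)G_{x,r}$.

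Writing $\lsup{k^{-1}}\gamma=hu$ with $h\in\stab_{G'}(\ox)$ and $u\in G_{x,r}$, and decomposing $\lsup{k^{-1}}\gamma=(\lsup{k^{-1}}\gamma_{<r})(\lsup{k^{-1}}\gamma_{\ge r})$ with $\lsup{k^{-1}}\gamma_{\ge r}\in G_{x,r}$, I read off $\lsup{k^{-1}}\gamma_{<r}\in h\cdot G_{x,r}\subseteq\stab_{G'}(\ox)G_{x,r}$. The main obstacle is to absorb this residual $G_{x,r}$ factor, which the iteration of Proposition~\ref{prop:step1-support} does not on its own remove. For this I would appeal to the rigidity of the head: the element $\lsup{k^{-1}}\gamma_{<r}$ is semisimple and centralizes $\CC G r(\lsup{k^{-1}}\gamma)$, so lies in $\ZZ G r(\lsup{k^{-1}}\gamma)$ and (by Remark~\xref{exp-rem:approx-facts-in-stab}) in $\stab_G(\ox)$. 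Proposition~\xref{exp-prop:aniso-Levi} then supplies $k''\in G_{x,0+}$ with $\lsup{k''}\ZZ G r(\lsup{k^{-1}}\gamma)\subseteq G'$, so $\lsup{k''k^{-1}}\gamma_{<r}\in G'\cap\stab_G(\ox)=\stab_{G'}(\ox)$, yielding $\gamma_{<r}\in\lsup{G_{x,0+}}\stab_{G'}(\ox)$ as required.
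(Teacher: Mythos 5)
Your proof is correct and follows essentially the same route as the paper's: iterate Proposition \ref{prop:step1-support} over the (finitely many) filtration breaks to conclude $\gamma \in \lsup{G_{x,0+}}(\stab_{G'}(\ox)G_{x,r})$, then pass from this containment to one for the head $\gamma_{<r}$. The paper handles the second step in one line by citing Corollary \xref{exp-cor:aniso-Levi} of \cite{adler-spice:good-expansions}, which is precisely the equivalence you re-derive from Proposition \xref{exp-prop:aniso-Levi} together with the facts that $\gamma_{<r}$ lies in $\ZZ G r(\gamma)$ and stabilizes \ox.
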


\begin{proof}
By Proposition \ref{prop:step1-support},
$\theta_\sigma(\gamma) = 0$ unless
$\gamma \in \lsup{G_{x, 0+}}(\stab_{G'}(\ox)G_{x, r})$.
By Corollary \xref{exp-cor:aniso-Levi} of \cite{adler-spice:good-expansions},
$\gamma \in \lsup{G_{x, 0+}}(\stab_{G'}(\ox)G_{x, r})$
if and only if
$\gamma_{< r} \in \lsup{G_{x, 0+}}\stab_{G'}(\ox)$.
\end{proof}

\begin{cor}
\label{cor:wk-step1-support}
If $\gamma$ has a normal $r$-approximation
and $\bG'/Z(\bG)$ is $F$-anisotropic,
then $\dot\theta_\sigma(\gamma) = 0$ unless
$x \in \BB_r(\gamma)$
and
$\gamma_{< r} \in \lsup{G_{x, 0+}}\stab_{G'}(\ox)$.
\end{cor}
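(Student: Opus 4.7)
The plan is to reduce to Corollary~\ref{cor:step1-support} by showing that under the anisotropy hypothesis $\bG'/Z(\bG)$ $F$-anisotropic, the non-vanishing $\dot\theta_\sigma(\gamma) \neq 0$ already forces $x \in \BB_r(\gamma)$; the remaining conclusion $\gamma_{<r} \in \lsup{G_{x,0+}}\stab_{G'}(\ox)$ will then follow from that corollary applied to the thus-recovered hypothesis.

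Assume $\dot\theta_\sigma(\gamma) \neq 0$, so $\gamma \in K_\sigma = \stab_{G'}(\ox)G_{x, 0+}$. Under the anisotropy assumption, the reduced building $\rBB(\bG', F)$ collapses to a single point which, because $\bG^0 \subseteq \bG'$ and $\ox \in \rBB(\bG^0, F)$, must coincide with $\ox$; hence $\stab_{G'}(\ox) = G'$ and $K_\sigma = G' G_{x, 0+}$. Invoking Proposition \xref{exp-prop:aniso-Levi} of \cite{adler-spice:good-expansions} (the same tool used at the start of the proof of Proposition~\ref{prop:step1-support}) produces $k \in G_{x, 0+}$ with $\lsup k \ZZ G r(\gamma) \subseteq G'$. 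Since $k$ belongs to $K_\sigma$ and fixes $x$, replacing $\gamma$ by $\lsup k \gamma$ preserves both $\dot\theta_\sigma(\gamma) \neq 0$ and the set $\BB_r(\gamma)$; after this substitution $\gamma_{<r} \in G'$, so in particular $\gamma_{<r}$ stabilizes $\ox$ in the reduced building.

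It then remains to verify the two defining conditions of $x \in \BB_r(\gamma)$. Pick any normal $r$-approximation $(\ugamma, x_1)$ to (the new) $\gamma$. Since $\gamma_{<r} \in G'$, the natural map of reduced buildings $\rBB(\CC\bG r(\gamma), F) \to \rBB(\bG', F)$ has as its target the single point $\ox$, so $\overline{x_1} = \ox = \bar x$ in $\rBB(\bG, F)$. Because Moy--Prasad filtration subgroups at positive depth depend only on the reduced-building image, $\CC G r(\gamma)_{x, r} = \CC G r(\gamma)_{x_1, r}$, so that $\gamma_{\ge r} \in \CC G r(\gamma)_{x_1, r} = \CC G r(\gamma)_{x, r}$, giving $\depth_x(\gamma_{\ge r}) \ge r$. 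For the remaining condition $x \in \BB(\CC\bG r(\gamma), F)$, one invokes the functorial embedding of enlarged buildings under compatibly filtered inclusions (Proposition \xref{exp-prop:compatibly-filtered-tame-rank} of \loccit): it is $X_*(Z(\bG))_\R$-equivariant and covers the fiber of $\BB(\bG, F) \to \rBB(\bG, F)$ above every point of $\rBB(\CC\bG r(\gamma), F)$, so its image contains $x$.

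The main obstacle is the final step, where one must ensure that the specific $x$ (and not merely some $Z(\bG)$-translate of it) lies in the image of the embedded building above $\ox$. The anisotropy of $\bG'/Z(\bG)$ synchronizes the reduced-building data perfectly, reducing the argument to an affine (central) question above $\ox$; the $X_*(Z(\bG))_\R$-equivariance of the embedding then closes the gap, but this requires a careful appeal to the standard machinery for enlarged buildings of tame compatibly filtered $F$-subgroups.
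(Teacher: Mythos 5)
Your overall strategy---first argue that non-vanishing of $\dot\theta_\sigma(\gamma)$ forces $x \in \BB_r(\gamma)$, then quote Corollary \ref{cor:step1-support}---is the same as the paper's, but the paper obtains the first step by citing Lemma \xref{exp-lem:aniso-Brgamma} (together with Corollary \xref{exp-cor:aniso-Levi}) of \cite{adler-spice:good-expansions}, whereas you attempt to prove it directly, and the direct argument breaks down at its crucial point. The decisive error is the assertion that, because $\gamma_{< r} \in G'$, there is a ``natural map of reduced buildings $\rBB(\CC\bG r(\gamma), F) \to \rBB(\bG', F)$'' forcing $\ol{x_1} = \ox$. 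No such map exists: $\CC\bG r(\gamma) = C_\bG(\gamma_{< r})\conn$ is not contained in $\bG'$ in general---only its center $\ZZ\bG r(\gamma)$ lands in $G'$ after the conjugation you performed, and the centralizer itself can be all of \bG (take $\gamma_{< r}$ central). With $\ol{x_1} = \ox$ gone, neither the depth inequality $\depth_x(\gamma_{\ge r}) \ge r$ nor the membership $x \in \BB(\CC\bG r(\gamma), F)$ follows, so the verification of $x \in \BB_r(\gamma)$ collapses.

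There is also a structural obstruction showing that no repair along these lines can work: the only consequence of $\dot\theta_\sigma(\gamma) \ne 0$ that you actually use is $\gamma \in K_\sigma$, and that is provably too weak. For instance, with $\bG = \GL_2$ and $\bG'$ an elliptic maximal torus, a topologically unipotent $\gamma \in G_{x, 0{+}}$ with $\depth_x(\gamma) < r$ but $\depth_{x_1}(\gamma) \ge r$ at some other point $x_1$ lies in $K_\sigma$, has a normal $r$-approximation (the trivial one, based at $x_1$), and yet $x \notin \BB_r(\gamma)$. Ruling such elements out requires the finer vanishing of Proposition \ref{prop:step1-support} (equivalently, the actual hypotheses of Lemma \xref{exp-lem:aniso-Brgamma} of \cite{adler-spice:good-expansions}), not building theory applied to the crude support bound $\gamma \in K_\sigma$. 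A secondary worry: you invoke Proposition \xref{exp-prop:aniso-Levi} of \cite{adler-spice:good-expansions} before knowing $x \in \BB_r(\gamma)$, whereas in the proof of Proposition \ref{prop:step1-support} it is applied only once $x \in \BB_t(\gamma)$ is in hand, so that step risks circularity as well.
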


\begin{proof}
This follows from
Lemma \xref{exp-lem:aniso-Brgamma} and
Corollary \xref{exp-cor:aniso-Levi} of
\cite{adler-spice:good-expansions}, and
Corollary \ref{cor:step1-support}.
\end{proof}

\begin{cor}
\label{cor:trho-isotyp}
If $\gamma$ has a normal $r$-approximation and $\bG'/Z(\bG)$
is $F$-anisotropic, then
\begin{multline*}
\dot\theta_\sigma(\gamma)
= \dot\theta_\sigma(\gamma_{< r})
	\cdot
	[G_{x, r}](\gamma_{\ge r})\hat\phi(\gamma_{\ge r}) \\
= \dot\theta_\sigma(\gamma_{< r})
	\cdot
	[\BB(\CC\bG r(\gamma), F)](x)
	\cdot
	[G_{x, r}](\gamma_{\ge r})
	\hat\phi(\gamma_{\ge r}) \\
= [\lsup{G_{x, 0+}}G']
		(\gamma_{< r})
	\theta_\sigma(\gamma_{< r})
	\cdot
	[\BB(\CC\bG r(\gamma), F)](x)
	\cdot
	[G_{x, r}](\gamma_{\ge r})
	\hat\phi(\gamma_{\ge r}).
\end{multline*}
\end{cor}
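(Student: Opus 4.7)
The proof combines Corollary~\ref{cor:wk-step1-support}, which restricts the support of $\dot\theta_\sigma$, with Lemma~\ref{lem:trho-isotyp}, which says $\sigma\bigr|_{G_{x,r}}$ is $\hat\phi$-isotypic. The idea is to peel off the tail $\gamma_{\ge r}$ as a scalar, after first conjugating $\gamma$ by an element of $G_{x,0+}$ so as to place its head inside $K_\sigma$.

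For the first equality, suppose that $\dot\theta_\sigma(\gamma) \ne 0$. Corollary~\ref{cor:wk-step1-support} gives $x \in \BB_r(\gamma)$---hence $\gamma_{\ge r} \in \CC G r(\gamma)_{x,r} \subseteq G_{x,r}$---and $\gamma_{<r} = g^{-1} h g$ for some $g \in G_{x,0+}$ and $h \in \stab_{G'}(\ox)$. Put $\gamma' = g\gamma g^{-1} = h \dotm (g\gamma_{\ge r}g^{-1})$. Each of $g$, $h$, $\gamma_{<r}$, $\gamma$, $\gamma'$ lies in $K_\sigma$ (using that $G_{x,0+}$ normalizes $G_{x,r}$). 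The class-function property of $\theta_\sigma$ on $K_\sigma$ then yields $\theta_\sigma(\gamma) = \theta_\sigma(\gamma')$ and $\theta_\sigma(\gamma_{<r}) = \theta_\sigma(h)$. Since $g\gamma_{\ge r}g^{-1} \in G_{x,r}$, Lemma~\ref{lem:trho-isotyp} shows that $\sigma$ acts on it by the scalar $\hat\phi(g\gamma_{\ge r}g^{-1})$, giving $\theta_\sigma(\gamma') = \hat\phi(g\gamma_{\ge r}g^{-1})\theta_\sigma(h)$. Finally, the commutator estimate $[G_{x,r}, G_{x,0+}] \subseteq G_{x,r+}$ combined with triviality of $\hat\phi$ on $G_{x,r+}$ yields $\hat\phi(g\gamma_{\ge r}g^{-1}) = \hat\phi(\gamma_{\ge r})$, producing the desired identity.

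If instead $\dot\theta_\sigma(\gamma) = 0$, I must show the right-hand side vanishes. By Corollary~\ref{cor:wk-step1-support}, either $x \notin \BB_r(\gamma)$ or $\gamma_{<r} \notin \lsup{G_{x,0+}}\stab_{G'}(\ox)$. Since $\gamma_{\ge r} \in \CC G r(\gamma)$ always (directly from the definition of normal $r$-approximation), the first alternative splits into $x \notin \BB(\CC\bG r(\gamma), F)$ or $\gamma_{\ge r} \notin G_{x,r}$; in the latter subcase $[G_{x,r}](\gamma_{\ge r}) = 0$, while in the former, applying Corollary~\ref{cor:wk-step1-support} to $\gamma_{<r}$ (whose natural normal $r$-approximation has trivial tail, so that $\CC\bG r(\gamma_{<r}) = \CC\bG r(\gamma)$ and $\BB_r(\gamma_{<r}) = \BB(\CC\bG r(\gamma), F)$) forces $\dot\theta_\sigma(\gamma_{<r}) = 0$. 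In the second alternative, the same application of Corollary~\ref{cor:wk-step1-support} to $\gamma_{<r}$ again gives $\dot\theta_\sigma(\gamma_{<r}) = 0$. The second and third equalities then drop out of that same application: whenever $\dot\theta_\sigma(\gamma_{<r}) \ne 0$, one gets $x \in \BB(\CC\bG r(\gamma), F)$ and $\gamma_{<r} \in \lsup{G_{x,0+}}\stab_{G'}(\ox) \subseteq \lsup{G_{x,0+}}G'$, so both indicator factors equal $1$, and $\gamma_{<r} \in K_\sigma$ makes $\dot\theta_\sigma(\gamma_{<r}) = \theta_\sigma(\gamma_{<r})$. The principal technical obstacle is the conjugation bookkeeping---in particular the $G_{x,0+}$-invariance of $\hat\phi\bigr|_{G_{x,r}}$---which is handled cleanly by the commutator estimate and the depth-$r+$ triviality of $\hat\phi$.
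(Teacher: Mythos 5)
Your overall strategy is the same as the paper's: use Corollary \ref{cor:wk-step1-support} to cut down the support, use the $\hat\phi$-isotypy of Lemma \ref{lem:trho-isotyp} to peel off $\hat\phi(\gamma_{\ge r})$ as a scalar, and then apply Corollary \ref{cor:wk-step1-support} a second time to $\gamma_{< r}$ to account for the indicator factors. The conjugation bookkeeping in your first paragraph is more than is needed --- once you know $\gamma_{< r} \in \lsup{G_{x, 0+}}\stab_{G'}(\ox) \subseteq K_\sigma$ and $\gamma_{\ge r} \in G_{x, r}$, the isotypic property gives $\theta_\sigma(\gamma_{< r}\gamma_{\ge r}) = \hat\phi(\gamma_{\ge r})\theta_\sigma(\gamma_{< r})$ directly, without conjugating $\gamma$ or invoking the $G_{x, 0+}$-invariance of the restriction of $\hat\phi$ to $G_{x, r}$ --- but that part is correct.

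There is, however, a logical slip in your case analysis. In the second paragraph you assert: if $\dot\theta_\sigma(\gamma) = 0$, then by Corollary \ref{cor:wk-step1-support} either $x \notin \BB_r(\gamma)$ or $\gamma_{< r} \notin \lsup{G_{x, 0+}}\stab_{G'}(\ox)$. That is the \emph{converse} of the corollary: it asserts only that failure of those conditions forces vanishing, and says nothing in the case where $\dot\theta_\sigma(\gamma) = 0$ while the conditions happen to hold (the character can perfectly well vanish at points of its a priori support). As written, that step fails. The repair is to split instead on whether the support conditions hold: if they do, the computation of your first paragraph applies verbatim (it never actually uses $\dot\theta_\sigma(\gamma) \ne 0$ beyond deducing the conditions), and it yields the identity whether or not the common value is zero; if they fail, the corollary gives that the left-hand side is $0$, and your analysis of the indicator factors --- including the correct observation that $\BB_r(\gamma_{< r}) = \BB(\CC\bG r(\gamma), F)$ because $\gamma_{< r}$ has trivial tail, so that Corollary \ref{cor:wk-step1-support} applied to $\gamma_{< r}$ kills $\dot\theta_\sigma(\gamma_{< r})$ whenever $x \notin \BB(\CC\bG r(\gamma), F)$ or $\gamma_{< r} \notin \lsup{G_{x, 0+}}G'$ --- shows the right-hand side vanishes too. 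With that reorganization the argument is complete and coincides with the paper's.
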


The notation indicates that
$\dot\theta_\sigma(\gamma)
= \theta_\sigma(\gamma_{< r})\hat\phi(\gamma_{\ge r})$
if all the characteristic functions appearing are $1$, and
$\dot\theta_\sigma(\gamma) = 0$ otherwise.

\begin{proof}
By Corollary \ref{cor:wk-step1-support}
$\dot\theta_\sigma(\gamma) = 0$ unless $\gamma_{\ge r} \in G_{x, r}$\,.
By Lemma \ref{lem:trho-isotyp}, if $\gamma_{\ge r} \in G_{x, r}$\,,
then
$\dot\theta_\sigma(\gamma)
= \dot\theta_\sigma(\gamma_{< r})\hat\phi(\gamma_{\ge r})$.
By Corollary \ref{cor:wk-step1-support} again,
$\dot\theta_\sigma(\gamma_{< r}) = 0$ unless
$x \in \BB(\CC\bG r(\gamma), F)$ and
$\gamma_{< r} \in \lsup{G_{x, 0+}}G'$.
If $\gamma_{< r} \in \lsup{G_{x, 0+}}G'$, then $\gamma_{< r}$ 
is in the domain of $\sigma$, so
$\dot\theta_\sigma(\gamma_{< r}) = \theta_\sigma(\gamma_{< r})$.
\end{proof}

\numberwithin{thm}{subsection}
\numberwithin{equation}{subsection}
\section{Induction to {$\stab_{G'}(\ox)G_{x, 0{+}}$}}
\label{sec:induction1}

We have just shown that the character of $\sigma$ vanishes
``far from $G'$\,''.  In this section, we will compute the
character on a large subset of $\stab_{G'}(\ox)$.
By Lemma \ref{lem:trho-isotyp}, we will then have character
values on a large subset of
$\stab_{G'}(\ox)G_{x, r}$.
(To be more precise, unless certain tameness and compactness conditions
are satisfied, we must place mild restrictions on the
elements at which we evaluate the character.  See the
following paragraph and the beginning of
\S\ref{sec:induction2} for details.)
The resulting formula (see Proposition \ref{prop:induction1})
will be expressed in terms of the character of the
representation $\tau_{d - 1}$ of $\stab_{G'}(\ox)$
induced from the representation $\rho_{d - 1}' \otimes \phi$
of $K^{d - 1}$.

In this section, we suppose that
$\gamma \in G'$
has a normal $r$-approximation
$\ugamma = (\gamma_i)_{0 \le i < r}$ in $G$,
and that $x \in \BB_r(\gamma)$.
In particular, by Remark \xref{exp-rem:approx-facts-in-stab} of \cite{adler-spice:good-expansions},
we have that
$\gamma \in \stab_{G'}(\ox)$.
We will eventually (after Corollary \ref{cor:t-small}) also
require that $\gamma$ be semisimple.

\subsection{The Frobenius formula for $\theta_\sigma$}
\label{ssec:frobenius}

The following
\emph{ad hoc} definitions are useful for cutting down the
number of summands appearing in the Frobenius formula.

\begin{dn}
\label{defn:ijt}
For $g \in G_{x, 0+}$\,, put
\indexmem{jg}{j(g)}%
\indexmem{j-perp-g}{\jperp(g)}%
\begin{gather*}
j(g) = \sup \set{j \in \R_{\ge 0} \cup \sset\infty}
       {g \in \dc G_{x, j}} \\
\intertext{and}
\jperp(g) = \sup \set{j(g'g)}{g' \in G'_{x, 0+}}.
\end{gather*}
If $j(g) < \infty$, put
\indexmem{ig}{i(g)}%
\begin{gather*}
i(g) =
\sup \set{i \in \R \cup \sset\infty}
	{g \in \dc(\CC G i(\gamma), G)_{x, (j(g), j(g){+})}} \\
\intertext{and}
t(g) = \sup \set{\depth_x([\gamma\inv, g h])}{h \in G_{x, j(g)+}}.
\end{gather*}
If $\jperp(g) < \infty$, put
\indexmem{i-perp-g}{\iperp(g)}%
\begin{gather*}
\iperp(g) = \sup \set{i(g'g)}
            {g' \in G'_{x, 0+}, j(g'g) = \jperp(g)} \\
\intertext{and}
\tperp(g) = \sup \set{t(g'g)}
            {g' \in G'_{x, 0+}, j(g'g) = \jperp(g)}.
\end{gather*}
\end{dn}

The numbers $i(g)$ and $j(g)$ are different measures of how
far $g$ is from lying in the group $\dc$.
Remember from
Figure~\ref{fig:sears-tower}
on page \pageref{fig:sears-tower}
that $\dc$ looks somewhat like a skyscraper  
that becomes narrower toward the top.
The vertical direction represents depth,
while horizontal motion toward the center is analogous
to moving through successively smaller full-rank
reductive subgroups of $\bG$
(the connected-centralizer subgroups $\CC\bG i(\gamma)$).
The quantity $j(g)$ tells us the vertical distance from
$g$ down to a roof of $\dc$.
(Of course, if $g \in \dc$, then we have
$j(g) = \infty$.)
The quantity $i(g)$ answers the question:
When we've gone down $j(g)$ floors, landing on a roof of the skyscraper,
how far toward the center must we travel in order to hit a wall?

\begin{rk}
\label{rem:local-const}
Since $g, [\gamma\inv, g] \in G_{x, 0+}$\,, we have that
$j(g), t(g) > 0$.  However, it is possible that $i(g) = 0$.

Since $\set{j \in \R_{\ge 0}}{G_{x, j} \ne G_{x, j+}}$ is
discrete, the supremum in the definition of $j(g)$ is
actually a maximum.
Suppose that $j(g) < \infty$.
Since $\set{i \in \R}{\CC G i(\gamma) \ne \CC G{i+}(\gamma)}$
is discrete,
the supremum in the definition of $i(g)$ is actually a
maximum.
If $[\gamma\inv, g h] = 1$ for some $h \in G_{x, j(g)+}$\,,
then obviously the supremum in the definition of $t(g)$ is a
maximum.  Otherwise,
$h \mapsto \depth_x([\gamma\inv, g h])$ is locally constant
on the compact set $G_{x, j(g)+}$\,, so the supremum in the
definition of $t(g)$ is again a maximum.

By Proposition \xref{exp-prop:heres-a-gp} of \cite{adler-spice:good-expansions},
$$
\dc(\CC G{r - 2j(g)}(\gamma), G)_{x, (j(g), j(g){+})}
\subseteq \dc G_{x, j(g)+}\,,
$$
so $i(g) < r - 2j(g)$.

If $h \in G_{x, j(g)+}$\,, then
$i(g h) = i(g)$, $j(g h) = j(g)$, and $t(g h) = t(g)$.
Since $G_{x, s} \subseteq \dc$,
we have that $j(g) < \infty$ if and only if $j(g) < s$,
so the functions $i$, $j$, and $t$ are all invariant under
translation by $G_{x, s}$\,.

The function $g' \mapsto j(g'g)$ is locally constant
on the compact set $G'_{x, 0+}$\,, so the
supremum in the definition of $\jperp(g)$ is actually a maximum.  If
$\jperp(g) < \infty$, then $g' \mapsto i(g'g)$ and $g'
\mapsto t(g'g)$ are also locally constant
on the compact set $\set{g' \in G'_{x, 0+}}{j(g'g) = \jperp(g)}$,
so the suprema in the definitions of $\iperp(g)$ and $\tperp(g)$
are also maxima.

As above, if $\jperp(g) < \infty$ and $h \in G_{x, \jperp(g)+}$\,,
then $\iperp(g h) = \iperp(g)$, $\jperp(g h) = \jperp(g)$, and
$\tperp(g h) = \tperp(g)$.  Furthermore, $\jperp(g) < \infty$ if and
only if $\jperp(g) < s$, so the functions \iperp, \jperp, and \tperp are
all invariant under translation by $G_{x, s}$\,.  Obviously,
they are also invariant under left translation by $G'_{x, 0+}$\,,
hence
(by Proposition \xref{exp-prop:heres-a-gp} of \cite{adler-spice:good-expansions})
also by
$(G', G)_{x, (0{+}, s)}$ and (if $\jperp(g) < \infty$) by
$(G', G)_{x, (0{+}, \jperp(g){+})}$.
\end{rk}

\begin{lm}
\label{lem:t=i+j}
Fix $g \in G_{x, 0+}$\,.  If $j(g) < \infty$, then $t(g) = i(g) + j(g)$.
\end{lm}

\begin{proof}
Put $i_0 = i(g)$, $j_0 = j(g)$, and $t_0 = t(g)$,
so $g \in \dc(\CC G{i_0}(\gamma), G)_{x, (j_0, j_0{+})}$.

Since $i_0 < r - 2j_0$, we have by
Proposition \xref{exp-prop:heres-a-gp}
and Remark \xref{exp-rem:bracket-facts-decomp} of \cite{adler-spice:good-expansions}
that
$$
\dc(\CC G{i_0}(\gamma), G)_{x, (j_0, j_0{+})}
= \dc^{(j_0)}\CC G{i_0}(\gamma)_{x, j_0}G_{x, j_0+}\,.
$$
Choose
$h \in \dc^{(j_0)}g G_{x, j_0+} \cap
\CC G{i_0}(\gamma)_{x, j_0}$\,.
Then there is $k_1 \in G_{x, j_0+}$ such that
$h \in \dc^{(j_0)}g k_1$.

Note that
$[\gamma\inv, h] = [\gamma_{\ge i_0}\inv, h] \in G_{x, i_0 + j_0}$\,.
If $[\gamma\inv, h] \in G_{x, (i_0 + j_0)+}$\,, then
Lemma \xref{exp-lem:bracket} of \loccit gives
$h \in (\CC G{i_0{+}}(\gamma), G)_{x, (j_0, j_0{+})}$,
so $g \in \dc(\CC G{i_0{+}}(\gamma), G)_{x, (j_0, j_0{+})}$,
contradicting the definition of $i_0$.
Thus $\depth_x([\gamma\inv, h]) = i_0 + j_0$.

Suppose that $[\gamma\inv, h] \in G_{x, t_0+}$\,.
Then, by
Remarks \xref{exp-rem:bracket-facts-containment}
and \xref{exp-rem:approx-facts-commutator} of \loccit,
the fact that
$g k_1 h\inv \in \dc^{(j_0)}$
implies that
$[\gamma\inv, g k_1 h\inv] \in G_{x, r - j_0}$\,, hence that
$[\gamma\inv, h] \equiv [\gamma\inv, g k_1] \pmod{G_{x, r - j_0}}$.
By the definition of $t(g)$, we have
$[\gamma\inv, g k_1] \not\in G_{x, t_0+}$\,.
Thus $r - j_0 \le t_0$, so
$[\gamma\inv, h] \in G_{x, r - j_0}$\,.
By Lemma \xref{exp-lem:bracket} of \loccit,
$h \in (\CC G{r - 2j_0}(\gamma), G)_{x, (j_0, j_0{+})}
\subseteq (\CC G{i_0+}(\gamma), G)_{x, (j_0, j_0{+})}$,
so $g \in \dc(\CC G{i_0+}(\gamma), G)_{x, (j_0, j_0{+})}$,
contradicting the definition of $i(g)$.
 
Thus $[\gamma\inv, h] \not\in G_{x, t_0{+}}$\,, so
$i_0 + j_0 \le t_0$.  Suppose that $i_0 + j_0 < t_0$.
Then there is $k_2 \in G_{x, j_0+}$ such that
$[\gamma\inv, g k_2] \in G_{x, t_0} \subseteq G_{x, (i_0 + j_0)+}$\,.
We have $h \in \dc^{(j_0)}G_{x, j_0+} g k_2$ ---
say, $h = h'g k_2$, with $h' \in \dc^{(j_0)}G_{x, j_0{+}}$\,.
Since
$\dc^{(j_0)}
\subseteq \odc{\gamma_{\ge i_0}; x, r}^{(j_0)}
\subseteq [\gamma_{\ge i_0}; x, (i_0 + j_0){+}]$,
and clearly
$G_{x, j_0{+}}
\subseteq [\gamma_{\ge i_0}; x, (i_0 + j_0){+}]$,
we have that
$$
[\gamma\inv, h] = [\gamma_{\ge i_0}\inv, h]
= [\gamma_{\ge i_0}\inv, h']
\dotm\lsup{h'}[\gamma_{\ge i_0}\inv, g k_2]
\in G_{x, (i_0 + j_0){+}}
	\dotm\lsup{h'}[\gamma_{\ge i_0}\inv, g k_2].
$$
By Lemma \xref{exp-lem:bracket} of \cite{adler-spice:good-expansions},
$g k_2 \in [\gamma; x, t_0]$,
where $[\gamma; x, t_0]$ is as in
Definition \xref{exp-defn:fancy-centralizer-no-underline} of \loccit
Since $[\gamma; x, t_0] \subseteq [\gamma_{\ge i_0}; x, t_0]$,
we have by Remark \xref{exp-rem:approx-facts-commutator} of
\loccit that
$[\gamma_{\ge i_0}\inv, g k_2]
\in G_{x, t_0} \subseteq G_{x, (i_0 + j_0)+}$\,,
so also $[\gamma\inv, h] \in G_{x, (i_0 + j_0)+}$\,,
a contradiction.
\end{proof}

\begin{cor}
\label{cor:t=i+j}
Fix $g \in G_{x, 0+}$\,.  If $\jperp(g) < \infty$, then
$\tperp(g) = \iperp(g) + \jperp(g)$.
\end{cor}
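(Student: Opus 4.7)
The plan is to reduce the corollary directly to Lemma \ref{lem:t=i+j} by noticing that both $\iperp(g)$ and $\tperp(g)$ are defined as suprema over the \emph{same} subset of $G'_{x, 0+}$, namely
\[
S = \set{g' \in G'_{x, 0+}}{j(g' g) = \jperp(g)}.
\]
First I would observe that, since $\jperp(g) < \infty$, any $g' \in S$ automatically has $j(g' g) = \jperp(g) < \infty$, so Lemma \ref{lem:t=i+j} applies to $g' g$ and yields
\[
t(g' g) = i(g' g) + j(g' g) = i(g' g) + \jperp(g).
\]

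Next I would take suprema over $g' \in S$ on both sides. The right-hand side is $\iperp(g) + \jperp(g)$ by the definition of $\iperp(g)$, while the left-hand side is $\tperp(g)$ by the definition of $\tperp(g)$. Combining,
\[
\tperp(g) = \sup_{g' \in S}\,t(g' g) = \sup_{g' \in S}\bigl(i(g' g) + \jperp(g)\bigr) = \iperp(g) + \jperp(g),
\]
which is the desired equality.

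There is essentially no obstacle here: the real work has already been done in Lemma \ref{lem:t=i+j}. The only subtle point worth a sentence in the write-up is that the suprema in the definitions of $\iperp$ and $\tperp$ are maxima (as observed in Remark \ref{rem:local-const}), so the manipulation ``$\sup (i(g'g) + \jperp(g)) = \sup i(g'g) + \jperp(g)$'' is harmless; this is why the constant shift by $\jperp(g)$ transfers cleanly from $i(g'g)$ to $t(g'g)$. No new lemmas or additional structural facts are needed beyond Lemma \ref{lem:t=i+j} and the definitions in Definition \ref{defn:ijt}.
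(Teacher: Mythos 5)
Your proposal is correct and is essentially the paper's own argument: both reduce immediately to Lemma \ref{lem:t=i+j} applied to the elements $g'g$ with $g'$ ranging over the set on which both $\iperp$ and $\tperp$ are defined as suprema. The paper phrases this as two inequalities using witnesses attaining the respective maxima, whereas you take suprema of the pointwise identity $t(g'g)=i(g'g)+\jperp(g)$; the two formulations are interchangeable, and your observation that the constant shift by $\jperp(g)$ passes through the supremum is all that is needed.
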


\begin{proof}
By Remark \ref{rem:local-const},
there is some $g' \in G'_{x, 0+}$ such that
$i(g'g) = \iperp(g)$ and $j(g'g) = \jperp(g)$.
Thus
$\tperp(g) \ge t(g'g) = i(g'g) + j(g'g) = \iperp(g) + \jperp(g)$.
Similarly, there is some $g'' \in G'_{x, 0+}$ such that
$j(g''g) = \jperp(g)$ and $t(g''g) = \tperp(g)$.
Thus
$\tperp(g) = t(g''g) = i(g''g) + j(g''g) \le \iperp(g) + \jperp(g)$.
\end{proof}

\begin{cor}
\label{cor:t-small}
Fix $g \in G_{x, 0+}$\,.  If $j(g) < \infty$, then
$t(g) < r - j(g)$.
If $\jperp(g) < \infty$, then $\tperp(g) < r - \jperp(g)$.
\end{cor}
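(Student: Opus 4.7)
The plan is very direct: combine Lemma \ref{lem:t=i+j} (and its companion Corollary \ref{cor:t=i+j}) with the strict inequality $i(g) < r - 2j(g)$ that was already recorded in Remark \ref{rem:local-const}.

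For the first assertion, I would start by noting that by Lemma \ref{lem:t=i+j}, whenever $j(g) < \infty$ we have the exact identity $t(g) = i(g) + j(g)$. On the other hand, Remark \ref{rem:local-const} observes, as an immediate consequence of Proposition \xref{exp-prop:heres-a-gp} of \cite{adler-spice:good-expansions} applied to $\dc(\CC G{r - 2j(g)}(\gamma), G)_{x, (j(g), j(g){+})} \subseteq \dc G_{x, j(g){+}}$, that $i(g) < r - 2j(g)$. Adding $j(g)$ to both sides and using the identity yields $t(g) < r - j(g)$.

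For the second assertion, I would use Corollary \ref{cor:t=i+j} to get $\tperp(g) = \iperp(g) + \jperp(g)$ when $\jperp(g) < \infty$. Then I would choose, as in Remark \ref{rem:local-const}, an element $g' \in G'_{x, 0{+}}$ with $j(g'g) = \jperp(g)$ and $i(g'g) = \iperp(g)$. Applying the first assertion's inequality to $g'g$ in place of $g$ gives $\iperp(g) = i(g'g) < r - 2j(g'g) = r - 2\jperp(g)$, and adding $\jperp(g)$ to both sides yields $\tperp(g) < r - \jperp(g)$.

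There is no real obstacle here, since all the substantial work has already been carried out in Lemma \ref{lem:t=i+j} and Corollary \ref{cor:t=i+j}; the corollary is essentially just a bookkeeping consequence. The only point that deserves a sentence of care is verifying that the maxima in the definitions of $\iperp(g)$ and $\jperp(g)$ are attained simultaneously by a single choice of $g'$, which is exactly the local constancy observation from Remark \ref{rem:local-const}.
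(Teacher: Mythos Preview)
Your proposal is correct and follows exactly the route the paper intends: the corollary is stated without proof in the paper, precisely because it is immediate from Lemma~\ref{lem:t=i+j}, Corollary~\ref{cor:t=i+j}, and the inequality $i(g) < r - 2j(g)$ recorded in Remark~\ref{rem:local-const}. Your observation about simultaneous attainment of the suprema for $\iperp$ and $\jperp$ is also exactly the point handled in Remark~\ref{rem:local-const}.
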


From now on, suppose that $\gamma$ is semisimple.
This condition can be removed if desired by
observing that all the functions in which we will be
interested are locally constant;
but,
since we will only use the main result of this section
(Proposition \ref{prop:induction1}) when the semisimplicity
condition is already satisfied,
it is not a serious restriction.

By Lemma \xref{exp-lem:G-approx-is-G'-approx} of \cite{adler-spice:good-expansions},
\ugamma is a normal $r$-approximation to $\gamma$ in $G'$,
so that it makes sense to speak of groups such as
$\CC{G'}{\iperp(g)}(\gamma)$ below.

\begin{lm}
\label{lem:S-in-BC}
Fix $g \in G_{x, 0+}$ with $\jperp(g) < \infty$.
There is
$\gperp \in (G', G)_{x, (0{+}, \jperp(g){+})}\dotm g$
such that
$$
[\gamma\inv, \gperp]
\in (\CC{G'}{\iperp(g)}(\gamma), \CC G{\iperp(g)}(\gamma))
	_{x, (\tperp(g){+}, \tperp(g))}.
$$
\end{lm}

\begin{proof}
%
%
Put $i_0 = \iperp(g)$, $j_0 = \jperp(g)$, and $t_0 = \tperp(g)$,
so $i_0 + j_0 = t_0$ (by Corollary \ref{cor:t=i+j})
and $t_0 < r - j_0$ (by Corollary \ref{cor:t-small}).
Put also $\bH = \CC\bG{i_0}(\gamma)$
and $\bH' = \CC{\bG'}{i_0}(\gamma)$.
By
Remark \ref{rem:local-const},
there is $g' \in G'_{x, 0+}$
such that $i(g'g) = i_0$ and $j(g'g) = j_0$.
By Lemma \ref{lem:t=i+j}, we have that
$t(g'g) = i(g'g) + j(g'g) = i_0 + j_0 = t_0$.
In particular,
$g'g \in \dc(H, G)_{x, (j_0, j_0{+})}$.
By Remark \xref{exp-rem:bracket-facts-decomp}
and Proposition \xref{exp-prop:heres-a-gp} of \cite{adler-spice:good-expansions},
\begin{gather*}
\dc(H, G)_{x, (j_0, j_0{+})}
= \dc^{(j_0)}(H, G)_{x, (j_0, j_0{+})} \\
\intertext{and}
(H, G)_{x, (j_0, j_0{+})}
= (H', G)_{x, (j_0, j_0{+})}
	(H', H)_{x, (j_0{+}, j_0)}.
\end{gather*}
Since the commutator of $G_{x, 0{+}}$ with
$(H, G)_{x, (j_0, j_0{+})} \subseteq G_{x, j_0}$
lies in
$G_{x, j_0{+}} \subseteq (H, G)_{x, (j_0, j_0{+})}$,
we have that $\dc^{(j_0)} \subseteq G_{x, 0{+}}$
normalizes $(H, G)_{x, (j_0, j_0{+})}$.
Thus we may write $g'g = k'g_1k_-$, with
$k' \in (H', G)_{x, (j_0, j_0{+})}$,
$g_1
\in (H', H)_{x, (j_0{+}, j_0)}$,
and
$k_- \in \dc^{(j_0)}$.
Since $\gamma_{\ge i_0} \in H'_{x, i_0}$
(or $\stab_{H'}(\ox)$, if $i_0 = 0$),
Lemma \xref{exp-lem:shallow-comm}
(or Corollary \xref{exp-cor:stab-norm}, if $i_0 = 0$)
of \cite{adler-spice:good-expansions} gives
$$
[\gamma\inv, g_1]
= [\gamma_{\ge i_0}\inv, g_1]
\in (H', H)_{x, (t_0{+}, t_0)}.
$$
Further,
Remarks \xref{exp-rem:bracket-facts-containment} and
\xref{exp-rem:approx-facts-commutator} of \loccit give
$$
[\gamma\inv, k_-] \in H_{x, r - j_0}
\subseteq H_{x, t_0+}\,,
$$
hence $\lsup{g_1}[\gamma\inv, k_-] \in H_{x, t_0{+}}$\,.
Thus, if $\gperp = g_1 k_-$, then
$$
[\gamma\inv, \gperp]
= [\gamma\inv, g_1]\dotm\lsup{g_1}[\gamma\inv, k_-]
\in (H', H)_{x, (t_0{+}, t_0)}.
$$
Since
$\gperp g\inv = k'^{-1}g' \in (G', G)_{x, (0{+}, j_0{+})}$,
we are done.
\end{proof}

\subsection{Gauss sums}
\label{ssec:gauss}

In this section, we consider (in the context of our
character computations) certain sums associated to
non-degenerate quadratic forms on vector spaces over finite
fields.  We call these sums Gauss sums since, for
a $1$-dimensional vector space, they are a special case of
classical Gauss sums (see
\cite{lidl-niederreiter:finite-fields}*{\S5.2}).
We begin with a simple result that computes such objects.

\begin{defn}
\label{defn:basic-gauss-sum}
Recall that, in \S\ref{sec:generalities}, we chose a square
root $\sqrt{-1}$ of $-1$ and used it to construct a
non-trivial additive character $\Lambda$ of \ff.  Put
$$
\mf G_\Lambda(\ff)
= \begin{cases}
-(-1)^{\log_p \card\ff}, & p \equiv 1 \pmod 4 \\
(-\sqrt{-1})^{\log_p \card\ff}, & p \equiv 3 \pmod 4.
\end{cases}
$$
\end{defn}

\begin{lm}
\label{lem:well-known-gauss}
Let $V$ be a finite-dimensional \ff-vector
space, and $B$ a non-degenerate symmetric bilinear form on
$V$.
Put
$\mf G(V, B) = \card V^{-1/2}\sum_{v \in V} \Lambda(B(v, v))$.
Then
$$
\mf G(V, B) = \sgn_\ff(\det B)\mf G_\Lambda(\ff)^{\dim_\ff V}.
$$
\end{lm}

\begin{proof}
Notice that, if $V$ is $1$-dimensional and
$v_0 \in V \smallsetminus \sset 0$, then
\begin{multline*}
\sum_{v \in V} \Lambda(B(v, v))
= \sum_{t \in \ff} \Lambda(t^2 B(v_0, v_0))
\\
= \sum_{t \in \ff} \sgn_\ff(t)\Lambda(t B(v_0, v_0))
= \sgn_\ff(B(v_0, v_0))\sum_{t \in \ff} \sgn_\ff(t)\Lambda(t),
\end{multline*}
where $\sgn_\ff(0) = 1$.
By Theorem 5.15 of \cite{lidl-niederreiter:finite-fields},
this latter sum equals
$\sgn_\ff(B(v_0, v_0))\mf G_\Lambda(\ff)
= \sgn_\ff(\det B)\mf G_\Lambda(\ff)^{\dim V}$.
Now notice that, if $V = \bigoplus_{i \in I} V_i$ is an orthogonal
direct sum decomposition and, for $i \in I$,
$B_i$ denotes the restriction of $B$ to $V_i \times V_i$,
then we have (with the obvious notation)
$\mf G(V, B) = \prod_{i \in I} \mf G(V_i, B_i)$.
By Theorem 6.21 of \cite{lidl-niederreiter:finite-fields},
we are done.
\end{proof}

\begin{rem}
It is also possible to compute a Gauss sum
as in Lemma \ref{lem:well-known-gauss} by re-writing it as
$\sum_{b \in \ff} N_b\Lambda(b)$, where
$N_b = \card{\set{v \in V}{B(v, v) = b}}$ for all $b \in \ff$.
We can then use the explicit computations of $N_b$ in
Theorems 6.26 and 6.27 of
\cite{lidl-niederreiter:finite-fields},
together with the fact that $\mf G_\Lambda(\ff)^2 = \sgn_\ff(-1)$, to
obtain the desired result.
\end{rem}

\begin{defn}
Put
\indexmem{tG-phi-gamma}{\wtilde{\mf G}(\phi, \gamma)}%
\indexmem{G-phi-gamma}{\mf G(\phi, \gamma)}%
\begin{gather*}
\wtilde{\mf G}(\phi, \gamma)
= \sum_{
	g \in \dc_{G'}^{(s)}\CC G{0{+}}(\gamma)_{x, s}
		\backslash\dc^{(s)}
}
	\hat\phi([\gamma\inv, g]) \\
\intertext{and}
\mf G(\phi, \gamma)
= \smabs{\wtilde{\mf G}(\phi, \gamma)}\inv
	\wtilde{\mf G}(\phi, \gamma).
\end{gather*}
\end{defn}

In this subsection, we will compute
$\smabs{\wtilde{\mf G}}
= \smabs{\wtilde{\mf G}(\phi, \gamma)}$
and
$\mf G = \mf G(\phi, \gamma)$.
The proof of the main result
(Proposition \ref{prop:gauss-sum}) is quite close, in
structure and content, to that of Proposition
\ref{prop:theta-tilde-phi}.
A similar result appears in
\cite{waldspurger:loc-trace-form}*{\S VIII.5}.

Recall (see Definition \xref{exp-defn:r-approx} of
\cite{adler-spice:good-expansions}) that,
since \ugamma is a normal $r$-approximation to $\gamma$ in
$G'$, in particular
$\gamma_{< r}$ is tame in $\bG'$.
Let \bT be a maximal $F$-tame
(hence, since $\bG'$ is $F$-tame, an $F$-tame maximal)
torus in $\bG'$ containing $\gamma_{< r}$.
Let $E/F$ be the splitting field of \bT.
Recall that we defined, at the beginning of \S\ref{sec:induction1}, a
character $\hat\phi_E$ of $\bG(E)_{x, s{+}}$ that extends
$\hat\phi\bigr|_{G_{x, s{+}}}$ and is trivial on $\bG(E)_{x, r{+}}$\,.

For $g, g_1, g_2 \in \dc^{(s)}_{\bG(E)}$, put
$$
\fQ(g) = \hat\phi_E([\gamma\inv, g])
\quad\text{and}\quad
\fB(g_1, g_2) = \hat\phi_E\bigl(
	\bigl[[g_2, \gamma\inv], g_1\bigr]
\bigr).
$$
Except in Corollary \ref{cor:Q-and-B}, we will be
interested only in the restrictions of \fQ and \fB
to $\dc^{(s)}$ (respectively, $\dc^{(s)} \times \dc^{(s)}$); however, we could
not find a proof of Corollary \ref{cor:Q-and-B} that did not
involve passing to extension fields.
We will show that \fQ is, in some sense, a quadratic
form (see Corollary \ref{cor:Q-and-B}), so that we can realize
\mf G as a Gauss sum (see Proposition \ref{prop:gauss-sum}).

It is straightforward to verify that
\begin{equation}
\label{eq:Q-and-B}
\fQ(g_1 g_2) = \fQ(g_1)\fQ(g_2)\fB(g_1, g_2)
\quad\text{for $g_1, g_2 \in \dc^{(s)}_{\bG(E)}$.}
\end{equation}

By Proposition \xref{exp-prop:heres-a-gp} of
\cite{adler-spice:good-expansions}, any element
$h \in \dc^{(s)}_{\bG(E)}$ may be written as
\begin{equation}
\label{eq:Q-decomp}
h = \prod_{0 < i < r} h_i,
\quad\text{with $h_i \in \CC\bG i(\gamma)(E)_{x, (r - i)/2}$
for $0 < i < r$.}
\end{equation}
If $g_1, g_2 \in \dc^{(s)}_{\bG(E)}$ and
$g_j = \prod_{0 < i < r} g_{j i}$
are decompositions as in \eqref{eq:Q-decomp}
for $j = 1, 2$, then one verifies inductively that
there are elements
$g'_i \in g_{1 i}g_{2 i}\CC\bG i(\gamma)(E)_{x, ((r - i)/2){+}}$
for $0 < i < r$ such that
$\bigl(\prod_{i_0 < i < r} g_{1 i}\bigr)
	\bigl(\prod_{i_0 < i < r} g_{2 i}\bigr)
= \prod_{i_0 < i < r} g'_i$
for $0 \le i_0 < r$.  In particular, for $i_0 = 0$, we
obtain
\begin{equation}
\label{eq:mult-Q-decomp}
g_1 g_2 = \prod_{0 < i < r} g'_i.
\end{equation}

\begin{lm}
\label{lem:Q-decomp}
If $g \in \dc^{(s)}_{\bG(E)}$ and
$g = \prod_{0 < i < r} g_i$ is a decomposition
as in \eqref{eq:Q-decomp}, then
$\fQ(g) = \prod_{0 < i < r} \fQ(g_i)$.
\end{lm}

\begin{proof}
One verifies inductively that
$[\gamma\inv, g]
\equiv \prod_{i_0 < i < r} [\gamma\inv, g_i]
\pmod{\bG(E)_{x, r{+}}}$
for $0 \le i_0 < r$.
Since $\bG(E)_{x, r{+}} \subseteq \ker \hat\phi_E$, evaluating
$\hat\phi_E$ at both sides of the above identity for $i_0 = 0$
gives the desired result.
\end{proof}

\begin{cor}
\label{cor:Q-const}
\fQ is constant on cosets of
$\odc{\gamma_{< r}; x, r{+}}^{(s)}_{\bG(E)}$.
The restriction of \fQ to $\dc^{(s)}$ is constant on right cosets
of $\dc_{G'}^{(s)}$.
\end{cor}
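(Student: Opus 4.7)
The plan is to apply the identity $\fQ(g_1g_2)=\fQ(g_1)\fQ(g_2)\fB(g_1,g_2)$ from \eqref{eq:Q-and-B}: to see that $\fQ$ is constant on cosets modulo a subgroup $H$ of the ambient group, it suffices to verify that $\fQ|_H\equiv 1$ and that $\fB(g,h)=\fB(h,g)=1$ for all $h\in H$ and all $g$ in the ambient group.

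For the first assertion, fix $h\in\odc{\gamma_{<r};x,r+}^{(s)}_{\bG(E)}$, write $\gamma^{-1}=\gamma_{\ge r}^{-1}\gamma_{<r}^{-1}$, and expand
\[
[\gamma^{-1},h]=\lsup{\gamma_{\ge r}^{-1}}[\gamma_{<r}^{-1},h]\cdot[\gamma_{\ge r}^{-1},h].
\]
The inner commutator of the first factor lies in $\bG(E)_{x,r+}$ by the defining commutator property of $\odc{\gamma_{<r};x,r+}$ (Remark \xref{exp-rem:approx-facts-commutator} of \cite{adler-spice:good-expansions}), and $\gamma_{\ge r}^{-1}\in G_{x,0+}\subseteq\stab_G(\ox)$ preserves this subgroup under conjugation.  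The second factor lies in $\bG(E)_{x,r+}$ because $\gamma_{\ge r}\in G_{x,r}$ and $h\in\bG(E)_{x,0+}$ force the commutator into depth strictly greater than $r$.  Hence $\fQ(h)=1$.  For the bilinear values, the inclusion $[h,\gamma^{-1}]\in\bG(E)_{x,r+}$ just established forces $[[h,\gamma^{-1}],g]\in\bG(E)_{x,r+}=\ker\hat\phi_E$ for any $g\in\dc^{(s)}_{\bG(E)}\subseteq\bG(E)_{x,0+}$, so $\fB(g,h)=1$; symmetrically, $[g,\gamma^{-1}]\in\bG(E)_{x,r}$ from Remark \xref{exp-rem:approx-facts-commutator} applied to $\dc^{(s)}_{\bG(E)}$ gives $\fB(h,g)=1$.

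For the second assertion I invoke Hypothesis \ref{hyp:X*-central} to decompose $X^*=Y^*+Z^*$ with $Y^*\in\mf z(\fg')^*$ and $Z^*\in\fg^*_{x,-s}$.  The bilinear vanishing $\fB(h,g)=1$ for $h\in\dc_{G'}^{(s)}$ and $g\in\dc^{(s)}$ repeats the previous argument.  To establish $\fQ|_{\dc_{G'}^{(s)}}\equiv 1$, factor $h=\prod_{0<i<r}h_i$ as in \eqref{eq:Q-decomp}, with $h_i\in\CC{\bG'}i(\gamma)(F)_{x,(r-i)/2}$; by Lemma \ref{lem:Q-decomp} it suffices to show $\fQ(h_i)=1$.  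Since $h_i$ commutes with $\gamma_{<i}$, we obtain $[\gamma^{-1},h_i]=\lsup{\gamma_{<i}^{-1}}[\gamma_{\ge i}^{-1},h_i]$, a commutator of an element of depth $\ge i$ with one of depth $\ge(r-i)/2$, hence of depth $\ge(r+i)/2>s$.  Transporting to the Lie algebra via the mock exponential, this is represented by $(\Ad(\gamma^{-1})-1)Y_i$ modulo higher-order BCH terms, all of which lie in $[\fg',\fg']$.  Pairing with $X^*$, the $Y^*$-contribution vanishes since $Y^*$ annihilates $[\fg',\fg']$, while the $Z^*$-contribution lands in $F_{(r+i)/2-s}=F_{i/2}\subseteq F_{0+}\subseteq\ker\Lambda$.

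The main obstacle is the Lie-algebra linearization just sketched: one must verify that the mock-exponential expansion of $[\gamma^{-1},h_i]$, including its higher-order BCH corrections, really does pair with $X^*$ inside $\ker\Lambda$.  Fortunately the $Y^*$-contribution is clean because every BCH term is an iterated bracket in $\fg'$, and the $Z^*$-contribution is controlled purely by filtration depth; the argument then parallels the proof of Lemma \ref{lem:phi-trivial}, with the remaining work being careful bookkeeping.
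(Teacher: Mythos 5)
There is a genuine gap in your treatment of the first assertion, and it is located exactly where the paper has to work hardest. You claim that for $h \in \odc{\gamma_{< r}; x, r{+}}^{(s)}_{\bG(E)}$ the commutator $[\gamma_{< r}\inv, h]$ lies in $\bG(E)_{x, r{+}}$ "by the defining commutator property" of $\odc{\gamma_{< r}; x, r{+}}$. That depth estimate holds for the \emph{full} group $\odc{\gamma_{< r}; x, r{+}}$, but not for its truncation by $(s)$. The truncated group is generated by the pieces $\CC\bG i(\gamma)(E)_{x, ((r - i)/2){+}}$ for $0 < i < r$, and the commutator of $(\gamma_{< r})_{\ge i}\inv$ (depth $i$) with such a piece only reaches depth $((r + i)/2){+}$, which lies strictly between $s$ and $r$ for $i < r$. (Compare the containment $\dc^{(j_0)} \subseteq [\gamma; x, r - j_0]$ used in Lemma \ref{lem:t=i+j}: truncating by $(j)$ costs you $j$ in the depth of the resulting commutators, so here you only get $\bG(E)_{x, s{+}}$, not $\bG(E)_{x, r{+}}$.) Since $\hat\phi_E$ is genuinely nontrivial on $\bG(E)_{x, s{+}:r{+}}$, you cannot conclude $\fQ(h) = 1$ from depth alone; the same error undermines your vanishing of $\fB(g, h)$. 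This is precisely why the paper's proof passes through Lemma \xref{exp-lem:center-comm} of \cite{adler-spice:good-expansions} (using $\gamma_{< r} \in Z(\CC G r(\gamma))$ to push the $\CC\bG r(\gamma)$-component of the commutator to depth $> r$) and then Lemma \ref{lem:phi-trivial}, Remark \ref{rem:phi-trivial}, and Hypothesis \ref{hyp:X*-central} (to kill the perpendicular component at depths $> s$). The fact that your argument for the first assertion never invokes Hypothesis \ref{hyp:X*-central} --- which the paper states is needed precisely for this corollary --- should have been a warning sign.

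For the second assertion your conclusion is correct but the route is both riskier and unnecessary. The paper's proof is a one-line formal manipulation: since $\gamma$, $g'$, and $[\gamma\inv, g]$ all lie in the group $\stab_{G'}(\ox)G_{x, s{+}}$ on which $\hat\phi$ is an honest character invariant under conjugation by $G'_{x, 0{+}}$, one gets $\fQ(g'g) = \hat\phi(\gamma\inv)\,\hat\phi(\lsup{g'}\gamma)\,\hat\phi(\lsup{g'}[\gamma\inv, g]) = \hat\phi([\gamma\inv, g]) = \fQ(g)$ with no appeal to $X^*$, to the Lie algebra, or to Hypothesis \ref{hyp:X*-central}. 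Your proposed linearization via the mock exponential with BCH corrections would require justifications (control of the higher-order terms, compatibility of $\mexp$ with iterated brackets) that you acknowledge but do not supply, and in any case it imports a hypothesis where none is needed.
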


The appearance of $\odc{\gamma_{< r}; x, r{+}}^{(s)}_{\bG(E)}$ in the
statement of the corollary is somewhat unexpected.
It appears because $\gamma$ itself might not have a normal
$(r{+})$-approximation.
If (as will usually be the case, by Lemma
\xref{exp-lem:simult-approx} of
\cite{adler-spice:good-expansions})
it does have such an approximation, then
$\odc{\gamma; x, r{+}}^{(s)}_{\bG(E)}
= \odc{\gamma_{< r}; x, r{+}}^{(s)}_{\bG(E)}$.

\begin{proof}
By Proposition \xref{exp-prop:heres-a-gp} of
\cite{adler-spice:good-expansions}, any element
$g_+ \in \odc{\gamma_{< r}; x, r{+}}^{(s)}_{\bG(E)}$
may be written as
$g_+ = \prod_{0 < i < r} g_{{+}, i}$,
with
$g_{{+}, i} \in \CC\bG i(\gamma_{< r})(E)_{x, ((r - i)/2){+}}
	= \CC\bG i(\gamma)(E)_{x, ((r - i)/2){+}}$
for $0 < i < r$.
If $g \in \dc^{(s)}_{\bG(E)}$
and $g = \prod_{0 < i < r} g_i$ is a decomposition as in
\eqref{eq:Q-decomp}, then, by \eqref{eq:mult-Q-decomp} and
Lemma \ref{lem:Q-decomp},
$\fQ(g) = \prod_{0 < i < r} \fQ(g_i)$ and
$\fQ(g_+ g) = \prod_{0 < i < r} \fQ(g'_i)$,
where
$g'_i \in g_{{+}, i}g_i\CC\bG i(\gamma)(E)_{x, ((r - i)/2){+}}
	= \CC\bG i(\gamma)(E)_{x, ((r - i)/2){+}} g_i$
for $0 < i < r$,
so it suffices to show that $\fQ(g'_i) = \fQ(g_i)$
for $0 < i < r$.
Indeed, for such $i$, put
$k_i = g'_i g_i\inv \in \CC\bG i(\gamma)(E)_{x, ((r - i)/2){+}}$.
Upon applying Lemma \ref{lem:Q-decomp} again
(this time, with $k_i$
playing the role of $g_{i - \varepsilon}$ for $\varepsilon$
sufficiently small), we obtain that
$\fQ(g'_i) = \fQ(k_i)\fQ(g_i)
	= \hat\phi_E([\gamma\inv, k_i])\fQ(g_i)$.
Further,
$$
[\gamma\inv, k_i] \equiv [\gamma_{< r}\inv, k_i]
	\pmod{\bG(E)_{x, r{+}} \subseteq \ker \hat\phi_E},
$$
so it suffices to show that
$[\gamma_{< r}\inv, k_i] \in \ker \hat\phi_E$.
Since
$$
[\gamma_{< r}\inv, k_i]
\in \bG(E)_{x, ((r + i)/2){+}}
= (\CC\bG r(\gamma), \bG)(E)_{x, (((r + i)/2){+}, ((r + i)/2){+})}
$$
and $\gamma_{< r} \in Z(\CC G r(\gamma))$,
Lemma \xref{exp-lem:center-comm} of
\cite{adler-spice:good-expansions} gives
$$
[\gamma_{< r}\inv, k_i]
\in (\CC\bG r(\gamma), \bG)(E)_{x, (r{+}, ((r + i)/2){+})},
$$
and Lemma \ref{lem:phi-trivial}, Remark
\ref{rem:phi-trivial}, and Hypothesis \ref{hyp:X*-central}
give that
$[\gamma_{< r}\inv, k_i] \in \ker \hat\phi_E$,
as desired.

If $g \in \dc^{(s)}$ and $g' \in \dc_{G'}^{(s)}$,
then, since
$\hat\phi_E\bigr|_{G_{x, s{+}}}
= \hat\phi\bigr|_{G_{x, s{+}}}$
and $\hat\phi$ is invariant under conjugation by
$G'_{x, 0{+}}$\,, we have
\begin{multline*}
\mc Q(g'g) = \hat\phi_E([\gamma\inv, g'g])
= \hat\phi([\gamma\inv, g'g]) \\
= \hat\phi(\gamma\inv)\hat\phi^{g'}(\gamma)
\hat\phi^{g'}([\gamma\inv, g])
= \hat\phi([\gamma\inv, g]) \\
= \hat\phi_E([\gamma\inv, g])
= \mc Q(g).
\text{\qedhere}
\end{multline*}
\end{proof}

\begin{lm}
\label{lem:B-decomp}
If $g_1, g_2 \in \dc^{(s)}_{\bG(E)}$ and
$g_j = \prod_{0 < i < r} g_{j i}$ are decompositions as in
\eqref{eq:Q-decomp} for $j = 1, 2$, then
$\fB(g_1, g_2) = \prod_{0 < i < r} \fB(g_{1 i}, g_{2 i})$.
\end{lm}

\begin{proof}
By \eqref{eq:Q-and-B}, \eqref{eq:mult-Q-decomp},
Lemma \ref{lem:Q-decomp}, and Corollary \ref{cor:Q-const},
we have that, for suitable $g_i'$,
\begin{multline*}
\fQ(g_1)\fQ(g_2)\fB(g_1, g_2)
= \fQ(g_1 g_2) = \prod_{0 < i < r} \fQ(g'_i)
= \prod_{0 < i < r} \fQ(g_{1 i}g_{2 i}) \\
= \prod_{0 < i < r} \fQ(g_{1 i})\fQ(g_{2 i})
	\fB(g_{1 i}, g_{2 i})
= \fQ(g_1)\fQ(g_2)
\prod_{0 < i < r} \fB(g_{1 i}, g_{2 i}).
\text{\qedhere}
\end{multline*}
\end{proof}

For $0 < i < r$, let $Y_i$ be any element of
$\mexp_{x, i:i{+}}\inv(\gamma_i)$.
If $g_1, g_2 \in \dc^{(s)}_{\bG(E)}$ and
$g_j = \prod_{0 < i < r} g_{j i}$
are decompositions
as in \eqref{eq:Q-decomp} for $j = 1, 2$, then put
\begin{equation}
\label{eq:sqrt-B-defn}
\log_\Lambda \sqrt\fB(g_1, g_2)
= \tfrac 1 2\sum_{0 < i < r}
	\ol{X^*\bigl[[Y_i, X_{2i}], X_{1i}\bigr]},
\end{equation}
where
$t \mapsto \ol t$ is the natural map from $E_0$ to $\ff_E$,
and
$X_{j i}
\in (\mexp^E_{x, ((r - i)/2):((r - i)/2){+}})\inv(g_{j i})
	\cap \Lie(\CC\bG i(\gamma))(E)_{x, (r - i)/2}$
for $0 < i < r$ and $j = 1, 2$.
Note that there do exist elements $X_{j i}$ in the indicated
intersection, by Lemma \ref{lem:mock-exp-subgroup}; and that
this definition does not depend on the choices
of the various $Y_i$ and $X_{ji}$.

\begin{lm}
\label{lem:sqrt-B-and-B}
$\log_\Lambda \sqrt\fB$ and \fB are symmetric, and
\begin{align*}
\fB(g_1 g_1', g_2) & = \fB(g_1, g_2)\fB(g_1', g_2), \\
\log_\Lambda \sqrt\fB(g_1 g_1', g_2) & = \log_\Lambda \sqrt\fB(g_1, g_2)
	+ \log_\Lambda \sqrt\fB(g_1', g_2),
\end{align*}
and
$$
\bigl(\Lambda(\log_\Lambda \sqrt\fB(g_1, g_2))\bigr)^2 = \fB(g_1, g_2)
$$
for $g_1, g_1', g_2 \in \dc^{(s)}_{\bG(E)}$.
\end{lm}

Of course, \fB and $\log_\Lambda \sqrt\fB$ exhibit analogous behaviors
in the second variable, by symmetry.

\begin{proof}
The symmetry and multiplicativity of \fB will follow from
those of $\log_\Lambda \sqrt\fB$.

By Lemma \ref{lem:B-decomp} and \eqref{eq:sqrt-B-defn},
it suffices to show the desired facts on
each $\CC\bG i(\gamma)(E)_{x, (r - i)/2}$\,.
Accordingly, fix $0 < i < r$
and $g_1, g_2 \in \CC\bG i(\gamma)(E)_{x, (r - i)/2}$\,.
Let $X_j \in \Lie(\CC\bG i(\gamma))(E)_{x, (r - i)/2}$
satisfy $g_j \in \mexp^E_{x, ((r - i)/2):((r - i)/2){+}}(X_j)$
for $j = 1, 2$.
Then
$$
\bigl[[Y_i, X_1], X_2\bigr]
= \bigl[[Y_i, X_2], X_1\bigr]
+ \bigl[Y_i, [X_1, X_2]\bigr]
$$
by the Jacobi identity.

Since \bT is a maximal torus in
$\CC\bG i(\gamma)$, we have
$\gamma_i \in \ZZ G i(\gamma) \subseteq T$.
Since
$\bigl[\gamma_i, [g_1, g_2]\bigr] \in \CC\bG i(\gamma)(E)_{x, r}$\,,
Lemma \xref{exp-lem:center-comm} of
\cite{adler-spice:good-expansions} gives
$\bigl[\gamma_i, [g_1, g_2]\bigr]
\in (\bT, \CC\bG i(\gamma))(E)_{x, (r{+}, r)}$.
By two applications of Hypothesis \ref{hyp:mock-exp-ad}, we
have that
$\bigl[Y_i, [X_1, X_2]\bigr]
\in (\mexp^E_{x, r:r{+}})\inv\bigl[\gamma_i, [g_1, g_2]\bigr]$,
hence (by Hypothesis \ref{hyp:mock-exp-concave}) that
$$
\bigl[Y_i, [X_1, X_2]\bigr]
\in \Lie(\bT, \CC\bG i(\gamma))(E)_{x, (r{+}, r)}.
$$
By an easy analogue of Proposition
\xref{exp-prop:heres-a-gp} of
\cite{adler-spice:good-expansions},
$\bigl[Y_i, [X_1, X_2]\bigr]
\in \Lie(\bT)(E)_{r{+}}
\oplus (\Lie(\bT)^\perp(E) \cap \Lie(\bG)(E)_{x, r})$,
where
$\Lie(\bT)^\perp
= \bigoplus_{\alpha \in \Phi(\bG, \bT)}
	\Lie(\bG)_\alpha$.
Since
$X^* \in \mf z(\fg')^* + \fg^*_{x, r{+}}
	\subseteq \mf t^* + \fg^*_{x, r{+}}$\,,
we have that
$X^*\bigl[Y_i, [X_1, X_2]\bigr] \in E_{0{+}}$\,,
so
$$
\log_\Lambda \sqrt\fB(g_2, g_1)
= \tfrac 1 2 \ol{ X^*\bigl[[Y_i, X_1], X_2\bigr] }
= \tfrac 1 2 \ol{ X^*\bigl[[Y_i, X_2], X_1\bigr] }
= \log_\Lambda \sqrt\fB(g_1, g_2).
$$
Now fix $g_1' \in \CC\bG i(\gamma)(E)_{x, (r - i)/2}$\,,
and choose $X_1' \in \Lie(\CC\bG i(\gamma))(E)_{x, (r - i)/2}$
such that
$g_1' \in \mexp^E_{x, ((r - i)/2):((r - i)/2){+}}(X_1')$.
Then, since $\mexp^E_{x, ((r - i)/2):((r - i)/2){+}}$ is a
homomorphism, we have that
$g_1 g_1' \in \mexp^E_{x, ((r - i)/2):((r - i)/2){+}}(X_1 + X_1')$,
and it follows immediately that
$\log_\Lambda \sqrt\fB(g_1 g_1', g_2)
= \log_\Lambda \sqrt\fB(g_1, g_2)\log_\Lambda \sqrt\fB(g_1', g_2)$.

Finally, note that
$\bigl[[g_2, \gamma\inv], g_1\bigr]
= \bigl[[g_2, \gamma_{\ge i}\inv], g_1\bigr]
\equiv \bigl[[g_2, \gamma_i\inv], g_1\bigr]
\pmod{G_{x, r{+}}}$
and, by two applications of Hypothesis
\ref{hyp:mock-exp-ad} and the fact that
$\gamma_i\inv \in \mexp^E_{x, r:r{+}}(Y_i)$, that
$\bigl[[g_2, \gamma_i\inv], g_1\bigr]
\in \mexp^E_{x, r:r{+}}\bigl(\bigl[[Y_i, X_2], X_1\bigr]\bigr)$,
so
\begin{multline*}
\fB(g_1, g_2)
= \hat\phi_E\bigl(\bigl[[g_2, \gamma\inv], g_1\bigr]\bigr)
= \Lambda\bigl(X^*\bigl[[Y_i, X_2], X_1\bigr]\bigr) \\
= \Bigl(\Lambda\bigl(\tfrac 1 2
	X^*\bigl[[Y_i, X_2], X_1\bigr]
\bigr)\Bigr)^2
= \bigl(\Lambda(\log_\Lambda \sqrt\fB(g_1, g_2))\bigr)^2.
\text{\qedhere}
\end{multline*}
\end{proof}

\begin{cor}
\label{cor:Q-and-B}
We have $\fQ(g) = \Lambda(\log_\Lambda \sqrt\fB(g, g))$ for $g \in \dc^{(s)}_{\bG(E)}$.
\end{cor}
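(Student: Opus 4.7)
The plan is first to reduce to a single factor in the decomposition of $g$, and then to compute $\fQ(g)$ there by a Baker--Campbell--Hausdorff expansion of $[\gamma^{-1}, g]$, discarding every term killed by $X^*$ modulo $E_{0{+}}$.

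Decompose $g = \prod_{0 < i < r} g_i$ as in \eqref{eq:Q-decomp}, with $g_i \in \CC\bG i(\gamma)(E)_{x, (r - i)/2}$. By Lemma \ref{lem:Q-decomp}, $\fQ(g) = \prod_i \fQ(g_i)$; and specializing \eqref{eq:sqrt-B-defn} to $g_1 = g_2 = g$ gives $\log_\Lambda \sqrt{\fB}(g, g) = \sum_i \log_\Lambda \sqrt{\fB}(g_i, g_i)$. Since $\Lambda$ is a character, the claim reduces to the single-factor identity $\fQ(g_i) = \Lambda(\log_\Lambda \sqrt{\fB}(g_i, g_i))$, so fix $0 < i < r$ and write $g = \mexp^E_{x, (r - i)/2:(r - i)/2{+}}(X)$ with $X \in \Lie(\CC\bG i(\gamma))(E)_{x, (r - i)/2}$. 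Since $g$ centralizes $\gamma_{< i}$, we have $[\gamma^{-1}, g] = [\gamma_{\ge i}^{-1}, g]$, and I will choose $Y \in \Lie(\bG)(E)_{x, i}$ with $\gamma_{\ge i} \equiv \mexp(Y) \pmod{\bG(E)_{x, r{+}}}$ and $Y \equiv Y_i \pmod{\fg_{x, i{+}}}$; note $Y_i \in \mf t$ since $\gamma_i \in T$.

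Writing $[\gamma_{\ge i}^{-1}, g] = \mexp(\Ad(\gamma_{\ge i}^{-1}) X)\mexp(-X)$ and running the Baker--Campbell--Hausdorff expansion provided by the mock-exponential, a direct depth count (using $\depth_x(X) = (r - i)/2$ and that each bracket with $Y$ adds $i$) will show that, modulo $\fg_{x, r{+}}$,
\[
Z := \mexp^{-1}([\gamma^{-1}, g])
\equiv (\Ad(\gamma_{\ge i}^{-1}) - 1) X + \tfrac 12 [[Y, X], X],
\]
as all higher-order BCH corrections and all iterates of $\ad Y$ past the leading term land at depth $> r$. To conclude, split $X^* = X^*_0 + X^*_1$ with $X^*_0 \in \mf z(\fg')^*$ and $X^*_1 \in \fg^*_{x, -s}$ per Hypothesis \ref{hyp:X*-central}. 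The functional $X^*_0$ is $\bG'$-invariant, hence $\fg'$-invariant, so (since $\mf t \subseteq \fg'$) it kills $[\mf t, \fg] = \bigoplus_\alpha \fg_\alpha$, and in particular every $(\ad Y)^k X$ for $k \ge 1$. And $X^*_1((\ad Y)^k X) \in E_{ki + (r - i)/2 - s} = E_{(2k - 1)i/2} \subseteq E_{0{+}}$ since $i > 0$. So the first summand of $Z$ disappears under $\Lambda \circ X^*$. Replacing $Y$ by $Y_i$ in the second summand changes it by $[[Y - Y_i, X], X]$, which has depth $> r$ and so is again killed. Thus $X^*(Z) \equiv \tfrac 12 X^*([[Y_i, X], X]) \pmod{E_{0{+}}}$, and $\fQ(g) = \Lambda(X^*(Z)) = \Lambda(\log_\Lambda \sqrt{\fB}(g, g))$.

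The hard part will be carrying out the BCH expansion and the attendant depth bookkeeping carefully in the mock-exponential framework, to certify rigorously that each of the many correction terms contributes nothing modulo $E_{0{+}}$. The genuinely essential input is Hypothesis \ref{hyp:X*-central}: it is exactly what forces the cascade of correction terms from $\Ad(\gamma_{\ge i}^{-1}) - 1$ to vanish under $X^*$ modulo $E_{0{+}}$, leaving only the quadratic form $\tfrac 12 X^*([[Y_i, X], X])$.
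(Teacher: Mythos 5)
There is a genuine gap, and it sits exactly where you park the ``hard part''. Your computation of $Z = \mexp\inv([\gamma\inv, g])$ modulo $\Lie(\bG)(E)_{x, r{+}}$ requires a second-order Baker--Campbell--Hausdorff expansion, and the mock-exponential framework does not provide one. The only compatibility between $\mexp$ and commutators that the paper assumes is first order: Hypotheses \ref{hyp:mock-exp-Ad} and \ref{hyp:mock-exp-ad} identify $[\gamma_{\ge i}\inv, g]$ with $(\Ad(\gamma_{\ge i}\inv) - 1)X$ (equivalently, with $-[Y, X]$) only in the quotient by depth $((r+i)/2){+}$, whereas $\fQ(g) = \hat\phi_E([\gamma\inv,g])$ depends on $[\gamma\inv,g]$ modulo the strictly smaller group $\bG(E)_{x,r{+}}$. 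The quadratic correction $\tfrac 12[[Y,X],X]$ you need to extract has depth $r$, which lies precisely in the window between $((r+i)/2){+}$ and $r{+}$ that the axioms do not control; it is not a correction term to be bounded away but the entire content of the corollary, and no depth bookkeeping inside the stated hypotheses can produce it. (A symptom of the same problem: $\mexp^E_{x,t:u}$ is only defined for $u \le 2t$, so you cannot even lift $\gamma_{\ge i}$ to a $Y$ well defined ``$\bmod\ \bG(E)_{x,r{+}}$'' when $i < s$.) With a genuine exponential satisfying BCH --- characteristic zero, $p$ large --- your estimates and the splitting of $X^*$ via Hypothesis \ref{hyp:X*-central} do go through, but the paper deliberately works without assuming such a map.

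The paper's proof is engineered to avoid exactly this. By \eqref{eq:Q-and-B} and Lemma \ref{lem:sqrt-B-and-B}, the discrepancy $g \mapsto \fQ(g)\Lambda(\log_\Lambda \sqrt\fB(g, g))\inv$ is multiplicative on $\dc^{(s)}_{\bG(E)}$, so it suffices to check the identity on semigroup generators, namely elements $g$ of the individual groups $\bU_\alpha(E) \cap \bG(E)_{x,(r-i)/2}$ with $\alpha \in \wtilde\Phi(\CC\bG i(\gamma), \bT)$. For such $g$ both sides are $1$ for soft reasons: $[[Y_i, X], X] = 0$ because $\Lie(\bU_\alpha)$ is abelian and $\ad(Y_i)$-stable, and $\fQ(g) = 1$ because $[\gamma_{< r}\inv, g] \in \bU_\alpha(E) \cap \bG(E)_{x,(r+i)/2} \subseteq \ker\hat\phi_E$ by Lemma \ref{lem:phi-trivial}, Remark \ref{rem:phi-trivial}, and Hypothesis \ref{hyp:X*-central}. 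Only first-order information is used. Your reduction to single factors $g_i \in \CC\bG i(\gamma)(E)_{x,(r-i)/2}$ is correct as far as it goes, but you must push it one step further, to root-group generators, and replace BCH by the multiplicativity of the discrepancy.
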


\begin{proof}
Note that $\gamma_i \in \ZZ G r(\gamma) \subseteq T$ for
$0 < i < r$.
Since the choice of $Y_i \in \mexp_{x, i:i{+}}\inv(\gamma_i)$
does not matter in \eqref{eq:sqrt-B-defn},
by Lemma \ref{lem:mock-exp-subgroup}, we may, and
do, take $Y_i \in \mf t$.
If $0 < i < r$,
$\alpha \in \wtilde\Phi(\CC\bG i(\gamma), \bT)$,
and $g \in \bU_\alpha(E) \cap \bG(E)_{x, (r - i)/2}$\,, then
choose
$$
X \in \Lie(\bU_\alpha)(E) \cap \Lie(\bG)(E)_{x, (r - i)/2}
$$
such that
$$
g \in \mexp^E_{x, ((r - i)/2):((r - i)/2){+}}(X).
$$
(Such an $X$ exists, by Lemma \ref{lem:mock-exp-roots}.)
Since $\Lie(\bU_\alpha)$ is Abelian and preserved by
$\ad(Y_i)$, we have that
$\bigl[[Y_i, X], X\bigr] = 0$,
hence that
$$
\log_\Lambda \sqrt\fB(g, g) = \tfrac 1 2\ol{X^*\bigl[[Y_i, X], X\bigr]} = 0.
$$

If $\alpha = 0$, then
$[\gamma_{< r}\inv, g] = 1 \in \ker \hat\phi_E$.
Otherwise, by Lemma \ref{lem:phi-trivial}, Remark
\ref{rem:phi-trivial}, and Hypothesis \ref{hyp:X*-central},
we have
$$
[\gamma_{< r}\inv, g]
\in \bU_\alpha(E) \cap \bG(E)_{x, (r + i)/2}
\subseteq (\bT, \bG)(E)_{x, (r{+}, s{+})}
\subseteq \ker \hat\phi_E.
$$
In either case,
$$
\fQ(g)
= \hat\phi_E([\gamma\inv, g]) = \hat\phi_E([\gamma_{< r}\inv, g])
= 1 = \Lambda(\log_\Lambda \sqrt\fB(g, g)).
$$

By Definitions \xref{exp-defn:vGvr-split} and
\xref{exp-defn:fancy-centralizer-no-underline} of
\cite{adler-spice:good-expansions},
we have shown equality on a set of semigroup generators for
$\dc^{(s)}_{\bG(E)}$.
Since $g \mapsto \fQ(g)\Lambda(\log_\Lambda \sqrt\fB(g, g))\inv$ is multiplicative
by \eqref{eq:Q-and-B} and Lemma \ref{lem:sqrt-B-and-B},
we have equality everywhere.
\end{proof}

\begin{lm}
\label{lem:B-nondegen}
If $g_2 \in \dc^{(s)}$ is such that
$\fB(g, g_2) = 1$ for all $g \in \dc^{(s)}$, then
$g_2 \in \odc{\gamma; x, r}^{(s)}_{G'}\odc{\gamma_{< r}; x, r{+}}^{(s)}$.
\end{lm}

As remarked after Corollary \ref{cor:Q-const}, the unexpected
appearance of $\odc{\gamma_{< r}; x, r{+}}^{(s)}$ in place
of $\odc{\gamma; x, r{+}}^{(s)}$ compensates for the fact
that $\gamma$ might not have a normal
$(r{+})$-approximation.

\begin{proof}
By Lemma \ref{lem:B-decomp}
(and the fact that, by Proposition
\xref{exp-prop:heres-a-gp} of
\cite{adler-spice:good-expansions},
all the terms in a decomposition \eqref{eq:Q-decomp} of an
$F$-rational element may be taken to be $F$-rational),
it suffices to consider
the case that $g_2 \in \CC G i(\gamma)_{x, (r - i)/2}$
for some $0 < i < r$.
Notice that
$[g_2, \gamma\inv] = [g_2, \gamma_{\ge i}\inv]
\in \CC G i(\gamma)_{x, (r + i)/2}$\,.
Therefore, if
$g \in (\CC G i(\gamma), G)_{x, (((r - i)/2){+}, (r - i)/2)}$,
then Lemma \xref{exp-lem:shallow-comm} of
\cite{adler-spice:good-expansions} gives that
$\bigl[[g_2, \gamma\inv], g\bigr]
\in (\CC G i(\gamma), G)_{x, (r{+}, r)}$,
hence, by Lemma \ref{lem:phi-trivial}
and Remark \ref{rem:phi-trivial},
that
$\bigl[[\gamma\inv, g_2], \hat\phi\bigr](g)
:= \hat\phi\bigl(\bigl[[g_2, \gamma\inv], g\bigr]\bigr)
= 1$.
That is, $\bigl[[\gamma\inv, g_2], \hat\phi\bigr]$
is trivial on $(\CC G i(\gamma), G)_{x, (r{+}, r)}$.
By hypothesis, it is also trivial on
$\CC G i(\gamma)_{x, r}$\,, hence, by
Proposition \xref{exp-prop:heres-a-gp} of
\cite{adler-spice:good-expansions},
on $G_{x, r}$\,.
By Lemma \ref{lem:phi-commute},
$$
g_2 \in (\CC{G'}i(\gamma), \CC G i(\gamma))
	_{x, ((r - i)/2, ((r - i)/2){+})}
\subseteq \odc{\gamma; x, r}^{(s)}_{G'}
	\odc{\gamma_{< r}; x, r{+}}^{(s)},
$$
as desired.
\end{proof}

Via \eqref{eq:Q-and-B}, we may view Lemma
\ref{lem:B-nondegen} as an ``upper bound'' on the size of
the level set of \fQ containing a fixed element $g$.  Since
Corollary \ref{cor:Q-const} describes uniform ``lower
bounds'' on the sizes of the level sets of \fQ,
we have quite precise local constancy
information.

\begin{notn}
\label{notn:gauss}
In addition to Notation \ref{notn:root-constants},
for $\alpha \in \Phi(\bG, \bT)$,
let $\mo V_\alpha$ denote the image of
$$
\Lie(\bG)_\alpha(E) \cap \Lie(\dc^{(s)}_{\bG(E)})
$$
in
$$
\Lie(
	\odc{\gamma; x, r}^{(s)}_{\bG'(E)}
	\odc{\gamma_{< r}; x, r{+}}^{(s)}_{\bG(E)}
)\backslash\Lie(\dc^{(s)}_{\bG(E)}),
$$
and $V_\alpha$ the set of $\Gamma_\alpha$-fixed points in
$\mo V_\alpha$.
(The symbols $\mo V_\alpha$ and $V_\alpha$
had a different meaning in \S\ref{sec:weil}.)
More concretely, we have that
$\mo V_\alpha = \sset 0$
if
$\alpha \in \Phi(\bG', \bT) \cup \Phi(\CC\bG r(\gamma), \bT)$
or
$\alpha \not\in \Phi(\CC\bG{0{+}}(\gamma), \bT)$;
and, if
$\alpha \in \Phi(\CC\bG{0{+}}(\gamma), \bT)
	\smallsetminus
	(\Phi(\bG', \bT) \cup \Phi(\CC\bG r(\gamma), \bT))$,
then
$\mo V_\alpha
\cong \lsub E\mf u_{(\alpha + (r - i)/2):(\alpha + (r - i)/2){+}}$\,,
where $i = \ord(\alpha(\gamma_{< r}) - 1)$
and $\alpha + (r - i)/2$ is the affine root with gradient
$\alpha$ whose value at $x$ is $(r - i)/2$.
Put
$\Upsilon(\phi, \gamma) = \set{\alpha \in \Phi(\bG, \bT)}
	{V_\alpha \ne \sset 0}$,
and
\begin{align*}
\Upsilon_{\textup{symm}, \textup{unram}}(\phi, \gamma)
& = \sett{\alpha \in \Upsilon(\phi, \gamma)}
	{${-\alpha} \in \Gamma\dota\alpha$ and
	$\mysigma_\alpha \ne 1$ on $\ff_\alpha$}, \\
\Upsilon_{\textup{symm}, \textup{ram}}(\phi, \gamma)
& = \sett{\alpha \in \Upsilon(\phi, \gamma)}
	{${-\alpha} \in \Gamma\dota\alpha$ and
	$\mysigma_\alpha = 1$ on $\ff_\alpha$}, \\
\intertext{and}
\Upsilon^{\textup{symm}}(\phi, \gamma)
& = \set{\alpha \in \Upsilon(\phi, \gamma)}
	{-\alpha \not\in \Gamma\dota\alpha}.
\end{align*}
We will omit $\phi$ and $\gamma$ from the notation when convenient.
Note that all of these sets are
$\Gamma \times \sset{\pm 1}$-stable.
We denote by
$\dot\Upsilon_{\textup{symm}, \textup{unram}}(\phi, \gamma)$
and
$\dot\Upsilon_{\textup{symm}, \textup{ram}}(\phi, \gamma)$
sets of representatives for the $\Gamma$-orbits in the appropriate
sets;
and by $\ddot\Upsilon^{\textup{symm}}(\phi, \gamma)$ a set of
representatives for the $\Gamma \times \sset{\pm 1}$-orbits in
$\Upsilon^{\textup{symm}}$.
Finally, put
$\dot\Upsilon_{\textup{symm}}(\phi, \gamma)
= \dot\Upsilon_{\textup{symm}, \textup{unram}}
\cup \dot\Upsilon_{\textup{symm}, \textup{ram}}$,
$\dot\Upsilon(\phi, \gamma)
= \dot\Upsilon_{\textup{symm}} \cup \pm\ddot\Upsilon^{\textup{symm}}$,
and
$f(\dot\Upsilon_{\textup{symm}, \textup{ram}}(\phi, \gamma))
= \sum_{\alpha \in \dot\Upsilon_{\textup{symm}, \textup{ram}}}
	f_\alpha$.
\end{notn}

%

The proof of the following result is a relatively
straightforward application of results from
\cite{adler-spice:good-expansions} that allow us to
combine and manipulate groups of the form $\lsub\bT G_{x, f}$.

\begin{pn}
\label{prop:gauss-sum-card}
\begin{align*}
\smabs{\wtilde{\mf G}(\phi, \gamma)}
= & \Bigindx{\odc{\gamma_{< r}; x, r}}
	{\odc{\gamma_{< r}; x, r}_{G'}G_{x, s}}^{1/2} \\
  & \times\Bigindx{\odc{\gamma_{< r}; x, r{+}}}
	{\odc{\gamma_{< r}; x, r{+}}_{G'}G_{x, s{+}}}^{1/2} \\
  & \times\smcard{(\CC{G'}{0{+}}(\gamma), \CC G{0{+}}(\gamma))
		_{x, (r, s):(r, s{+})}}^{-1/2}
\end{align*}
\end{pn}

\begin{proof}
Since none of the quantities involved change if we replace
$\gamma$ by $\gamma_{< r}$, we do so.
Write
$\mc V
= \dc^{(s)}_{G'}\odc{\gamma; x, r{+}}^{(s)}
	\backslash\dc^{(s)}$.
For $0 < j < s$, we have that
$(\CC{G'}{r - 2j}(\gamma), \CC G{r - 2j}(\gamma))
	_{x, (j, j):(j, j{+})}$
is naturally isomorphic to a subgroup of \mc V.
It is straightforward to check that \mc V is the direct sum
of these subgroups.
Then reasoning as in the proof of Proposition
\ref{prop:theta-tilde-phi} (applied to each direct summand)
shows that
\begin{equation}
\label{eq:gauss-V-decomp}
\mc V
\cong
\bigoplus_{\alpha \in \dot\Upsilon_{\textup{symm}}}
	V_\alpha
\oplus \bigoplus_{\alpha \in \ddot\Upsilon^{\textup{symm}}}
	V_{\pm\alpha} =: V,
\end{equation}
where, as before, we have written
$V_{\pm\alpha} = V_\alpha \oplus V_{-\alpha}$ for
$\alpha \in \ddot\Upsilon^{\textup{symm}}$.
Write $B$ for the pairing on $V$ induced by
\eqref{eq:gauss-V-decomp} (and the pairing $\log_\Lambda \sqrt\fB$ on
\mc V).
Notice that $B$ is \ff-bilinear.
By Lemma \ref{lem:B-nondegen}, $B$ is non-degenerate.

By Corollaries \ref{cor:Q-const} and \ref{cor:Q-and-B},
\begin{align*}
\wtilde{\mf G}
&= \Bigindx{\dc_{G'}^{(s)}\odc{\gamma; x, r{+}}}
	{\dc_{G'}^{(s)}\CC G{0{+}}(\gamma)_{x, s}}
\sum_{g \in \mc V} \fQ(g) \\
&= \Bigindx{\dc_{G'}^{(s)}\odc{\gamma; x, r{+}}}
	{\dc_{G'}^{(s)}\CC G{0{+}}(\gamma)_{x, s}}
\sum_{X \in V} \Lambda(B(X, X)).
\end{align*}
Since $B$ is non-degenerate, we have by Lemma
\ref{lem:well-known-gauss} that (in the notation of that
lemma)
$$
\abs{\sum_{X \in V} \Lambda(B(X, X))}
= \abs{\card V^{1/2}\mf G(V, B)}
= \card V^{1/2}
= \card{\mc V}^{1/2}.
$$
Thus
\begin{equation}
\label{eq:gauss-sum-card-1}
\begin{aligned}
\smabs{\wtilde{\mf G}}
& =  \Bigindx{\dc_{G'}^{(s)}\odc{\gamma; x, r{+}}^{(s)}}
	{\dc_{G'}^{(s)}\CC G{0{+}}(\gamma)_{x, s}} \\
& \qquad\times\Bigindx{\dc^{(s)}}
	{\dc_{G'}^{(s)}\odc{\gamma; x, r{+}}^{(s)}}
	^{1/2} \\
& = \Bigindx{\dc_{G'}^{(s)}\odc{\gamma; x, r{+}}^{(s)}}
	{\dc_{G'}^{(s)}\CC G{0{+}}(\gamma)_{x, s}}^{1/2} \\
& \qquad\times\Bigindx{\dc^{(s)}}
	{\dc_{G'}^{(s)}\CC G{0{+}}(\gamma)_{x, s}}^{1/2}.
\end{aligned}
\end{equation}
(In the second and third expressions being compared, the first terms
are the same,
except that the latter contains an extra exponent of $1/2$.)
By Remarks \xref{exp-rem:bracket-facts-containment}
and \xref{exp-rem:bracket-facts-decomp} of
\cite{adler-spice:good-expansions},
\begin{equation}
\label{eq:dc-card-1}
\dc^{(s)}G_{x, s} = \dc.
\end{equation}
Upon writing $\odc{\gamma; x, r{+}}^{(s)}$ and
$\odc{\gamma; x, r{+}}^{(s{+})}$
as groups of the form $\lsub\bT G_{x, f}$\,,
using Definition
\xref{exp-defn:fancy-centralizer-no-underline} of \loccit,
we see that
$\odc{\gamma; x, r{+}}^{(s)}
= \odc{\gamma; x, r{+}}^{(s{+})}$.
Therefore, again by
Remarks \xref{exp-rem:bracket-facts-containment}
and \xref{exp-rem:bracket-facts-decomp} of
\loccit, we have that
\begin{equation}
\label{eq:dc+-card-1}
\odc{\gamma; x, r{+}}^{(s)}G_{x, s{+}}
	= \odc{\gamma; x, r{+}}.
\end{equation}
By Proposition \xref{exp-prop:heres-a-gp}
and Lemma \xref{exp-lem:more-vGvr-facts} of
\cite{adler-spice:good-expansions},
we have the following
equalities:
\begin{align}
\label{eq:dc-intersect}
\dc^{(s)} \cap \dc_{G'}G_{x, s}
& = \dc_{G'}^{(s)}\CC G{0{+}}(\gamma)_{x, s}\,, \\
\label{eq:dc+-intersect}
\odc{\gamma; x, r{+}}^{(s)}
\cap \odc{\gamma; x, r}_{G'}^{(s)}\CC G{0{+}}(\gamma)_{x, s}
& = \odc{\gamma; x, r{+}}_{G'}^{(s)}\CC G{0{+}}(\gamma)_{x, s}\,, \\
\label{eq:dc+-intersect'}
\odc{\gamma; x, r{+}}^{(s)}
\cap \odc{\gamma; x, r{+}}_{G'}
	(\CC G{0{+}}(\gamma), G)_{x, (s, s{+})}
& = \odc{\gamma; x, r{+}}_{G'}^{(s)}\CC G{0{+}}(\gamma)_{x, s}\,, \\
\intertext{and}
\label{eq:other-intersect}
(\CC{G'}{0{+}}(\gamma), \CC G{0{+}}(\gamma))_{x, (r, s)}
\cap \dc_{G'}G_{x, s{+}}
& = (\CC{G'}{0{+}}(\gamma), \CC G{0{+}}(\gamma))_{x, (r, s{+})}.
\end{align}
By \eqref{eq:dc-card-1} and \eqref{eq:dc-intersect},
we have a bijection
$$
\dc_{G'}^{(s)}\CC G{0{+}}(\gamma)_{x, s}
	\backslash
\dc^{(s)}
\to
\dc_{G'}G_{x, s}\backslash\dc,
$$
so
\begin{equation}
\label{eq:dc-card}
\Bigindx{\dc^{(s)}}{\dc_{G'}^{(s)}\CC G{0{+}}(\gamma)_{x, s}}
= \Bigindx\dc{\dc_{G'}G_{x, s}}.
\end{equation}
By \eqref{eq:dc+-card-1}, since
$G_{x, s+}
\subseteq
\odc{\gamma; x, r{+}}_{G'}(\CC G{0{+}}(\gamma), G)_{x, (s, s{+})}$,
we have that
$$
\odc{\gamma; x, r{+}}_{G'}(\CC G{0{+}}(\gamma), G)_{x, (s, s{+})}
	\backslash\odc{\gamma; x, r{+}}
$$
is naturally in bijection with
$$
\Bigl(\odc{\gamma; x, r{+}}_{G'}(\CC G{0{+}}(\gamma), G)_{x, (s, s{+})}
	\cap \odc{\gamma; x, r{+}}^{(s)}\Bigr)
\backslash\odc{\gamma; x, r{+}}^{(s)},
$$
which, by \eqref{eq:dc+-intersect'}, is just
$$
\odc{\gamma; x, r{+}}_{G'}^{(s)}\CC G{0{+}}(\gamma)_{x, s}
	\backslash\odc{\gamma; x, r{+}}^{(s)}.
$$
By \eqref{eq:dc+-intersect}, this latter set is naturally in
bijection with
$$
\dc_{G'}^{(s)}\CC G{0{+}}(\gamma)_{x, s}
	\backslash\dc_{G'}^{(s)}\odc{\gamma; x, r{+}}^{(s)}.
$$
Thus,
\begin{multline}
\label{eq:dc+-card-2}
\Bigindx{\odc{\gamma; x, r}_{G'}^{(s)}\odc{\gamma; x, r{+}}^{(s)}}
	{\odc{\gamma; x, r}_{G'}^{(s)}\CC G{0{+}}(\gamma)_{x, s}}
\\
= \Bigindx{\odc{\gamma; x, r{+}}}
	{\odc{\gamma; x, r{+}}_{G'}(\CC G{0{+}}(\gamma), G)_{x, (s, s{+})}}.
\end{multline}
By \eqref{eq:other-intersect}, we have an injection
$$
(\CC{G'}{0{+}}(\gamma), \CC G{0{+}}(\gamma))
	_{x, (r, s):(r, s{+})}
\to
\odc{\gamma; x, r{+}}_{G'}G_{x, s{+}}
	\backslash
\odc{\gamma; x, r{+}}.
$$
By Proposition \xref{exp-prop:heres-a-gp} of
\cite{adler-spice:good-expansions},
the cokernel of this injection is
$$
\odc{\gamma; x, r{+}}_{G'}(\CC G{0{+}}(\gamma), G)_{x, (s, s{+})}
	\backslash
\odc{\gamma; x, r{+}},
$$
so
\begin{multline}
\label{eq:dc+-card}
\Bigindx{\odc{\gamma; x, r{+}}}
	{\odc{\gamma; x, r{+}}_{G'}G_{x, s{+}}}
\\
\qquad\qquad
\begin{aligned}
& = \smcard{(\CC{G'}{0{+}}(\gamma), \CC G{0{+}}(\gamma))
	_{x, (r, s):(r, s{+})}
} \\
& \qquad \times \Bigindx{\odc{\gamma; x, r{+}}}
	{\odc{\gamma; x, r{+}}_{G'}(\CC G{0{+}}(\gamma), G)_{x, (s, s{+})}} \\
& = \smcard{(\CC{G'}{0{+}}(\gamma), \CC G{0{+}}(\gamma))
	_{x, (r, s):(r, s{+})}
} \\
& \qquad \times \Bigindx{\odc{\gamma; x, r}_{G'}^{(s)}\odc{\gamma; x, r{+}}^{(s)}}
	{\odc{\gamma; x, r}_{G'}^{(s)}\CC G{0{+}}(\gamma)_{x, s}}.
\end{aligned}
\end{multline}
(where the last equality follows from \eqref{eq:dc+-card-2}).
Upon plugging \eqref{eq:dc-card} and \eqref{eq:dc+-card}
into \eqref{eq:gauss-sum-card-1}, we obtain the desired
formula for $\smabs{\wtilde{\mf G}}$.
\end{proof}

\begin{pn}
\label{prop:gauss-sum}
\begin{align*}
\mf G(\phi, \gamma)
= {} & (-1)^{\card{\dot\Upsilon_{\textup{symm}}(\phi, \gamma)}}
(-\mf G_\Lambda(\ff))^{f(\dot\Upsilon_{\textup{symm}, \textup{ram}}(\phi, \gamma))} \\
& \quad \times \prod_{\alpha \in \dot\Upsilon_{\textup{symm}, \textup{ram}}(\phi, \gamma)}
	\Bigl[\sgn_{\ff_\alpha}\bigl(
		\tfrac 1 2 e_\alpha
		N_{F_\alpha/F_{\pm\alpha}}(w_\alpha)
		d\alpha^\vee(X^*)(\alpha(\gamma_{< r}) - 1)
	\bigr) \\
& \hphantom{
	\quad \times \prod_{\alpha \in \dot\Upsilon_{\textup{symm}, \textup{ram}}(\phi, \gamma)}
}
	\quad \times \sgn_{F_{\pm\alpha}}(\bG_{\pm\alpha})\Bigr],
\end{align*}
where
\begin{itemize}
\item
$\mf G_\Lambda(\ff)$ is as in Definition
\ref{defn:basic-gauss-sum};
\item
$d\alpha^\vee(X^*) = X^*(d\alpha^\vee(1))$;
\item
$w_\alpha$ is any element
of $F_\alpha$, of valuation
$\bigl(r - \ord(\alpha(\gamma_{< r}) - 1)\bigr)/2$,
whose square lies in $F_{\pm\alpha}$;
\item $\bG_{\pm\alpha}$ is the group generated by the root
subgroups $\bU_\alpha$ and $\bU_{-\alpha}$ of \bG;
and
\item
$\sgn_{F_{\pm\alpha}}(\bG_{\pm\alpha})$ is $+1$ or $-1$
according as $\bG_{\pm\alpha}$ is or is not
$F_{\pm\alpha}$-split, respectively.
\end{itemize}
\end{pn}

We will show in the proof that an element $w_\alpha$ as
in the statement exists.

\begin{proof}
As in the proof of Proposition \ref{prop:gauss-sum-card}, we
may, and do, replace $\gamma$ by $\gamma_{< r}$.
Recall the notation $B$ and $V$ from the proof of
Proposition \ref{prop:gauss-sum-card},
and the elements $Y_i$ chosen before Lemma
\ref{lem:sqrt-B-and-B}.
By the way we defined $\log_\Lambda \sqrt\fB$ and the
isomorphism in \eqref{eq:gauss-V-decomp},
$V_\alpha$ is $B$-orthogonal to $V_\beta$ unless
$-\beta \in \Gamma\dota\alpha$, and
\begin{multline}
\label{eq:whats-gauss-B}
B(X, X')
= \sum_{\mysigma \in \Gamma/\Gamma_\alpha}
	\mysigma\bigl(\tfrac 1 2
		X^*\bigl[[Y_i, X], \mysigma_\alpha X'\bigr]
	\bigr)
\\
\quad\text{for $X, X' \in V_\alpha$
	with $\alpha \in \dot\Upsilon_{\textup{symm}}$,}
\end{multline}
where $i = \ord(\alpha(\gamma) - 1)$.
In particular, the sums on the right-hand side of
\eqref{eq:gauss-V-decomp} are $B$-orthogonal.

Put $\mf G = \mf G(\phi, \gamma)$.
We showed in the proof of Proposition
\ref{prop:gauss-sum-card} that
$\mf G = \mf G(V, B)$, in the notation of Lemma
\ref{lem:well-known-gauss}; so, by that lemma,
we have that
$\mf G = \sgn_\ff(\det B)\mf G_\Lambda(\ff)^{\dim_\ff V}$.
By \eqref{eq:gauss-V-decomp},
\begin{equation}
\tag{$\dag$}
\mf G
= \prod_{\alpha \in \dot\Upsilon_{\textup{symm}}}
	\sgn_\ff(\det B\bigr|_{V_\alpha})
	\mf G_\Lambda(\ff)^{\dim_\ff V_\alpha}
\dotm
\prod_{\alpha \in \ddot\Upsilon^{\textup{symm}}}
	\sgn_\ff(\det B\bigr|_{V_{\pm\alpha}})
	\mf G_\Lambda(\ff)^{\dim_\ff V_{\pm\alpha}}.
\end{equation}
We will use in our calculations below the fact that
$\mf G_\Lambda(\ff)^2 = \sgn_\ff(-1)$.

For $\alpha \in \ddot\Upsilon^{\textup{symm}}$,
the matrix of the restriction of $B$ to $V_{\pm\alpha}$,
with respect to a suitable basis, is of the form
$\left(\begin{smallmatrix}
0        & M \\
\trans M & 0
\end{smallmatrix}\right)$
for some matrix $M$.  Thus the determinant of this
restriction is in the square class of
$(-1)^{\dim_\ff V_{\pm\alpha}/2}$, so
\begin{equation}
\label{eq:gauss-non-symm}
\sgn_\ff(\det B\bigr|_{V_{\pm\alpha}})
\mf G_\Lambda(\ff)^{\dim_\ff V_{\pm\alpha}}
= 1.
\end{equation}

Fix $\alpha \in \dot\Upsilon_{\textup{symm}}$ and
put $i = \ord(\alpha(\gamma) - 1)$.
Since $\alpha \in \Phi(\CC\bG{0{+}}(\gamma), \bT)$,
we have that $i > 0$.
As in the proof of Corollary \ref{cor:Q-and-B}, we may, and
do, assume that $Y_i \in \mf t$.
As in the proof of Proposition \ref{prop:theta-tilde-phi},
we have an isomorphism
$\iota_\alpha : V_\alpha \cong \ff_\alpha$.
Put
$X_\alpha = \iota_\alpha\inv(1)$ and
$c_\alpha
= \ol{ X^*\bigl[[Y_i, X_\alpha], \mysigma_\alpha X_\alpha\bigr] }
\in \ff_{\pm\alpha}$.
By \eqref{eq:whats-gauss-B}, $\iota_\alpha$ identifies the
restriction of $B$ to $V_\alpha$ with the pairing
$$
(t_1, t_2)
\mapsto \tfrac 1 2 e_\alpha\tr_{\ff_\alpha/\ff}(
		c_\alpha\dotm t_1\mysigma_\alpha(t_2)
	)
$$
on $\ff_\alpha$.
The determinant of this pairing is
\begin{equation}
\tag{$*$}
\bigl(\tfrac 1 2 e_\alpha\bigr)^{f_\alpha}N_{\ff_\alpha/\ff}(c_\alpha)\Delta,
\end{equation}
where $\Delta$ is the determinant of
$(t_1, t_2)
\mapsto \tr_{\ff_\alpha/\ff}(t_1\mysigma_\alpha(t_2))$.

If $\alpha \in \dot\Upsilon_{\textup{symm}, \textup{unram}}$,
then $f_\alpha = \dim_\ff V_\alpha$ is even
(so $\bigl(\tfrac 1 2 e_\alpha\bigr)^{f_\alpha}$ is a square)
and $\mysigma_\alpha$ is a $(\dim_\ff V_\alpha/2)$th
power of a generator of $\Gal(\ff_\alpha/\ff)$, so
$\sgn_{\Gal(\ff_\alpha/\ff)}(\mysigma_\alpha)
= (-1)^{\dim_\ff V_\alpha/2}$
(where $\sgn_{\Gal(\ff_\alpha/\ff)}$ is as in Lemma
\ref{lem:gen-trace}).
Further,
we have that
$N_{\ff_\alpha/\ff}(c_\alpha)
= N_{\ff_{\pm\alpha}/\ff}(c_\alpha)^2 \in (\ff\cross)^2$.
By Lemma \ref{lem:gen-trace},
$$
\sgn_\ff(\Delta)
= \bigl(-\sgn_\ff(-1)^{\dim_\ff V_\alpha/2}\bigr)^{f_\alpha + 1}
= -\sgn_\ff(-1)^{\dim_\ff V_\alpha/2},
$$
so ($*$) gives
\begin{multline}
\label{eq:gauss-unram-symm}
\sgn_\ff(\det B\bigr|_{V_\alpha})
\mf G_\Lambda(\ff)^{\dim_\ff V_\alpha}
= \sgn_\ff(\Delta) (\mf G_\Lambda(\ff)^2)^{\dim_\ff V_\alpha / 2} \\
= -\sgn_\ff(-1)^{\dim_\ff V_\alpha / 2} \sgn_\ff(-1)^{\dim_\ff V_\alpha / 2}
= -1.
\end{multline}

If $\alpha \in \dot\Upsilon_{\textup{symm}, \textup{ram}}$,
then $F_\alpha/F_{\pm\alpha}$ is totally ramified.
Since $\mysigma_\alpha\alpha(\gamma) = \alpha(\gamma)\inv$,
we have that
$i = \ord(\alpha(\gamma) - 1) \in \ord(F_\alpha\cross)
	\smallsetminus \ord(F_{\pm\alpha}\cross)$.
Similarly, since
$\mysigma_\alpha d\alpha^\vee(X^*) = -d\alpha^\vee(X^*)$,
we have that
$-r \in \ord(F_\alpha\cross)
	\smallsetminus \ord(F_{\pm\alpha}\cross)$.
Since $\ord(F_\alpha\cross)/\ord(F_{\pm\alpha}\cross) \cong \Z/2\Z$,
we have that $r - i \in \ord(F_{\pm\alpha}\cross)$,
so $(r - i)/2 \in \ord(F_\alpha\cross)$.
Let $\varpi_\alpha$ be a uniformizer of $F_\alpha$ that is
negated by $\mysigma_\alpha$, and let
$w_\alpha$ be a power of $\varpi_\alpha$ that has valuation
$(r - i)/2$.  In particular, $w_\alpha^2 \in F_{\pm\alpha}$.

Put
$H_\alpha = d\alpha^\vee(1) \in \Lie(\bG_{\pm\alpha})(F_\alpha)$,
so that $d\alpha^\vee(X^*) = X^*(H_\alpha)$.
Then $\Lie(\bG_{\pm\alpha})$ is the sum of the
$\alpha$-weight space $\Lie(\bG)_\alpha$, the
$(-\alpha)$-weight space $\Lie(\bG)_{-\alpha}$, and the
Cartan subgroup $\pmb{\mf t}^\alpha$ spanned by $H_\alpha$.
Since
$[X_\alpha, \mysigma_\alpha X_\alpha]$
and
$H_\alpha$ both belong to the $1$-dimensional
$F_\alpha$-space $\pmb{\mf t}^\alpha(F_\alpha)$
and $H_\alpha \ne 0$,
we have that there is a constant $t_\alpha \in F_\alpha$
so that
$[X_\alpha, \mysigma_\alpha X_\alpha] = t_\alpha H_\alpha$.
Since $H_\alpha$ and $[X_\alpha, \mysigma_\alpha X_\alpha]$
are both negated by $\mysigma_\alpha$, we have that
$t_\alpha \in F_{\pm\alpha}$.
Then
\begin{equation*}
c_\alpha
= \ol{ X^*\bigl[[Y_i, X_\alpha], \mysigma_\alpha X_\alpha\bigr] }
= \ol{ X^*(d\alpha(Y_i)t_\alpha H_\alpha) }
= \ol{ t_\alpha d\alpha^\vee(X^*)d\alpha(Y_i) }.
\end{equation*}
We claim that $\ord(t_\alpha) = r - i$.
Indeed, since $\depth_x(H_\alpha) = 0$,
we have that
$\ord(t_\alpha)
= \depth_x([X_\alpha, \mysigma_\alpha X_\alpha])
\ge r - i$.
Suppose that we had $\ord(t_\alpha) > r - i$.
Since $\ord(d\alpha^\vee(X^*)) \ge -r$
and $\ord(d\alpha(Y_i)) \ge i$,
this would mean that
$c_\alpha$ was the projection to $\ff_E$ of an element
of $E_{0{+}}$\,; that is, that $c_\alpha = 0$.
(Both inequalities of valuations in the previous
sentence are actually equalities; but we do not need this.)
Thus $V_\alpha$ would be totally $B$-isotropic --- which, by
orthogonality of the sum in \eqref{eq:gauss-V-decomp},
would be a contradiction of the non-degeneracy of $B$.

We claim that
$t_\alpha N_{F_\alpha/F_{\pm\alpha}}(w_\alpha)\inv$
projects to a
square in $\ff_\alpha\cross$ if and only if $\bG_{\pm\alpha}$
is $F_{\pm\alpha}$-split; i.e., if and only if
$\Lie(\bG_{\pm\alpha})$ is $F_{\pm\alpha}$-isomorphic to
$\mf{sl}_2$.
Once we have shown this, we will have that
\begin{multline}
\tag{$**$}
\sgn_\ff(N_{\ff_\alpha/\ff}(c_\alpha))
= \sgn_{\ff_\alpha}(c_\alpha) \\
= \sgn_{\ff_\alpha}\bigl(
	N_{F_\alpha/F_{\pm\alpha}}(w_\alpha)
	d\alpha^\vee(X^*)d\alpha(Y_i)
\bigr)
\sgn_{F_{\pm\alpha}}(\bG_{\pm\alpha}).
\end{multline}

If the image in $\ff_\alpha$ of
$t_\alpha N_{F_\alpha/F_{\pm\alpha}}(w_\alpha)\inv$
equals $\ol\theta^{\,-2}$ for some
$\ol\theta \in \ff_\alpha\cross$,
then, since $\ff_\alpha = \ff_{\pm\alpha}$ and $p \ne 2$, we
may find an element $\theta \in F_{\pm\alpha}\cross$ such that
$t_\alpha N_{F_\alpha/F_{\pm\alpha}}(w_\alpha)\inv
= \theta^{-2}$.
Then the unique $F_\alpha$-linear map
$\Lie(\bG_{\pm\alpha}) \to \mf{sl}_2$ satisfying
\begin{multline*}
X_\alpha
\mapsto \frac{w_\alpha}{2\theta}
\left(\begin{matrix}
	-1        & \varpi_\alpha\inv \\
	-\varpi_\alpha & 1
\end{matrix}\right),
\quad
\mysigma_\alpha X_\alpha
\mapsto \frac{\mysigma_\alpha w_\alpha}{2\theta}
\left(\begin{matrix}
	-1       & -\varpi_\alpha\inv \\
	\varpi_\alpha & 1
\end{matrix}\right), \\
H_\alpha
\mapsto \left(\begin{matrix}
	0        & \varpi_\alpha^{-1} \\
	\varpi_\alpha & 0
\end{matrix}\right)
\end{multline*}
is an $F_{\pm\alpha}$-isomorphism of Lie algebras.

Suppose, on the other hand, that
$\iota : \Lie(\bG_{\pm\alpha}) \to \mf{sl}_2$ is an
$F_{\pm\alpha}$-isomorphism of Lie algebras.
Then the co-character lattice of
$\iota(\pmb{\mf t}^\alpha)$ contains a
simultaneous $(-1)$-eigenvector for every element of
$\mysigma_\alpha\dotm\Gal(F\sep/F_\alpha)$,
so the Cartan $F_{\pm\alpha}$-subalgebra
$\iota(\pmb{\mf t}^\alpha)$ is
$\GL_2(F_{\pm\alpha})$-conjugate to
the Cartan $F_{\pm\alpha}$-subalgebra $\pmb{\mf t}'$ spanned by 
$\left(\begin{smallmatrix}
0        & \varpi_\alpha\inv \\
\varpi_\alpha & 0
\end{smallmatrix}\right)$.
Replace $\iota$ by its composition
with the indicated conjugation.
Note that $d\alpha \circ \iota\inv$ is a weight for the
adjoint action of $\pmb{\mf t}'$ on $\mf{sl}_2$, hence is of the
form $\pm d\alpha'$,
where $d\alpha'$ is the functional on $\pmb{\mf t}'$ sending
$\left(\begin{smallmatrix}
0        & \varpi_\alpha\inv \\
\varpi_\alpha & 0
\end{smallmatrix}\right)$ to $2$.
After further conjugating by
$\left(\begin{smallmatrix}
0  & 1 \\
-1 & 0
\end{smallmatrix}\right)$ if necessary, we may, and do,
assume that
$d\alpha \circ \iota\inv = d\alpha'$.
Then there is a
constant $t'_\alpha \in F_\alpha$ such that
$$
\iota(X_\alpha)
= t'_\alpha\left(\begin{matrix}
	-1        & \varpi_\alpha\inv \\
	-\varpi_\alpha & 1
\end{matrix}\right),
$$
hence
$$
\iota(\mysigma_\alpha X_\alpha)
= \mysigma_\alpha(\iota(X_\alpha))
= \mysigma_\alpha(t'_\alpha)\left(\begin{matrix}
	-1       & -\varpi_\alpha\inv \\
	\varpi_\alpha & 1
\end{matrix}\right).
$$
Recall that $\iota(H_\alpha) \in \pmb{\mf t}'(F_\alpha)$
is a scalar multiple of
$\left(\begin{smallmatrix}
0             & \varpi_\alpha\inv \\
\varpi_\alpha & 0
\end{smallmatrix}\right)$.
Since
$$
d\alpha'(\iota(H_\alpha)) = d\alpha(H_\alpha) = 2
= d\alpha' 
	\left(\begin{matrix}
	0             & \varpi_\alpha\inv \\
	\varpi_\alpha & 0
	\end{matrix}\right),
$$
in fact
$\iota(H_\alpha) =
	\left(\begin{smallmatrix}
	0             & \varpi_\alpha\inv \\
	\varpi_\alpha & 0
	\end{smallmatrix}\right)$.
Thus,
$$
\iota(t_\alpha H_\alpha)
= \iota([X_\alpha, \mysigma_\alpha X_\alpha])
= [\iota(X_\alpha), \iota(\mysigma_\alpha X_\alpha)]
= N_{F_\alpha/F_{\pm\alpha}}(2t'_\alpha)\iota(H_\alpha).
$$
That is, $t_\alpha = N_{F_\alpha/F_{\pm\alpha}}(2t'_\alpha)$.
Since $\ord(t_\alpha) = r - i$, we have $\ord(t'_\alpha) = (r-i)/2$.
Then
$$
t_\alpha N_{F_\alpha/F_{\pm\alpha}}(w_\alpha)\inv
= N_{F_\alpha/F_{\pm\alpha}}(2t'_\alpha w_\alpha\inv)
\equiv (2t'_\alpha w_\alpha\inv)^2
\pmod{(F_\alpha\cross)_{0{+}}},
$$
i.e., $t_\alpha N_{F_\alpha/F_{\pm\alpha}}(w_\alpha)\inv$
projects to the square of $\ol{2 t'_\alpha w_\alpha\inv}$.

By Lemma \ref{lem:gen-trace},
$\sgn_\ff(\Delta) = (-1)^{f_\alpha + 1}$.
Now ($*$) and ($**$) give
\begin{multline}
\label{eq:gauss-ram-symm}
\sgn_\ff(\det B\bigr|_{V_\alpha})\mf G_\Lambda(\ff)^{\dim_\ff V_\alpha} \\
\begin{aligned}
& = (-1)^{f_\alpha + 1}
\bigl(\sgn_\ff(\tfrac 1 2 e_\alpha)\mf G_\Lambda(\ff)\bigr)^{f_\alpha} \\
& \qquad\times\sgn_{\ff_\alpha}(
	N_{F_\alpha/F_{\pm\alpha}}(w_\alpha)
	d\alpha^\vee(X^*)d\alpha(Y_i)
)
\sgn_{F_{\pm\alpha}}(\bG_{\pm\alpha})
\end{aligned}
\\
\begin{aligned}
& = -(-\mf G_\Lambda(\ff))^{f_\alpha} \\
& \qquad\times\sgn_{\ff_\alpha}(
	\tfrac 1 2 e_\alpha
	N_{F_\alpha/F_{\pm\alpha}}(w_\alpha)
	d\alpha^\vee(X^*)d\alpha(Y_i)
)
\sgn_{F_{\pm\alpha}}(\bG_{\pm\alpha}).
\end{aligned}
\end{multline}
We have used that
$\sgn_{\ff_\alpha}(n) = \sgn_\ff(n)^{f_\alpha}$
for $n \in \ff$.
(Note that, if $f_\alpha$ is even, then
$\tfrac1 2 e_\alpha \in \ff\cross \subseteq (\ff_\alpha\cross)^2$,
and
$\mf G_\Lambda(\ff)^{f_\alpha} = \sgn_\ff(-1)^{f_\alpha/2}$.)

Upon combining ($\dag$) with
\eqref{eq:gauss-non-symm}--\eqref{eq:gauss-ram-symm},
and the facts that
\begin{itemize}
\item
$d\alpha(Y_i)
\in (\alpha(\gamma_i) - 1) + E_{i{+}}$
(by Lemma \ref{lem:mock-exp-root-value}),
\item
$\alpha(\gamma_i)
\in \alpha((\gamma_{< r})_{\ge i}) + E_{i{+}}$\,,
and
\item
$\alpha((\gamma_{< r})_{\ge i}) = \alpha(\gamma_{< r})$,
\end{itemize}
we obtain the desired formula.
\end{proof}

\subsection{A formula for $\theta_\sigma$ on $G'$}
\label{ssec:step1-formula}

The following easy technical result on integration is probably
well known, but we could not find a reference.

\begin{lm}
\label{lem:silly-integration}
Suppose that $A$ is a locally compact, Hausdorff
topological group, and $B$ and $C$ are closed
subgroups of $A$ such that
\begin{itemize}
\item $B\backslash A$ carries a quotient measure;
\item the image of $C$ under the projection
$A \to B\backslash A$ is open; and
\item $B \cap C$ is compact.
\end{itemize}
Then, for any right Haar measures $da$ and $db$,
on $A$ and $B$, respectively,
there is a right Haar measure $dc$ on $C$ such
that
$$
\int_{B C} f(a)da
= \int_C \int_B f(b c)db\,dc
$$
for all continuous, compactly supported functions $f$ on $A$.
\end{lm}

\begin{proof}
The choice of $da$ and $db$ fixes a choice of
quotient measure $d\dot a$ on $B\backslash A$.
Since $(B \cap C)\backslash C$ embeds naturally as
an open subset of $B\backslash A$, $d\dot a$
induces a quotient measure $d\dot c$ on $(B \cap C)\backslash C$.
Let $db'$ be the Haar measure on $B \cap C$ such that
$\meas_{db'}(B \cap C) = 1$.
The choice of $d\dot c$ and
$db'$ fixes a choice of measure $dc$ on $C$.  Then
\begin{multline*}
\int_{B C} f(a)da = \int_A [B C](a)f(a)da \\
= \int_{B\backslash A} \int_B [B C](b a)f(b a)db\,d\dot a \\
= \int_{B\backslash A}  [B C](a)\int_B f(b a)db\,d\dot a.
\end{multline*}
Since the support of the outer integral is
contained in (the image of) $(B \cap C)\backslash
C$, we have by our choice of $d\dot c$ that
\begin{multline*}
\int_{B C} f(a)da = \int_{(B \cap C)\backslash C} \int_B f(b c)db\,d\dot c \\
= \int_{(B \cap C)\backslash C}
	\int_{B \cap C} \int_B f(b b' c)db\,db'\,d\dot c \\
= \int_C \int_B f(b c)db\,dc.
\text{\qedhere}
\end{multline*}
\end{proof}

Recall that $\gamma$ is semisimple and $x \in \BB_r(\gamma)$.

\begin{pn}
\label{prop:step1-formula1}
$$
\theta_\sigma(\gamma)
= \sum_{g \in \dc_{G'}^{(s)}\CC G{0{+}}(\gamma)_{x, s}\backslash\dc^{(s)}}
      \theta_{\tilde\rho}(\lsup g\gamma).
$$
\end{pn}

\begin{proof}
By the Frobenius formula,
$$
\theta_\sigma(\gamma)
= \sum_{g \in \stab_{G'}(\ox)G_{x, s}\backslash\stab_{G'}(\ox)G_{x, 0+}}
	\dot\theta_{\tilde\rho}(\lsup g\gamma).
$$
Since
$\stab_{G'}(\ox)G_{x, s} \cap G_{x, 0+}
= G'_{x, 0+}G_{x, s}$ ---
which, by
Proposition \xref{exp-prop:heres-a-gp} of \cite{adler-spice:good-expansions}, is
$(G', G)_{x, (0{+}, s)}$
--- the indexing set for the sum is naturally in bijection with
$(G', G)_{x, (0{+}, s)}\backslash G_{x, 0+}$\,.
Thus
\begin{equation}
\tag{$*$}
\theta_\sigma(\gamma) =
\int_{G_{x, 0+}}
	\dot\theta_{\tilde\rho}(\lsup g\gamma)
dg,
\end{equation}
where $dg$ is the Haar measure on $G_{x, 0+}$
normalized so that
$(G', G)_{x, (0{+}, s)}$ has measure $1$.

Put
$S_\infty = G'_{x, 0+}\dc
= \set{g \in G_{x, 0{+}}}{\jperp(g) = \infty}$,
and, for $i_0, j_0 \in \R$,
put 
$$
S_{i_0j_0} = \set{g \in G_{x, 0+}}
	{\iperp(g) = i_0, \jperp(g) = j_0}.
$$
Note that the sets $S_{i_0j_0}$, together with $S_\infty$,
form a partition of $G_{x,0+}$\,.
By Remark \ref{rem:local-const}, they are open.
We will show that the portion of ($*$) taken
over each $S_{i_0j_0}$ vanishes, so that the integral may be
taken instead over $S_\infty$.

Fix $i_0, j_0 \in \R$, and put
\begin{itemize}
\item $t_0 = i_0 + j_0$,
\item $H = \CC G{i_0}(\gamma)$,
\item $H' = \CC{G'}{i_0}(\gamma)$,
\item $B = (G', G)_{x, (0{+}, j_0{+})}$, and
\item
$C = \set{g \in (H, G)_{x, (0{+}, t_0{+})}}
         {[\gamma\inv, g] \in (H', H)_{x, (t_0{+}, t_0)}}$.
\end{itemize}
We claim that $S_{i_0j_0} \subseteq B C$.

This is obvious if $S_{i_0j_0} = \emptyset$, so assume that
$S_{i_0j_0} \ne \emptyset$.
Fix $s \in S_{i_0j_0}$\,, so $\tperp(s) = t_0$ (by Corollary
\ref{cor:t=i+j}).
By Remark \ref{rem:local-const} and Corollary \ref{cor:t-small},
$i_0 < r - 2j_0$
and $t_0 < r - j_0$.
By Lemma \ref{lem:S-in-BC},
there is
$$
b_1 \in (G', G)_{x, (0{+}, j_0{+})} = B
$$
such that
$[\gamma\inv, b_1\inv s] \in (H', H)_{x, (t_0{+}, t_0)}$.
By Lemma \xref{exp-lem:bracket} of \cite{adler-spice:good-expansions},
$b_1\inv s \in [\gamma; x, t_0]$,
where $[\gamma; x, t_0]$ is as in Definition
\xref{exp-defn:fancy-centralizer-no-underline} of \loccit
Put $s_1 = b_1\inv s$.  By Remark \ref{rem:local-const},
$\iperp(s_1) = i_0$, $\jperp(s_1) = j_0$, and $\tperp(s_1) = t_0$.
Thus
$$
s_1 \in G'_{x, 0+}\dc(H, G)_{x, (j_0, j_0{+})}.
$$
By Proposition \xref{exp-prop:heres-a-gp}
and Remark \xref{exp-rem:bracket-facts-decomp} of
\cite{adler-spice:good-expansions},
\begin{gather*}
\dc(H, G)_{x, (j_0, j_0{+})}
= \dc^{(j_0)}(H, G)_{x, (j_0, j_0{+})}; \\
(H, G)_{x, (j_0, j_0{+})}
= (H', G)_{x, (j_0, j_0{+})}(H', H)_{x, (j_0{+}, j_0)};
\end{gather*}
and, since the commutator of
$G_{x, 0{+}}$ with
$(H', G)_{x, (j_0, j_0{+})} \subseteq G_{x, j_0}$
lies in
$G_{x, j_0{+}} \subseteq (H', G)_{x, (j_0, j_0{+})}$,
we have that
$\dc^{(j_0)} \subseteq G_{x, 0{+}}$
normalizes $(H', G)_{x, (j_0, j_0{+})}$.
Thus we may write $s_1 = k'k_-k$, with
$k' \in G'_{x, 0+}(H', G)_{x, (j_0, j_0{+})}$,
$k_- \in \dc^{(j_0)}$, and
$k \in (H', H)_{x, (j_0{+}, j_0)}$.
By Proposition \xref{exp-prop:heres-a-gp} of \loccit,
$k' \in (G', G)_{x, (0{+}, j_0{+})} = B$.
By Remark \xref{exp-rem:bracket-facts-containment} of
\loccit,
\begin{multline}
\label{eq:bracket-containments}
\dc^{(j_0)} \subseteq
[\gamma; x, r - j_0]^{(j_0)}  \\
\subseteq
[\gamma; x, r - j_0] \cap \CC G{r - 2j_0}(\gamma)_{x, 0+}
\subseteq [\gamma; x, t_0{+}] \cap H_{x, 0+}\,.
\end{multline}
Also,
$(H', H)_{x, (j_0{+}, j_0)} \subseteq H_{x, j_0}
\subseteq [\gamma; x, t_0]$.
Since $s_1 \in [\gamma; x, t_0]$,
this means that
$k' \in [\gamma; x, t_0] \cap (G', G)_{x, (0{+}, j_0{+})}$.
By Lemma \xref{exp-lem:more-vGvr-facts} and
Proposition \xref{exp-prop:heres-a-gp} of \loccit,
$$
[\gamma; x, t_0] \cap (G', G)_{x, (0{+}, j_0{+})}
= [\gamma; x, t_0]_{G'}^{(j_0{+})}
	([\gamma; x, t_0] \cap G_{x, j_0{+}}).
$$
Write $k' = k''k'_+$, with
$k'' \in [\gamma; x, t_0]_{G'}^{(j_0{+})}$
and $k'_+ \in [\gamma; x, t_0] \cap G_{x, j_0+}$\,.
By Remark \xref{exp-rem:bracket-facts-containment} of
\loccit,
$[\gamma; x, t_0]_{G'}^{(j_0{+})} \subseteq H'_{x, 0+}$\,, so
$$
[\gamma\inv, k'']
\in H'_{x, 0{+}} \cap G'_{x, t_0}
= H'_{x, t_0}
\subseteq (H', H, G)_{x, (t_0, t_0{+}, t_0)}.
$$
By Lemma \xref{exp-lem:perp-commute} of \loccit,
$$
[\gamma\inv, k'_+]
\in (H, G)_{x, (t_0{+}, t_0)}
\subseteq (H', H, G)_{x, (t_0, t_0{+}, t_0)}.
$$
Since the commutator of $G_{x, 0{+}}$ with
$(H', H, G)_{x, (t_0, t_0{+}, t_0)} \subseteq G_{x, t_0}$
lies in
$G_{x, t_0{+}} \subseteq (H', H, G)_{x, (t_0, t_0{+}, t_0)}$,
in particular $k'' \in G_{x, 0{+}}$ normalizes
$(H', H, G)_{x, (t_0, t_0{+}, t_0)}$.  Thus
$$
[\gamma\inv, k'] \in (H', H, G)_{x, (t_0, t_0{+}, t_0)}.
$$

Now, using \eqref{eq:bracket-containments} and
imitating the above argument that
$[\gamma\inv, k''] \in H'_{x,t_0}$\,, we see that
$[\gamma\inv, k_-] \in H_{x, t_0{+}}
\subseteq (H', H)_{x, (t_0{+}, t_0)}$.
Also, by Lemma \xref{exp-lem:shallow-comm}
(or Corollary \xref{exp-cor:stab-norm}, if $i_0 = 0$)
of \cite{adler-spice:good-expansions},
$$
[\gamma\inv, k] = [\gamma_{\ge i_0}\inv, k]
\in (H', H)_{x, ((i_0 + j_0){+}, i_0 + j_0)}
= (H', H)_{x, (t_0{+}, t_0)}.
$$
Thus, since $k_- \in H_{x,0{+}}$ normalizes
$(H', H)_{x, (t_0{+}, t_0)}$,
$$
[\gamma\inv, k_-k] \in (H', H)_{x, (t_0{+}, t_0)}.
$$
Thus $s = b c$, where $b := b_1k' \in B$ and $c := k_-k \in C$.

Now we claim that
$$
\int_{S_{i_0j_0}} \dot\theta_{\tilde\rho}(\lsup g\gamma)dg = 0.
$$
Once again, this is obvious if $S_{i_0j_0} = \emptyset$, so
suppose $S_{i_0j_0} \ne \emptyset$.
Note that $(H', H)_{x, ((r - t_0){+}, r - t_0)}
\subseteq B$.
By Lemma \ref{lem:silly-integration}, applied to
the function sending $g \in G$ to
$[S_{i_0j_0}](g)\dot\theta_{\tilde\rho}(\lsup g\gamma)$,
we have that
\begin{multline}
\label{eq:Sij-int}
\int_{S_{i_0j_0}} \dot\theta_{\tilde\rho}(\lsup g\gamma)dg
= (\text{const})\int_B \int_C
	[S_{i_0j_0}](b c)\dot\theta_{\tilde\rho}(\lsup{b c}\gamma)dc\,db \\
= (\text{const})\int_B \int_C
	\int_{(H', H)_{x, ((r - t_0){+}, r - t_0)}}
	[S_{i_0j_0}](b h c)\dot\theta_{\tilde\rho}(\lsup{b h c}\gamma)
		dh\,dc\,db.
\end{multline}
Suppose $b \in B$, $h \in (H', H)_{x, ((r - t_0){+}, r - t_0)}$,
and $c \in C$.
By Remark \ref{rem:local-const},
$[S_{i_0j_0}](b h c) = [S_{i_0j_0}](c)$.
We have that
\begin{equation}
\label{eq:bc-conj1}
\lsup{h c}\gamma
= [h, \gamma]
  \cdot\lsup c\gamma
  \cdot\bigl[[c, \gamma\inv], h\bigr].
\end{equation}
By Lemma \xref{exp-lem:shallow-comm}
(or Corollary \xref{exp-cor:stab-norm}, if $i_0 = 0$)
of \cite{adler-spice:good-expansions},
\begin{equation}
\label{eq:bc-conj2}
[h, \gamma] = [h, \gamma_{\ge i_0}] 
\in (H', H)_{x, (r - t_0 + i_0){+}, r - t_0 + i_0)}
\subseteq (G', G)_{x, (r{+}, r - j_0)}.
\end{equation}
By the definition of the group $C$, we have
$[c, \gamma\inv] \in G_{x, t_0}$\,.
Thus, since $h \in G_{x, r - t_0}$\,, we have
\begin{equation}
\label{eq:bc-conj3}
\bigl[[c, \gamma\inv], h\bigr] \in G_{x, r}\,.
\end{equation}
Combining \eqref{eq:bc-conj1}--\eqref{eq:bc-conj3}
gives
\begin{multline*}
\lsup{b h c}\gamma
\in \lsup b(G', G)_{x, (r{+}, r - j_0)}
     \cdot\lsup{b c}\gamma
     \cdot\lsup b\bigl[[c, \gamma\inv], h\bigr] \\
\subseteq (G', G)_{x, (r{+}, r - j_0)}
          \cdot\lsup{b c}\gamma
          \cdot\bigl[[c, \gamma\inv], h\bigr]
          G_{x, r+}
\end{multline*}
for $b \in B = (G', G)_{x, (0{+}, j_0{+})}$.
The containment on the second line follows from
the fact that, by Corollary \xref{exp-cor:master-comm} of
\cite{adler-spice:good-expansions},
$(G', G)_{x, (r{+}, r - j_0)}$ is
normalized by $B = (G', G)_{x, (0{+}, j_0)}$.
Now, by Lemma \ref{lem:trho-isotyp}
and the fact that
$(G', G)_{x, (r{+}, r - j_0)} \subseteq \ker \hat\phi$,
$$
\dot\theta_{\tilde\rho}(\lsup{b h c}\gamma) =
\hat\phi\bigl(\bigl[[c, \gamma\inv], h\bigr]\bigr)
\dot\theta_{\tilde\rho}(\lsup{b c}\gamma)
= \bigl[[\gamma\inv, c], \hat\phi\bigr](h)
	\dot\theta_{\tilde\rho}(\lsup{bc}\gamma),
$$
where $\bigl[[\gamma\inv, c], \hat\phi]$ is the character of
$G_{x, r - t_0}$ given by
$g \mapsto \hat\phi\bigl(\bigl[[c, \gamma\inv], g\bigr]\bigr)$.

In particular, the inner integral in \eqref{eq:Sij-int} is
$0$ unless
$c \in C \cap S_{i_0j_0}$ and
$\bigl[[\gamma\inv, c], \hat\phi]$
is trivial on $(H', H)_{x, ((r - t_0){+}, r - t_0)}$.

Fix $c \in C \cap S_{i_0j_0}$ for which the indicated
character is trivial.
If $g \in G'_{x, r - t_0}$\,, then,
by two applications of Hypothesis \ref{hyp:mock-exp-ad}
$$
\bigl[[\gamma\inv, c], \hat\phi\bigr](g)
= \hat\phi\bigl(\bigl[[c, \gamma\inv], g\bigr]\bigr)
= 1.
$$

If $g \in (H, G)_{x, ((r - t_0){+}, r - t_0)}$, then,
since
$[c, \gamma\inv] \in (H', H)_{x, (t_0{+}, t_0)} \subseteq
	H_{x, t_0}$\,,
we have by Lemma \xref{exp-lem:shallow-comm} of \cite{adler-spice:good-expansions} that
$\bigl[[c, \gamma\inv], g\bigr] \in (H, G)_{x, (r{+}, r)}$.
By Lemma \ref{lem:phi-trivial} and Remark \ref{rem:phi-trivial},
$(H, G)_{x, (r{+}, r)} \subseteq \ker \hat\phi$,
so $g \in \ker \bigl[[\gamma\inv, c], \hat\phi\bigr]$.

We have seen that $\bigl[[\gamma\inv, c], \hat\phi\bigr]$
is trivial on the group generated by
$(H', H)_{x, ((r - t_0){+}, r - t_0)}$,
$G'_{x, r - t_0}$\,,
and $(H, G)_{x, ((r - t_0){+}, r - t_0)}$,
which, by Proposition \xref{exp-prop:heres-a-gp} of \cite{adler-spice:good-expansions}, is all of
$G_{x, r - t_0}$\,.
By Lemma \ref{lem:phi-commute},
this means that
$[\gamma\inv, c] \in (G', G)_{x, (0{+}, t_0{+})}$.
Since $c \in C$, also
$[\gamma\inv, c] \in (G', G)_{x, (t_0{+}, t_0)}$; so
in fact $[\gamma\inv, c] \in G_{x, t_0+}$\,.  This
contradicts the fact that $t_0 = \tperp(c)$.
Thus the inner integral in \eqref{eq:Sij-int} is
always $0$, so
$\int_{S_{i_0j_0}} \dot\theta_{\tilde\rho}(\lsup g\gamma)dg = 0$,
as desired.

By Remarks \xref{exp-rem:bracket-facts-containment}
and \xref{exp-rem:bracket-facts-decomp} of \cite{adler-spice:good-expansions},
$G_{x, s}\dc^{(s)} = \dc$;
and, by Proposition \xref{exp-prop:heres-a-gp} of \loccit,
$G'_{x, 0{+}}G_{x, s} = (G', G)_{x, (0{+}, s)}$;
so $G'_{x,0{+}}\dc = (G', G)_{x, (0{+}, s)}\dc^{(s)}$.
By Lemma \ref{lem:silly-integration},
there is a measure $dh$ on $\dc^{(s)}$ such that
$$
\int_{G'_{x, 0{+}}\dc} f(g)dg
= \int_{\dc^{(s)}} \int_{(G', G)_{x, (0{+}, s)}} f(g h)dg\,dh
$$
for all continuous functions $f$ on
$G_{x, 0{+}}$\,.
By definition,
$\meas_{dg}((G', G)_{x, (0{+}, s)}) = 1$,
so
\begin{multline*}
\meas_{dh}(\dc^{(s)})
= \meas_{dg}(G'_{x, 0{+}}\dc) \\
= \Bigindx{G'_{x, 0{+}}\dc}{(G', G)_{x, (0{+}, s)}} \\
= \Bigindx{\dc^{(s)}}{\dc^{(s)} \cap (G', G)_{x, (0{+}, s)}}.
\end{multline*}
By Lemma \xref{exp-lem:more-vGvr-facts} of \cite{adler-spice:good-expansions},
$\dc^{(s)} \cap (G', G)_{x, (0{+}, s)}
= \dc_{G'}^{(s)}\CC G{0{+}}(\gamma)_{x, s}$\,.
Thus
\begin{multline*}
\theta_\sigma(\gamma)
= \int_{G'_{x, 0+}\dc}
      \dot\theta_{\tilde\rho}(\lsup g\gamma)dg 
= \int_{\dc^{(s)}} \int_{(G', G)_{x, (0{+}, s)}}
      \dot\theta_{\tilde\rho}(\lsup {g h}\gamma)dg\,dh \\
= \int_{\dc^{(s)}}
      \dot\theta_{\tilde\rho}(\lsup h\gamma) dh
= \sum_{g \in \dc_{G'}^{(s)}\CC G{0{+}}(\gamma)_{x, s}\backslash\dc^{(s)}}
      \dot\theta_{\tilde\rho}(\lsup g\gamma).
\text{\qedhere}
\end{multline*}
\end{proof}

\begin{pn}
\label{prop:induction1}
\begin{align*}
\theta_\sigma(\gamma)
& = \Bigindx{\odc{\gamma_{< r}; x, r}}
	{\odc{\gamma_{< r}; x, r}_{G'}G_{x, s}}^{1/2} \\
& \qquad\times\Bigindx{\odc{\gamma_{< r}; x, r{+}}}
	{\odc{\gamma_{< r}; x, r{+}}_{G'}G_{x, s{+}}}^{1/2} \\
& \qquad\times\mf G(\phi, \gamma_{< r})
\varepsilon(\phi, \gamma_{< r})\theta_{\tau_{d - 1}}(\gamma),
\end{align*}
where
$\mf G(\phi, \gamma_{< r})$ is as in Proposition \ref{prop:gauss-sum}
and
$\varepsilon(\phi, \gamma_{< r})$ is as in
Proposition \ref{prop:theta-tilde-phi}.
\end{pn}

\begin{proof}
By Lemma \ref{lem:trho-isotyp}, applied to
$\sigma = \sigma_d$ and $\tau_{d - 1}$ (using the fact that
$G'_{x, r_{d - 1}} \subseteq G'_{x, r_{d - 2}{+}}$),
it suffices to verify the desired equality when
$\gamma = \gamma_{< r}$.
Then, by Proposition \ref{prop:step1-formula1} and Lemma
\ref{lem:trho-isotyp}, we have
\begin{multline*}
\theta_\sigma(\gamma)
= \sum_{g \in \dc_{G'}^{(s)}\CC G{0{+}}(\gamma)_{x, s}\backslash\dc^{(s)}}
	\dot\theta_{\tilde\rho}(\lsup g\gamma) \\
= \theta_{\tilde\rho}(\gamma)
\sum_{g \in \dc_{G'}^{(s)}\CC G{0{+}}(\gamma)_{x, s}\backslash\dc^{(s)}}
	\hat\phi([\gamma\inv, g])
= \theta_{\tilde\rho}(\gamma)\smabs{\wtilde{\mf G}}
	\mf G
\end{multline*}
(where $\wtilde{\mf G} = \wtilde{\mf G}(\phi, \gamma)$ and
$\mf G = \mf G(\phi, \gamma)$ are the quantities calculated
in \S\ref{ssec:gauss}).
If $\gamma \in \lsup{\stab_{G'}(\ox)}K^{d - 1}$ ---
say, $\gamma = \lsup gk$,
with $g \in \stab_{G'}(\ox)$ and $k \in K^{d - 1}$
--- then Lemma \ref{lem:trho-char}
and Proposition \ref{prop:theta-tilde-phi}
show that
\begin{multline*}
\theta_{\tilde\rho}(\gamma) = \theta_{\tilde\rho}(k)
= \theta_{\tilde\phi}(k \ltimes 1)\theta_{\tau_{d - 1}}(k)
= \theta_{\tilde\phi}(\gamma \ltimes 1)\theta_{\tau_{d - 1}}(\gamma) \\
= \smcard{(\CC{G'}{0{+}}(\gamma), \CC G{0{+}}(\gamma))_{x, (r, s):(r, s{+})}}
	^{1/2}
\varepsilon(\phi, \gamma)\theta_{\tau_{d - 1}}(\gamma).
\end{multline*}
As in the proof of Lemma \ref{lem:trho-isotyp}, we see that
$K \cap \stab_{G'}(\ox) = K^{d - 1}$.
Thus, since $J \subseteq K$, we have
$\lsup{\stab_{G'}(\ox)J}K \cap \stab_{G'}(\ox)
= \lsup{\stab_{G'}(\ox)}(K \cap \stab_{G'}(\ox))
= \lsup{\stab_{G'}(\ox)}K^{d - 1}$.
Since $\gamma \in \stab_{G'}(\ox)$, we have that, if
$\gamma \not\in \lsup{\stab_{G'}(\ox)}K^{d - 1}$ ---
so that $\theta_{\tau_{d - 1}}(\gamma) = 0$
--- then $\gamma \not\in \lsup{\stab_{G'}(\ox)J}K$ ---
so that again
$$
\theta_{\tilde\rho}(\gamma) = 0
= \smcard{(\CC{G'}{0{+}}(\gamma), \CC G{0{+}}(\gamma))
	_{x, (r, s):(r, s{+})}
}^{1/2}
\varepsilon(\phi, \gamma)\theta_{\tau_{d - 1}}(\gamma).
$$

To complete the proof, we note that, by Proposition
\ref{prop:gauss-sum-card},
$$
\smabs{\wtilde{\mf G}}
\dotm
\smcard{(\CC{G'}{0{+}}(\gamma), \CC G{0{+}}(\gamma))
	_{x, (r, s):(r, s{+})}}^{1/2}
$$
equals
\[
\Bigindx\dc{\dc_{G'}G_{x, s}}^{1/2} 
\Bigindx{\odc{\gamma; x, r{+}}}
	{\odc{\gamma; x, r{+}}_{G'}G_{x, s{+}}}^{1/2}.
\qedhere
\]
\end{proof}

\numberwithin{thm}{section}
\numberwithin{equation}{section}
\section{Induction to $G$}
\label{sec:induction2}

In this section, we compute
the character of the
representation $\tau = \tau_d$ of $\stab_G(\ox)$ induced from the
representation $(\sigma, K_\sigma)$ whose character we
computed in \S\ref{sec:induction1}.
If $\bG'/Z(\bG)$ is $F$-anisotropic, then we also compute the
character of the representation $\pi = \pi_d$
of $G$ induced from $(\sigma, K_\sigma)$.
As in \S\ref{sec:induction1}, unless certain tameness and
compactness hypotheses are satisfied, we must place mild
restrictions on the elements that we consider.

Namely,
we fix throughout this section an element $\gamma \in G$,
and assume that
$\gamma$ has a normal $r$-approximation;
but, unless otherwise stated, we do \emph{not} assume that
$\gamma \in G'$ or
$x \in \BB_r(\gamma)$.
By Lemma \xref{exp-lem:simult-approx} of \cite{adler-spice:good-expansions},
under suitable assumptions on \bG,
any bounded-modulo-$Z(G)$ element of $G$
that belongs to a tame $F$-torus
will do.
By \cite{casselman:jacquet} or \cite{deligne:support},
$\Theta_\pi(\gamma) = 0$ unless
$\gamma$ is bounded modulo $Z(G)$,
and the domain of $\tau$ is already bounded modulo $Z(G)$;
so, under these assumptions,
we need only require that $\gamma$ be tame.
(Remember that an element or subgroup of $G$ is said to be bounded
modulo $Z(G)$ if its orbits in $\rBB(\bG, F)$ are bounded in
the sense of metric spaces.)

\begin{lm}
\label{lem:finite-double-coset}
If \bM is a Levi $F$-subgroup of \bG
and $\delta \in G\sss$,
then
$$
M
\backslash\set{g \in G}{\lsup g\delta \in M}
/C_G(\delta)\conn
$$
is finite.
\end{lm}

Note that \bM above need not be an $F$-Levi subgroup (i.e.,
a Levi component of a parabolic $F$-subgroup).

\begin{proof}
Put $\bH = C_\bG(\delta)$
and $\mc C = \set{g \in G}{\lsup g\delta \in M}$.
Since every $(N_G(M), H\conn)$-double coset is a
finite union of $(M, H\conn)$-double cosets,
it suffices to show that
$N_G(M)\backslash\mc C/H\conn$ is finite.

Let \bS be a maximal torus in \bG containing $\delta$.
For $g \in \mc C$, we have that
$Z(\bM^g)\conn \subseteq \bS \subseteq \bH\conn$, so
$\bM^g \cap \bH\conn = C_{\bH\conn}(Z(\bM^g)\conn)$ is a Levi subgroup
(necessarily defined over $F$) of $\bH\conn$.
Consider the $H$-equivariant map $f$ from
$N_G(M)\backslash\mc C$
to the set of Levi $F$-subgroups of $\bH\conn$
that sends $N_G(M)g$
to $\bM^g \cap \bH\conn$.
We claim that $f$ is finite-to-one.
Indeed, for $g_0 \in \mc C$, fix a torus $\bT^{g_0}$
that is maximal in $\bM^{g_0} \cap \bH\conn$, hence in \bG.
Then $N_G(M)g \mapsto \bM^g$
is an injection from the fiber of $f$ over $\bM^{g_0} \cap \bH\conn$
into the set of Levi subgroups of \bG
containing $\bT^{g_0}$, which is finite.

Thus there is a finite-to-one map from
$N_G(M)\backslash\mc C/H\conn$ to the set of
$H\conn$-orbits of Levi $F$-subgroups of $\bH\conn$.
Recall that there are only finitely many $\bH\conn(F\sep)$-orbits of
Levi subgroups of $\bH\conn$.
Thus it suffices to show that every
such orbit contains at most finitely many $H\conn$-orbits of Levi
$F$-subgroups.

Accordingly, fix a Levi subgroup $\bL \subseteq \bH\conn$.
Clearly, it suffices to consider the case where \bL is
$F$-rational.
Then the intersection of the $\bH\conn(F\sep)$-orbit of \bL with the 
set of Levi $F$-subgroups of $\bH\conn$ is
$$
\sett{\bL^h}
{$h \in \bH\conn(F\sep)$
and
$h\sigma(h)\inv \in N_{\bH\conn}(\bL)(F\sep)
	\text{ for }\sigma \in \Gal(F\sep/F)$},
$$
which is naturally in $H\conn$-equivariant bijection with
$(N_{\bH\conn}(\bL)\backslash\bH\conn)(F)$.
Thus, it suffices to show that $(\bL\backslash\bH\conn)(F)/H\conn$
is finite.
Standard Galois cohomology arguments show that this latter
set
is in bijection with the kernel of the natural map
$H^1(F\sep/F, \bL(F\sep)) \to H^1(F\sep/F, \bH\conn(F\sep))$.
Under the assumption that
$F$ has characteristic $0$
and that
\bG is $F\unram$-split and $F$-quasisplit,
\cite{debacker-reeder:depth-zero-sc}*{\S\S2.2--2.3},
describes a bijection of
$H^1(F\unram/F, \bL(F\unram))$ with
the set of torsion points in a certain finite quotient of
the lattice of cocharacters of a certain torus (see
Corollary 2.3.3 of \loccit).
However, it is observed there that the splitness and
quasisplitness assumptions are unnecessary
(although we need to take the torus \bT of \S2.3 of \loccit
to be the centralizer of a
maximal $F\unram$-split torus containing a maximal $F$-split
torus); and it can be
checked that the proof also does not require $\chr F = 0$.
Thus, $H^1(F\unram/F, \bL(F\unram))$ is finite.
Since $H^1(F\sep/F\unram, \bL(F\sep)) = \sset 0$ (as
observed in
\cite{adler-spice:good-expansions}*{\S\xref{exp-sec:buildings}}),
we have by
\cite{serre:galois}*{\S I.5.8(a)} that
$H^1(F\sep/F, \bL(F\sep))$,
hence \emph{a fortiori} the desired kernel,
is also finite.
\end{proof}

For the remainder of this paper, we fix a normal
$r$-approximation to $\gamma$
(hence to all of its conjugates and truncations),
so that $\gamma_{< r}$ is a well defined element.
For the remainder of this section, we put
$\bH = C_\bG(\gamma_{< r})$.
(Note that Proposition \xref{exp-prop:unique-approx} of \cite{adler-spice:good-expansions} guarantees
only that $\bH\conn$, not necessarily \bH itself, is
determined by $\gamma$; but, since we have chosen a specific
normal $r$-approximation, there is no ambiguity.)

We need to prove a result analogous to Lemmata 10.0.5 and 10.0.6
of \cite{debacker-reeder:depth-zero-sc}.  First, we
prove an analogue of Lemma 7.0.9 of \emph{loc.{} cit.}

\begin{lm}
\label{lem:tail-regular}
Suppose that $\gamma$ is regular semisimple in \bG.  Then
$\gamma_{\ge r}$ is regular semisimple in \bH.
\end{lm}

\begin{proof}
By Definition \xref{exp-defn:r-approx} and
Lemma \xref{exp-lem:compare-centralizers} of \cite{adler-spice:good-expansions},
$\gamma \in \CC G r(\gamma) = H\conn$.
Thus, there is a torus containing both $\gamma$ and
$\gamma_{< r}$, hence also $\gamma_{\ge r}$.
In particular, $\gamma_{\ge r} \in H\conn$ is semisimple,
so it suffices to show that
$C_\bH(\gamma_{\ge r})\conn$ is contained in a torus.
We have that
$C_\bH(\gamma_{\ge r})
\subseteq C_\bG(\gamma_{< r}\gamma_{\ge r})
= C_\bG(\gamma)$,
so
$C_\bH(\gamma_{\ge r})\conn
\subseteq C_\bG(\gamma)\conn$.
By regularity of $\gamma$, we have that $C_\bG(\gamma)\conn$ is a torus.
The proof is complete.
\end{proof}

\begin{lm}
\label{lem:char-pi-cpt-supp}
Suppose that $\bG'/Z(\bG)$ is $F$-anisotropic
and $\gamma$ is regular semisimple.
If $\cpt_H$ is a compact open subgroup of $H\conn$,
then
$$
g \mapsto
\int_{\cpt_H} \dot\theta_\sigma(\lsup{g k}\gamma)dk
$$
is compactly supported on $G/Z(G)$.
\end{lm}

\begin{proof}
By Lemma \ref{lem:finite-double-coset} (with $\bM = \bG'$),
since $K_\sigma$ contains $G'$,
the set of $(K_\sigma, H\conn)$-double cosets in $G$ containing an
element $g$ with $\lsup g\gamma_{< r} \in G'$ is finite.
By Corollary \ref{cor:wk-step1-support}, the support of the
function occurring in the statement is contained in the union of such double
cosets.  Thus, it suffices to show that the restriction of
the indicated function to any $(K_\sigma, H\conn)$-double coset
has compact support.

Fix a double coset $K_\sigma g H\conn$ in $G$.
Since $\dot\theta_\sigma$ is invariant under conjugation by
the compact-modulo-$Z(G)$ group $K_\sigma$,
it suffices to show that
$$
h \mapsto
\int_{\cpt_H} \dot\theta_\sigma(\lsup{g h k}\gamma)dk
$$
is compactly supported on $H\conn$, modulo $Z(G)$.
Suppose that $h \in H\conn$ and $k \in \cpt_H \subseteq H\conn$.
Then
$\lsup{g h k}\gamma
= (\lsup g\gamma_{< r})(\lsup{g h k}\gamma_{\ge r})$.
Therefore, by Corollary \ref{cor:trho-isotyp},
\begin{align*}
\dot\theta_\sigma(\lsup{g h k}\gamma)
= {} & [\lsup{G_{x, 0+}}G'](\lsup g\gamma_{< r})
	\dot\theta_\sigma(\lsup g\gamma_{< r})
	{\dotm}
	[\BB(\bH, F)]((g h k)\inv x) \\
  & \quad \times
	[G_{x, r}](\lsup{g h k}\gamma_{\ge r})
	\hat\phi(\lsup{g h k}\gamma_{\ge r}).
\end{align*}
Since $\lsup{h k}\gamma_{\ge r} \in H\conn$, we have that
$\lsup{g h k}\gamma_{\ge r} \in G_{x, r}$ if and only if
$\lsup{h k}\gamma_{\ge r} \in H\conn \cap G_{g\inv x, r}$\,.
Thus it suffices to show that
\begin{equation}
\tag{$*$}
h \mapsto
\int_{\cpt_H}
	[\BB(\bH, F)]((g h k)\inv x)
	\cdot
	[H\conn \cap G_{g\inv x, r}](\lsup{h k}\gamma_{\ge r})
	\hat\phi^g(\lsup{h k}\gamma_{\ge r})dk
\end{equation}
is compactly supported on $H\conn$, modulo $Z(G)$, whenever
$\lsup g\gamma_{< r} \in \lsup{G_{x, 0+}}G'$.
Since ($*$) does not change if we replace $g$ by an element
of $K_\sigma g$, we need only consider the case that
$\lsup g\gamma_{< r} \in G'$.

If $g\inv x \not\in \BB(\bH, F)$, then the function ($*$) vanishes.
Suppose that $g\inv x \in \BB(\bH, F)$ (as well as
$\lsup g\gamma_{< r} \in G'$).
Then, by Remark \xref{exp-rem:approx-facts-in-stab} of \cite{adler-spice:good-expansions},
$\lsup g\gamma_{< r} \in \stab_G(\ox)$.
Let
$\Sigma_H = (\vec\bH, \vec\phi_H, \vec r_H, x_H, \rho_{H, 0}')$
be a cuspidal datum
(see Definition \ref{defn:cusp-dat})
such that
\begin{enumerate}
\item
$\vec\bH = (\bH^0 \subseteq \bH^1)$,
where
$\bH^0
= \bH\conn \cap \lsup{g\inv}\bG'$ and
$\bH^1 = \bH\conn$;
\item
$\vec\phi_H = (\phi_{H, 0}, 1)$,
where $\phi_{H, 0} = \phi^g\bigr|_{H^0}$;
\item
$\vec r_H = (r, r)$;
and
\item
$x_H = g\inv x$.
\end{enumerate}
(Note that $\bH^0$, $\phi_{H, 0}$ and $x_H$ all
depend on $g$ as well as on $H$.)
As in \S\ref{sec:JK},
there are associated to the datum $\Sigma_H$ a
compact-modulo-$Z(H)$ open subgroup $K_{\Sigma_H}$ of $H$
and a representation $\rho_{\Sigma_H}'$ of $K_H$ such that
$\pi_{\Sigma_H} = \Ind_{K_{\Sigma_H}}^H \rho_{\Sigma_H}'$ is
an irreducible supercuspidal representation of $H$.
Put
$K_{\sigma_H} = K_{\Sigma_H}H_{x, 0+}$
and
$\sigma_H
= \Ind_{K_{\Sigma_H}}^{K_{\sigma_H}} \rho_{\Sigma_H}'$.
Now we are in the situation of \S\ref{sec:induction1}
(with $(G, \sigma)$ there replaced by $(H\conn, \sigma_H)$).

By Corollary \ref{cor:trho-isotyp}, for $h \in H\conn$ and
$k \in \cpt_H$, we have
\begin{multline*}
\dot\theta_{\sigma_H}(\lsup{h k}\gamma_{\ge r})
= [\BB(\bH, F)]((h k)\inv x_H)
	\cdot
	[H_{x_H, r}](\lsup{h k}\gamma_{\ge r})
	\hat\phi_{H, 0}(\lsup{h k}\gamma_{\ge r}) \\
= [\BB(\bH, F)]((g h k)\inv x)
	\cdot
	[G_{x, r}](\lsup{g h k}\gamma_{\ge r})
	\hat\phi^g(\lsup{h k}\gamma_{\ge r}).
\end{multline*}
Since $\dot\theta_{\sigma_H}$ is a sum of matrix coefficients of 
the supercuspidal representation $\pi_H$, it is a cusp form
(or `supercusp form', in the language of
\cite{hc:harmonic}*{\S I.3}) on $H\conn$.
In particular, by Lemma 23 of \cite{hc:harmonic}
(the proof of which does not depend on $\chr F$ being $0$)
and our Lemma \ref{lem:tail-regular},
($*$) is compactly supported on $H\conn$, modulo $Z(H\conn)$.
Since $\gamma_{< r} \in G'$,
we have that
$Z(H\conn) = Z(C_G(\gamma_{< r})\conn) \subseteq G'$
is compact modulo $Z(G)$, so
($*$) is also compactly supported on $H\conn$, modulo $Z(G)$.
\end{proof}

The portion of the following result concerning $\Theta_\pi$
is the analogue
of Lemma 10.0.4 of \cite{debacker-reeder:depth-zero-sc}.

\begin{thm}
\label{thm:char-tau|pi-1}
If $x \in \BB_r(\gamma)$, then
\begin{equation}
\label{eq:char-tau}
\theta_\tau(\gamma)
= \phi_d(\gamma)
\sum_g
	\theta_\sigma(\lsup g\gamma_{< r}) \\
	\hat\mu_{X^*}^{\stab_{\lsup g H}(\ox)}(
		\mexp_x\inv(\lsup g\gamma_{\ge r})
	).
\end{equation}
If $\bG'/Z(\bG)$ is $F$-anisotropic and $\gamma \in G$
is regular semisimple, then
\begin{equation}
\label{eq:char-pi}
\Theta_\pi(\gamma)
= \phi_d(\gamma)
\sum_g
	\theta_\sigma(\lsup g\gamma_{< r})
	\hat\mu_{X^*}^{\lsup g H}(
		\mexp_x\inv(\lsup g\gamma_{\ge r})
	).
\end{equation}
The sums run over those double
cosets in $\stab_{G'}(\ox)G_{x, 0+}\backslash\stab_G(\ox)/\stab_H(\ox)$
or $G'G_{x, 0+}\backslash G/H$, respectively,
containing an element $g$ such that
$\lsup g\gamma_{< r} \in G'$
and $x \in \BB_r(\lsup g\gamma)$.

Here,
$\hat\mu_{X^*}^{\stab_{\lsup g H}(\ox)}$ is the function
representing the distribution
\eqref{eq:mu-stab-ox} below,
and both it and $\hat\mu_{X^*}^{\lsup g H}$ are defined with
respect to the Haar measure on $\lsup g H/Z(G)$
normalized so that $\meas(K_\sigma \cap \lsup g H/Z(G)) = 1$.
\end{thm}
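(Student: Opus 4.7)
The plan is to start from an integral/sum formula expressing the induced character in terms of $\dot\theta_{\sigma\otimes\phi_d}$, decompose along double cosets, and then recognize the contribution of each coset as an orbital-integral Fourier transform. Concretely, for $\tau$ the Frobenius formula gives
$$
\theta_\tau(\gamma)
= \sum_{g\in K_\sigma\backslash\stab_G(\ox)}
  \dot\theta_{\sigma\otimes\phi_d}(\lsup g\gamma),
$$
while for $\pi$, which is supercuspidal, Harish-Chandra's integral formula (with compact support guaranteed by Lemma \ref{lem:char-pi-cpt-supp}) gives
$$
\Theta_\pi(\gamma)
= \int_{G/C_G(\gamma)\conn}\dot\theta_{\sigma\otimes\phi_d}(\lsup g\gamma)\,d\dot g.
$$
In both cases, since $\phi_d$ is a linear character of $G$, we may immediately factor out $\phi_d(\lsup g\gamma)=\phi_d(\gamma)$. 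I would then decompose the summation/integration along double cosets in $K_\sigma\backslash\stab_G(\ox)/\stab_H(\ox)$ (for $\tau$) or $K_\sigma\backslash G/H\conn$ (for $\pi$), where $\bH=C_\bG(\gamma_{<r})$; note that $K_\sigma=\stab_{G'}(\ox)G_{x,0+}\supseteq G'G_{x,0+}$ and, for $\gamma$ regular semisimple, $C_G(\gamma)\conn\subseteq H\conn$ by Lemma \ref{lem:tail-regular}, so such a fibration is legitimate.

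Next I would fix a double-coset representative $g$ and rewrite $\lsup{gh}\gamma=\lsup g\gamma_{<r}\cdot\lsup{gh}\gamma_{\ge r}$ for $h\in H\conn$, using that $h$ centralizes $\gamma_{<r}$. Then $(\lsup g\gamma_i)_{0\le i<r}$ is a normal $r$-approximation to $\lsup{gh}\gamma$ with tail $\lsup{gh}\gamma_{\ge r}$, so Corollary \ref{cor:trho-isotyp} (applied to the ambient group $G$, which requires $\bG'/Z(\bG)$ anisotropic in the $\pi$-case and the hypothesis $x\in\BB_r(\gamma)$ in the $\tau$-case) factors
$$
\dot\theta_\sigma(\lsup{gh}\gamma)
= [\lsup{G_{x,0+}}G'](\lsup g\gamma_{<r})\,
  \theta_\sigma(\lsup g\gamma_{<r})\,
  [\BB(\lsup g\bH,F)](x)\,
  [G_{x,r}](\lsup{gh}\gamma_{\ge r})\,
  \hat\phi(\lsup{gh}\gamma_{\ge r}).
$$
The first three factors are $h$-independent and force exactly the support condition stated in the theorem (upon replacing $g$ by a suitable $K_\sigma$-translate, which is permissible within its double coset, we may assume $\lsup g\gamma_{<r}\in G'$ and $x\in\BB_r(\lsup g\gamma)$); the Weyl-conjugacy results of \cite{adler-spice:good-expansions} ensure the double cosets with nonzero contribution are precisely those indexed in the statement.

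It remains to identify the inner integral/sum of the last two factors as $\hat\mu_{X^*}^{\lsup g H}(\mexp_x\inv(\lsup g\gamma_{\ge r}))$. Using the defining relation $\hat\phi\circ\mexp_x=\Lambda\circ X^*$ on the relevant filtration piece, and changing variables so that $h$ ranges over $\lsup g H\conn$ (respectively $\stab_{\lsup g H}(\ox)$ for $\tau$) modulo $C_G(\gamma)\conn$, the inner integral becomes
$$
\int \Lambda\bigl(X^*(\Ad(h)Y)\bigr)\,dh,
\qquad Y=\mexp_x\inv(\lsup g\gamma_{\ge r})\in\lsup g\mf h,
$$
which is precisely the Fourier transform of the orbital integral at $\lsup g X^*$ (equivalently, at $X^*$ over $\lsup g H$), once the measure on $\lsup g H/Z(G)$ is normalized so that $K_\sigma\cap\lsup g H/Z(G)$ has volume one. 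The $[G_{x,r}]$ indicator is automatic: since $Y$ (being regular in $\lsup g\mf h$ by Lemma \ref{lem:tail-regular}) is a regular semisimple parameter, the Fourier transform is already represented by a locally constant function there, and the integrand $\Lambda\circ X^*\circ\Ad(h)$ is supported on the preimage under $\mexp_x$ of $G_{x,r}$ in the range where $\hat\phi$ was defined.

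The main obstacle I anticipate is bookkeeping: verifying that the measures on $G$, $H\conn$, $C_G(\gamma)\conn$, and $K_\sigma$ combine consistently so that the indicated normalization ($K_\sigma\cap\lsup g H/Z(G)$ has measure one) produces the Fourier-transform of the orbital integral with no extra constant; and, in the $\pi$-case, justifying the interchange of the Harish-Chandra integral with the double-coset decomposition. For (i) the compact support provided by Lemma \ref{lem:char-pi-cpt-supp} suffices, and for (ii) the explicit nature of the subgroup $K_\sigma=\stab_{G'}(\ox)G_{x,0+}$ relative to $\lsup g H$ should make the quotient measure computation routine once $H\conn\cap\lsup{g\inv}K_\sigma$ is identified as a specific parahoric-type subgroup of $\lsup{g\inv}\bH$.
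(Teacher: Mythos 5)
Your treatment of $\theta_\tau$ is essentially the paper's argument (Frobenius formula, double-coset decomposition over $K_\sigma\backslash\stab_G(\ox)/\stab_H(\ox)$, the isotypy of $\sigma$ on $G_{x,r}$, and the recognition of the inner sum over $\stab_{\lsup gH}(\ox)$ as a compact orbital-integral Fourier transform), and that part is fine. The $\Theta_\pi$ case, however, has a genuine gap at both ends. Your starting point
$\Theta_\pi(\gamma)=\int_{G/C_G(\gamma)\conn}\dot\theta_{\sigma\otimes\phi_d}(\lsup g\gamma)\,d\dot g$
is not Harish-Chandra's integral formula: the actual formula is
$\Theta_\pi(\gamma)=\tfrac{\deg\pi}{\deg\sigma}\phi_d(\gamma)\int_{G/Z(G)}\int_{\cpt}\dot\theta_\sigma(\lsup{g'c}\gamma)\,dc\,d\dot g'$,
and the inner smoothing over a compact open subgroup $\cpt$ is essential — the function $g'\mapsto\dot\theta_\sigma(\lsup{g'}\gamma)$ by itself need not be integrable on $G/Z(G)$, and Lemma \ref{lem:char-pi-cpt-supp} does \emph{not} give compact support for it; it gives compact support only for the $\cpt_H$-averaged function $g\mapsto\int_{\cpt_H}\dot\theta_\sigma(\lsup{gk}\gamma)\,dk$. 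The nontrivial step you are skipping is the replacement of $\int_\cpt$ by $\int_{\cpt_H}$ (the analogue of Lemma 10.0.7 of \cite{debacker-reeder:depth-zero-sc}), which is exactly the interchange of integrals that Lemma \ref{lem:char-pi-cpt-supp} exists to justify. Without it you cannot fibre the integral over $K_\sigma\backslash G/H$ in the way you propose.

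The second gap is at the identification of the inner integral with $\hat\mu_{X^*}^{\lsup gH}(\mexp_x\inv(\lsup g\gamma_{\ge r}))$. The expression $\int\Lambda\bigl(X^*(\Ad(h)Y)\bigr)\,dh$ over the noncompact group $\lsup gH/Z(G)$ is an oscillatory integral that does not converge as written; $\hat\mu_{X^*}$ is a distribution and its value at $Y$ means the value of the representing function, obtained (via Huntsinger's theorem) as a sum of $\cpt$-smoothed pieces. Your claim that the indicator $[G_{x,r}]$ is ``automatic'' because the integrand is ``supported on the preimage of $G_{x,r}$'' is false: $\Lambda\circ X^*\circ\Ad(h)$ is nonzero for essentially all $h$. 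The correct statement is Lemma \ref{lem:orbital-cancel}: the contributions from conjugates $\lsup hY\notin\fg_{x,r}$ \emph{cancel}, and proving this cancellation uses the $F$-anisotropy of $\bG'/Z(\bG')$ together with Lemma \ref{lem:phi-commute} in an essential way. As written, your argument never invokes this cancellation, so the passage from $\sum_g(\cdots)\hat\phi(\lsup{gh}\gamma_{\ge r})$ with the $[G_{x,r}]$-indicator to the orbital-integral Fourier transform is unproved.
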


If we used a suitable exponential map in place of $\mexp_x$
(one that, among other things, was conjugation invariant and
defined on all the filtration lattices $\fg_{y, r}$ for
$y \in \BB(\bG, F)$), then
the sums in \eqref{eq:char-tau} and \eqref{eq:char-pi}
could be extended over all
double cosets containing an element $g$ such that
$\lsup g\gamma_{< r} \in G'$, since Lemma \ref{lem:orbital-cancel}
shows that the extra summands would vanish.  However, it is
more convenient for our purposes to restrict the sum, so
that we do not have to assume the existence of a suitable
exponential map,
and so that we can apply Proposition \ref{prop:induction1}
(which is subject to the assumptions in force through all of
\S\ref{sec:induction1}, including that
$\gamma_{\ge r} \in G_{x, r}$).

Note that the orbital integrals appearing in
\eqref{eq:char-pi} are taken over the $F$-rational points
of possibly \emph{disconnected} groups $\lsup g H$.
By Lemma \ref{lem:disc-orbital-int}, it is easy to describe
them as sums of orbital integrals over the connected groups
$\lsup gH\conn$ if necessary.

\begin{proof}
Recall that
$K_\sigma = \stab_{G'}(\ox)G_{x, 0{+}}$\,,
so $K_\sigma = G'G_{x, 0{+}}$ if $\bG'/Z(\bG)$ is
$F$-anisotropic.

First, we compute $\theta_\tau(\gamma)$ (in case $x \in \BB_r(\gamma)$).
By the Frobenius formula, we have that
$\theta_\tau(\gamma)
= \phi_d(\gamma)
\sum_{g \in K_\sigma\backslash\stab_G(\ox)}
       \dot\theta_\sigma(\lsup g\gamma)$,
so
\begin{multline}
\label{eq:char-tau-big-supp}
\theta_\tau(\gamma)
= \phi_d(\gamma)
\sum_{g \in  K_\sigma\backslash\stab_G(\ox)/\stab_H(\ox)} \,
	\sum_{g' \in K_\sigma\backslash K_\sigma g\stab_H(\ox)}
	\dot\theta_\sigma(\lsup{g'}\gamma) \\
= \phi_d(\gamma)
\sum_g \, \sum_{
	h' \in (\stab_{\lsup g H}(\ox) \cap K_\sigma)
		\backslash\stab_{\lsup g H}(\ox)
}
	\dot\theta_\sigma(\lsup{h'g}\gamma) \\
= \phi_d(\gamma)
\sum_g \dot\theta_\sigma(\lsup g\gamma_{< r})
	\sum_{h'} \hat\phi(\lsup{h'g}\gamma_{\ge r})
\end{multline}
(where the equality on the last line follows from Lemma
\ref{lem:trho-isotyp}, and the fact that
$(\lsup{h'g}\gamma)_{< r} = \lsup g\gamma_{< r}$
for all $g$ and $h'$ as above).
An easy formal calculation, using
Hypothesis \ref{hyp:strong-mock-exp-equi} and
the fact that
$\meas(K_{\sigma} \cap \stab_{\lsup g H}(\ox)/Z(G))
= \meas(K_\sigma \cap \lsup g H/Z(G)) = 1$,
shows that
\begin{multline*}
\tag{$*$}
\sum_{
	h' \in (\stab_{\lsup g H}(\ox) \cap K_\sigma)
		\backslash\stab_{\lsup g H}(\ox)}
	\hat\phi(\lsup{h'g}\gamma_{\ge r})
= \sum_{h'}
	\Lambda(X^*(\lsup{h'}\mexp_x\inv(\lsup g\gamma_{\ge r})) \\
= \hat\mu_{X^*}^{\stab_{\lsup g H}(\ox)}(
		\mexp_x\inv(\lsup g\gamma_{\ge r})
	),
\end{multline*}
where $\hat\mu_{X^*}^{\stab_{\lsup g H}(\ox)}$ is the
function representing the distribution
\begin{equation}
\label{eq:mu-stab-ox}
f \mapsto
\int_{\stab_{\lsup g H}(\ox)/Z(G)}
	\hat f(\Ad^*(h')\inv X^*)
d\dot h'
\end{equation}
on $\lsup g\mf h^*$.
Now fix a double coset $K_\sigma g\stab_H(\ox)$ with $g \in \stab_G(\ox)$.
If there is no element $g'$ in the double coset such that
$\lsup{g'}\gamma_{< r} \in G'$, then, by
Corollary \ref{cor:step1-support}, we have that
$\dot\theta_\sigma(\lsup g\gamma_{< r}) = 0$,
so the summand corresponding to $g$ on the last line of
\eqref{eq:char-tau-big-supp} vanishes.
If $\lsup g\gamma_{< r} \in G'$, then,
since $\lsup g\gamma_{< r} \in \stab_G(\ox)$,
we have
$\lsup g\gamma_{< r} \in \stab_{G'}(\ox)$.
In particular, $\lsup g\gamma_{< r}$ is in the domain of
$\sigma$, so
$\dot\theta_\sigma(\lsup g\gamma_{< r})
= \theta_\sigma(\lsup g\gamma_{< r})$,
and the summands in \eqref{eq:char-tau} and,
by ($*$),
the last line of \eqref{eq:char-tau-big-supp}
corresponding to $g$ are the same.
Note that
$g x \in \BB_r(\lsup g\gamma)$,
so, since $\ox = g\ox$, also $x \in \BB_r(\lsup g\gamma)$.

Next, we compute $\Theta_\pi(\gamma)$ (in case $\bG'/Z(\bG)$
is $F$-anisotropic).
By Harish-Chandra's integral formula,
for any compact open subgroup \cpt of $G$,
\begin{equation}
\label{eq:char-pi-1}
\Theta_\pi(\gamma)
= \frac{\deg(\pi)}{\deg(\sigma)}\phi_d(\gamma)
\int_{G/Z(G)} \int_\cpt
	\dot\theta_\sigma(\lsup{g'c}\gamma)dc\,d\dot g',
\end{equation}
where $d\dot g'$ is a Haar measure on $G/Z(G)$, and
$dc$ is the Haar measure on \cpt normalized so that
$\meas(\cpt) = 1$.
(In characteristic $0$, this was proven for supercuspidal
representations --- in particular, for $\pi$ --- by
Harish-Chandra in \cite{hc:harmonic}.  In
\cite{rader-silberger:submersion}, Rader and Silberger
demonstrated an analogue of this result for discrete series
representations.
In \cite{adler-debacker:mk-theory}*{Appendix B},
Prasad provided a characteristic-free proof of a submersion
principle of Harish-Chandra.
Since the proof of the integral formula 
for characters (Theorem 12 of \cite{hc:harmonic}*{p.~60})
relies only on the submersion principle and Lemma 23 of
\cite{hc:harmonic}*{p.~59}, and since the proof of the
latter does not
depend on $\chr F$ being $0$, the correctness of the
integral formula in any characteristic follows.)

For the remainder of the proof,
we will assume that $\bG'/Z(\bG)$ is $F$-anisotropic.
We claim that the inner integral
in \eqref{eq:char-pi-1}
may be replaced by an
integral over $\cpt_H := \cpt \cap H\conn$.  (This is the analogue
of Lemma 10.0.7 of \cite{debacker-reeder:depth-zero-sc}.)
Indeed,
$$
\begin{aligned}
\frac{\deg(\sigma)}{\deg(\pi)}\phi_d(\gamma)\inv\Theta_\pi(\gamma)
& = \int_{G/Z(G)} \int_\cpt
	\dot\theta_\sigma(\lsup{g'c}\gamma)dc\,d\dot g' \\
& = \int_{G/Z(G)} \int_{\cpt_H}
	\int_\cpt \dot\theta_\sigma(\lsup{g'c k}\gamma)
	dc\,dk\,d\dot g' \\
& = \int_\cpt \int_{G/Z(G)}
	\int_{\cpt_H} \dot\theta_\sigma(\lsup{g'c k}\gamma)
	dk\,d\dot g'\,dc \\
& = \int_{G/Z(G)} \int_{\cpt_H}
	\dot\theta_\sigma(\lsup{g'k}\gamma)
	dk\,d\dot g',
\end{aligned}
$$
where $dk$ is the Haar measure on $\cpt_H$ normalized so
that $\meas(\cpt_H) = 1$.
(The equalities on the second and fourth lines
come from routine Haar measure manipulations.  The
interchange of integrals on the third line is justified by
Lemma \ref{lem:char-pi-cpt-supp}.)
Thus,
\begin{equation}
\label{eq:char-pi-H-int}
\Theta_\pi(\gamma)
= \phi_d(\gamma)
\sum_{g \in K_\sigma\backslash G/H}
	\frac{\deg(\pi)}{\deg(\sigma)}
	\int_{K_\sigma g H/Z(G)} \int_{\cpt_H}
		\dot\theta_\sigma(\lsup{g'k}\gamma)
	dk\,d\dot g'.
\end{equation}

Fix a double coset $K_\sigma g H$ in $G$.
Since
$g' \mapsto \dot\theta_\sigma(\lsup{g'k}\gamma)$
is invariant under left translation by $K_\sigma$,
we have that
\begin{multline}
\label{eq:char-pi-H-int-term}
\int_{K_\sigma g H/Z(G)} \int_{\cpt_H}
	\dot\theta_\sigma(\lsup{g'k}\gamma)
dk\,d\dot g'
\\
\begin{aligned}
& = \int_{K_\sigma(\lsup g H)/Z(G)} \int_{\lsup g\cpt_H}
	\dot\theta_\sigma(\lsup{y k'g}\gamma)
dk'\,d\dot y \\
& = \int_{K_\sigma(\lsup g H)/\lsup g H} \int_{\lsup g H/Z(G)} \int_{\lsup g\cpt_H}
	\dot\theta_\sigma(\lsup{y h'k'g}\gamma)
dk'\,d\dot h'\,\frac{dy}{dh} \\
& = \meas(K_\sigma(\lsup g H)/\lsup g H)\int_{\lsup g H/Z(G)} \int_{\lsup g\cpt_H}
	\dot\theta_\sigma(\lsup{h'k'g}\gamma)
dk\,d\dot h',
\end{aligned}
\end{multline}
where $d\dot y$ is the Haar measure on $G/Z(G)$ used to
compute $\deg(\pi)$,
$d\dot h'$ is Haar measure on $\lsup g H/Z(G)$ normalized as
in the statement of the theorem,
$dk'$ is the Haar measure on $\lsup g\cpt_H$ normalized so
that $\meas(\lsup g\cpt_H) = 1$,
and $dy/dh$ is the Haar measure on $G/H$ deduced from $d\dot y$
and $d\dot h$.
If there is no element $g'$ in the double coset such that
$\lsup{g'}\gamma_{< r} \in G'$ and $x \in \BB_r(\lsup{g'}\gamma)$,
then, by Corollary \ref{cor:wk-step1-support}, the summand
in \eqref{eq:char-pi-H-int} corresponding to $g$ vanishes.
Otherwise, we may, and do, assume that
$\lsup g\gamma_{< r} \in G'$ and $x \in \BB_r(\lsup g\gamma)$.
Now, by Corollary \ref{cor:trho-isotyp},
$$
\dot\theta_\sigma(\lsup{h'k'g}\gamma)
= \theta_\sigma(\lsup g\gamma_{< r})
	\dotm{[(\lsup g H)_{x, r}](\lsup{h'k'g}\gamma_{\ge r})}
	\hat\phi(\lsup{h'k'g}\gamma_{\ge r})
$$
for $h' \in \lsup g H$ and $k' \in \lsup g\cpt_H$;
and
$$
\meas(K_\sigma(\lsup g H)/\lsup g H)
= \meas(K_\sigma/Z(G))
\meas(K_\sigma \cap \lsup g H/Z(G))\inv
= \frac{\deg(\sigma)}{\deg(\pi)},
$$
where the last equality follows from the normalization
$\meas(K_\sigma \cap \lsup g H/Z(G)) = 1$
and the fact that
$\pi = \Ind_{K_\sigma}^G \sigma$.
Combining these two facts with
Lemma \ref{lem:orbital-cancel} (with $Z = Z(G)$)
and \eqref{eq:char-pi-H-int-term},
we see that the summands
in \eqref{eq:char-pi} and \eqref{eq:char-pi-H-int}
corresponding to $g$ are the same.
\end{proof}

We would like a way of describing the sum in Theorem
\ref{thm:char-tau|pi-1} as running over a set of conjugates
of $\gamma$, not over a set of elements conjugating
$\gamma$.  However, really we are interested only in
conjugates of $\gamma_{< r}$, not of $\gamma$.
We define below an equivalence relation $\simm{d - 1}$ on the set
$\mc T((\bG^i, \dotsc, \bG^d), (r_i, \dotsc, r_d))$
that makes this precise,
and then sum over equivalence classes for this relation in
Corollary \ref{cor:char-tau|pi-1}.
Since we will need them later, in fact we define a family of
equivalence relations $\simm i$.

\begin{dn}
\label{defn:equiv}
For $0 \le i < d$, let
\indexmem{simm-i}{\simm{i}}
$\simm{i}$ be the equivalence relation on
$\mc T((\bG^i, \dotsc, \bG^d), (r_i, \dotsc, r_d))$
such that,
for two elements $\delta$ and $\delta'$ of that set,
$\delta \simm{i} \delta'$ if and only if
$\delta'_{< r_j} \in \lsup{\stab_{G^j}(\ox)}\delta_{< r_j}$
for all $i \le j < d$.
\end{dn}

\begin{cor}
\label{cor:char-tau|pi-1}
If $x \in \BB_r(\gamma)$, then
\begin{equation}
\label{eq:char-tau-no-g}
\theta_\tau(\gamma)
= \phi_d(\gamma)
\sum
	\theta_\sigma(\gamma'_{< r})
	\hat\mu^{\stab_{H'}(\ox)}_{X^*}(
		\mexp_x\inv(\gamma'_{\ge r})
	).
\end{equation}
If $\bG'/Z(\bG)$ is $F$-anisotropic and $\gamma$ is regular
semisimple, then
\begin{equation}
\label{eq:char-pi-no-g}
\Theta_\pi(\gamma)
= \phi_d(\gamma)
\sum
	\theta_\sigma(\gamma'_{< r})
	\hat\mu^{H'}_{X^*}(
		\mexp_x\inv(\gamma'_{\ge r})
	).
\end{equation}
Here,
$\hat\mu_{X^*}^{\stab_{H'}(\ox)}$
and $\hat\mu_{X^*}^{H'}$ are defined with respect to the
Haar measure on $H'/Z(G)$ normalized so that
$\meas(K_\sigma \cap H'/Z(G)) = 1$,
and
the sums are taken over $\simm{d - 1}$-equivalence
classes of elements
$\gamma'
\in \lsup{\stab_G(\ox)}\gamma \cap \mc T((\bG', \bG), (r, r_d))$
(respectively,
$\gamma' \in \lsup G\gamma \cap \mc T((\bG', \bG), (r, r_d))$
with $x \in \BB_r(\gamma')$).
\end{cor}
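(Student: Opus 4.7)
The plan is to reindex the sums in Theorem~\ref{thm:char-tau|pi-1} via the change of variable $g \mapsto \gamma' := \lsup g\gamma$. If $g$ satisfies the constraints of the theorem (in particular $\lsup g\gamma_{<r} \in G'$), then $\gamma'$ inherits from $\gamma$ a normal $r$-approximation with head $\gamma'_{<r} = \lsup g\gamma_{<r} \in G'$, so $\gamma' \in \mc T((\bG', \bG), (r, r_d))$. Moreover $H' := C_\bG(\gamma'_{<r})(F) = \lsup g H$, and (since $g \in \stab_G(\ox)$ in the $\tau$ case) $\stab_{H'}(\ox) = \lsup g\stab_H(\ox)$. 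By conjugation-equivariance of Fourier transforms of orbital integrals, and because the normalizations $\meas(K_\sigma \cap \lsup g H/Z(G)) = 1 = \meas(K_\sigma \cap H'/Z(G))$ coincide, the summand in Theorem~\ref{thm:char-tau|pi-1} indexed by $g$ reads verbatim as the summand in \eqref{eq:char-tau-no-g} or \eqref{eq:char-pi-no-g} indexed by $\gamma'$.

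The next step is to show that $g \mapsto \gamma'$ descends to a bijection between the two index sets. For the $\tau$ assertion, two elements $g_1, g_2 \in \stab_G(\ox)$ give the same double coset $K_\sigma g \stab_H(\ox)$ iff $g_1 = k g_2 h$ with $k \in K_\sigma = \stab_{G'}(\ox)G_{x,0+}$ and $h \in \stab_H(\ox)$. Since $h$ centralises $\gamma_{<r}$, such a relation implies $(\gamma'_1)_{<r} = \lsup k(\gamma'_2)_{<r}$; writing $k = k'k_0$ with $k' \in \stab_{G'}(\ox)$ and $k_0 \in G_{x,0+}$, both $(\gamma'_2)_{<r}$ and $\lsup{k_0}(\gamma'_2)_{<r}$ lie in $G'$, so the tame-descent argument of Corollary~\xref{exp-cor:aniso-Levi} of \cite{adler-spice:good-expansions} lets us absorb $k_0$ into $\stab_{G'}(\ox)$ up to an element of $H$, yielding $\gamma'_1 \simm{d-1} \gamma'_2$. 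Conversely, if $(\gamma'_1)_{<r} = \lsup{\tilde k}(\gamma'_2)_{<r}$ with $\tilde k \in \stab_{G'}(\ox)$, then $(\tilde kg_2)^{-1}g_1 \in H \cap \stab_G(\ox) = \stab_H(\ox)$, so $g_1 \in K_\sigma g_2 \stab_H(\ox)$. The $\pi$ case is strictly analogous, with $\stab_G(\ox)$ replaced by $G$, $K_\sigma$ by $G'G_{x,0+}$, and $\stab_H(\ox)$ by $H$; the auxiliary condition $x \in \BB_r(\gamma')$ transfers cleanly, because translating $g$ on the left by $G'G_{x,0+}$ alters $\lsup g\BB_r(\gamma)$ only up to $G_{x,0+}$-action.

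Finally, I would verify that the summand is constant on $\simm{d-1}$-equivalence classes: $\theta_\sigma(\gamma'_{<r})$ is invariant under conjugation by $\stab_{G'}(\ox) \subseteq K_\sigma$ (the domain of $\sigma$), and the orbital-integral factor depends on $\gamma'$ only through the pair $(H', H'\dota\gamma'_{\ge r})$, which is determined by the $\simm{d-1}$-class of $\gamma'$. The main obstacle is the tame-descent step in the second paragraph, where one must transfer a $G_{x,0+}$-conjugacy between two elements of $G'$ into a $\stab_{G'}(\ox)$-conjugacy modulo an $H$-error; once that bijection is in hand, everything else is a formal rewriting of the formulas in Theorem~\ref{thm:char-tau|pi-1}.
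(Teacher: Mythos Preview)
Your approach is essentially the same as the paper's: both reindex the sums in Theorem~\ref{thm:char-tau|pi-1} via $g \mapsto \gamma' = \lsup g\gamma$, and both identify the crucial step as showing that a $G_{x,0+}$-conjugacy between two elements of $G'$ can be factored into a $G'_{x,0+}$-part and a centralizer part. The paper organizes the bijection through three intermediate maps $f_1, f_2, f_3$ (factoring through $\stab_{G'}(\ox)$-orbits of $\gamma'_{< r}$), which packages the argument a bit more cleanly, but the content is the same as your direct argument.

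One correction: the reference you invoke for the tame-descent step is not the right one. Corollary~\xref{exp-cor:aniso-Levi} of \cite{adler-spice:good-expansions} concerns when an element lies in $\lsup{G_{x,0+}}(\stab_{G'}(\ox)G_{x,r})$, which is not what is needed here. The statement you actually need is Lemma~\xref{exp-lem:rigidity} (together with Corollary~\xref{exp-cor:compare-centralizers}) of \loccit: if $k \in G_{x,0+}$, $\lsup{g_1}\gamma_{< r} \in G'$, $x \in \BB_r(\lsup{g_1}\gamma)$, and $\lsup{k g_1}\gamma_{< r} \in G'$, then $k \in G'_{x,0+}(\lsup{g_1}H)_{x,0+}$. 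This is precisely what lets you ``absorb $k_0$ into $\stab_{G'}(\ox)$ up to an element of $H$'' in your second paragraph; without it, that step has no justification.
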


The notations
$\mc T((\bG', \bG), (r, r_d))$ and $\simm{d - 1}$
are as in Definitions \ref{defn:trunc}
and \ref{defn:equiv}, respectively.
By abuse of notation, we have written $\bH'$ in place of
$C_\bG(\gamma'_{< r_{d - 1}})$, even though this group
depends on $\gamma'$.

As observed after Theorem \ref{thm:char-tau|pi-1}, by Lemma
\ref{lem:disc-orbital-int},
we may describe the orbital integrals over the possibly disconnected
groups $H'$ as sums of orbital integrals over ${H'}\conn$.

\begin{proof}
Let \mc G be a subgroup of $G$ containing
$\stab_{G'}(\ox)G_{x, 0{+}}$\,, and put
$\mc H = H \cap \mc G$
and
$\mc C = \sett{g \in \mc G}
	{$\lsup g\gamma \in \mc T((\bG', \bG), (r, r_d))$
	and
	$x \in \BB_r(\lsup g\gamma)$}$.

First, we claim that the natural map
$f_1 : \stab_{G'}(\ox)\backslash\mc G/\mc H
	\to \stab_{G'}(\ox)G_{x, 0{+}}\backslash\mc G/\mc H$
furnishes a bijection of
$\stab_{G'}(\ox)\backslash\mc C/\mc H$
with the set of $(\stab_{G'}(\ox)G_{x, 0{+}}, \mc H)$-double
cosets containing an element of \mc C.
The map is clearly surjective, so it suffices to show that
it is injective.
Suppose that $g_1, g_2 \in \mc C$ are such that
$$
\stab_{G'}(\ox)G_{x, 0{+}}g_1\mc H
= \stab_{G'}(\ox)G_{x, 0{+}}g_2\mc H.
$$
Since
$\stab_{G'}(\ox)G_{x, 0{+}}
= G_{x, 0{+}}\stab_{G'}(\ox)$,
we have that
$$
G_{x, 0{+}}g_1\mc H \cap \stab_{G'}(\ox)g_2\mc H \ne \emptyset
$$
--- say $k g_1\mc H = g'g_2\mc H$, with
$k \in G_{x, 0{+}}$ and $g' \in \stab_{G'}(\ox)$.
Then
$\lsup{k g_1}\gamma_{< r} = \lsup{g'g_2}\gamma_{< r}
	\in G'$.
Since $\lsup{g_1}\gamma_{< r} \in G'$
and $x \in \BB_r(\lsup{g_1}\gamma_{< r})$,
we have by Lemma \xref{exp-lem:rigidity}
and Corollary \xref{exp-cor:compare-centralizers}
of \cite{adler-spice:good-expansions} that
$k \in G'_{x, 0{+}}(\lsup{g_1}H)_{x, 0{+}}$\,.
In particular, $k g_1 \in \stab_{G'}(\ox)g_1 H$.
Since also $k g_1 \in \mc G$,
we have
$$
k g_1 \in \stab_{G'}(\ox)g_1 H \cap \mc G
\subseteq
\stab_{G'}(\ox)g_1 (H \cap \mc G)
=
\stab_{G'}(\ox)g_1 \mc H.
$$
Since $g'g_2 \in k g_1\mc H$, we have that
$g_2$ belongs to the same $(\stab_{G'}(\ox), \mc H)$-double
coset as $g_1$, as desired.

Second, notice that the map $f_2$ on
$\stab_{G'}(\ox)\backslash\mc C/H$
that sends a double coset
$\stab_{G'}(\ox)g\mc H$ to
the $\stab_{G'}(\ox)$-orbit of $\lsup g\gamma_{< r}$
is a well defined injection.

Third, consider the map $f_3$ from
$\mc S
= \set{\gamma' \in \lsup{\mc G}\gamma \cap \mc T((\bG', \bG), (r, r_d))}
	{x \in \BB_r(\gamma')}$
to the set of $\stab_{G'}(\ox)$-orbits in
$\lsup{\mc G}\gamma_{< r} \cap G'$ that sends an element
$\gamma' \in \mc S$
to the $\stab_{G'}(\ox)$-orbit of $\gamma'_{< r}$.
By definition, two elements
$\gamma', \gamma''
\in \lsup{\mc G}\gamma \cap \mc T((\bG', \bG), (r, r_d))$
have the same image if and only if
$\gamma' \simm{d - 1} \gamma''$.
Thus, the induced map on $\simm{d - 1}$-equivalence classes
in \mc S is an injection.
It is easy to see that the images of $f_2$ and $f_3$ are the
same.

Now we consider the composition
$f_3\inv \circ f_2 \circ f_1\inv$.
This furnishes a bijection of the set of
$(\stab_{G'}(\ox)G_{x, 0{+}}, \mc H)$-double cosets
containing an element of \mc C into the set of
$\simm{d - 1}$-equivalence classes in \mc S.
If $\mc G = \stab_G(\ox)$, then $\mc H = \stab_H(\ox)$;
the specified set of double
cosets is the indexing set for the sum in \eqref{eq:char-tau};
and the set of equivalence classes is the indexing set for the sum in
\eqref{eq:char-tau-no-g}.
It is easy to check that the summands match term-by-term,
so \eqref{eq:char-tau-no-g} holds.
Similarly, we demonstrate \eqref{eq:char-pi-no-g} by taking
$\mc G = G$ (and observing that $\stab_{G'}(\ox) = G'$ when
$\bG'/Z(\bG)$ is $F$-anisotropic).
\end{proof}

We now prove a single-orbit result in the spirit of
Murnaghan--Kirillov theory (see
\cites{debacker:thesis,
adler-debacker:mk-theory,
murnaghan:chars-sln,
murnaghan:chars-u3,
murnaghan:chars-classical,
murnaghan:chars-gln
}).
When $F$ has characteristic zero and $p$ is large,
the second statement is a special case of Theorem 5.3.1
of \cite{jkim-murnaghan:charexp}.

\begin{cor}
\label{cor:char-tau|pi-1-germ}
Suppose that there exists a bijection
$\mexp : \bigcup_{y \in \BB(\bG, F)} \fg_{y, 0{+}}
	\to \bigcup_{y \in \BB(\bG, F)} G_{y, 0{+}}$
such that, for all $y \in \BB(\bG, F)$, the restriction
$\mexp\bigr|_{\fg_{y, 0{+}}}$ has image in $G_{y, 0{+}}$ and
satisfies Hypothesis
\ref{hyp:strong-mock-exp} (for all tame maximal $F$-tori \bT
with $y \in \BB(\bT, F)$).

Fix $\gamma \in G$ such that $\gamma \in G_{y, r}$
for some $y \in \BB(\bG, F)$.
If $\gamma \in G_{x, r}$ (i.e., if we may take $y = x$), then
$$
\theta_\tau(\gamma)
= \phi_d(\gamma)\indx{\stab_G(\ox)}{K_\sigma}\inv
\deg(\tau)\hat\mu^{\stab_G(\ox)}_{X^*}(\mexp\inv(\gamma)).
$$
If $\bG'/Z(\bG)$ is $F$-anisotropic and
$\gamma$ is regular semisimple, then
$$
\Theta_\pi(\gamma)
= \phi_d(\gamma)\deg(\pi)\hat\mu^G_{X^*}(\mexp\inv(\gamma)).
$$
Here, $\hat\mu_{X^*}^{\stab_G(\ox)}$ and $\hat\mu_{X^*}^G$
are defined with respect to the Haar measure on $G/Z(G)$
normalized so that $\meas(K_\sigma/Z(G)) = 1$.
\end{cor}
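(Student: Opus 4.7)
The plan is to apply Corollary \ref{cor:char-tau|pi-1} with the trivial normal $r$-approximation and observe that the sum indexing the character formula collapses to a single term.

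First, since $r > 0$ and $\gamma \in G_{x, r}$, the pair $\gamma_{<r} = 1$, $\gamma_{\geq r} = \gamma$ constitutes a valid normal $r$-approximation to $\gamma$ based at $x$: the requirement $\gamma_{\geq r} \in \CC G r(\gamma)_{x, r}$ reduces to $\gamma \in G_{x, r}$, which holds by hypothesis. Consequently $\bH = C_\bG(\gamma_{<r}) = \bG$, and in particular $\bH' = \bG$. Since the Moy--Prasad subgroup $G_{x, r}$ depends only on $\ox \in \rBB(\bG, F)$ (as $r > 0$), every $\stab_G(\ox)$-conjugate of $\gamma$ again lies in $G_{x, r}$ and inherits the same trivial-head $r$-approximation based at $x$. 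The analogous assertion holds for any $G$-conjugate $\lsup g\gamma$ satisfying the condition $x \in \BB_r(\lsup g\gamma)$ appearing in Corollary \ref{cor:char-tau|pi-1}: such an element lies in $G_{x, r}$ and admits the trivial approximation. Thus every representative $\gamma'$ appearing in the sums of Corollary \ref{cor:char-tau|pi-1} has $\gamma'_{<r} = 1$ and $\gamma'_{\geq r} = \gamma'$.

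Next, I note that the equivalence relation $\simm{d-1}$ compares heads $\gamma'_{<r}$ under $\stab_{G'}(\ox)$-conjugation; since every head is the identity, there is exactly one equivalence class in each sum. Taking $\gamma$ itself as the representative, $\theta_\sigma(\gamma'_{<r}) = \theta_\sigma(1) = \deg(\sigma)$, the identity $\bH' = \bG$ yields $\hat\mu^{H'}_{X^*} = \hat\mu^G_{X^*}$ and $\hat\mu^{\stab_{H'}(\ox)}_{X^*} = \hat\mu^{\stab_G(\ox)}_{X^*}$, and the hypothesis on the global $\mexp$ lets us replace $\mexp_x^{-1}(\gamma)$ by $\mexp^{-1}(\gamma)$. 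Accordingly, the two formulas of Corollary \ref{cor:char-tau|pi-1} collapse to
\begin{align*}
\theta_\tau(\gamma) & = \phi_d(\gamma)\deg(\sigma)\hat\mu^{\stab_G(\ox)}_{X^*}(\mexp^{-1}(\gamma)), \\
\Theta_\pi(\gamma) & = \phi_d(\gamma)\deg(\sigma)\hat\mu^G_{X^*}(\mexp^{-1}(\gamma)).
\end{align*}

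Finally, the stated formulas follow upon identifying $\deg(\sigma)$ in the two cases. For $\tau$, the definition $\tau = \Ind_{K_\sigma}^{\stab_G(\ox)}(\sigma \otimes \phi_d)$ together with $\dim \phi_d = 1$ yields $\deg(\tau) = \indx{\stab_G(\ox)}{K_\sigma}\deg(\sigma)$. For $\pi$, the normalization $\meas(K_\sigma/Z(G)) = 1$ built into the definition of $\hat\mu^G_{X^*}$, combined with the standard formal-degree formula for a supercuspidal induced from a compact-modulo-center open subgroup, gives $\deg(\pi) = \dim(\sigma) = \deg(\sigma)$. The chief subtleties are the invariance of $G_{x, r}$ under $\stab_G(\ox)$, which ensures that all the relevant conjugates remain in $G_{x, r}$ and so admit the trivial approximation, and the bookkeeping around measure normalizations in passing from $\deg(\sigma)$ to $\deg(\pi)$; otherwise the argument is purely a degeneration of the general formula, reflecting the expected Murnaghan--Kirillov-type behavior of the character near the identity.
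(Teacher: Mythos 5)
Your main computation is sound and follows the paper's route: the head $\gamma_{<r}$ is trivial, so $\bH = \bG$, the sum in the reduction formula collapses to one term, $\theta_\sigma(1) = \deg(\sigma)$, and the two degree identities $\deg(\tau) = \indx{\stab_G(\ox)}{K_\sigma}\deg(\sigma)$ and $\deg(\pi) = \meas(K_\sigma/Z(G))\deg(\sigma) = \deg(\sigma)$ finish the job. (The paper works directly from Theorem \ref{thm:char-tau|pi-1} rather than Corollary \ref{cor:char-tau|pi-1}; that difference is immaterial.)

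However, there is a genuine gap in your treatment of the second formula. That formula is asserted for any $\gamma \in G_{y,r}$ with $y \in \BB(\bG, F)$ arbitrary, whereas your argument assumes from the first sentence that $\gamma \in G_{x,r}$ and later takes ``$\gamma$ itself as the representative.'' If $\lsup{G}\gamma \cap G_{x,r} = \emptyset$, the sum in \eqref{eq:char-pi-no-g} is empty, so $\Theta_\pi(\gamma) = 0$, and you must separately show that $\hat\mu^G_{X^*}(\mexp\inv(\gamma)) = 0$; this is not formal --- it is exactly the second assertion of Lemma \ref{lem:orbital-cancel}, applied after using Hypothesis \ref{hyp:strong-mock-exp-equi} to convert $\lsup{G}\gamma \cap G_{x,r} = \emptyset$ into $\lsup{G}\mexp\inv(\gamma) \cap \fg_{x,r} = \emptyset$. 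You never invoke that lemma. In the remaining case, where $\gamma \notin G_{x,r}$ but some conjugate $\lsup{g}\gamma$ lies in $G_{x,r}$, you must take that conjugate as the representative and then use the conjugation-invariance of $\Theta_\pi$ and of $\hat\mu^G_{X^*}$ (again via Hypothesis \ref{hyp:strong-mock-exp-equi}) to return to $\mexp\inv(\gamma)$. With these two cases added, your proof of the second formula is complete; the first formula, whose hypothesis explicitly places $\gamma$ in $G_{x,r}$, is unaffected.
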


\begin{proof}
Note that $\gamma$ trivially has a normal $r$-approximation,
and that $\gamma_{< r} = 1$, so $\bH = \bG$.
Since $\gamma = \gamma_{\ge r}$,
we have that
$x \in \BB_r(\gamma)$ if and only if
$\gamma \in G_{x, r}$\,.

Put $Y = \mexp\inv(\gamma)$.
If $\lsup G\gamma \cap G_{x, r} = \emptyset$, then,
by Hypothesis \ref{hyp:strong-mock-exp-equi},
$\lsup G Y \cap \fg_{x, r} = \emptyset$.
Therefore, by Lemma \ref{lem:orbital-cancel},
$\hat\mu^G_{X^*}(Y) = 0$.
If also $\bG'/Z(\bG)$ is $F$-anisotropic,
then, by Theorem \ref{thm:char-tau|pi-1},
we have that $\dot\theta_\sigma(\lsup g\gamma) = 0$
for $g \in G$, hence (by the Frobenius formula)
that $\Theta_\pi(\gamma) = 0$.

Thus we may, and do, assume that
$\gamma \in G_{x, r}$\,.
In particular, equation \eqref{eq:char-tau} holds,
and the sum on the right-hand side of that equation has a single summand,
so it becomes
\begin{equation}
\tag{\ref{eq:char-tau}$'$}
\theta_\tau(\gamma)
= \phi_d(\gamma)
\theta_\sigma(1)
\hat\mu^{\stab_G(\ox)}_{X^*}(Y)
= \phi_d(\gamma)
\deg(\sigma)
\hat\mu^{\stab_G(\ox)}_{X^*}(Y).
\end{equation}
Since
$\tau = \Ind_{K_\sigma}^{\stab_G(\ox)} \sigma \otimes \phi_d$,
we have that
$\deg(\tau) = \indx{\stab_G(\ox)}{K_\sigma}\deg(\sigma)$,
so
$$
\theta_\tau(\gamma)
= \phi_d(\gamma)\indx{\stab_G(\ox)}{K_\sigma}\inv
\deg(\tau)
\hat\mu^{\stab_G(\ox)}_{X^*}(Y).
$$

The second equality follows similarly from
\eqref{eq:char-pi} and the fact that
$\deg(\pi) = \meas(K_\sigma/Z(G))\deg(\sigma) = \deg(\sigma)$.
\end{proof}

\section{The full character formula}
\label{sec:full} 
Here we unroll the inductive formulas from \S\S\ref{sec:induction1}
and \ref{sec:induction2},
preserving the hypotheses of \S\ref{sec:induction2}.
In particular,
$\gamma$ is an element of $G$ with a normal
$r_{d - 1}$-approximation, which we fixed for
definiteness.
Thus, the elements $\gamma_{< r_i}$ are
unambiguously defined for $0 \le i < d$.
Choosing such an approximation also fixes approximations to
all truncations and conjugates of $\gamma$.

\begin{thm}
\label{thm:full-char}
If $x \in \BB_r(\gamma)$, then
\begin{equation}
\label{eq:full-char-tau}
\begin{aligned}
\theta_\tau(\gamma) =
\phi_d(\gamma)
  \sum &
	c(\vec\phi, \gamma'_{< r_{d - 1}})
  \Bigl(
	\prod_{i = 0}^{d - 1}
		\mf G(\phi_i, \gamma'_{< r_i})
		\varepsilon(\phi_i, \gamma'_{< r_i})
	\Bigr) \\
  & \times
	\Bigl(\prod_{i = 0}^{d - 1}
		\phi_i(\gamma'_{< r_i})
	\Bigr)
	\theta_{\rho_0'}(\gamma'_0)
	\prod_{i = 0}^{d - 1}
		\hat\mu^{\stab_{H^{i\,\prime}}(\ox)}_{X_i^*}(
			\mexp_x\inv(\gamma'_{(i)})
		).
\end{aligned}
\end{equation}
If $\bG'/Z(\bG)$ is $F$-anisotropic and $\gamma \in G$
is regular semisimple, then
\begin{equation}
\label{eq:full-char-pi}
\begin{aligned}
\Theta_\pi(\gamma) =
\phi_d(\gamma)
  \sum &
  	c(\vec\phi, \gamma'_{< r_{d - 1}})
  \Bigl(
	\prod_{i = 0}^{d - 1}
		\mf G(\phi_i, \gamma_{< r_i}')
		\varepsilon(\phi_i, \gamma_{< r_i}')
	\Bigr) \\
  & \times
	\Bigl(\prod_{i = 0}^{d - 1}
		\phi_i(\gamma'_{< r_i})
	\Bigr)
	\Theta_{\pi'_0}(\gamma'_0)
	\prod_{i = 0}^{d - 1}
		\hat\mu^{H^{i\,\prime}}_{X_i^*}(
			\mexp_x\inv(\gamma'_{(i)})
		).
\end{aligned}
\end{equation}
Here,
\begin{align*}
c(\vec\phi, \gamma'_{< r_{d - 1}})
= \prod_{i = 0}^{d - 1}
	& \Bigindx{\odc{\gamma'_{< r_i}; x, r_i}_{G^{i + 1}}}
		{\odc{\gamma'_{< r_i}; x, r_i}_{G^i}
			G^{i + 1}_{x, s_i}
		}^{1/2} \\
	& \times\Bigindx{\odc{\gamma'_{< r_i}; x, r_i{+}}_{G^{i + 1}}}
		{\odc{\gamma'_{< r_i}; x, r_i{+}}_{G^i}
			G^{i + 1}_{x, s_i{+}}
		}^{1/2},
\end{align*}
and
\begin{itemize}
\item
$\hat\mu_{X_i^*}^{\stab_{H^{i\,\prime}}(\ox)}$
and $\hat\mu_{X_i^*}^{H^{i\,\prime}}$ are defined with
respect to the Haar measure on
$H^{i\,\prime}/Z(G)$ normalized
so that $\meas(K_{\sigma_{i + 1}} \cap H^{i\,\prime}/Z(G)) = 1$,
\item
$\gamma'_{(i)} = (\gamma'_{< r_{i + 1}})_{\ge r_i}$
when $0 \le i < d - 1$,
and
\item
$\gamma'_{(d-1)} = \gamma'_{\ge r_{d - 1}}$.
\end{itemize}
The sums are taken over $\simm0$-equivalence classes of
elements
$\gamma' \in \lsup{\stab_G(\ox)}\gamma \cap \mc T(\vbG, \vec r)$
(respectively, $\gamma' \in \lsup G\gamma \cap \mc T(\vbG, \vec r)$
such that $x \in \BB_{r_{d - 1}}(\gamma')$).
\end{thm}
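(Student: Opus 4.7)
The plan is to prove both formulas by induction on $d$, the length of the tame Levi sequence, the inductive step being a single application of Corollary \ref{cor:char-tau|pi-1} followed by Proposition \ref{prop:induction1} and then the inductive hypothesis applied to $\tau_{d-1}$ (respectively $\pi_{d-1}$).

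First I apply Corollary \ref{cor:char-tau|pi-1} to $\tau = \tau_d$ (resp.\ $\pi = \pi_d$). This expresses $\theta_\tau(\gamma)$ as $\phi_d(\gamma)$ times a sum, indexed by $\simm{d-1}$-classes of $\gamma' \in \lsup{\stab_G(\ox)}\gamma \cap \mc T((\bG', \bG), (r_{d-1}, r_d))$, whose terms are $\theta_\sigma(\gamma'_{<r_{d-1}}) \cdot \hat\mu^{\stab_{H'}(\ox)}_{X^*_{d-1}}(\mexp_x^{-1}(\gamma'_{\ge r_{d-1}}))$ (and analogously for $\Theta_\pi$). Since $\gamma'_{(d-1)} = \gamma'_{\ge r_{d-1}}$, this accounts for the outer factor $\phi_d(\gamma)$ and for the $i = d-1$ orbital-integral factor of the target formula.

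Next, I apply Proposition \ref{prop:induction1} to each $\theta_\sigma(\gamma'_{<r_{d-1}})$. This yields the two index factors that assemble into the $i = d-1$ contribution to $c(\vec\phi, \gamma'_{<r_{d-1}})$, the sign and Gauss sum $\mf G(\phi_{d-1}, \gamma'_{<r_{d-1}})\varepsilon(\phi_{d-1}, \gamma'_{<r_{d-1}})$, and a residual factor of $\theta_{\tau_{d-1}}(\gamma'_{<r_{d-1}})$. The element $\gamma'_{<r_{d-1}} \in G^{d-1}$ inherits the normal $r_{d-2}$-approximation $(\gamma'_0, \dotsc, \gamma'_{d-2})$ from $\gamma'$ (by Lemma \xref{exp-lem:G-approx-is-G'-approx} of \cite{adler-spice:good-expansions}), and a direct check using $\CC\bG{r_{d-1}}(\gamma') \subseteq \CC\bG{r_{d-2}}(\gamma')$ and the definition of $\BB_t(\cdot)$ shows that $x \in \BB_{r_{d-2}}(\gamma'_{<r_{d-1}})$ follows from $x \in \BB_{r_{d-1}}(\gamma')$; so the inductive hypothesis rewrites $\theta_{\tau_{d-1}}(\gamma'_{<r_{d-1}})$ as an inner sum over $\simm 0$-classes of $\gamma'' \in \lsup{\stab_{G^{d-1}}(\ox)}\gamma'_{<r_{d-1}} \cap \mc T((\bG^0, \dotsc, \bG^{d-1}), (r_0, \dotsc, r_{d-1}))$, contributing all remaining factors for $0 \le i < d-1$ together with the character of $\rho'_0$ (resp.\ $\pi'_0$).

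The main obstacle is collapsing the resulting nested double sum into the single sum over $\simm 0$-classes in $\mc T(\vbG, \vec r)$ claimed by the theorem. I will construct a bijection as follows: given an outer representative $\gamma'$ and an inner representative $\gamma''$ (associated to $\gamma'$), the tuple $(\gamma''_0, \dotsc, \gamma''_{d-2}, \gamma'_{\ge r_{d-1}})$ is a normal $r_{d-1}$-approximation of the element $\gamma''' := \gamma''_{<r_{d-1}}\gamma'_{\ge r_{d-1}}$, which, after adjustment by a suitable representative of the $\stab_{G^{d-1}}(\ox)$-coset implementing $\gamma'' \in \lsup{\stab_{G^{d-1}}(\ox)}\gamma'_{<r_{d-1}}$, lies in $\lsup{\stab_G(\ox)}\gamma \cap \mc T(\vbG, \vec r)$; conversely, each such $\gamma'''$ produces a compatible pair by truncation. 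That $\gamma''' \simm 0 \tilde\gamma'''$ in $\mc T(\vbG, \vec r)$ is equivalent to the conjunction of (outer $\simm{d-1}$-equivalence of their $r_{d-1}$-tails) and (inner $\simm 0$-equivalence of their heads) reduces to the rigidity of normal approximations, namely Proposition \xref{exp-prop:unique-approx} of \cite{adler-spice:good-expansions} together with the inclusion $\stab_{G^{d-1}}(\ox) \subseteq \stab_G(\ox)$. Once this bijection is in place, each factor---the indices in $c(\vec\phi, \gamma'''_{<r_{d-1}})$, the signs and Gauss sums at each level, the linear characters $\phi_i$, and the orbital integrals---is checked to depend only on the $\simm 0$-class of $\gamma'''$, so the reindexing yields the formula as stated.
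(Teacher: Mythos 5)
Your proposal is correct and is essentially the paper's own argument: the paper likewise reduces to the one-step formula obtained from Proposition \ref{prop:induction1} and Corollary \ref{cor:char-tau|pi-1} (its ($*_i$)), iterates it down to $\tau_0$, and collapses the resulting nested sum to a single sum over $\simm0$-classes via exactly the bijection you describe, built from the conjugating elements $g_i \in \stab_{G^{i+1}}(\ox)$ (with the same appeals to Lemma \xref{exp-lem:G-approx-is-G'-approx} for the ambient-group issue and to the identity $G^{i+1} = \stab_{G^{i+1}}(\ox)$ for $i < d-1$ in the anisotropic case). The only difference is presentational --- you package the unrolling as an induction on $d$ with a two-level reindexing, whereas the paper unrolls all $d$ levels at once before reindexing.
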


The notations $\mc T(\vbG, \vec r)$ and $\simm0$ are as in
Definitions \ref{defn:trunc} and \ref{defn:equiv},
respectively.
As in Corollary \ref{cor:char-tau|pi-1}, we have written
$\bH^{i\,\prime}$ in place of $C_{\bG^{i + 1}}(\gamma'_{< r_i})$
for $0 \le i < d$.

Note that, in particular, if $\bG'/Z(\bG)$ is
$F$-anisotropic, then the character of $\pi$ is supported on
conjugacy classes intersecting $\mc T(\vbG, \vec r)$.
A similar statement holds for the character of $\tau$.

Recall that the various roots of unity \mf G were defined and
computed in \S\ref{ssec:gauss}.

As observed after Theorem \ref{thm:char-tau|pi-1}, by Lemma
\ref{lem:disc-orbital-int},
we may describe the orbital integrals over possibly disconnected
groups in the above formula as sums of orbital integrals
over connected groups.

\begin{proof}
\emph{In this proof only}, we write $r_d$ for $\infty$.  This
conflicts with the notation in the rest of the paper, but it
makes the equations appearing below (for example,
\eqref{eq:full-char-tau-over-mc-S}) simpler.

For $0 \le i < d$, we may
apply Proposition \ref{prop:induction1}
and Corollary \ref{cor:char-tau|pi-1},
with $(\bG^i, \bG^{i+1})$
in place of $(\bG',\bG)$, to see that,
for all $\delta \in \stab_{G^{i+1}}(\ox)$
such that $x\in \BB_{r_i}(\delta)$,
\begin{equation}
\tag{$*_i$}
\begin{aligned}
\theta_{\tau_{i+1}} (\delta)
=
\phi_{i+1}(\delta)
\sum
&
\Bigindx{\odc{\delta_{< r_i}; x, r_i}_{G^{i + 1}}}
	{\odc{\delta_{< r_i}; x, r_i}_{G^i}
		G^{i + 1}_{x, s_i}
	}^{1/2} \\
&
\times
\Bigindx{\odc{\delta_{< r_i}; x, r_i{+}}_{G^{i + 1}}}
	{\odc{\delta_{< r_i}; x, r_i{+}}_{G^i}
		G^{i + 1}_{x, s_i{+}}
	}^{1/2} \\
&
\times\mf G(\phi_i, \delta'_{< r_i})
\varepsilon(\phi_i, \delta'_{< r_i}) \\
&
\times\theta_{\tau_i}(\delta'_{< r_i})
\hat\mu_{X^*_i}^{\stab_{H^{i\,\prime}}(\ox)}(
	\mexp_x\inv(\delta'_{\ge r_i})
),
\end{aligned}
\end{equation}
the sum taken over $\simm{i}$-equivalence classes of
elements
$\delta'
\in \lsup{\stab_{G^{i + 1}}(\ox)}\delta
	\cap \mc T((\bG^i, \bG^{i + 1}), (r_i, r_{i + 1}))$.
(The condition $x \in \BB_{r_i}(\delta')$ is automatically
satisfied here.)
Note that the set $\BB_{r_i}(\delta)$ and the equivalence
relation $\simm i$ are both constructed in the setting of
some ambient group, which is suppressed from the notation.
However, it is easy to see that changing the ambient group
from \bG to $\bG^{i + 1}$ corresponds simply to restricting
the equivalence relation $\simm i$;
and, since $x \in \BB(\bG^{i + 1}, F)$,
Lemma \xref{exp-lem:G-approx-is-G'-approx} of
\cite{adler-spice:good-expansions} shows that we do not need
to worry about what is the ambient group for the construction
of $\BB_{r_i}(\delta)$.

Put $\gamma^{(0)} = \gamma$.
We apply ($*_{d - 1}$) to describe
$\theta_\tau(\gamma) = \theta_{\tau_d}(\gamma)$ in
terms of the values of $\theta_{\tau_{d - 1}}$ at
truncations of various conjugates $\gamma^{(1)}$ of
$\gamma^{(0)} = \gamma^{(0)}_{< r_d}$;
then ($*_{d - 2}$) to describe each
$\theta_{\tau_{d - 1}}(\gamma^{(1)})$ in terms
of the values of $\theta_{\tau_{d - 2}}$ at truncations of
various conjugates $\gamma^{(2)}$ of
$\gamma^{(1)}_{< r_{d - 1}}$;
and so forth to obtain
\begin{equation}
\label{eq:full-char-tau-over-mc-S}
\begin{aligned}
\theta_\tau(\gamma)
= \phi_d(\gamma)\sum
& \underbrace{\Bigl(\prod_{i = 0}^{d - 1}
	\Bigindx{\odc{\gamma^{(d - i)}_{< r_i}; x, r_i}_{G^{i + 1}}}
		{\odc{\gamma^{(d - i)}_{< r_i}; x, r_i}_{G^i}
			G^{i + 1}_{x, s_i}
		}^{1/2}
\Bigr)}_{\text I} \\
&
\times
\underbrace{\Bigl(\prod_{i = 0}^{d - 1}
	\Bigindx{\odc{\gamma^{(d - i)}_{< r_i}; x, r_i{+}}_{G^{i + 1}}}
		{\odc{\gamma^{(d - i)}_{< r_i}; x, r_i{+}}_{G^i}
			G^{i + 1}_{x, s_i{+}}
		}^{1/2}
\Bigr)}_{\text I'} \\
& \times
\underbrace{\Bigl(\prod_{i = 0}^{d - 1}
	\mf G(\phi_i, \gamma^{(d - i)}_{< r_i})
	\varepsilon(\phi_i, \gamma^{(d - i)}_{< r_i})
\Bigr)}_{\text{II}} \\
& \times\underbrace{\Bigl(\prod_{i = 0}^{d - 2}
	\phi_{i + 1}(\gamma^{(d - i)})
\Bigr)
\theta_{\tau_0}(\gamma^{(d)}_{< r_0})}_{\text{III}}
\underbrace{\prod_{i = 0}^{d - 1}
	\hat\mu_{X_i^*}^{\stab_{H^{i\,\prime}}(\ox)}(
		\mexp_x\inv(
			\gamma^{(d - i)}_{\ge r_i}
		)
	)
}_{\text{IV}},
\end{aligned}
\end{equation}
the sum taken over the collection \mc S of $d$-tuples
$\bigl([\gamma^{(d - i)}]_i\bigr)_{i = 0}^{d - 1}$
with
$\gamma^{(d - i)}
\in \lsup{\stab_{G^{i + 1}}(\ox)}\gamma^{(d - i - 1)}_{< r_{i + 1}}
	\cap \mc T((\bG^i, \bG^{i + 1}), (r_i, r_{i + 1}))$
for $0 \le i < d$.
(Here, $[\delta]_i$ denotes the $\simm i$-equivalence class
of an element $\delta$ for $0 \le i < d$.)

If $\gamma' \in \lsup {\stab_G(\ox)}\gamma \cap \mc T(\vbG, \vec r)$,
then
$S(\gamma')
:= \bigl([\gamma'_{< r_{i + 1}}]_i\bigr)_{i = 0}^{d - 1}$
lies in \mc S.
It is an easy consequence of the definitions that,
for $\gamma'' \in \lsup {\stab_G(\ox)}\gamma \cap \mc T(\vbG, \vec r)$,
we have $S(\gamma') = S(\gamma'')$ if and only if
$\gamma' \simm0 \gamma''$.
On the other hand, suppose that
$\vec\gamma^{\,\prime}
= \bigl([\gamma^{(d - i)}]_i\bigr)_{i = 0}^{d - 1}
\in \mc S$.
Then, by definition, there are elements
$g_i \in \stab_{G^{i + 1}}(\ox)$ such that
$\gamma^{(d - i)}
= \lsup{g_i}\gamma^{(d - i - 1)}_{< r_{i + 1}}$
for $0 \le i \le d$.
One checks that
$\gamma' := \lsup{g_0\dotsb g_{d - 1}}\gamma
\in \lsup{\stab_G(\ox)}\gamma \cap \mc T(\vbG, \vec r)$,
and
$S(\gamma') = \vec\gamma^{\,\prime}$.
Thus, $S$ induces a bijection from the set of
$\simm0$-equivalence classes in
$\lsup {\stab_G(\ox)}\gamma \cap \mc T(\vbG, \vec r)$
onto \mc S,
so that we may regard the sum in
\eqref{eq:full-char-tau-over-mc-S} as running over the
former set.
Upon doing so, we notice that the product of terms (I) and
($\text I'$) becomes $c(\vec\phi, \gamma'_{< r_{d - 1}})$.
We calculate the remaining terms
appearing in \eqref{eq:full-char-tau-over-mc-S} as follows.
\begin{enumerate}[(I)]
\addtocounter{enumi}{1}
\item This matches with the corresponding term in
\eqref{eq:full-char-tau}.
\item
Since
$\theta_{\tau_0}(\gamma'_{< r_0})
= \phi_0(\gamma'_{< r_0})\theta_{\rho'_0}(\gamma'_{< r_0})$
and
$\theta_{\rho'_0}(\gamma'_{< r_0})
= \theta_{\rho'_0}(\gamma'_0)$,
this becomes
$\bigl(
	\prod_{i = 0}^{d - 1} \phi_i(\gamma'_{< r_i})
\bigr)
\theta_{\rho'_0}(\gamma'_0)$.
\item When we replace $\gamma^{(d - i)}$ by $\gamma'_{< r_{i + 1}}$,
the element
$\gamma^{(d - i)}_{\ge r_i}$
becomes
$(\gamma'_{< r_{i + 1}})_{\ge r_i} = \gamma'_{(i)}$,
even when $i = d - 1$ (because we have set $r_d = \infty$ in
this proof).  Thus, this matches up with the corresponding
term in \eqref{eq:full-char-tau}.
\end{enumerate}
Since \eqref{eq:full-char-tau-over-mc-S} holds, and can be
matched term-by-term with \eqref{eq:full-char-tau}, we also
have that \eqref{eq:full-char-tau} holds.

The argument carries over essentially unchanged to prove
\eqref{eq:full-char-pi} holds.  We sketch the few minor
differences.
Instead of using ($*_{d - 1}$), we
apply Proposition \ref{prop:induction1} and Corollary
\ref{cor:char-tau|pi-1} to obtain
\begin{equation}
\tag{${**}_{d - 1}$}
\begin{aligned}
\Theta_\pi(\gamma)
=
\phi_d(\gamma)
\sum
&
\Bigindx{\odc{\gamma^{(1)}_{< r_{d - 1}}; x, r_{d - 1}}}
	{\odc{\gamma^{(1)}_{< r_{d - 1}}; x, r_{d - 1}}_{G^{d - 1}}
		G^d_{x, s_{d - 1}}
	}^{1/2} \\
&
\times
\Bigindx{\odc{\gamma^{(1)}_{< r_{d-1}}; x, r_{d - 1}{+}}}
	{\odc{\gamma^{(1)}_{< r_{d-1}}; x, r_{d - 1}{+}}_{G^{d - 1}}
		G^d_{x, s_{d - 1}{+}}
	}^{1/2} \\
&
\times\mf G(\phi_{d - 1}, \gamma^{(1)}_{< r_{d - 1}})
\varepsilon(\phi, \gamma^{(1)}_{< r_{d - 1}}) \\
&
\times\theta_{\tau_{d - 1}}(\gamma^{(1)}_{< r_{d - 1}})
\hat\mu_{X^*_{d - 1}}^{H^{d - 1\,\prime}}(
	\mexp_x\inv(\gamma^{(1)}_{\ge r_{d - 1}})
),
\end{aligned}
\end{equation}
the sum taken over $\simm{d - 1}$-equivalence classes of
elements
$\gamma^{(1)} \in \lsup G\gamma \cap \mc T((\bG', \bG), (r, r_d))$
such that $x \in \BB_{r_{d - 1}}(\gamma^{(1)})$.
We then apply ($*_{d - 2}$), \tdotsc, ($*_0$) as before to
obtain a sum over a collection of $d$-tuples
$\bigl([\gamma^{(d - i)}]_i\bigr)_{i = 0}^{d - 1}$,
but this time we require only that
$\gamma^{(d - i)}
\in \lsup{G^{i + 1}}\gamma^{(d - i - 1)}_{< r_{i + 1}}
	\cap \mc T((\bG^i, \bG^{i + 1}), (r_i, r_{i + 1}))$
and $x \in \BB_{r_i}(\gamma^{(d - i)})$
for $0 \le i < d$
(that is, we allow $\gamma^{(d - i)}$ to range
over a \mbox{$G^{i + 1}$-,} not just a
\mbox{$\stab_{G^{i + 1}}(\ox)$-},
orbit).
The key point here is that
$G^{i + 1} = \stab_{G^{i + 1}}(\ox)$ unless
$i = d - 1$.
Again, we use (a natural extension of) the map
$S$ to identify this collection of $d$-tuples with the set
of $\simm0$-equivalence classes of elements
$\gamma' \in \lsup G\gamma \cap \mc T(\vbG, \vec r)$
with $x \in \BB_r(\gamma')$.
Finally, we note that $\rho'_0 = \pi'_0$.
\end{proof}

\appendix

\section{Mock-exponential map}
\label{sec:mock-exp}

In this section, \bG is a reductive algebraic $F$-group
(possibly disconnected) that splits over a tame extension of
$F$.  We state below two useful hypotheses (Hypotheses
\ref{hyp:mock-exp} and \ref{hyp:strong-mock-exp}) regarding
mock-exponential maps.  Proposition
\ref{prop:strong-mock-exp-holds} will show that the first
always holds, and the second holds when \bG is connected.
(Note that, with appropriate modifications in the statement of
Hypothesis
\ref{hyp:mock-exp-concave},
these hypotheses also make sense
for groups that are not tame.
However, they are not always satisfied for such groups.)

\begin{nhyp}
\label{hyp:mock-exp}
There exists a family
\indexmem{exp-E-xtu}{\mexp^E_{x, t:u}}
$$
(\mexp^E_{x, t:u} : \Lie(\bG)(E)_{x, t:u} \to \bG(E)_{x, t:u})
	_{\substack{
		\text{$E/F$ finite and tamely ramified} \\
		x \in \BB(\bG, E) \\
		t, u \in \tR_{> 0} \\
		t \le u \le 2t
	}}
$$
of isomorphisms such that,
given
\begin{itemize}
\item a finite, tamely ramified extension $E/F$,
\item $x \in \BB(\bG, E)$,
\item $t_1, t_2, u_1 \in \tR_{> 0}$ with $t_1 \le u_1 \le 2t_1$,
\item $X_j \in \Lie(\bG)(E)_{x, t_j}$ for $j = 1, 2$,
and
\item $g_j \in \mexp^E_{x, t_j:t_j{+}}(X_j)$ for $j = 1, 2$,
\end{itemize}
the following statements hold.
\begin{inc_enumerate}
\item\label{hyp:mock-exp-field-descend}
If $L$ is a field intermediate between $E$ and $F$
and $x \in \BB(\bG, L)$,
then the restriction of $\mexp^E_{x, t_1:u_1}$ to
$\Lie(\bG)(L)_{x, t_1:u_1}$ is $\mexp^L_{x, t_1:u_1}$\,.
\item\label{hyp:mock-exp-galois-equi}
If $E/F$ is Galois, then $\mexp^E_{x, t_1:u_1}$ is
$\Gal(E/F)$-equivariant.
\item\label{hyp:mock-exp-subquotient}
If $u_2 \in \tR_{> 0}$ and
$t_1 \le t_2 \le u_2 \le u_1 \le 2t_1$,
then $\mexp^E_{x, t_1:u_1}(X_2) \subseteq \mexp^E_{x, t_2:u_2}(X_2)$.
\item\label{hyp:mock-exp-Ad}
$(\Ad(g_1) - 1)X_2
\in (\mexp^E_{x, (t_1 + t_2):(t_1 + t_2){+}})\inv[g_1, g_2]$.
\item\label{hyp:mock-exp-ad}
$[X_1, X_2]
\in (\mexp^E_{x, (t_1 + t_2):(t_1 + t_2){+}})\inv[g_1, g_2]$.
\item\label{hyp:mock-exp-concave}
If
	\begin{itemize}
	\item \bT is a tame maximal $E$-torus,
	\item $x \in \BB(\bT, E)$,
	\item $f$ is a $\Gal(F\sep/E)$-invariant concave function on
$\wtilde\Phi(\bG, \bT)$,
and
	\item $t_1 \le f(\alpha) \le u_1$ for all
$\alpha \in \wtilde\Phi(\bG, \bT)$,
	\end{itemize}
then
$\mexp^E_{x, t_1:u_1}$ restricts to an isomorphism of the image
in $\Lie(\bG)(E)_{x, t_1:u_1}$ of $\lsub\bT\Lie(\bG)(E)_{x, f}$
with the image in $\bG(E)_{x, t_1:u_1}$ of $\lsub\bT\bG(E)_{x, f}$\,.
\end{inc_enumerate}
(For convenience, we have regarded $\mexp^E_{x, t_1:u_1}$ also
as a function on $\Lie(\bG)(E)_{x, t_1}$\,.)
\end{nhyp}

We will often suppress a superscript $F$, writing
$\mexp_{x, t:u}$ instead of $\mexp^F_{x, t:u}$\,.

\begin{rk}
\label{rem:mock-exp-Ad}
Note that, with the notation of Hypothesis \ref{hyp:mock-exp-Ad},
$\Ad(h)X \equiv X \pmod{\Lie(\bG)(E)_{x, t_1 + t_2}}$.
Analogous statements for $\Int$ and $\ad$ are proven in
Proposition 1.4.1 of \cite{adler:thesis}.
\end{rk}

\begin{rk}
\label{rem:mock-exp-suff-large}
Given Hypothesis \ref{hyp:mock-exp-field-descend}, it suffices
to verify Hypotheses
\ref{hyp:mock-exp-galois-equi}--\ref{hyp:mock-exp-concave} only
for $E$ ``sufficiently large tame Galois'', in the sense that, given a
finite, tamely ramified field extension $E/F$, there is a
finite, tamely ramified Galois superextension $M/F$ of $E/F$
for which the hypotheses hold.
Indeed, it is a straightforward application of Hypothesis
\ref{hyp:mock-exp-field-descend} that, if Hypotheses
\ref{hyp:mock-exp-galois-equi}--\ref{hyp:mock-exp-ad} hold
for such an $M$, then they hold for $E$ as well.

Now let $E$, $x$, $t_1$, and $u_1$ be as usual,
and let \bT and $f$ be as in Hypothesis \ref{hyp:mock-exp-concave}.
Let $M/F$ be a (finite, tame) Galois superextension of $E/F$ such that
Hypothesis \ref{hyp:mock-exp-concave} is satisfied for $M$,
$x$, $t_1$, $u_1$, \bT, and $f$.  
By further enlarging $M$ if necessary, we may, and do,
assume that \bG is $M$-split.

Let $\ol u_1$ be the constant function on
$\wtilde\Phi(\bG, \bT)$ with value $u_1$.
By assumption, $\mexp^M_{x, t_1:u_1}$ induces an isomorphism of
$\lsub\bT\Lie(\bG)(M)_{x, f:\ol u_1}$
with $\lsub\bT\bG(M)_{x, f:\ol u_1}$\,.
By Hypothesis \ref{hyp:mock-exp-galois-equi}, the
isomorphism is \mbox{$\Gal(M/F)$-,}
hence certainly
\mbox{$\Gal(M/E)$-,} equivariant; so we obtain an
isomorphism
$\lsub\bT\Lie(\bG)(M)_{x, f:\ol u_1}^{\Gal(M/E)}
\to \lsub\bT\bG(M)_{x, f:\ol u_1}^{\Gal(M/E)}$.
By Proposition \xref{exp-prop:H1-iso} of \cite{adler-spice:good-expansions}, we have
$H^1(M/F, \lsub\bT\bG(M)_{x, \ol u_1}) = \sset 0$,
so that the codomain of the isomorphism is
$\lsub\bT\bG(M)_{x, f}^{\Gal(M/E)}
	/
\lsub\bT\bG(M)_{x, \ol u_1}^{\Gal(M/E)}$.
By Lemma \xref{exp-lem:tame-descent} of \loccit, this is just
$\lsub\bT\bG(E)_{x, f:\ol u_1}$\,.
A similar, but easier, calculation shows that
the domain of the isomorphism is
$\lsub\bT\Lie(\bG)(E)_{x, f:\ol u_1}$\,.
Thus, Hypothesis \ref{hyp:mock-exp-concave} is satisfied for
$E$, $x$, $t_1$, $u_1$, \bT, and $f$, as desired.
\end{rk}

For the next three results, suppose that
\begin{itemize}
\item $E/F$ is a finite, tamely ramified extension,
\item \bT is a tame maximal $E$-torus in \bG,
\item $x \in \BB(\bT, E)$,
\item $t, u \in \tR_{> 0}$ with $t \le u \le 2t$,
and
\item $\mexp^E_{x, t:u}$ is as in Hypothesis
\ref{hyp:mock-exp}.
\end{itemize}

\begin{lm}
\label{lem:mock-exp-subgroup}
Suppose that \bH is a reductive $E$-subgroup of \bG
containing \bT.
Then $\mexp^E_{x, t:u}$ restricts to an isomorphism of
$\Lie(\bH)(E)_{x, t:u}$ with $\bH(E)_{x, t:u}$\,.
\end{lm}

\begin{proof}
Apply Hypothesis \ref{hyp:mock-exp-concave},
with $f$ the concave function on $\wtilde\Phi(\bG, \bT)$
that takes the value $t$ on
$\wtilde\Phi(\bH, \bT)$ and $u$ elsewhere.
\end{proof}

\begin{lm}
\label{lem:mock-exp-root-value}
If $Y \in \Lie(\bT)(E)_t$ and
$h \in \mexp^E_{x, t:t{+}}(Y) \cap \bT(E)$,
then
$\alpha(h) - 1 \equiv d\alpha(Y) \pmod{E_{t{+}}}$
for $\alpha \in \wtilde\Phi(\bG, \bT)$.
\end{lm}

\begin{proof}
Fix $\alpha\in \wtilde\Phi(\bG, \bT)$
and choose a non-zero, positive-depth
element $X \in \Lie(\bG)(E)_\alpha$.
Put $t' = \depth_x(X)$, and let $g$ be an element of
$\mexp^E_{x, t':t'{+}}(X)$.
Then, by Hypotheses
\ref{hyp:mock-exp-Ad} and \ref{hyp:mock-exp-ad}, we have
that
\begin{align*}
(\alpha(h) - 1)X  & = (\Ad(h) - 1)X \\
\intertext{and}
d\alpha(Y)X  & = [Y, X]
\end{align*}
lie in $(\mexp^E_{x, (t + t'):(t + t'){+}})\inv[h, g]$,
so
$(\alpha(h) - 1)X \equiv d\alpha(Y)X
\pmod{\Lie(\bG)(E)_{x, (t + t'){+}}}$.
Since $X \not\in \Lie(\bG)(E)_{x, t'{+}}$,
we have
$\alpha(h) - 1 \equiv d\alpha(Y) \pmod{E_{t{+}}}$.
\end{proof}

\begin{lm}
\label{lem:mock-exp-roots}
Suppose that \bT is $E$-split
and
$\alpha \in \wtilde\Phi(\bG, \bT)$.
Then $\mexp^E_{x, t:u}$ induces an isomorphism
of $\lsub E\mf u_{(\alpha + t):(\alpha + u)}$
with $\lsub E U_{(\alpha + t):(\alpha + u)}$,
where, for $c \in \tR$,
$\alpha + c$ denotes the unique affine root with gradient $\alpha$
satisfying $(\alpha + c)(x) = c$.
\end{lm}

\begin{proof}
Apply Hypothesis \ref{hyp:mock-exp-concave}
with $f$ the concave function on $\wtilde\Phi(\bG, \bT)$
that takes the value $t$ at $\alpha$ and $u$
elsewhere.
\end{proof}

\begin{nhyp}
\label{hyp:strong-mock-exp}
There exists a family
\indexmem{exp-Tx}{\mexp_{\bT, x}}
$$
(\mexp_{\bT, x} : \fg_{x, 0{+}} \to G_{x, 0{+}})
	_{\substack{
		\text{\bT a tame maximal $F$-torus in \bG} \\
		x \in \BB(\bT, F)
	}}
$$
of bijections such that, given
\begin{itemize}
\item a tame maximal $F$-torus \bT in \bG,
\item $x \in \BB(\bT, F)$,
\item $t \in \tR_{> 0}$,
and
\item $X \in \fg_{x, t}$\,,
\end{itemize}
the following statements hold.
\begin{inc_enumerate}
\item\label{hyp:strong-mock-exp-refines}
If $u \in \tR_{> 0}$ with $t \le u \le 2t$, then
$\mexp_{\bT, x}(X) \in \mexp_{x, t:u}(X)$.
\item\label{hyp:strong-mock-exp-equi}
For $g \in G$, we have
$\lsup g X \in \fg_{x, t}$ if and only if
$\lsup g (\mexp_{\bT, x}(X)) \in G_{x, t}$\,,
in which case
$\lsup g (\mexp_{\bT, x}(X))
\in \mexp_{x, t:t{+}}(\lsup g X)$.
\item\label{hyp:strong-mock-exp-subgroup}
If \bH is a reductive $F$-subgroup of \bG containing \bT,
then
$\mexp_{\bT, x}(\mf h_{x, 0{+}}) = H_{x, 0{+}}$\,.
\end{inc_enumerate}
\end{nhyp}

\begin{pn}
\label{prop:strong-mock-exp-holds}
Hypothesis \ref{hyp:mock-exp} holds.
If \bG is connected, then
Hypothesis \ref{hyp:strong-mock-exp} also holds.
\end{pn}

In fact, one can imitate the argument of Proposition
1.6.7 of \cite{adler:thesis} to show that Hypothesis
\ref{hyp:strong-mock-exp} holds even if \bG is not assumed
to be connected; but we do not need to do so here.

\begin{proof}
The only difference between the hypotheses in question for
\bG and $\bG\conn$ is in Hypothesis
\ref{hyp:strong-mock-exp-equi}, so it suffices to assume
throughout that \bG is connected.

See \cite{adler:thesis}*{\S 1.5} for a description of a family of bijections
$$
(\varphi_{\bT(E), x; t, u} : \Lie(\bG)(E)_{x, t:u} \to \bG(E)_{x, t:u})
	_{\substack{
		\text{$E/F$ finite and tamely ramified} \\
		\text{\bT a tame maximal $E$-torus in \bG} \\
		x \in \BB(\bT, E) \\
		t, u \in \R \\
		t \le u \le 2t
	}}
$$
(in fact, of isomorphisms, by Proposition 1.6.2(a) of \loccit).
By Corollary 1.6.6 of \loccit,
given $E$, $x$, $t$, and $u$, the map
$\varphi_{\bT(E), x; t, u}$ does not depend on the choice of 
tame maximal $E$-torus \bT such that $x \in \BB(\bT, E)$; so
we may write $\mexp^E_{x, t:u} = \varphi_{\bT(E), x; t, u}$
for any such choice.
These maps are the \emph{Moy--Prasad isomorphisms}.
It is easy to extend the definition of $\mexp^E_{x, t:u}$ to
all $t, u \in \tR$ (not just $t, u \in \R$)
satisfying $t \le u \le 2t$.

By construction, the resulting family satisfies
Hypotheses
\ref{hyp:mock-exp-field-descend}--\ref{hyp:mock-exp-subquotient}.
Hypothesis \ref{hyp:mock-exp-ad} is just Proposition
1.6.2(b) of \cite{adler:thesis}.
Hypothesis \ref{hyp:mock-exp-Ad} is Propositions 1.6.2(b)
and 1.6.3 of \loccit
By Remark \ref{rem:mock-exp-suff-large}, it suffices to show
Hypothesis \ref{hyp:mock-exp-concave} in case \bG is
$E$-split, in which case it follows (as in the proof of
Proposition 1.9.2 of \cite{adler:thesis})
from Definition \xref{exp-defn:vGvr-split} of \cite{adler-spice:good-expansions} and
the construction of $\mexp^E_{x, t:u}$\,.

In \cite{adler:thesis}*{\S 1.5} one also finds a
family of bijections
$$
(\varphi_{T, x} : \fg_{x, 0{+}} \to G_{x, 0{+}})
	_{\substack{
		\text{\bT a tame maximal $F$-torus in \bG} \\
		x \in \BB(\bT, F).
	}}
$$
For fixed \bT and $x$, the bijection $\varphi_{T, x}$ is
constructed (as in \S 1.3 of \loccit)
from the various bijections $\mexp_{x, t:t{+}}$
after choosing representatives
of the $\fg_{x, t{+}}$-cosets in $\fg_{x, t}$
and the $G_{x, t{+}}$-cosets in $G_{x, t}$
for $t \in \R$.
Regardless of the choice of representatives, the
construction ensures that the maps
$\mexp_{\bT, x} := \varphi_{T, x}$ satisfy Hypothesis
\ref{hyp:strong-mock-exp-refines};
and it is observed in Remark 4.1.1 of
\cite{adler-debacker:mk-theory} that they satisfy
Hypothesis \ref{hyp:strong-mock-exp-equi}.
However, we must make our choices with some delicacy to
ensure that Hypothesis \ref{hyp:strong-mock-exp-subgroup} is
satisfied.

The choices are made as follows.
Fix $t \in \R$.
Note that, if $\set{\bH^j}{j \in J}$ is a family of (not
necessarily connected) reductive
$F$-subgroups of \bG containing \bT, then
$\bigcap_{j \in J} \bH_j$ is also (not necessarily
connected) reductive;
and, by Lemma \xref{exp-lem:more-vGvr-facts} of \cite{adler-spice:good-expansions},
$$
\bigcap_{j \in J} H^j_{x, t}G_{x, t{+}}
= \bigcap_{j \in J} (H^{j\,\circ}, G)_{x, (t, t{+})}
= \Bigl(\bigl(\bigcap_{j \in J} H^j\bigr)\conn, G\Bigr)_{x, (t, t{+})}
\subseteq \Bigl(\bigcap_{j \in J} H^j_{x, t}\Bigr)G_{x, t{+}}\,.
$$
Thus, if $g \in G_{x, t}$\,, then there exists an element
$h \in g G_{x, t{+}}$ such that $h \in H$ whenever
\bH is a reductive $F$-subgroup of \bG containing \bT
such that $H \cap g G_{x, t{+}} \ne \emptyset$.
We choose any such element $h$ as the representative for the
coset $g G_{x, t{+}}$\,.
Similarly,
we choose, for every $X \in \fg_{x, t}$\,,
a coset representative $Y$ for $X + \fg_{x, t{+}}$ such that
$Y \in \mf h$ whenever \bH is a reductive $F$-subgroup of
\bG containing \bT such that
$\mf h \cap X + \fg_{x, t{+}} \ne \emptyset$.

By Lemma \ref{lem:mock-exp-subgroup}, if $t \in \R$,
$X \in \fg_{x, t}$\,,
and $g \in G_{x, t}$\,, then the sets of reductive
$F$-subgroups \bH of \bG, containing \bT, such that
$\mf h \cap X + \fg_{x, t{+}} \ne \emptyset$,
and such that $H \cap g G_{x, t{+}} \ne \emptyset$,
are the same.
Thus it follows from the construction that Hypothesis
\ref{hyp:strong-mock-exp-subgroup} is satisfied for
$\mexp_{\bT, x}$.
\end{proof}

%

\section{An orbital-integral formula}

In this section, suppose that
\begin{itemize}
\item \bG is a reductive algebraic $F$-group (possibly disconnected),
split over a tame extension of $F$,
satisfying Hypotheses \ref{hyp:mock-exp}
and \ref{hyp:strong-mock-exp},
\item $(x, r) \in \BB(\bG, F) \times \R_{> 0}$,
\item $X^* \in \fg^*_{x, -r} \smallsetminus \fg^*_{(-r){+}}$
satisfies condition \textbf{GE1} of \cite{yu:supercuspidal}*{\S 8},
\item $\phi$ is a linear character of $G_{x, r:r{+}}$\,,
and
\item
$\phi \circ \mexp_{x, r:r{+}}
= \Lambda \circ X^*$.
\end{itemize}
Note that, by Proposition \ref{prop:strong-mock-exp-holds},
the requirement that \bG satisfy Hypothesis
\ref{hyp:mock-exp} is superfluous.
Put $\bG' = C_\bG(X^*)\conn$.
We choose, arbitrarily, a tame maximal $F$-torus \bT with
$x \in \BB(\bT, F)$, and write $\mexp_x = \mexp_{\bT, x}$,
where $\mexp_{\bT, x}$ is as in Hypothesis
\ref{hyp:strong-mock-exp}.
(The resulting map will depend on our choice of \bT, but
this dependence will not affect our proof.)

\begin{lm}
\label{lem:phi-commute}
Suppose that
\begin{itemize}
\item $d \in \R_{> 0}$ with $d < r$,
\item $h \in G_{x, d}$\,,
and
\item $[h, \hat\phi]$ is trivial on $G_{x, r - d}$\,.
\end{itemize}
Then $h \in (G', G)_{x, (d, d{+})}$.
\end{lm}

Here, $[h, \hat\phi]$ is the character
$g \mapsto \hat\phi([h\inv, g])$ of $G_{x, r - d}$\,.

\begin{proof}
Fix $X \in \fg_{x, r - d}$\,, and
let $g$ be any element of $\mexp_{x, (r - d):(r - d){+}}(X)$.
By Hypothesis \ref{hyp:mock-exp-Ad},
$[h\inv, g]$ belongs to
$\mexp_{x, r:r{+}}\bigl((\Ad(h)\inv - 1)X\bigr)$, so
\begin{multline*}
1 = [h, \hat\phi](g)
= \hat\phi([h\inv, g]) \\
= \Lambda\bigl(X^*\bigl((\Ad(h)\inv - 1)X\bigr)\bigr)
= \Lambda\bigl((\Ad^*(h) - 1)X^*(X)\bigr).
\end{multline*}
Since $X \in \fg_{x, r - d}$ was arbitrary, we have
$(\Ad^*(h) - 1)X^* \in \fg^*_{x, (d - r)+}$\,.  By
Lemma 8.4 of \cite{yu:supercuspidal} (the proof of which
uses only condition \textbf{GE1}, not the full definition of
genericity),
$h \in (G', G)_{x, (d, d{+})}$.
\end{proof}

Since it is often easier to work with orbital integrals over
connected groups than over disconnected ones, we present a
basic result describing a $G$-orbital integral as a sum of
$G\conn$-orbital integrals.
Choose an invariant measure $d\dot h$ on $G/C_G(X^*)$.
Fix $g \in G$.
Then $d\dot h$ affords a natural choice of invariant measure on
$G/\lsup g C_G(X^*) = G/C_G(\lsup gX^*)$.
Since $G\conn/C_{G\conn}(\lsup gX^*)$ embeds naturally as an
open subset of $G/C_G(\lsup gX^*)$, it inherits this
invariant measure.  For convenience, we will write again
$d\dot h$ for the various measures occurring in this
way.
We will take all orbital integrals with respect to these
measures.
Notice that the induced measure on each
$G/C_G(\lsup gX^*)G\conn$ is just counting measure.

\begin{lm}
\label{lem:disc-orbital-int}
We have that
$\hat\mu^G_{X^*}
= \indx{C_G(X^*)}{C_{G\conn}(X^*)}\inv\sum_{g \in G/G\conn}
	\hat\mu^{G\conn}_{\lsup gX^*}$.
\end{lm}

The statement may be interpreted as an equality of
distributions, or of functions.  We will prove the equality
of distributions; that of functions follows immediately.

\begin{proof}
For $f \in C_c^\infty(\fg)$, we have that
\begin{multline*}
\hat\mu^G_{X^*}(f)
= \int_{G/C_G(X^*)} \hat f(\lsup hX^*)d\dot h \\
= \sum_{g \in G/C_G(X^*)G\conn} \int_{G\conn/C_{G\conn}(X^*)}
	\hat f(\lsup{g h}X^*)d\dot h \\
= \indx{C_G(X^*)}{C_{G\conn}(X^*)}\inv
	\sum_{g \in G/G\conn} \int_{G\conn/C_{G\conn}(X^*)}
		\hat f(\lsup{g h}X^*)d\dot h.
\end{multline*}
For fixed $g \in G$, the inner integral is
just
$\int_{G\conn/C_{G\conn}(\lsup gX^*)} \hat f(\lsup{h g}X^*)d\dot h
= \hat\mu^{G\conn}_{\lsup gX^*}(f)$.
The result follows.
\end{proof}

\begin{rem}
\label{rem:disc-orbital-int}
Our result as stated does not actually require that $X^*$
satisfy condition \textbf{GE1} of
\cite{yu:supercuspidal}*{\S8},
only that the relevant orbital integrals converge.  In
particular, the result holds for any semisimple $X^*$.
\end{rem}

Compare the next result to Lemma
6.3.5 of \cite{adler-debacker:mk-theory}.

\begin{lm}
\label{lem:orbital-cancel}
Suppose that
\begin{itemize}
\item $\bG'/Z(\bG')$ is $F$-anisotropic,
\item $Z$ is a cocompact subgroup of $Z(G\conn)$
that is normal in $G$,
\item \cpt is a compact open subgroup of $G\conn$,
and
\item $Y \in \bigcup_{y \in \BB(\bG, F)} \fg_{y, r}$ is regular semisimple.
\end{itemize}
If $Y \in \fg_{x, r}$ and $\gamma = \mexp_x(Y)$,
then
$$
\hat\mu_{X^*}^G(Y)
= \int_{G/Z} \int_\cpt
	[G_{x, r}](\lsup{g k}\gamma)\hat\phi(\lsup{g k}\gamma)
dk\,d\dot g,
$$
where $d\dot g$ is a Haar measure on $G/Z$ and
$dk$ is the Haar measure on \cpt, normalized so that
$\meas \cpt = 1$.
If $\lsup G Y \cap \fg_{x, r} = \emptyset$, then
$$
\hat\mu_{X^*}(Y) = 0.
$$
\end{lm}

When $Y \in \fg_{x, r}$, the notation of the lemma is meant to convey that
the integrand equals $\hat\phi(\lsup{g k}\gamma)$
if $\lsup{g k}\gamma \in G_{x, r}$\,,
and equals $0$ otherwise.

\begin{proof}
Since $G_{x, r} = G\conn_{x, r}$,
and since, for $g \in G$,
replacing $X^*$ by $\lsup gX^*$ has the effect of changing
$\phi$ to $\phi^g$,
we see by Lemma \ref{lem:disc-orbital-int} that it suffices
to prove this result in case $G$ is connected.

The map
$g
\mapsto
\min \set{\depth_x(\lsup{k_1 g k_2}Y)}
	{k_1 \in G_{x, 0+}, k_2 \in \cpt}$
on $G$ is locally constant.
For $t \in \R \cup \sset\infty$,
denote by $G(t)$ the preimage of $t$
under this map, and put
\begin{gather*}
I(t) = \int_{G(t)/Z} \int_\cpt \Lambda(X^*(\lsup{gk}Y))dk\,d\dot g \\
\intertext{and}
I'(t) = \int_{G(t)/Z} \int_\cpt
	[\fg_{x, r}](\lsup{gk}Y)\Lambda(X^*(\lsup{gk}Y))dk\,d\dot g.
\end{gather*}
We show that $I(t) = I'(t)$ for $t \in \R \cup \sset\infty$.

Fix $t \in \R \cup \sset\infty$.
If $t \ge r$, then the desired equality is obvious,
so we may suppose that $t < r$.
Denote by
$t = t_0 < t_1 < \cdots < t_m$ the distinct values of $\depth_x$ in $[t, r)$.
Given a compact open subgroup $\cpt'$ of $G$ such that
$k'G(t) = G(t)$ for all $k' \in \cpt'$
(for example, an open subgroup of $G_{x, 0{+}}$), we have
\begin{multline*}
I(t) = \indx{\cpt'}{\cpt''}
\int_{G(t)/Z} \int_{\cpt''} \int_\cpt
	\Lambda(X^*(\lsup{k'g k}Y))dk\,dk'\,d\dot g \\
= \int_{G(t)/Z}
	\sum_{h \in \cpt''\backslash\cpt'}
	\int_{\cpt''} \int_\cpt
	\Lambda(X^*(\lsup{k'h g k}Y))dk\,dk'\,d\dot g \\
= \int_{G(t)/Z} \int_{\cpt'} \int_\cpt
	\Lambda(X^*(\lsup{k'g k}Y))dk\,dk'\,d\dot g \\
= \int_{G(t)/Z} \int_\cpt \int_{\cpt'}
	\Lambda(X^*(\lsup{k'g k}Y))dk'\,dk\,d\dot g,
\end{multline*}
where
$\cpt'' = \cpt' \cap G_{x, (r - t){+}}$ and
$dk'$ is the Haar measure on $\cpt'$, normalized so
that $\meas(\cpt') = 1$.
An easy generalization allows us to handle several subgroups
$\cpt'_0, \dotsc, \cpt'_m$ as above, so we obtain
\begin{equation}
\tag{$**$}
I(t)
= \int_{G(t)/Z} \int_\cpt \int_{G_{x, r - t_m}} \cdots \int_{G_{x, r - t_0}}
	\Lambda(X^*(\lsup{h_m\cdots h_0 gk}Y))
dh_0\dotsb dh_m\,dk\,d\dot g,
\end{equation}
where, for $j = 0, \dotsc, m$,
$dh_j$ is the Haar measure on $G_{x, r - t_j}$
normalized so that $\meas(G_{x, r - t_j}) = 1$.
Now fix $g \in G(t)$ and $k \in \cpt$,
and put $\wtilde Y = \lsup{gk}Y$.
Then either $[\fg_{x, r}](\wtilde Y) = 1$,
or there is $0 \le j \le m$ such that $\depth_x(\wtilde Y) = t_j$
(in particular, $\wtilde Y \not\in \fg_{x, t_j{+}}$).
In the latter case,
fix $h_i \in G_{x, r - t_i}$ for $0 \le i \le m$.
By Remark \ref{rem:mock-exp-Ad},
we have that
$\lsup{h_j}\wtilde Y - \wtilde Y \in \fg_{x, r}$
and
(since
$h_{j - 1}\dotsb h_0
\in G_{x, r - t_{j - 1}} \subseteq G_{x, (r - t_j){+}}$)
also that
$\lsup{h_{j - 1}\dotsb h_0}\wtilde Y - \wtilde Y
\in \fg_{x, r{+}}$\,,
hence that
$\lsup{h_j}(
	\lsup{h_{j - 1}\dotsb h_0}\wtilde Y - \wtilde Y
) \in \fg_{x, r{+}}$\,.
Thus, by another application of Remark \ref{rem:mock-exp-Ad},
\begin{multline*}
\lsup{h_m\dotsb h_0}\wtilde Y
	- \lsup{h_m\dotsb h_{j + 1}}\wtilde Y \\
= \lsup{h_m\dotsb h_{j + 1}}\bigl(
	\lsup{h_j}(
		\lsup{h_{j - 1}\dotsb h_0}\wtilde Y - \wtilde Y
	)
	+ \lsup{h_j}\wtilde Y - \wtilde Y
\bigr) \\
\in \lsup{h_j}\wtilde Y - \wtilde Y + \fg_{x, r{+}}\,.
\end{multline*}
That is,
$$
\Lambda(X^*(\lsup{h_m\dotsb h_0}\wtilde Y))
= \Lambda(X^*(\lsup{h_m\dotsb h_{j + 1}}\wtilde Y))
	\Lambda(X^*(\lsup{h_j}\wtilde Y - \wtilde Y)),
$$
so the inner integrals in ($**$) are a constant multiple of
\begin{equation}
\tag{$*{*}*$}
\int_{G_{x, r - t_j}} \Lambda(X^*(\lsup{h_j}\wtilde Y - \wtilde Y))dh_j.
\end{equation}
Now, for $h_j, h_j' \in G_{x, r - t_j}$, we have
by Remark \ref{rem:mock-exp-Ad}
that
$\lsup{h_j'}\wtilde Y - \wtilde Y \in \fg_{x, r}$\,,
so
\begin{multline*}
\lsup{h_j h_j'}\wtilde Y - \wtilde Y
= \lsup{h_j}(\lsup{h_j'}\wtilde Y - \wtilde Y)
	+ (\lsup{h_j}\wtilde Y - \wtilde Y) \\
\in (\lsup{h_j'}\wtilde Y - \wtilde Y)
	+ (\lsup{h_j}\wtilde Y - \wtilde Y)
	+ \fg_{x, r{+}}\,.
\end{multline*}
That is,
$\varphi_{\wtilde Y}
: h_j \mapsto \Lambda(X^*(\lsup{h_j}\wtilde Y - \wtilde Y))$
is a homomorphism on $G_{x, r - t_j}$\,; so
($*{*}*$) equals $0$ unless $\varphi_{\wtilde Y}$ is
trivial there.

Suppose that it is trivial.
Choose $\tilde\gamma \in \mexp_{x, t_j:t_j{+}}(\wtilde Y)$.
By Hypothesis \ref{hyp:mock-exp-Ad},
$[h_j, \tilde\gamma]
\in \mexp_{x, r:r{+}}((\Ad(h_j) - 1)\wtilde Y)$,
so
$$
[\tilde\gamma\inv, \hat\phi](h_j)
= \hat\phi([h_j, \tilde\gamma])\inv
= \Lambda(X^*((\Ad(h_j) - 1)\wtilde Y)\inv = 1,
$$
for $h_j \in G_{x, r - t_j}$\,.
That is, $[\tilde\gamma\inv, \hat\phi]$ is trivial on
$G_{x, r - t_j}$\,; so, by Lemma \ref{lem:phi-commute},
$\tilde\gamma \in (G', G)_{x, (t_j, t_j{+})}$.
Then Hypothesis \ref{hyp:mock-exp-concave} gives
$\wtilde Y \in (\fg', \fg)_{x, (t_j, t_j{+})}$.
On the other hand, $\wtilde Y \in \fg_r \subseteq \fg_{t_j+}$\,.
Therefore, by Corollary 3.7.8 of \cite{adler-debacker:bt-lie},
we have that
$\wtilde Y \in \fg'_{t_j+} + \fg_{x, t_j+}$\,.
Since $\bG'/Z(\bG')$ is $F$-anisotropic, we have that
$\fg'_{t_j+} = \fg'_{x, t_j+} \subseteq \fg_{x, t_j{+}}$\,, so in fact
$\wtilde Y \in \fg_{x, t_j+}$\,, a contradiction.

We have shown that
\begin{multline*}
\int_{G_{x, r - t_m}} \cdots \int_{G_{x, r - t_0}}
    \Lambda(X^*(\lsup{h_m\cdots h_0}\wtilde Y))dh_0\cdots dh_m \\
= (\text{const})
  \int_{G_{x, r - t_j}} \Lambda(X^*(\lsup{h_j}\wtilde Y - \wtilde Y))dh_j
= 0
\end{multline*}
whenever $\wtilde Y \not\in \fg_{x, r}$\,.  Thus
$I(t) = I'(t)$.
By a theorem of Huntsinger
(see \cite{adler-debacker:mk-theory}*{Appendix A}),
$\hat\mu_{X^*}(Y) = \sum_{t \in \R \cup \sset\infty} I(t)$;
so
$\hat\mu_{X^*}(Y) = \sum_{t \in \R \cup \sset\infty} I'(t)$.
In particular, if
$\lsup G Y \cap \fg_{x, r} = \emptyset$, then
$\hat\mu_{X^*}(Y) = 0$.

Now suppose that $Y \in \fg_{x, r}$\,.
By Hypothesis \ref{hyp:strong-mock-exp-equi},
$\lsup{gk}\gamma \in G_{x, r}$ if and only if $\lsup{gk}Y \in \fg_{x, r}$\,,
in which case
$\lsup{gk}\gamma$ belongs to $\mexp_{x, r:r{+}}(\lsup{gk}Y)$.
Recall that
$\hat\phi \circ \mexp_{x, r:r{+}} = \Lambda \circ X^*$
on $\fg_{x, r:r{+}}$\,.
Thus
\begin{multline*}
\int_{G/Z} \int_\cpt
	[G_{x, r}](\lsup{gk}\gamma)\hat\phi(\lsup{gk}\gamma)dk\,d\dot g \\
=
\int_{G/Z} \int_\cpt
	[\fg_{x, r}](\lsup{gk}Y)\Lambda(X^*(\lsup{gk}Y))dk\,d\dot g \\
= \sum_{t \in \R} I'(t) = \hat\mu_{X^*}(Y).
\text{\qedhere}
\end{multline*}
\end{proof}



\newpage 


\setlength{\columnseprule}{.5pt}
\begin{multicols}{2}[\section*{Index of notation}]
\let\olditem\item
\def\item#1,#2{\olditem
	\makebox[.45\textwidth]{\ensuremath{#1} \hfill p.\ \pageref{index:#2}}}

\begin{trivlist}

  \item {\tR }, {tR}
  \item {\sgn_\F}, {sgnF}
  \item {\ord }, {ord}
  \item {\ff _F, \: \ff}, {ffF}
  \item {p}, {p}
  \item {F ^{\mathrm {un}}}, {Fun}
  \item {F ^{\mathrm {sep}}}, {Fsep}
  \item {\Lambda }, {Lambda}
  \item {\hat \mu _{X^*}^G}, {hat-mu}

\indexspace

  \item {\stab _H({{{\overline {x}}}})}, {stabHx}
  \item {\Phi ({{\mathbf G}},{{\mathbf T}}), \:
	\wtilde\Phi({{\mathbf G}},{{\mathbf T}})}, {PhiGT}
  \item {\Lie ({{\mathbf G}})_\alpha,\: \pmb{\mf g}_\alpha}, {LieGalpha}
  \item {\bU _\alpha , \quad \text{$\alpha $ a root}},
	{Ualpha}
  \item {\psi{+}}, {psi+}
  \item {\lsub E U_\psi,\: U_\psi ,\quad \text{$\psi$ an affine root}}, {EUpsi}
  \item {\lsub E {{\mathfrak {u}}}_\psi,\: \mathfrak u_\psi }, {Eupsi}
  \item {\depth _x}, {depthx}
  \item {F_{r:t},\: U_{\psi_1:\psi_2},\: G_{x, r:t}}, {filt-quot}
  \item {\mexp _{x, t:u}, \: \mexp_x}, {exp-xtu}
  \item {\lsub {{\mathbf T}}G_{x, f}}, {TGxf}
  \item {\lsub {{\mathbf T}}\Lie  (G)_{x, f}}, {TLieGxf}

  \item {\vG _{x, \vec{r}}}, {vGxr}
  \item {\Lie ({\vG})_{x, \vec{r}}}, {vLieGxr}
  \item {\Lie ({}_{{{\mathbf T}}}G_{x, f})}, {LieTGxf}
  \item {\Lie (\vG_{x, \vec{r}}}) , {LievGxr}

\indexspace

  \item {{C_{\bG }^{(r)}}(\gamma ), \: {C_{G}^{(r)}}(\gamma )}, {CbGrgamma}
  \item {{Z_{\bG }^{(r)}}(\gamma ), \: {Z_{G}^{(r)}}(\gamma )}, {ZbGrgamma}
  \item {\gamma _{< r} ,  \: \gamma _{\ge r}}, {gamma<r}

  \item {\BB _r(\gamma )}, {Brgamma}
  \item {\dc }, {dc}
  \item {\dc ^{(j)}}, {dcj}
  \item {\dc_{G'} , \: \dc_{G'} ^{(j)}}, {dcG'}
  \item {\mc T(\vbG , \vec{r})}, {TvGvr}

\indexspace

  \item {\Theta _\pi }, {Theta-pi}
  \item {\theta _\rho }, {theta-rho}
  \item {\dot \theta _\rho }, {dot-theta-rho}

\indexspace

  \item {\depth _x(\phi ), \quad \text{$\phi $ a character}}, {depth-x-char}
  \item {\bG ^i}, {bGi}

  \item {x }, {x}
  \item {r _i}, {r_i}
  \item {\phi _i}, {phi_i}
  \item {s _i}, {s_i}

  \item {K ^i}, {K-i}
  \item {J ^i , \: J ^i_+}, {J-i}

  \item {\rho _i'}, {rho_i'}
  \item {\pi _i}, {pi_i}
  \item {\tilde \phi _i}, {tilde-phi_i}

  \item {X _i^*}, {X_i*}
  \item {\hat \phi _i}, {hat-phi_i}
  \item {K _{\sigma _i}}, {K-sigma-i}
  \item {\sigma _i}, {sigma_i}
  \item {\tau _i}, {tau_i}

  \item G',{G'}
  \item K, K
  \item J, J
  \item J_+,{J+}
  \item \rho',{rho'}
  \item \sigma,{sigma}
  \item \tau,{tau}
  \item {r, \: s, \: \phi }, {r}
  \item {X^*,\: \hat\phi}, {X*}
  \item \tilde\phi, {tilde-phi}
  \item {K,\: K_\sigma,\: J,\: J_+}, K
  \item {\rho',\: \sigma,\: \tau,\: \pi}, {rho'}

  \item {\pi '_0}, {pi'_0}
  \item {\tilde \rho }, {tilde-rho}

\indexspace

  \item {\varepsilon (\phi ,\gamma )}, {eps_phi-gamma}

\indexspace

  \item {j (g), \: \jperp (g)}, {jg}
  \item {i (g),\: t(g),\: \iperp(g),\: \tperp(g)}, {ig}

\indexspace

  \item {\wtilde {\mathfrak {G}}(\phi , \gamma ), \: \mf G(\phi , \gamma )},
	{tG-phi-gamma}

\indexspace

  \item {\simm {i}}, {simm-i}

\indexspace

  \item {\mexp ^E_{x, t:u}}, {exp-E-xtu}
  \item {\mexp _{{{\mathbf T}}, x}}, {exp-Tx}

\end{trivlist}
\end{multicols}


\begin{bibdiv}
\begin{biblist}



\bib{adler:thesis}{article}{
  author={Adler, Jeffrey D.},
  title={Refined anisotropic $K$-types and supercuspidal representations},
  journal={Pacific J. Math.},
  volume={185},
  date={1998},
  number={1},
  pages={1\ndash 32},
  issn={0030-8730},
  review={\MR {1653184 (2000f:22019)}},
}

\bib{adler-debacker:bt-lie}{article}{
  author={Adler, Jeffrey D.},
  author={DeBacker, Stephen},
  title={Some applications of Bruhat--Tits theory to harmonic analysis on the Lie algebra of a reductive $p$-adic group},
  contribution={with appendices by Reid Huntsinger and Gopal Prasad},
  journal={Michigan Math. J.},
  volume={50},
  date={2002},
  number={2},
  pages={263\ndash 286},
  issn={0026-2285},
  review={\MR {1914065 (2003g:22016)}},
}

\bib*{proc:shalika}{collection}{
  title={Contributions to automorphic forms, geometry, and number theory},
  editor={Hida, H.},
  editor={Ramakrishnan, D.},
  editor={Shahidi, F.},
  publisher={Johns Hopkins University Press},
  place={Baltimore, MD},
  date={2004},
  pages={xiv+910},
  isbn={0-8018-7860-8},
  review={\MR {2058599 (2005a:11002)}},
}

\bib{adler-corwin-sally:division-formulas}{article}{
  author={Adler, Jeffrey D.},
  author={Corwin, Lawrence},
  author={Sally, Paul J., Jr.},
  title={Discrete series characters of division algebras and ${\rm GL}\sb n$ over a $p$-adic field},
  book={proc:shalika},
  pages={57\ndash 64},
  review={\MR {2058603 (2005d:22015)}},
}

\bib{adler-debacker:mk-theory}{article}{
  author={Adler, Jeffrey D.},
  author={DeBacker, Stephen},
  title={Murnaghan--Kirillov theory for supercuspidal representations of tame general linear groups},
  journal={J. Reine Angew. Math.},
  volume={575},
  date={2004},
  pages={1\ndash 35},
  issn={0075-4102},
  review={\MR {2097545 (2005j:22008)}},
}

\bib{adler-korman:loc-char-exp}{article}{
  author={Adler, Jeffrey D.},
  author={Korman, Jonathan},
  title={The local character expansion near a tame, semisimple element},
  journal={Amer. J. Math.},
  volume={129},
  number={2},
  year={2007},
  pages={381--403},
  eprint={arXiv:math.RT/0503051},
}

\bib{adler-spice:good-expansions}{article}{
  author={Adler, Jeffrey D.},
  author={Spice, Loren},
  title={Good product expansions for tame elements of $p$-adic groups},
  journal={Int. Math. Res. Pap.},
  volume={2008},
  doi={10.1093/imrp/rpn003},
  eprint={arXiv:math.RT/0611554},
}

\bib{assem:sll}{article}{
  author={Assem, Magdy},
  title={The Fourier transform and some character formulae for $p$-adic ${\rm SL}\sb l,\ l$ a prime},
  journal={Amer. J. Math.},
  volume={116},
  date={1994},
  number={6},
  pages={1433\ndash 1467},
  issn={0002-9327},
  review={\MR {1305872 (96i:22037)}},
}

\bib{boller:thesis}{thesis}{
  author={Boller, John},
  title={Characters of some supercuspidal representations of $p$-adic ${\rm Sp}_4(F)$},
  organization={The University of Chicago},
  type={Ph. D. Thesis},
  year={1999},
}

\bib{bushnell-kutzko:gln-book}{book}{
  author={Bushnell, Colin J.},
  author={Kutzko, Philip C.},
  title={The admissible dual of ${\rm GL}(N)$ via compact open subgroups},
  series={Annals of Mathematics Studies},
  volume={129},
  publisher={Princeton University Press},
  place={Princeton, NJ},
  date={1993},
  pages={xii+313},
  isbn={0-691-03256-4},
  isbn={0-691-02114-7},
  review={\MR {1204652 (94h:22007)}},
}

\bib{bushnell-kutzko:sln-1}{article}{
  author={Bushnell, Colin J.},
  author={Kutzko, Philip C.},
  title={The admissible dual of ${\rm SL}(N)$. I},
  journal={Ann. Sci. \'Ecole Norm. Sup. (4)},
  volume={26},
  date={1993},
  number={2},
  pages={261\ndash 280},
  issn={0012-9593},
  review={\MR {1209709 (94a:22033)}},
}

\bib{bushnell-kutzko:sln-2}{article}{
  author={Bushnell, Colin J.},
  author={Kutzko, Philip C.},
  title={The admissible dual of ${\rm SL}(N)$. II},
  journal={Proc. London Math. Soc. (3)},
  volume={68},
  date={1994},
  number={2},
  pages={317\ndash 379},
  issn={0024-6115},
  review={\MR {1253507 (94k:22035)}},
}

\bib{bushnell-henniart:local-tame-1}{article}{
  author={Bushnell, Colin J.},
  author={Henniart, Guy},
  title={Local tame lifting for ${\rm GL}(N)$. I. Simple characters},
  journal={Inst. Hautes \'Etudes Sci. Publ. Math.},
  number={83},
  date={1996},
  pages={105\ndash 233},
  issn={0073-8301},
  review={\MR {1423022 (98m:11129)}},
}

\bib{casselman:jacquet}{article}{
  author={Casselman, William},
  title={Characters and Jacquet modules},
  journal={Math. Ann.},
  volume={230},
  date={1977},
  number={2},
  pages={101--105},
  issn={0025-5831},
  review={\MR {0492083 (58 \#11237)}},
}

\bib{corwin:division-alg-tame}{article}{
  author={Corwin, Lawrence},
  title={Representations of division algebras over local fields},
  journal={Advances in Math.},
  volume={13},
  date={1974},
  pages={259\ndash 267},
  issn={0001-8708},
  review={\MR {0347780 (50 \#281)}},
}

\bib{corwin-howe:div-alg}{article}{
  author={Corwin, Lawrence},
  author={Howe, Roger E.},
  title={Computing characters of tamely ramified $p$-adic division algebras},
  journal={Pacific J. Math.},
  volume={73},
  date={1977},
  number={2},
  pages={461\ndash 477},
  issn={0030-8730},
  review={\MR {0492084 (58 \#11238)}},
}

\bib{corwin-moy-sally:degrees}{article}{
  author={Corwin, Lawrence},
  author={Moy, Allen},
  author={Sally, Paul J., Jr.},
  title={Degrees and formal degrees for division algebras and ${\rm GL}\sb n$ over a $p$-adic field},
  journal={Pacific J. Math.},
  volume={141},
  date={1990},
  number={1},
  pages={21\ndash 45},
  issn={0030-8730},
  review={\MR {1028263 (90k:22025)}},
}

\bib{corwin-moy-sally:gll}{article}{
  author={Corwin, Lawrence},
  author={Moy, Allen},
  author={Sally, Paul J., Jr.},
  title={Supercuspidal character formulas for ${\rm GL}\sb \ell $},
  booktitle={Representation theory and harmonic analysis (Cincinnati, OH, 1994)},
  series={Contemporary Mathematics},
  volume={191},
  pages={1\ndash 11},
  publisher={Amer. Math. Soc.},
  place={Providence, RI},
  date={1995},
  review={\MR {1365530 (96m:22037)}},
}

\bib{cunningham:ell-exp}{article}{
  author={Cunningham, Clifton},
  title={Characters of depth-zero, supercuspidal representations of the rank-2 symplectic group},
  journal={Canad. J. Math.},
  volume={52},
  date={2000},
  number={2},
  pages={306\ndash 331},
  issn={0008-414X},
  review={\MR {1755780 (2001f:22055)}},
}

\bib{debacker:thesis}{thesis}{
  author={DeBacker, Stephen},
  title={On supercuspidal characters of ${\rm GL}_\ell $, $\ell $ a prime},
  organization={The University of Chicago},
  type={Ph. D. Thesis},
  year={1997},
}

\bib*{proc:hc}{collection}{
  title={The mathematical legacy of Harish-Chandra},
  editor={Doran, Robert S.},
  editor={Varadarajan, V. S.},
  series={Proceedings of Symposia in Pure Mathematics},
  volume={68},
  publisher={American Mathematical Society},
  place={Providence, RI},
  date={2000},
  pages={xii+551},
  isbn={0-8218-1197-5},
  review={\MR {1767886 (2001b:22001)}},
}

\bib{debacker-sally:germs}{article}{
  author={DeBacker, Stephen},
  author={Sally, Paul J., Jr.},
  title={Germs, characters, and the Fourier transforms of nilpotent orbits},
  book={proc:hc},
  pages={191\ndash 221},
  review={\MR {1767897 (2001i:22022)}},
}

\bib{debacker:homogeneity}{article}{
  author={DeBacker, Stephen},
  title={Homogeneity results for invariant distributions of a reductive $p$-adic group},
  language={English, with English and French summaries},
  journal={Ann. Sci. \'Ecole Norm. Sup. (4)},
  volume={35},
  date={2002},
  number={3},
  pages={391\ndash 422},
  issn={0012-9593},
  review={\MR {1914003 (2003i:22019)}},
}

\bib{debacker-reeder:depth-zero-sc}{article}{
  author={DeBacker, Stephen},
  author={Reeder, Mark},
  title={Depth-zero supercuspidal $L$-packets and their stability},
  journal={Ann. Math.},
  status={to appear},
}

\bib{debacker-kazhdan:mk-zero}{article}{
  author={DeBacker, Stephen},
  author={Kazhdan, David},
  title={Murnaghan-Kirillov theory for depth zero supercuspidal representations: reduction to cuspidal local systems},
  date={2006-04-07},
  status={preprint},
}

\bib{deligne:support}{article}{
  author={Deligne, Pierre},
  title={Le support du caract\`ere d'une repr\'esentation supercuspidale},
  language={French, with English summary},
  journal={C. R. Acad. Sci. Paris S\'er. A-B},
  volume={283},
  date={1976},
  number={4},
  pages={Aii, A155\ndash A157},
  review={\MR {0425033 (54 \#12991)}},
}

\bib{gerardin:weil}{article}{
  author={G{\'e}rardin, Paul},
  title={Weil representations associated to finite fields},
  journal={J. Algebra},
  volume={46},
  date={1977},
  number={1},
  pages={54\ndash 101},
  issn={0021-8693},
  review={\MR {0460477 (57 \#470)}},
}

\bib{hakim-murnaghan:distinguished}{article}{
  author={Hakim, Jeffrey},
  author={Murnaghan, Fiona},
  title={Tame supercuspidal representations of ${\rm GL}(n)$ distinguished by a unitary group},
  journal={Compositio Math.},
  volume={133},
  date={2002},
  number={2},
  pages={199\ndash 244},
  issn={0010-437X},
  review={\MR {1923582 (2003g:22019)}},
}

\bib*{proc:corwin}{collection}{
  title={Representation theory and analysis on homogeneous spaces},
  editor={Gindikin, Simon},
  editor={Greenleaf, Frederick P.},
  editor={Sally, Paul J., Jr.},
  series={Contemporary Mathematics},
  volume={177},
  publisher={American Mathematical Society},
  place={Providence, RI},
  date={1994},
  pages={xii+256},
  isbn={0-8218-0300-X},
  review={\MR {1303596 (95f:22005)}},
}

\bib{hales:characters}{article}{
  author={Hales, Thomas C.},
  title={Hyperelliptic curves and harmonic analysis (why harmonic analysis on reductive $p$-adic groups is not elementary)},
  book={proc:corwin},
  pages={137\ndash 169},
  review={\MR {1303604 (96d:22024)}},
}

\bib{hc:harmonic}{book}{
  author={Harish-Chandra},
  title={Harmonic analysis on reductive $p$-adic groups},
  contribution={notes by G. van Dijk},
  publisher={Springer--Verlag},
  place={Berlin},
  date={1970},
  pages={iv+125},
  review={\MR {0414797 (54 \#2889)}},
}

\bib{hc:submersion}{article}{
  author={Harish-Chandra},
  title={A submersion principle and its applications},
  book={ title={Geometry and analysis}, subtitle={Papers dedicated to the memory of V. K. Patodi}, publisher={Indian Academy of Sciences}, place={Bangalore}, date={1980}, pages={iv+166 pp. (1 plate)}, review={\MR {592246 (81i:58005)}}, },
  pages={95\ndash 102},
  review={\MR {592255 (82e:22032)}},
}

\bib{hc:queens}{book}{
  author={Harish-Chandra},
  title={Admissible invariant distributions on reductive $p$-adic groups},
  contribution={with a preface and notes by Stephen DeBacker and Paul J. Sally, Jr.},
  series={University Lecture Series},
  volume={16},
  publisher={American Mathematical Society},
  place={Providence, RI},
  date={1999},
  pages={xiv+97},
  isbn={0-8218-2025-7},
  review={\MR {1702257 (2001b:22015)}},
}

\bib{henniart:jl1}{article}{
  author={Henniart, Guy},
  title={Correspondance de Jacquet-Langlands explicite. I. Le cas mod\'er\'e de degr\'e premier},
  booktitle={S\'eminaire de Th\'eorie des Nombres, Paris, 1990--91},
  language={French},
  series={Progr. Math.},
  volume={108},
  pages={85\ndash 114},
  publisher={Birkh\"auser Boston},
  place={Boston, MA},
  date={1993},
  review={\MR {1263525 (95d:11064)}},
}

\bib{howe:weil}{article}{
  author={Howe, Roger E.},
  title={On the character of Weil's representation},
  journal={Trans. Amer. Math. Soc.},
  volume={177},
  date={1973},
  pages={287\ndash 298},
  issn={0002-9947},
  review={\MR {0316633 (47 \#5180)}},
}

\bib{howe:fourier}{article}{
  author={Howe, Roger E.},
  title={The Fourier transform and germs of characters (case of ${\rm Gl}\sb {n}$ over a $p$-adic field)},
  journal={Math. Ann.},
  volume={208},
  date={1974},
  pages={305\ndash 322},
  issn={0025-5831},
  review={\MR {0342645 (49 \#7391)}},
}

\bib{howe:gln}{article}{
  author={Howe, Roger E.},
  title={Tamely ramified supercuspidal representations of ${\rm Gl}\sb {n}$},
  journal={Pacific J. Math.},
  volume={73},
  date={1977},
  number={2},
  pages={437\ndash 460},
  issn={0030-8730},
  review={\MR {0492087 (58 \#11241)}},
}

\bib{howe:qualitative}{article}{
  author={Howe, Roger E.},
  title={Some qualitative results on the representation theory of ${\rm Gl}\sb {n}$ over a $p$-adic field},
  journal={Pacific J. Math.},
  volume={73},
  date={1977},
  number={2},
  pages={479\ndash 538},
  issn={0030-8730},
  review={\MR {0492088 (58 \#11242)}},
}

\bib{jacobson:ba1}{book}{
  author={Jacobson, Nathan},
  title={Basic algebra. I},
  publisher={W. H. Freeman and Company},
  place={New York},
  date={1985},
  pages={xviii+499},
  isbn={0-7167-1480-9},
  review={\MR {780184 (86d:00001)}},
}

\bib{jkim-murnaghan:charexp}{article}{
  author={Kim, Ju-Lee},
  author={Murnaghan, Fiona},
  title={Character expansions and unrefined minimal $K$-types},
  journal={Amer. J. Math.},
  volume={125},
  date={2003},
  number={6},
  pages={1199\ndash 1234},
  issn={0002-9327},
  review={\MR {2018660 (2004k:22024)}},
}

\bib{jkim-murnaghan:gamma-asymptotic}{article}{
  author={Kim, Ju-Lee},
  author={Murnaghan, Fiona},
  title={$K$-types and $\Gamma $-asymptotic expansions},
  journal={J. Reine Angew. Math.},
  volume={592},
  year={2006},
  pages={189-236},
}

\bib{jkim:exhaustion}{article}{
  author={Kim, Ju-Lee},
  title={Supercuspidal representations: an exhaustion theorem},
  journal={J. Amer. Math. Soc.},
  volume={20},
  date={2007},
  number={2},
  pages={273--320 (electronic)},
  issn={0894-0347},
  review={\MR {2276772}},
}

\bib{kutzko:supercuspidal-gl2-1}{article}{
  author={Kutzko, Philip C.},
  title={On the supercuspidal representations of ${\rm Gl}\sb {2}$},
  journal={Amer. J. Math.},
  volume={100},
  date={1978},
  number={1},
  pages={43\ndash 60},
  issn={0002-9327},
  review={\MR {0507253 (58 \#22411a)}},
}

\bib{kutzko:supercuspidal-gl2-2}{article}{
  author={Kutzko, Philip C.},
  title={On the supercuspidal representations of ${\rm Gl}\sb {2}$. II},
  journal={Amer. J. Math.},
  volume={100},
  date={1978},
  number={4},
  pages={705\ndash 716},
  issn={0002-9327},
  review={\MR {0507254 (58 \#22411b)}},
}

\bib{landvogt:functorial}{article}{
  author={Landvogt, Erasmus},
  title={Some functorial properties of the Bruhat--Tits building},
  journal={J. Reine Angew. Math.},
  volume={518},
  date={2000},
  pages={213\ndash 241},
  issn={0075-4102},
  review={\MR {1739403 (2001g:20029)}},
}

\bib{lidl-niederreiter:finite-fields}{book}{
  author={Lidl, Rudolf},
  author={Niederreiter, Harald},
  title={Finite fields},
  series={Encyclopedia of Mathematics and its Applications},
  volume={20},
  publisher={Cambridge University Press},
  place={Cambridge},
  date={1997},
  pages={xiv+755},
  isbn={0-521-39231-4},
  review={\MR {1429394 (97i:11115)}},
}

\bib{moy:thesis}{article}{
  author={Moy, Allen},
  title={Local constants and the tame Langlands correspondence},
  journal={Amer. J. Math.},
  volume={108},
  date={1986},
  number={4},
  pages={863\ndash 930},
  issn={0002-9327},
  review={\MR {853218 (88b:11081)}},
}

\bib{moy-prasad:k-types}{article}{
  author={Moy, Allen},
  author={Prasad, Gopal},
  title={Unrefined minimal $K$-types for $p$-adic groups},
  journal={Invent. Math.},
  volume={116},
  date={1994},
  number={1--3},
  pages={393\ndash 408},
  issn={0020-9910},
  review={\MR {1253198 (95f:22023)}},
}

\bib{moy-prasad:jacquet}{article}{
  author={Moy, Allen},
  author={Prasad, Gopal},
  title={Jacquet functors and unrefined minimal $K$-types},
  journal={Comment. Math. Helv.},
  volume={71},
  date={1996},
  number={1},
  pages={98\ndash 121},
  issn={0010-2571},
  review={\MR {1371680 (97c:22021)}},
}

\bib{murnaghan:chars-u3}{article}{
  author={Murnaghan, Fiona},
  title={Local character expansions for supercuspidal representations of ${\rm U}(3)$},
  journal={Canad. J. Math.},
  volume={47},
  date={1995},
  number={3},
  pages={606\ndash 640},
  issn={0008-414X},
  review={\MR {1346155 (96i:22026)}},
}

\bib{murnaghan:chars-sln}{article}{
  author={Murnaghan, Fiona},
  title={Characters of supercuspidal representations of ${\rm SL}(n)$},
  journal={Pacific J. Math.},
  volume={170},
  date={1995},
  number={1},
  pages={217\ndash 235},
  issn={0030-8730},
  review={\MR {1359978 (96k:22030)}},
}

\bib{murnaghan:chars-classical}{article}{
  author={Murnaghan, Fiona},
  title={Characters of supercuspidal representations of classical groups},
  journal={Ann. Sci. \'Ecole Norm. Sup. (4)},
  volume={29},
  date={1996},
  number={1},
  pages={49\ndash 105},
  issn={0012-9593},
  review={\MR {1368705 (98c:22016)}},
}

\bib{murnaghan:chars-gln}{article}{
  author={Murnaghan, Fiona},
  title={Local character expansions and Shalika germs for ${\rm GL}(n)$},
  journal={Math. Ann.},
  volume={304},
  date={1996},
  number={3},
  pages={423\ndash 455},
  issn={0025-5831},
  review={\MR {1375619 (98b:22020)}},
}

\bib{rader-silberger:submersion}{article}{
  author={Rader, Cary},
  author={Silberger, Allan},
  title={Some consequences of Harish-Chandra's submersion principle},
  journal={Proc. Amer. Math. Soc.},
  volume={118},
  date={1993},
  number={4},
  pages={1271\ndash 1279},
  issn={0002-9939},
  review={\MR {1169888 (93j:22032)}},
}

\bib{sally:unimodular}{article}{
  author={Sally, Paul J., Jr.},
  title={Unitary and uniformly bounded representations of the two by two unimodular group over local fields},
  journal={Amer. J. Math.},
  volume={90},
  date={1968},
  pages={406\ndash 443},
  issn={0002-9327},
  review={\MR {0229755 (37 \#5329)}},
}

\bib{sally-shalika:characters}{article}{
  author={Sally, Paul J., Jr.},
  author={Shalika, Joseph A.},
  title={Characters of the discrete series of representations of ${\rm SL}(2)$\ over a local field},
  journal={Proc. Nat. Acad. Sci. U.S.A.},
  volume={61},
  date={1968},
  pages={1231\ndash 1237},
  review={\MR {0237713 (38 \#5994)}},
}

\bib{sally-shalika:plancherel}{article}{
  author={Sally, Paul J., Jr.},
  author={Shalika, Joseph A.},
  title={The Plancherel formula for ${\rm SL}(2)$ over a local field},
  journal={Proc. Nat. Acad. Sci. U.S.A.},
  volume={63},
  date={1969},
  pages={661\ndash 667},
  review={\MR {0364559 (51 \#813)}},
}

\bib*{proc:maryland1984-2}{collection}{
  title={Lie group representations. II},
  series={Lecture Notes in Mathematics},
  volume={1041},
  editor={Herb, R.},
  editor={Kudla, S.},
  editor={Lipsman, R.},
  editor={Rosenberg, J.},
  publisher={Springer--Verlag},
  place={Berlin},
  date={1984},
  pages={ix+340},
  isbn={3-540-12715-1},
  review={\MR {748503 (85c:11002)}},
}

\bib{sally-shalika:orbital-integrals}{article}{
  author={Sally, Paul J., Jr.},
  author={Shalika, Joseph A.},
  title={The Fourier transform of orbital integrals on ${\rm SL}\sb {2}$ over a $p$-adic field},
  book={proc:maryland1984-2},
  pages={303\ndash 340},
}

\bib{sally:remarks}{article}{
  author={Sally, Paul J., Jr.},
  title={Some remarks on discrete series characters for reductive $p$-adic groups},
  booktitle={Representations of Lie groups, Kyoto, Hiroshima, 1986},
  series={Adv. Stud. Pure Math.},
  volume={14},
  pages={337\ndash 348},
  publisher={Academic Press},
  place={Boston, MA},
  date={1988},
  review={\MR {1039842 (91g:22026)}},
}

\bib{secherre:glnd-3}{article}{
  author={S{\'e}cherre, Vincent},
  title={Repr\'esentations lisses de ${\rm GL}\sb m(D)$. III. Types simples},
  language={French, with English and French summaries},
  journal={Ann. Sci. \'Ecole Norm. Sup. (4)},
  volume={38},
  date={2005},
  number={6},
  pages={951--977},
  issn={0012-9593},
  review={\MR {2216835 (2007a:22010)}},
}

\bib{serre:galois}{book}{
  author={Serre, Jean-Pierre},
  title={Galois cohomology},
  series={Springer Monographs in Mathematics},
  publisher={Springer--Verlag},
  place={Berlin},
  date={2002},
  pages={x+210},
  isbn={3-540-42192-0},
  review={\MR {1867431 (2002i:12004)}},
  language={English},
}

\bib{shimizu:gl2}{article}{
  author={Shimizu, Hideo},
  title={Some examples of new forms},
  journal={J. Fac. Sci. Univ. Tokyo Sect. IA Math.},
  volume={24},
  date={1977},
  number={1},
  pages={97\ndash 113},
  issn={0040-8980},
  review={\MR {0447121 (56 \#5436)}},
}

\bib{silberger:pgl2}{book}{
  author={Silberger, Allan J.},
  title={${\rm PGL}\sb {2}$ over the $p$-adics: its representations, spherical functions, and Fourier analysis},
  series={Lecture Notes in Mathematics, Vol. 166},
  publisher={Springer--Verlag},
  place={Berlin},
  date={1970},
  pages={vii+204},
  review={\MR {0285673 (44 \#2891)}},
}

\bib{spice:thesis}{article}{
  author={Spice, Loren},
  title={Supercuspidal characters of ${\rm SL}\sb \ell $ over a $p$-adic field, $\ell $ a prime},
  journal={Amer. J. Math.},
  volume={127},
  date={2005},
  number={1},
  pages={51\ndash 100},
  issn={0002-9327},
  review={\MR {2115661 (2005k:22028)}},
}

\bib{spice:jordan}{article}{
  author={Spice, Loren},
  title={Topological Jordan decompositions},
  journal={J. Algebra},
  volume={319},
  year={2008},
  pages={3141--3163},
  eprint={arXiv:math.GR/0612475},
}

\bib{stevens:classical-sc}{article}{
  author={Stevens, Shaun},
  title={The supercuspidal representations of $p$-adic classical groups},
  journal={Inventiones Math.},
  volume={172},
  pages={289\ndash 352},
  year={2008},
  eprint={arXiv:math.RT/0607622},
}

\bib{waldspurger:loc-trace-form}{article}{
  author={Waldspurger, Jean-Loup},
  title={Une formule des traces locale pour les alg\`ebres de Lie $p$-adiques},
  language={French},
  journal={J. Reine Angew. Math.},
  volume={465},
  date={1995},
  pages={41\ndash 99},
  issn={0075-4102},
  review={\MR {1344131 (96i:22039)}},
}

\bib{yu:supercuspidal}{article}{
  author={Yu, Jiu-Kang},
  title={Construction of tame supercuspidal representations},
  journal={J. Amer. Math. Soc.},
  volume={14},
  date={2001},
  number={3},
  pages={579\ndash 622 (electronic)},
  issn={0894-0347},
  review={\MR {1824988 (2002f:22033)}},
}

\end{biblist}
\end{bibdiv}
\end{document}